\documentclass[oneside]{amsart}

\usepackage[width=160mm,top=30mm,bottom=32mm]{geometry}
\usepackage{caption}
\usepackage{subcaption}
\usepackage[centertags]{amsmath}
\usepackage{bbm, amsfonts,amssymb, amsthm}
\usepackage{mathrsfs}
\usepackage{fancyhdr}
\usepackage{graphicx}
\usepackage{amsmath}
\usepackage{tikz-cd} 
\usepackage{cancel}
\usepackage{physics}
\usepackage[sorting=nyt]{biblatex}
\usepackage{mathtools}
\usepackage{hyperref}
\usepackage{lipsum}

\newtheorem*{Theorem*}{Theorem}
\newtheorem*{Problema*}{Problem}
\newtheorem{TheoremA}{Theorem}

\newtheorem{TheoremB}{Theorem}

\newtheorem{TheoremC}{Theorem}

\newtheorem{thm}{Theorem}[section]

\newtheorem{lema}[thm]{Lemma}
\newtheorem{prop}[thm]{Proposition}
\newtheorem{cor}[thm]{Corollary}
\newtheorem{exam}[thm]{Example}
\newtheorem{defn}[thm]{Definition} 
\newtheorem{rem}[thm]{Remark}

\newcommand{\cD}{\mathcal{D}}

\newcommand{\cJ}{\mathcal{J}}

\newcommand{\bB}{\mathbb{B}}
\newcommand{\bC}{\mathbb{C}}

\newcommand{\bN}{\mathbb{N}}

\newcommand{\bQ}{\mathbb{Q}}\newcommand{\bR}{\mathbb{R}}
\newcommand{\bS}{\mathbb{S}}\newcommand{\bT}{\mathbb{T}}

\newcommand{\bZ}{\mathbb{Z}}

\newcommand{\nc}{\newcommand}
\nc{\p}{\partial}
\nc{\ol}{\overline}

\def\@maketitle{%
  \normalfont\normalsize
  \let\@makefnmark\relax  \let\@thefnmark\relax
  \ifx\@empty\@date\else \@footnotetext{\@setdate}\fi
  \ifx\@empty\@subjclass\else \@footnotetext{\@setsubjclass}\fi
  \ifx\@empty\@keywords\else \@footnotetext{\@setkeywords}\fi
  \ifx\@empty\thankses\else \@footnotetext{%
    \def\par{\let\par\@par}\@setthanks}\fi
  \@mkboth{\@nx\shortauthors}{\@nx\shorttitle}%
  \global\topskip42\p@ 
  \@settitle
  \ifx\@empty\authors \else \@setauthors \fi
  \ifx\@empty\@commby
  \else
    \baselineskip18\p@
    \vtop{\centering{\footnotesize\@commby\@@par}%
      \global\dimen@i\prevdepth}\prevdepth\dimen@i
  \fi
  \ifx\@empty\@dedicatory
  \else
    \baselineskip18\p@
    \vtop{\centering{\footnotesize\itshape\@dedicatory\@@par}%
      \global\dimen@i\prevdepth}\prevdepth\dimen@i
  \fi
  \@setabstract
  \normalsize
  \dimen@34\p@ \advance\dimen@-\baselineskip
  \vskip\dimen@\relax
}

\definecolor{myteal}{HTML}{00797d}
\hypersetup{
    colorlinks=false,%
    citebordercolor=green,%
    filebordercolor=red,%
    linkbordercolor=blue,%
    urlbordercolor=red}

\title[Singly periodic minimal Wente torus with ends]{Singly periodic minimal Wente torus with ends}
\author{CARLOS ANDRÉS TORO CARDONA}
\address{Instituto de Matemática Pura e Aplicada-Instituto de Matemática e Estatística, Universidade Federal do Rio Grande do Sul, Brazil}
\email{carlos.toroc@impa.br}
\date{}

\begin{document}
\begin{abstract}
We construct singly periodic minimal surfaces in $\bR^3$ parametrized by rhombic lattices and foliated by spherical curvature lines. As an application we find new examples of free boundary and capillary minimal annuli into two spheres of different center and same radius, some of them embedded.

\end{abstract}

\maketitle

\section{Introduction}
The Hopf conjecture, in the theory of constant mean curvature surfaces, claimed that the only compact immersed constant mean curvature surface without boundary in $\bR^3$ is the round sphere. H. Hopf himself proved the conjecture to be true when the surface has the topology of a sphere \cite{Hopf}. A. Alexandrov also solved the problem in the affirmative when the surface is embedded in $\bR^3$ \cite{Alexandrov}. However in 1986, H. Wente \cite{WenteCounterExample} proved the existence of a closed immersed torus in $\bR^3$ with constant mean curvature, thus disproving Hopf's conjecture in the general case.

The Wente torus has a further property, namely, it is foliated by one family of spherical curvature lines. That is, for each line of curvature in this family, there exists a sphere of a certain radius and center which fully contains it. Even though the centers may not coincide for different spherical curvature lines, Wente proved, in a further work \cite{Wentespherical}, that all the centers belong to a fixed straight line.

In 1987, U. Abresch \cite{Abresch} was the first to obtain computational simulations of Wente tori and observed that one family of curvature lines looked planar. Investigating further this property, he classified all the tori of constant mean curvature with one family of planar curvature lines in $\bR^3$. By the same year, R. Walter \cite{Walter} was able to determine explicitly all the immersed tori in $\bR^3$ with constant mean curvature and only spherical curvature lines.

In this article we investigate the existence of a non-compact minimal version of the compact non-zero mean curvature Wente torus in $\bR^3$
\begin{Problema*}
    \textit{Are there any complete minimal tori with ends in $\bR^3$ foliated by spherical lines of curvature?}
\end{Problema*}
It is worth to point out that our contributions to this problem are partial and a complete solution still remains open. Inspired by the work of Wente \cite{Wentespherical}, we introduce in Definition \ref{minimalwentetorus} a family of potentially good candidates to solve the problem, which we call \textit{minimal Wente tori with ends}. A necessary condition to construct such minimal surfaces is that the subjacent lattice is of rhombic type, which lead us to explore a generalization of the methods developed in \cite{Isabel} to the context of rhombic lattices. However we can prove that such objects do not exist
\begin{TheoremA}[{Theorem \ref{nonexistence}}]\label{theoremnonexistence}
    There are no minimal Wente torus with ends. 
\end{TheoremA}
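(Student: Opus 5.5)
The plan is to work in the setting the paper sets up for Definition~\ref{minimalwentetorus}: a conformal curvature-line parametrization $X(u,v)$ of a minimal surface with metric $e^{2\omega}(du^2+dv^2)$. Here $\omega$ solves the Liouville-type equation $\Delta\omega + e^{-2\omega}\cdot(\text{const}) = 0$, which in the minimal case becomes $\Delta \omega = -c\,e^{-2\omega}$ after normalizing the Hopf differential. The curves $v=\text{const}$ are spherical curvature lines, and the centers of the spheres lie on a fixed line. A minimal Wente torus with ends is, by definition, such a surface whose data $\omega$, and hence the Gauss map and Weierstrass data, are doubly periodic with respect to a rhombic lattice $\Lambda$, with finitely many singular points of $\omega$ corresponding to ends. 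The proof will show that the periodicity and end conditions cannot all hold at once.

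\textbf{Step 1: reduce sphericity to an ODE system.} Following Wente's analysis, I will impose that the $u$-curves are spherical. This forces $\omega$ to satisfy an overdetermined system: besides the Liouville equation, $\omega_{uv}$ must be expressible through $\omega_u,\omega_v$. As in the Wente/Abresch separation of variables, this yields $\omega$ (or $e^{-\omega}$) of the form determined by two functions $f(u), g(v)$. These satisfy first-order ODEs $f'^2=P(f)$ and $g'^2=Q(g)$, with $P,Q$ polynomials of degree at most four whose coefficients are linked by the compatibility conditions. In the minimal case the Liouville equation is explicitly solvable: $e^{-2\omega}=|h'|^2/(1+|h|^2)^2$-type expressions. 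I will use this to pin down $f,g$ as elliptic functions with periods matching the rhombic lattice; this is where rhombicity enters.

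\textbf{Step 2: extract the obstruction.} With $f,g$ elliptic, the ends are the points where $\omega\to-\infty$ (or $+\infty$), that is, zeros or poles of the combination defining $e^{\omega}$. I will compute the immersion on a fundamental domain via the Weierstrass representation $(G,\eta)$. Here $G$ is the Gauss map, determined from the spherical-line geometry, and $\eta$ is obtained from the Hopf differential $dz^2$ divided by $dG$. I will then check the period conditions $\mathrm{Re}\oint \Phi=0$ around the lattice generators, except for the translation along the fixed axis allowed by single periodicity. I will also check the residue conditions at the ends. The expected contradiction is that the real-period condition along one rhombic generator forces a degenerate parameter. Either the polynomial $P$ acquires a double root, making $f$ non-periodic (the lattice degenerates to a cylinder or plane), or the center line becomes ill-defined, so that the curvature lines are planar or the surface is a catenoid or plane. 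Either outcome contradicts the definition.

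\textbf{Main obstacle.} The hardest step is Step 2. Turning the period integrals of elliptic-function expressions into a clean sign or inequality statement requires care. I would first try to express the relevant period as a complete elliptic integral in the single free modulus. I would then show it is strictly monotone or of fixed sign, using the rhombic symmetry $u\leftrightarrow v$ reflection to halve the number of conditions. If that fails, I would instead use the symmetry to show the end residues must all be vertical and equal in sign. That would contradict the balancing (flux-sum zero) condition.
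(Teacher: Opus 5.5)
Your outline never identifies the actual obstruction, and the mechanisms you anticipate would not produce one.

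The closing conditions you plan to verify do not force any degeneration. By Corollary~\ref{periodisrationalalongCs}, Lemma~\ref{translationalperiod} and Theorem~\ref{singlyperiodic}, there are countably many $\tau\in\cJ$ for which:
\begin{itemize}
\item $L(\tau,s(\tau))$ is a genuinely non-degenerate rhombic lattice with $\kappa_2\in\bQ$;
\item the Weierstrass data $\Phi$ is periodic on $\Lambda_\tau=nL$;
\item $\Psi$ is invariant up to a single translation $\vec a$, which carries the axis of centers into itself.
\end{itemize}
These are honest singly periodic surfaces with infinitely many ends, not planes or catenoids. So no argument that the non-axial period conditions collapse a root of your polynomial can succeed. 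The contradiction lives exactly in the axial translation you explicitly set aside as ``allowed by single periodicity''. In the first item of Definition~\ref{minimalwentetorus}, no translation is allowed.

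Your fallback also fails. At every end, $dh$, $g\,dh$ and $dh/g$ all have zero residue (proof of Theorem~\ref{EnnepertypeImmersion}). So every flux vanishes and the balancing condition is vacuous.

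Finally, rhombicity is not part of the definition; rectangular lattices must be excluded. The paper does this in Theorem~\ref{importancerombic}: along any lattice translation with a horizontal component, $|g|$ is multiplied by the real factor $\exp\int_{\alpha_2}dz/\phi\neq 1$.

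The missing key step is to compare two scalar conditions on the single function $f_\tau(s)=2\zeta(s|\tilde L)-\zeta(2|\tilde L)s$. Use the normalization of Theorem~\ref{canonicalconstruction}, so $L=\lambda\tilde L(\tau)$ and $\mu=\lambda s$ with $s\in(0,2)$.
\begin{enumerate}
\item Closing the Gauss map on a rank-two $\Lambda$ forces $\mathrm{Im}\,\kappa_2=0$, i.e.\ $f_\tau(s)=0$ by Corollary~\ref{realimaginaryperiods}.
\item The height differential $\phi\,dz=(b-4\wp)\,dz$ is already $L$-periodic with no residues. Since $\omega_2=\ol{\omega_1}$ and $\zeta(\omega_2)=\ol{\zeta(\omega_1)}$, its real period along both $2\omega_1$ and $2\omega_2$ equals $2b\lambda+4\zeta(\omega_3)=-\frac{4}{\lambda}f_\tau'(s)$.
\item Along $2m_1\omega_1+2m_2\omega_2$ the real period is $(m_1+m_2)$ times this value. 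Some generator of $\Lambda$ has $m_1+m_2\neq0$, so closing in $x_3$ would require $f_\tau'(s)=0$.
\end{enumerate}
But $f_\tau''=-2\wp'>0$ on $(0,2)$ and $f_\tau(2)=0$. Hence $f_\tau$ and $f_\tau'$ cannot vanish at the same point of $(0,2)$; otherwise strict convexity would give $f_\tau(2)>0$. This is Lemma~\ref{criterio}, and it yields $a_3=-\frac{4n}{\lambda}f_\tau'(s(\tau))\neq0$.

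The paper then concludes by compactness. The image of a compact fundamental domain of $\Lambda$, minus small disks around the ends, is bounded. Yet $\Psi(P+2jn\omega_1)=\Psi(P)+j\vec a$ is unbounded.

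So the decisive inequality is a one-variable convexity fact in $s$ for the axial period. It is not a monotonicity statement for a complete elliptic integral in the modulus.
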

 In the development of the investigation, we produced a family of minimal tori with embedded flat ends, foliated by one family of spherical curvature lines and finite total curvature in the quotient space $\bR^3/\vec{a}$, where $\vec{a}\in \bR^3$ is a non-zero-translational vector. This family can also be viewed as a collection of singly periodic minimal surfaces in $\bR^3$ foliated by spherical curvature lines and with infinitely many ends.
\begin{TheoremB}[{Theorem \ref{singlyperiodic}}]\label{theorem7}
\textit{There exists an infinite countable subset $\cJ\subset \bR$ such that, for each $\tau \in \cJ$ and for all $\hat{g}_0\in \bR^+$ there is a complete minimal immersion
    \begin{equation*}
        \hat{\Psi}_{\tau,\hat{g}_0}:\bT_{\tau}^2\setminus \{P_1^{\tau},\ldots P_{n(\tau)^2}^{\tau}\}\to \bR^3/\vec{a}_{\tau,\hat{g}_0}
    \end{equation*}
    of a torus with $n(\tau)^2\in \bZ$ embedded flat ends and finite total curvature $-4\pi n(\tau)^2$ into $\bR^3/\vec{a}_{\tau,\hat{g}_0}$ with $\vec{a}_{\tau,\hat{g}_0}\in \bR^3\setminus \{0\}$, which lifts to a complete minimal immersion 
    \begin{equation*}
        \Psi_{\tau,\hat{g}_0}:\bC\setminus L_{\tau}\to \bR^3
    \end{equation*}
    defined on the complex plane punctured at the vertices of a rhombic lattice $L_\tau$ and that is moreover foliated by one family of spherical curvature lines.}
\end{TheoremB}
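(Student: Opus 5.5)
The construction will go through the Weierstrass representation with data adapted to curvature-line coordinates, following the approach of \cite{Isabel} but on a rhombic lattice. Take $z=u+iv$ as a conformal curvature-line parameter on $\bC$. In these coordinates the Hopf differential is a real constant multiple of $dz^2$, so we may normalize $Q=-\tfrac12\, dz^2$; equivalently $g' \,\omega = dz^2$-type data, with $\omega = dz/g'$ up to a constant. Writing the metric as $e^{2\varphi}|dz|^2$, minimality together with constant $Q$ reduces the Gauss--Codazzi equations to the Liouville-type equation $\Delta \varphi = \pm e^{-2\varphi}$. This equation is solved locally by $e^{\varphi}\propto (1+|g|^2)/|g'|$ for a meromorphic $g$.

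The spherical condition along the $u$-curves $v=\text{const}$ is the standard Joachimsthal/Wente condition: the geodesic curvature and normal curvature of each such line satisfy the ODE coming from ``curvature line contained in a sphere''. For minimal surfaces this translates into $\varphi$ being separable in a suitable sense, $e^{-\varphi}$ (or $e^{\varphi}$) $= f(u)+h(v)$ type, as in Wente's and Abresch's reduction. Substituting into the Liouville equation gives ODEs $f'^2 = P(f)$ and $h'^2 = \tilde P(h)$ with cubic (or quartic) polynomials. Their solutions are Weierstrass $\wp$-type elliptic functions.

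The key step is to show that the separation-of-variables compatibility forces the two real elliptic curves to combine into a doubly periodic $g$ whose period lattice is rhombic. Real $f(u)$ and $h(v)$ periodic in $u$ and $v$ yield a rectangular lattice in general, but imposing the matching of the algebraic constants turns the resulting $g$ into a meromorphic function on $\bC/L_\tau$ with $L_\tau$ rhombic, parametrized by a real parameter $\tau$. I would exhibit $g$ explicitly as (a power or rescaling of) a ratio of theta functions, or as $\hat g_0\,\wp_\tau'$-like data, with $\hat g_0>0$ the free scaling (a rotation/dilation of the Gauss map that preserves the structure). The poles of $dz/g'$ and of $g$ are located at the lattice vertices. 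After passing to a finite sublattice of index $n(\tau)^2$, this gives $n(\tau)^2$ punctures on the torus $\bT^2_\tau$. Since $g$ has simple poles there with $\omega$ having order-two poles, the ends are embedded flat ends. The total curvature is $-4\pi \deg g$, which gives $-4\pi n(\tau)^2$.

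The main obstacle is the period problem. The residues of $\Phi=(\tfrac12(1-g^2),\tfrac i2(1+g^2),g)\,\omega$ must vanish at every puncture; I expect this to follow from symmetry of $g$ under the rhombic reflections. The real periods along the two lattice generators must then be controlled. Here we only require that $\Re\int_\gamma\Phi$ vanish along one generator and equal a nonzero vector $\vec a_{\tau,\hat g_0}$ along the other, giving a singly periodic immersion. Using the reflection symmetries of the rhombic lattice, both periods reduce to one real integral condition in $\tau$, a function $F(\tau)$ given by elliptic integrals. I would show that $F$ is analytic and oscillates, for instance by asymptotics as $\tau$ approaches the degenerate (square or collapsing) rhombus, so that it changes sign infinitely often. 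Its zeros then define the countable set $\cJ$, and $n(\tau)$ is the minimal sublattice index needed to close up the remaining rational-type condition. The nonvanishing of the other period, which makes $\vec a\neq0$, should come from a positivity argument on the integrand, such as $\Re(g\,\omega)$ having fixed sign along a symmetry line. Completeness follows since each puncture is an end. The spherical curvature lines come from the separable form by the construction.
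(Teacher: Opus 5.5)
Your proposal has a genuine gap at its core: it misidentifies what has to be closed. For Enneper-type data the height differential $\phi\,dz=(b-4\wp(z))\,dz$ is $L$-periodic, but the Gauss map is determined by $g'/g=1/\phi$. So $g=\hat g_0e^{2\zeta(\mu)z}\sigma(\mu-z)/\sigma(\mu+z)$ is \emph{not} elliptic on $L$. It has one simple zero ($\mu$) and one simple pole ($-\mu$) per cell, and it is finite and nonzero at the lattice points, where the ends come from the double poles of $\phi$. It also satisfies $g(z+2\omega_j)=g(z)e^{-2\pi i\kappa_j}$ with $\kappa_j=-\frac{1}{2\pi i}\int_{\alpha_j}dz/\phi$.

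The structure you posit, with $g$ meromorphic on $\bC/L_\tau$ and a single simple pole per cell at the vertex, cannot exist, since an elliptic function has order at least two. Moreover, with your $\omega$ also having a double pole there, $g^2\omega$ would have a pole of order four and the end would not be embedded.

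The real first closing condition is that the multiplier become trivial on a sublattice, i.e.\ that $\kappa_2$ be real and rational (using $\kappa_1=-\overline{\kappa_2}$ on rhombic lattices). This is one real equation, $\mathrm{Im}\,\kappa_2=0$, plus a rationality condition on $\mathrm{Re}\,\kappa_2$. It needs a two-parameter family: the shape $\tau$ and the position $s$ of the zero $\mu=\lambda s$ of $\phi$ on the horizontal diagonal (equivalently $b$), which your setup does not have. The paper proceeds in four steps:
\begin{enumerate}
\item it cuts out $\mathrm{Im}\,\kappa_2=0$ as a graph $s=s(\tau)$, $\tau\in(0,\tau_0)$, from the convexity of $f_\tau(s)=2\zeta(s|\tilde L)-\zeta(2|\tilde L)s$;
\item it shows via the Legendre relation that on this graph $\mathrm{Re}\,\kappa_2=-s(\tau)/2-N$ with $N\in\bZ$;
\item it proves $s'(\tau)>0$ using the heat equation for $\vartheta_1$;
\item it takes $\cJ=s^{-1}(\bQ\cap(0,2))$, with $n(\tau)$ the denominator of $\kappa_2$.
\end{enumerate}

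Your replacement is one real function of $\tau$ that ``oscillates'', with infinitely many zeros forming $\cJ$, plus a ``remaining rational-type condition'' closed by choosing $n(\tau)$. It has no argument behind it, and the parameter count fails. Passing to an index-$n^2$ sublattice kills the multiplier only if the relevant quantity is already rational, and at isolated zeros of one condition nothing makes a second quantity rational. The real function of $\tau$ that does appear, $F(\tau)=\zeta(2|\tilde L)+2\wp(2|\tilde L)$, is monotone with a single zero $\tau_0$. Countability comes from the rational values of the monotone function $s(\tau)$, not from oscillation.

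Once $g$ is $\Lambda_\tau$-periodic, the remaining steps need none of the symmetry or positivity arguments you propose:
\begin{enumerate}
\item $\vec a_1=\vec a_2$ because $2n(\omega_2-\omega_1)$ points along the $v$-curves, which lie in compact spheres;
\item $\vec a\neq0$ follows from the explicit value $-\frac{4n}{\lambda}f'_\tau(s(\tau))\neq0$ of its third component;
\item the residues vanish by a Laurent expansion using $g'/g=1/\phi$;
\item $-4\pi n^2$ is the degree of $g$ on $\bC/\Lambda_\tau$, with zeros at $\mu+L$.
\end{enumerate}

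Two smaller misstatements remain. The spherical condition is Wente's $2\varphi_u=\alpha(u)e^{\varphi}+\beta(u)e^{-\varphi}$; separability of $e^{\pm\varphi}$ is only the degenerate case $\alpha\equiv0$ or $\beta\equiv0$. Rhombicity is not forced by compatibility, since rectangular lattices give Enneper-type surfaces too; it is forced only by the closing requirement.
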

\begin{figure}
     \centering
     \begin{subfigure}[b]{0.45\textwidth}
         \centering
         \includegraphics[width=\textwidth]{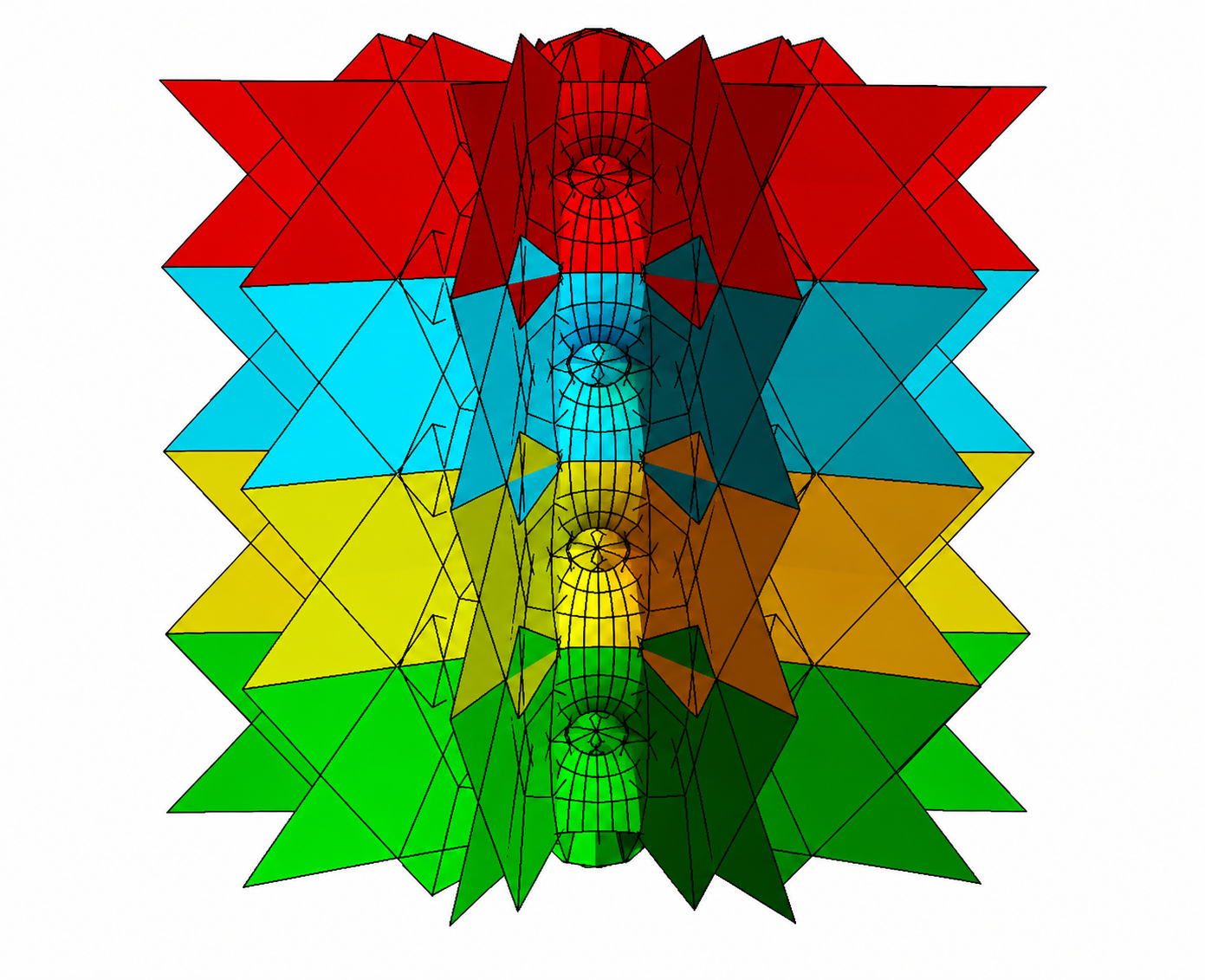}
    \label{fig:Extension n=5 g0=1 vertical}
     \end{subfigure}
     \hfill
     \begin{subfigure}[b]{0.45\textwidth}
         \centering
         \includegraphics[width=\textwidth]{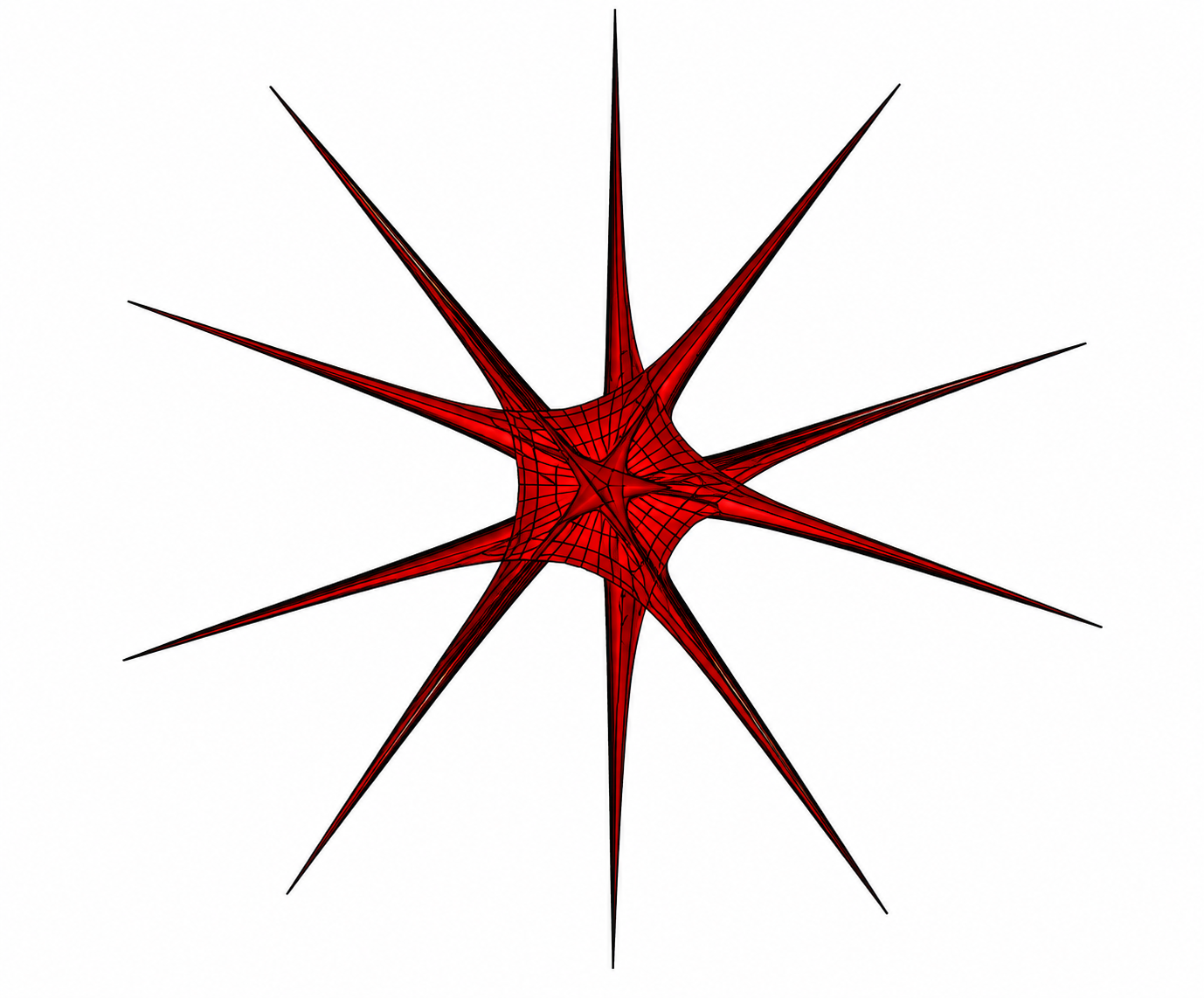}
         \label{fig:Extension n=5 g0=1 top}
     \end{subfigure}
     \caption{Side and top view of a singly periodic minimal surface foliated by spherical curvature lines of Theorem \ref{theorem7}.}
     \end{figure}
     Motivated by the construction of the free boundary minimal annuli of \cite{Isabel}, we wonder if these new extended singly periodic minimal immersions in $\bR^3$ foliated by spherical curvature lines produced also new examples of free boundary minimal annuli in the ball. In our constructions based on rhombic lattices, we were able to find new examples of both free boundary and capillary minimal annuli with respect to two spheres of different center and same radius. 

\begin{TheoremC}[{Theorem \ref{freeboundaryrhombictwospheres}}]\label{theorem8}
  \textit{There exists a family of free boundary minimal annuli in two spheres of same radius in $\mathbb{R}^3$ which are restrictions of an extended singly periodic minimal surface with dihedral symmetry group and foliated by one family of spherical curvature lines.}
\end{TheoremC}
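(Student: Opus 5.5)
The plan is to obtain the annuli as compact pieces of the singly periodic surfaces $\Psi_{\tau,\hat g_0}$ of Theorem \ref{theorem7}, cut out along two of the spherical curvature lines. Fix $\tau\in\cJ$ and $\hat g_0>0$. Each leaf $\gamma_t$ of the spherical foliation lies on a sphere $S(c(t),r(t))$. Following Wente, the centers $c(t)$ lie on a fixed line $\ell$, which I take as the axis of the period vector $\vec a_{\tau,\hat g_0}$. The first step is to record the Joachimsthal relation along each leaf. Since $\gamma_t$ is a curvature line on both the surface and the sphere, the angle $\theta(t)$ between the surface normal $N$ and the sphere normal $(\Psi-c(t))/r(t)$ is constant along $\gamma_t$. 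So $\gamma_t$ is a free boundary curve for $S(c(t),r(t))$ exactly when $\theta(t)=\pi/2$, and a capillary curve when $\theta(t)$ is any other constant.

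The second step is to write $c(t)$, $r(t)$ and $\theta(t)$ explicitly from the Weierstrass data on the rhombic lattice $L_\tau$. The spherical curvature lines correspond to one coordinate direction, say $\operatorname{Im} w=t$. Their geodesic curvature data are then given by elliptic-type functions of $t$. With these formulas, the condition $\theta(t)=\pi/2$ becomes a scalar equation $F_{\tau,\hat g_0}(t)=0$ in one real variable.

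The third step uses the dihedral symmetry. The rhombic lattice has a reflection symmetry, and I expect it to induce a symmetry $t\mapsto 2t_0-t$ of the foliation: a reflection of $\bR^3$ in a plane orthogonal to $\ell$, or a rotation about a line meeting $\ell$ orthogonally. This symmetry maps leaves to leaves, preserves radii, and sends $\theta$ to $\pi-\theta$ (or fixes it, depending on the orientation convention). Hence zeros of $F$ come in symmetric pairs $t_1<t_2=2t_0-t_1$ with $r(t_1)=r(t_2)$ and $c(t_1)\neq c(t_2)$, both centers on $\ell$. The restriction of $\Psi$ to the strip $t_1\le \operatorname{Im} w\le t_2$, taken modulo the horizontal period of $L_\tau$, is then a compact annulus. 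This requires first checking that the strip contains no lattice points, so that no ends are included. The two boundary curves are free boundary on two spheres of equal radius and different centers. Minimality comes from the construction, and the dihedral symmetry group is inherited from the lattice. Closing the annulus requires the horizontal direction to be a genuine period of $\Psi$, or to close after finitely many periods. This is where the choice $\tau\in\cJ$ enters: it should already guarantee closing along the leaf direction, because each leaf must be a closed curve on its sphere.

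The main obstacle is proving that $F_{\tau,\hat g_0}$ actually has a zero in an interval between two consecutive rows of punctures. I would try an intermediate value argument. Near a row of ends, the flat ends force the leaves to become asymptotically planar and large, with $\theta$ tending to a limit from one side. At the symmetric middle leaf $t=t_0$, the symmetry pins $\theta$ to a computable value. A sign change between these two would give the zero. If the sign analysis fails for some parameters, I would vary the free parameter $\hat g_0>0$ continuously, using that $\theta$ depends continuously on it, and show that the free boundary condition is achieved for a nonempty family of $\hat g_0$. That would yield a family rather than a single example, which is the claim of Theorem \ref{theorem8}.
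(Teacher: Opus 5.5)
Your outline matches the paper's: restrict to the strip between the leaves $u=u^*$ and $u=\omega_3-u^*$ (in the paper's coordinates the leaves are $u=\mathrm{const}$), note that free boundary means $\beta=0$, get equal radii from a leaf-exchanging symmetry, and find a zero of $\beta$ by the intermediate value theorem. However, the two steps that carry the argument fail as you state them.

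\textbf{The leaf-exchanging symmetry needs $\hat g_0=1$.} It does not exist for arbitrary $\hat g_0$. The rhombic lattice is invariant under both $z\mapsto\bar z$ and $z\mapsto-\bar z$. Only the first of these, together with its translates $T_k$, induces an ambient isometry for every $\hat g_0$. These give the reflections of Theorem \ref{symmetriesg0neq1} in planes containing $\ell$, and they map each spherical leaf to itself. The leaf-exchanging reflection acts on the Gauss map by $g(-\bar z)=\hat g_0^{2}/\overline{g(z)}$, up to a rotation factor after a lattice translation. This comes from an isometry of $\bS^2$ only when $\hat g_0=1$. Equivalently, the identity $(\alpha,\beta)(\omega_3-u)=-(\alpha,\beta)(u)$ of Theorem \ref{hamiltoniansystemg0=1} follows from uniqueness for the Hamiltonian system only because $\beta(0)=\hat g_0-\hat g_0^{-1}=0$. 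So you must fix $\hat g_0=1$. Your fallback of varying $\hat g_0$ removes exactly the symmetry that gives $R(u^*)=R(\omega_3-u^*)$. For $\hat g_0\neq 1$ you would get one free boundary leaf in one half of $(0,\omega_3)$, but no partner leaf of the same radius.

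\textbf{Your endpoint information does not produce a sign change.} With $\hat g_0=1$, as $u\to 0^+$ (the row of ends) one has $\beta\to\beta(0)=0$, so $\theta$ tends \emph{exactly} to $\pi/2$. The limit therefore carries no sign. The sign of $\beta$ just inside the strip is the sign of $\beta'(0)=\sum_{i<j}r_ir_j$ (Proposition \ref{initialconditionsparameterspace}). The paper does not prove this is negative. Theorem \ref{freeboundaryrhombictwospheres} assumes $\sum_{i<j}r_ir_j<0$, which is supported only numerically for $\theta_\tau\in(0,\pi/6)$. Your comparison of the ends with the middle leaf cannot supply this sign.

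The middle leaf does not have a pinned finite angle either. If the symmetry sends $\theta\mapsto\pi-\theta$, a regular middle leaf would have $\theta=\pi/2$, which again gives no sign. In fact the middle leaf is the singular leaf $u(\lambda_t)$, where $\alpha$ and $\beta$ blow up, with $\beta\to+\infty$ from the left (Theorem \ref{alphabetaqualitative}). Showing that $u(\lambda_t)=\omega_3/2$ is a separate step (Corollary \ref{calculationu0}). It uses the strict monotonicity of $c_3$ from Theorem \ref{centeranalysis}, and that same monotonicity is what gives $c(u^*)\neq c(\omega_3-u^*)$, which you assert without argument.

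A smaller point: lying on a compact sphere does not make a leaf closed. Closure comes from Lemma \ref{translationalperiod}. Rationality of $\kappa_2$ gives $\Psi(z+2n\omega_j)=\Psi(z)+\vec a_j$, and boundedness along the leaf then forces $\vec a_1=\vec a_2$.
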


In 1995, Wente \cite{Wenteannuli} constructed non-embedded examples of both capillary and free boundary annuli in the unit ball $\bB^3$ with constant non-zero mean curvature, and wondered whether any embedded capillary constant mean curvature annulus in $\bB^3$ should be rotationally symmetric. This question is the capillary CMC analog of a problem introduced in 1985 by J. Nitsche, asking about the uniqueness of the critical catenoid among the free boundary minimal annuli in $\bB^3$. Inspired by Wente's work, I. Fernández, P. Mira and L. Hauswirth \cite{Isabel} disproved Nitsche's conjecture, by constructing immersed free boundary minimal annuli in the unit ball that are not rotationally symmetric.
\begin{Theorem*}[{I. Fernández, P. Mira, L. Hauswirth} \cite{Isabel}]\label{Isabel1}
    There exists an infinite, countable family of non-embedded free boundary minimal annuli immersed in $\bB^3$ that are foliated by one family of spherical curvature lines and have a dihedral symmetry group.
\end{Theorem*}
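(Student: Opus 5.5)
This statement is quoted from \cite{Isabel} and is not proved in the present paper; it is cited as background. If I were to reconstruct it, I would follow the method of Fernández, Hauswirth and Mira and proceed as follows.

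The first step is to describe minimal surfaces with one family of spherical curvature lines in conformal curvature-line coordinates $(u,v)$. Write the metric as $e^{2\omega}|dz|^2$ with constant Hopf differential. Two facts are then available:
\begin{enumerate}
\item The Gauss equation becomes the Liouville-type equation $\Delta\omega + e^{-2\omega}=0$, after normalizing the Hopf differential.
\item The condition that the $v$-curves be spherical becomes an overdetermined ODE system for $\omega$.
\end{enumerate}
Following Wente's and Walter's approach for the CMC case, the separation of variables $e^{-\omega}$ expressed through functions of $u$ and of $v$ reduces everything to two elliptic-type ODEs. Their coefficients depend on finitely many real parameters, and their solutions are periodic in $u$ and $v$.

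The second step is to integrate the Weierstrass data. The Gauss map $g$ is meromorphic on a torus determined by the periods. I would also compute the centers and radii of the spheres containing the curvature lines: they lie on a fixed line, as in \cite{Wentespherical}, and each has an explicit formula.

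The third step is to impose the free boundary condition. I would take the annulus $\{u_0\le u\le u_1\}$ bounded by two curvature lines lying on the same sphere, and require that the surface meet that sphere orthogonally along both boundary curves. In the spherical-line setting this becomes a condition on the angle function along the line, and it reduces to a single real equation in the parameters.

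The main obstacle is the closing (period) condition. The immersion must be periodic in $v$, so that the annulus closes up. This requires the rotational monodromy about the axis of the centers to be a rational angle $2\pi m/n$, and the translational part to vanish. I would express the monodromy angle as an explicit elliptic-integral function of a parameter. Then I would show it varies continuously and strictly monotonically over an interval of parameters on which the free boundary condition holds, so that by the intermediate value theorem it attains infinitely many rational values. This yields countably many annuli. The dihedral symmetry follows from the symmetries of the ODE solutions, namely reflections in $u$ and $v$. Non-embeddedness follows from the winding number $m\ge 2$ forced by the admissible range of the monodromy.
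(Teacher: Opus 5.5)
Your outline matches the construction the paper attributes to \cite{Isabel}: Enneper-type data on a rectangular lattice, restriction to a vertical strip, closing by a rational period, and the intermediate value theorem. However, your third step puts the free boundary difficulty in the wrong place. Orthogonality along a boundary line $u=u_j$ is just $\beta(u_j)=0$, since $\tan\theta=2/\beta$ by \eqref{radiusangle}. This is not a condition on the parameters. Zeros of $\beta$ come from the intermediate value theorem, and with the normalization $\hat g_0=1$ the odd symmetry of $(\alpha,\beta)$ places them symmetrically and forces equal radii, as in Theorems \ref{hamiltoniansystemg0=1} and \ref{freeboundaryrhombictwospheres}.

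What you take for granted is the real codimension-one constraint: that the two boundary curves lie on the \emph{same} sphere, i.e.\ that the centers $c(u_0)$ and $c(u_1)$ on the axis $l$ coincide. In \cite{Isabel} this is the nodal set $\mathfrak{h}^{-1}(0)$ of the height map. What must be proved is that this set is nonempty (a curve in parameter space) and that the period map is not constant along it. Only then does the intermediate value theorem give rational periods \emph{on} that set. Otherwise your procedure yields annuli that are free boundary with respect to two spheres of equal radius and different centers. The present paper shows this danger is real: for rhombic lattices $c_3$ is strictly monotone (Theorem \ref{centeranalysis}), so the two centers can never coincide.

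Several auxiliary claims are also wrong. In the rectangular setting $g$ is not meromorphic on any torus, and the solutions are not periodic in $u$. The proof of Theorem \ref{importancerombic} shows that any lattice translation with nonzero horizontal component multiplies $|g|$ by a factor $\neq 1$. So the immersion is periodic (up to a rotation about $l$) only in the $v$-direction, and the annulus is a quotient of a strip. Insisting on a torus would push you to rhombic lattices, which is exactly where single-sphere free boundary annuli are impossible.

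Your non-embeddedness argument is asserted rather than derived. Before you can say the winding number about $l$ is at least $2$, you need an estimate of the period map along the nodal curve. You also need control of the total angle swept by the boundary curve, not just the monodromy modulo $2\pi$. Finally, since $K=-e^{-4\omega}<0$, the Gauss equation is $\Delta\omega-e^{-2\omega}=0$, not $\Delta\omega+e^{-2\omega}=0$.
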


\begin{figure}
    \centering
    \begin{subfigure}[b]{0.4\textwidth}
    \includegraphics[width=\linewidth]{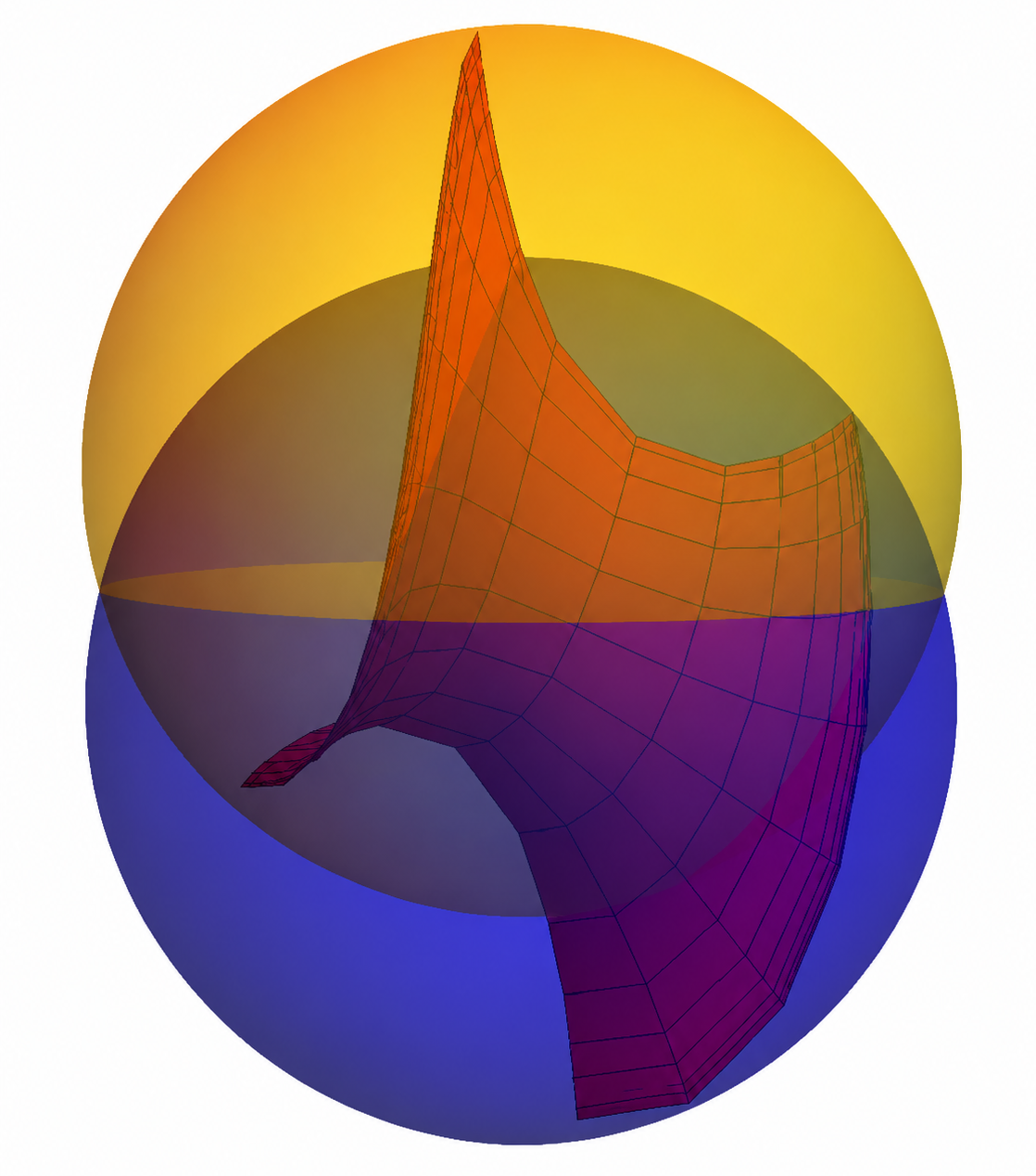}
    \label{fig:FMBS_IN_TWO_SPHERES}
    \end{subfigure}
     \hfill
     \begin{subfigure}[b]{0.4\textwidth}
         \centering
         \includegraphics[width=\textwidth]{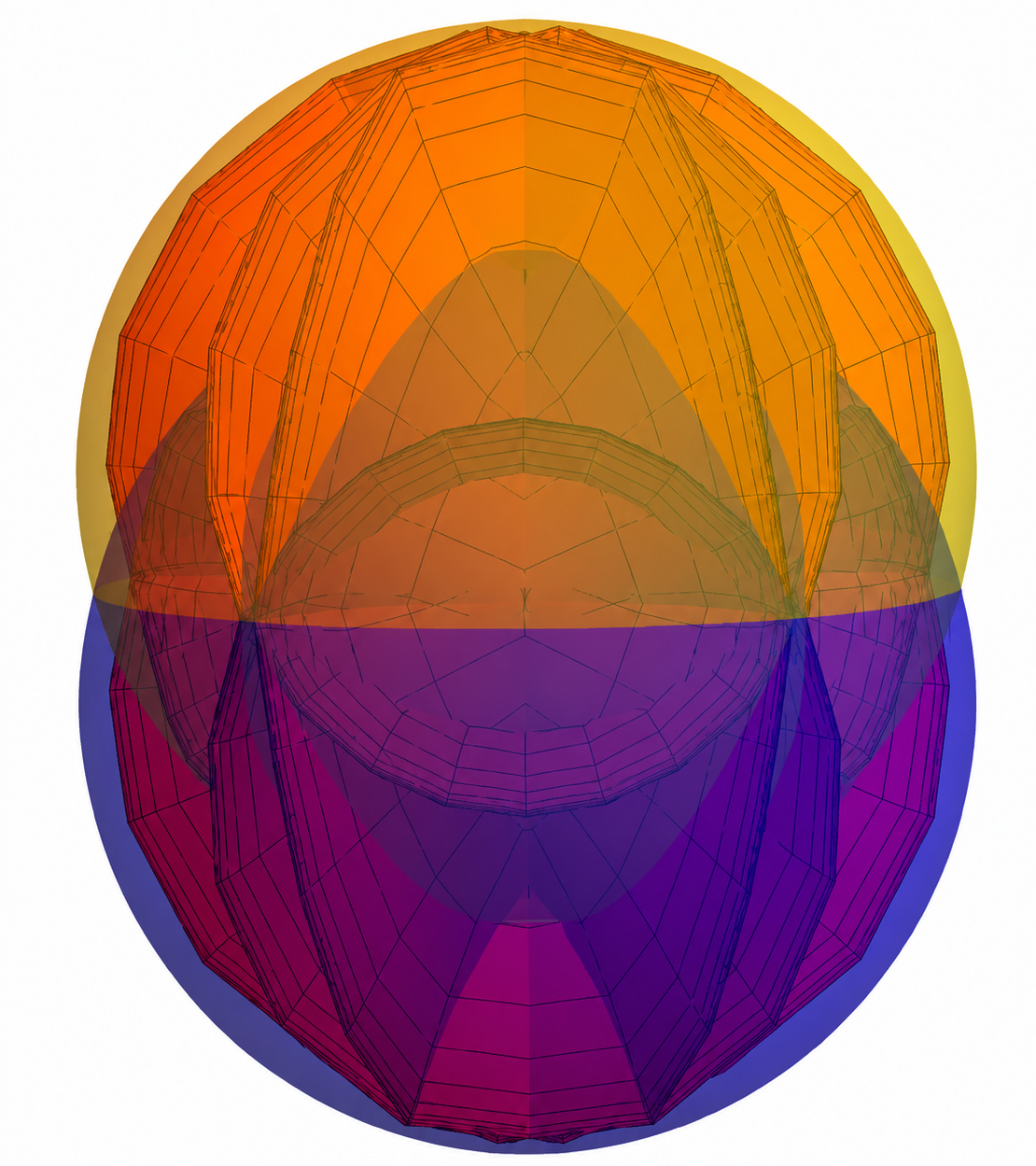}
         \label{fig:FBMS_IN_TWO_SPHERES_FULL}
     \end{subfigure}
     \caption{The fundamental domain and the full free boundary minimal annulus in two spheres of same radius of Theorem \ref{theorem8}.}
\end{figure}

 The method of construction of those annuli was to make a restriction to a vertical infinite strip of an extended minimal immersion foliated by spherical curvature lines and parametrized by a rectangular lattice. Moreover, in the same work \cite{Isabel}, they were also able to answer in the negative Wente's question in the following way:
\begin{Theorem*}[{I. Fernández, P. Mira, L. Hauswirth} \cite{Isabel}]
    There exist compact, embedded and non-rotationally minimal annuli in $\bB^3$ which are capillary, not free boundary and are foliated by one family of spherical curvature lines.
\end{Theorem*}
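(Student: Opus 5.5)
The natural approach is to follow the Fernández--Hauswirth--Mira construction and adapt it from rectangular to rhombic lattices. The last statement is a quoted theorem of \cite{Isabel}, so the plan is to re-derive it within the framework we set up for Theorem \ref{theorem8}. We parametrize conformally by $z=x+iy$, with the spherical curvature lines given by $x\mapsto \Psi(x,y)$ for fixed $y$. The metric is $e^{2\omega}|dz|^2$. The Gauss--Codazzi system reduces to the sinh-Gordon/Liouville type equation $\Delta\omega+\ldots=0$, together with the condition that one family of curvature lines is spherical. As in Wente's reduction, this condition forces $\omega$ to satisfy an additional first order ODE. The solutions then separate via two elliptic-type ODEs, one in $x$ and one in $y$, whose solutions depend on finitely many real parameters.

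\textbf{Step 1.} Integrate the frame explicitly. We then restrict to a vertical strip $\{x_0\le x\le x_1\}$ bounded by two curvature lines, which are spherical. The strip is periodic in $y$, so its quotient is an annulus whose boundary circles lie on two spheres $S_0,S_1$.

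\textbf{Step 2.} Impose a capillarity condition. The angle between the surface normal and the sphere normal along a spherical curvature line is automatically constant; this is Joachimsthal's theorem, since the curve is a line of curvature on both surfaces. So each boundary component meets its sphere at a constant angle.

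\textbf{Step 3.} Choose the parameters so that $S_0=S_1$ is the unit sphere. By the dihedral symmetry this reduces to requiring that the centers of the two boundary spheres coincide and that the radii match. The latter follows by reflecting across the symmetry line $x=(x_0+x_1)/2$.

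\textbf{Step 4.} Close the period in $y$. The monodromy of the frame over one period must be a rotation about the common axis by a rational angle $2\pi p/q$; then $q$ periods give a closed annulus. This is a one-parameter family in which the rotation angle varies continuously, so rational values occur for infinitely many parameters. The capillary annuli that are not free boundary correspond to angles different from $\pi/2$.

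\textbf{Step 5, the main obstacle: embeddedness.} I would take parameters near the degenerate rotational limit, in which the strip becomes a piece of a catenoid. There each spherical boundary curve is a graph over a circle, the boundary angle stays away from tangency, and the annulus is a small perturbation of an embedded catenoidal band, hence embedded. The hard part is checking that the rotation angle varies nontrivially near this limit, so that rational values with the needed $q$ actually occur near it. This should come from an expansion of the period map at the degenerate parameter. At the same time one must confirm that the angle differs from $\pi/2$, which rules out free boundary examples and hence the critical catenoid. Finally, non-rotationality follows because the curvature-line spheres vary along the family.
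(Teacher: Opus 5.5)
This statement is not proved in the paper. It is quoted from \cite{Isabel}, where the annuli are restrictions of Enneper-type immersions over \emph{rectangular} lattices. Your plan to re-derive it in the rhombic framework of Theorem \ref{theorem8} (parameters on $C_s$) breaks down at Step 3, and the failure is structural. For $L(\tau,s)$ one has $\operatorname{Re}(2m\omega_1+2n\omega_2)=(m+n)\omega_3$, so every vertical line $u=k\omega_3$ carries ends. A compact annulus bounded by two spherical curvature lines must therefore come from a strip $[u_1,u_2]\times\bR$ with $[u_1,u_2]\subset(k\omega_3,(k+1)\omega_3)$. By Theorems \ref{periodicitycenter} and \ref{centeranalysis}, $c_3$ is strictly decreasing on that interval. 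Hence $c(u_1)\neq c(u_2)$, and the two boundary spheres are never the same.

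The reflection you invoke ($\hat g_0=1$, $u\mapsto\omega_3-u$, Corollary \ref{u(lambdat)=omega32}) does match the radii. But it produces mirror-image spheres, whose centers agree only at $u=\omega_3/2$, where $\alpha$ blows up and the surface is tangent to its sphere. This is exactly why the rhombic construction yields annuli in two spheres of equal radius and \emph{different} centers. It is also the obstruction recorded, for free boundary, in the remark after Theorem \ref{centeranalysis}. Even in the rectangular setting of \cite{Isabel}, the boundary spheres are mirror images across $x_3=0$. Making them coincide (the center of $S(u^*)$ must lie on that plane) is a genuine condition that has to be solved together with rationality of the period. Step 3 treats it as automatic.

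Step 5 does not establish embeddedness either. The paper identifies no rotational limit of the rhombic family: the regime $\tau\to0$ is not understood, and as $\tau\to\tau_0$ the scale $\lambda\to0$. More importantly, near any catenoidal limit, an annulus whose curvature lines wind $p\geq 2$ times around the axis is close to a $p$-fold covered band. Its curvature lines then cannot be simple, since a simple closed curve in an annular neighbourhood of a circle has winding number $0$ or $\pm1$. So what you need is the limiting value of the period map, and that the specific values giving winding number one occur near it, compatibly with the single-sphere condition. Showing only that the rotation angle is non-constant is not enough.

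Several further points are also missing. Non-rotationality cannot be read off from the spheres $S(u)$ varying, because the parallels of a catenoid also lie in varying spheres. You never check that the annulus lies inside the ball bounded by $S_0$. With your convention that the spherical lines are $x\mapsto\Psi(x,y)$, the boundary of $\{x_0\le x\le x_1\}$ consists of the \emph{other} family of curvature lines, so Step 2 does not apply as written. Finally, the monodromy in Step 4 is a priori a screw motion about the axis, and its translational part must also be shown to vanish; for rhombic lattices this is the identity $\vec a_1=\vec a_2$ in Lemma \ref{translationalperiod}.
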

Regarding the classification problem of free boundary minimal annuli in the unit ball $\bB^3$, J. Lee and E. Yeon \cite{JLee} have discovered a connection between these surfaces and a Liouville type boundary value problem. More specifically, let $F:A(1,R)\to \bB^3$ be a free boundary minimal annulus with conformal factor $\lambda$, \textit{i.e.} $F^{*}can_{\bR^3}=\lambda^2|dz|^2$, and define on the annulus the function $v(re^{i\theta})=\log\left(\frac{1}{r^4\lambda^2}\right)$. Then the function $v$ is a solution to the following problem, where $C_0,\ \epsilon$ are constants:
\begin{gather*}
    E[R,\epsilon,C_0]\quad \begin{cases}
        \Delta v+2C_0^2e^v=0,\quad \text{in}\ A(1-\epsilon,R+\epsilon),\\ \frac{\p v}{\p n}=2e^{-\frac{1}{2}v}-2,\quad |z|=1,\\ \frac{\p v}{\p n}=\frac{2}{R^2}e^{-\frac{1}{2}v}+\frac{2}{R},\quad |z|=R.
    \end{cases}
\end{gather*}
Conversely, any solution to the $E[R,\epsilon,C_0]$ problem produces a minimal immersion in $\bR^3$ which \textit{restricts} to a piece of a free boundary minimal surface orthogonal to two spheres of possibly different center, with the same radius.

Consider the subsets
\begin{equation*}
            H_{R,\epsilon 
            }\coloneq\left\lbrace \xi \in \bC|  \log(1-\epsilon)<Im \xi< \log(R+\epsilon)\right\rbrace,\quad  H_{R,\epsilon}^0\coloneq H_{R,\epsilon}\cap \{\xi \in \bC\mid 0\leq Re\ \xi \leq 2\pi \}.
\end{equation*}
\begin{Theorem*}[{J. Lee, E. Yeon \cite{JLee}}]\label{theoremleeyeon}
    Suppose that $v$ is a solution to $E[R,\epsilon, C_0]$. There exist a conformal minimal immersion $X: H_{R,\epsilon}\to \bR^3$ and unit spheres $S_{O_1},\ S_{O_2}$ (centered at $O_1$ and $O_2$ respectively) satisfying the following conditions:
    \begin{enumerate}
        \item For the conformal factor $\Lambda$ of $X$ given by $X^{*}can_{\bR^3}=\Lambda|d\xi|^2$,
        \begin{equation*}
            \forall\xi \in H_{R,\epsilon 
            }\quad v(e^{-i\xi})=\log \frac{1}{\Lambda^2(\xi)\abs{e^{-i\xi}}^2},
        \end{equation*}
     \item $S_{O_1}$ and $S_{O_2}$ intersect $X(H_{R,\epsilon})$ orthogonally along the level curves $Im\xi=0$ and $Im \xi=\log R$ respectively.
     \item $X(H_{R,\epsilon})=\bigcup_{n\in \bZ}T^n\cdot X(H^0_{R,\epsilon})$ for some rigid motion $T$ in $\bR^3$ such that $T\cdot X(H_{R,\epsilon}^0)=X(H_{R,\epsilon}^0+2\pi).$
    \end{enumerate}
    The subset $X(H_{R,\epsilon}^0)$ is called the fundamental piece of the immersion $X$.
\end{Theorem*}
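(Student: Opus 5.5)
The plan is to read the Liouville equation as saying that $e^{v}|dz|^2$ is a metric of constant curvature $C_0^2$. From it I would build Weierstrass data for a minimal surface whose Hopf differential is a constant multiple of $d\xi^2$, so that the lines $Im\,\xi=\text{const}$ are curvature lines. The two Neumann conditions should then force the boundary curvature lines to lie orthogonally on unit spheres. Throughout I use the covering $\xi\mapsto z=e^{-i\xi}$. It maps $H_{R,\epsilon}$ onto the annulus $A(1-\epsilon,R+\epsilon)$ up to orientation conventions. It sends $Im\,\xi=0$ and $Im\,\xi=\log R$ to $|z|=1$ and $|z|=R$, and $\xi\mapsto\xi+2\pi$ is the deck transformation.

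\textbf{Step 1 (Liouville representation).} Since $\Delta v+2C_0^2e^v=0$, Liouville's theorem applied on the simply connected strip gives a locally univalent meromorphic $g$ on $H_{R,\epsilon}$ with
\begin{equation*}
e^{v(e^{-i\xi})}|e^{-i\xi}|^2\,|d\xi|^2=\frac{4|g'(\xi)|^2}{C_0^2(1+|g|^2)^2}|d\xi|^2 .
\end{equation*}
Because $v$ is single valued on the annulus, $g(\xi+2\pi)=A\circ g(\xi)$ for some $A\in PSU(2)$, i.e.\ a rotation of $\mathbb S^2$.

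\textbf{Step 2 (Weierstrass data).} Prescribe $\Lambda:=e^{-v/2}/|e^{-i\xi}|$, which is exactly condition (1), and solve $\Lambda=\tfrac12|f|(1+|g|^2)$ for a holomorphic $f$. This gives $f=C_0/g'$ up to a unimodular constant, which I would choose real. The Hopf differential is then $-\tfrac12 f g'\,d\xi^2=-\tfrac{C_0}{2}d\xi^2$, so the coordinate lines are curvature lines. The Gauss equation $K=-\Delta\log\Lambda/\Lambda^2$ is automatically consistent with the Liouville equation.

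Define $X=\mathrm{Re}\int(\tfrac12f(1-g^2),\tfrac i2 f(1+g^2),fg)\,d\xi$ on the strip, which has no periods since the strip is simply connected. The Weierstrass data at $\xi+2\pi$ are the data at $\xi$ transformed by the rotation $A$. Hence $X(\xi+2\pi)=T X(\xi)$ for a rigid motion $T$ (rotation composed with translation), and this gives (3).

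\textbf{Step 3 (spheres).} This is the main obstacle. On the curve $Im\,\xi=0$ let $\nu=-X_{y}/\Lambda$ be the unit conormal, where $\xi=x+iy$, and set $O(x)=X(x)+\nu(x)$. I would show $O'(x)\equiv 0$. Because the curve is a curvature line, the Rodrigues equation gives $N_x=-k_1X_x$ with $k_1$ the principal curvature, so $N_x$ has no $\nu$-component. Then $\nu_x$ decomposes as $\nu_x=-\kappa_g X_x/\Lambda+0\cdot N$. The normal component vanishes because $\langle \nu_x,N\rangle=-\langle \nu,N_x\rangle=0$. The geodesic curvature is $\kappa_g=-\partial_y\Lambda/\Lambda^2$, up to sign.

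So $O'=X_x(1-\kappa_g/\Lambda)$, and the condition $O'=0$ reads $\partial_y\log\Lambda=-\Lambda$ on $Im\,\xi=0$. Translating $\Lambda=e^{-v/2}/r$ with $r=e^{y}$ and $\partial_n=\mp\partial_r$, this should be exactly $\partial_nv=2e^{-v/2}-2$ at $r=1$. Checking the signs and orientation of $n$ is where I expect the real work.

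The same computation at $Im\,\xi=\log R$, where $\partial_y=r\partial_r$, yields $\partial_nv=\tfrac{2}{R^2}e^{-v/2}+\tfrac2R$. Hence $X+\nu$ is a constant $O_2$ there as well. In both cases $|X-O_i|=|\nu|=1$ and the surface normal direction is tangent to the sphere, so the spheres $S_{O_1},S_{O_2}$ are unit spheres meeting $X(H_{R,\epsilon})$ orthogonally along those level curves. This proves (2).
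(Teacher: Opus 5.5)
The paper gives no proof of this statement. It is quoted from Lee--Yeon \cite{JLee} in the introduction only as motivation, so there is nothing in the paper to compare with, and your proposal has to be judged on its own.

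\textbf{What works.} Your overall construction is sound. The developing map $g$ of the curvature-$C_0^2$ metric, together with the choice $f=C_0/g'$, gives regular Weierstrass data: $f$ has double zeros exactly at the simple poles of $g$. The resulting metric is $\Lambda^2|d\xi|^2$, which is how the metric in item (1) must be read to be consistent with $v=\log(1/(r^4\lambda^2))$. The Hopf differential is a real constant. The $PSU(2)$ holonomy acts on $(f,g)$ as a rotation. Together these correctly give (1), (3), and the fact that the lines $\mathrm{Im}\,\xi=\mathrm{const}$ are curvature lines. Your sphere criterion, that $X\pm\nu$ is constant, is also the right one. It is the $\beta=0$, $|\alpha|=2$ case of Wente's center formula \eqref{centerequationgeneral}, which the paper uses in Section \ref{Section 5.8}, with the two coordinates exchanged.

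\textbf{Step 3 is not correct as written,} though it closes once the signs are fixed.

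First, $\nu_x=-\kappa_gX_x/\Lambda$ is off by a factor $\Lambda$. With $\kappa_g$ measured per unit arclength and $ds=\Lambda\,dx$, one has $\nu_x=\pm\kappa_gX_x$. Your formula would give $\partial_y\log\Lambda=-\Lambda^2$, not the condition you state.

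It is cleaner to compute directly. Take $\nu=\epsilon X_y/\Lambda$ with $\epsilon=\pm1$.
\begin{itemize}
\item The curvature-line property gives $\langle\nu_x,N\rangle=-\langle\nu,N_x\rangle=0$.
\item The identity $\langle X_{xx},X_y\rangle=-\Lambda\Lambda_y$ gives $\langle\nu_x,X_x\rangle=\epsilon\Lambda_y$.
\item Hence $(X+\nu)_x=(1+\epsilon\Lambda_y/\Lambda^2)X_x$, which vanishes iff $\partial_y\log\Lambda=-\epsilon\Lambda$.
\end{itemize}

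Now $\log\Lambda=-v/2-\mathrm{Im}\,\xi$, so $\partial_y\log\Lambda=-\tfrac r2\partial_rv-1$.
\begin{itemize}
\item At $|z|=1$, the condition $\partial_rv=2e^{-v/2}-2$ gives $\partial_y\log\Lambda=-\Lambda$, as you found. But then $\epsilon=+1$, so the constant is $X+X_y/\Lambda$. For your $\nu=-X_y/\Lambda$ this is $X-\nu$, not $X+\nu$.
\item At $|z|=R$, the condition $-\partial_rv=\tfrac{2}{R^2}e^{-v/2}+\tfrac2R$ gives $\partial_y\log\Lambda=+\Lambda$, so the constant is $X-X_y/\Lambda$.
\end{itemize}
In both cases the center lies along the conormal pointing into the strip $0<\mathrm{Im}\,\xi<\log R$.

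This also settles the orientation question you flagged. The normal derivatives in $E[R,\epsilon,C_0]$ must mean $\partial_r$ on $|z|=1$ and $-\partial_r$ on $|z|=R$, i.e.\ the \emph{inward} normal of $\{1<|z|<R\}$. With the outward normal you get $\partial_y\log\Lambda=\Lambda-2$ on $|z|=1$, which is not a sphere condition. As a check, the critical catenoid satisfies the problem exactly in this inward form: take $\Lambda=c\cosh(\log r-T_0)$ with $T_0\tanh T_0=1$, $R=e^{2T_0}$ and $C_0=c$.

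\textbf{The case $C_0=0$ is missing.} Step 1 needs $C_0\neq0$. For $C_0=0$ there is no spherical developing map, and $f=C_0/g'\equiv0$. The fix is easy. In that case $\log\Lambda$ is harmonic, so take $X$ to be the developing map of the flat metric $\Lambda^2|d\xi|^2$ into a plane. Its holonomy is a planar rigid motion, and the computation of Step 3 applies unchanged, since every curve in a plane is a curvature line.
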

In the same work, Lee and Yeon proposed two interesting questions. On one hand, they ask whether or not there exists a free boundary minimal surface which is orthogonal to two spheres of different center and same radius \cite[Question 5.5]{JLee}. On the other hand, in the case of a free boundary minimal annulus which is orthogonal to one or two spheres and which is not a part of the catenoid, they wonder if the surface can be divided into more than one congruent piece \cite[Question 5.6]{JLee}.

It is worth to point out that the constructions contained on \cite{Isabel} already answer the previous two questions in the affirmative case. Indeed, consider the immersed minimal surface $\Sigma_{\tau}$ in $\bR^3$ of \cite[Definition 5.9]{Isabel}, which is possibly non-compact. According to \cite[Proposition 7.1 Item (3)]{Isabel}, each of the two boundary components of $\p\Sigma_{\tau}$ intersect orthogonally a corresponding sphere of a certain radius $R(\tau)>0$, whose centers are symmetric with respect to the plane $x_3=0$. Finally, from \cite[Theorem 6.3]{Isabel}, we see that the period map is strictly increasing along the lines transversal to its level sets, implying that the period map assumes rational values. Therefore for those choices of parameters in the complement of the nodal set $\mathfrak{h}^{-1}(0)$ of the height map \cite[Definition 7.2]{Isabel}, we have by \cite[Proposition 7.1 Item (i)]{Isabel} that $\Sigma_{\tau}$ is a compact minimal annuli in $\bR^3$ which is free boundary in two spheres of same radius and different center. Theorem \ref{theorem8} shows that both questions \cite[Question 5.5, 5.6]{JLee} can also be answered affirmatively from the rhombic lattices.

This article is organized as follows. In Section \ref{Section 5.2}, we review the work of H. Wente on the Weierstrass data of minimal immersions of Enneper type, \textit{i.e.} foliated by one family of spherical curvature lines and defined on the complex plane punctured at the points of either a rectangular or a rhombic lattice. We introduce in Definition \ref{minimalwentetorus} the concept of a \textit{minimal Wente torus with ends} and of a \textit{singly periodic minimal Wente torus with ends}. We define complex period maps to investigate the existence of such surfaces and show the importance of the rhombic lattices, see Theorem \ref{importancerombic}. We prove that if the period map is \textit{real} and \textit{rational} then the minimal immersion of Enneper type is singly periodic with respect to a sub-lattice, see Lemma \ref{translationalperiod}.

In Section \ref{Section 5.3}, inspired by the work of \cite{Isabel}, we introduce a three dimensional space that parametrizes minimal immersions of Enneper type. We find a surface inside the parameter space such that the period map is purely real, see Corollary \ref{curveCs}. Subsequently we prove that, along this surface the real part of the period is a smooth increasing function modulo an integer, and therefore it assumes rational values, see Corollary \ref{periodisrationalalongCs}. As a consequence we obtain an infinite countable number of singly periodic minimal Wente tori in the quotient $\bR^3/\vec{a}$ of Euclidean space by a non-zero translational vector, proving Theorem \ref{theorem7}. We prove in Theorem \ref{canonicalconstruction} that the method we described is the canonical way to construct both Wente minimal tori and singly periodic minimal tori in $\bR^3$ which in turns implies Theorem \ref{theoremnonexistence}.

In Section \ref{Section 5.4}, we study the symmetries of the singly periodic minimal Wente torus in terms of the period map. In the general case, the surface is invariant with respect to reflection about planes containing the straight line of centers, see Theorem \ref{symmetriesg0neq1}. We also prove that, in a particular subspace of parameters, the singly periodic minimal Wente torus is invariant under the reflection on a plane which is perpendicular to the line of centers, see Theorem \ref{symmetriesg0=1}.

In Section \ref{Section 5.5}, we study the Hamiltonian system of a pair of functions which control the radii of the spheres containing the spherical curvature lines, their centers and the angles of intersection of the surface with the corresponding spheres. We establish the explicit dependence between the initial conditions of the Hamiltonian system with the space of parameters (Proposition \ref{initialconditionsparameterspace}) and as a result, we show that the Hamiltonian system is periodic, see Corollary \ref{alphabetaomega3periodic}.

In Section \ref{Section 5.6}, we explore the qualitative behavior of the Hamiltonian system in terms of an associated system whose dynamical behavior is easier to understand, see Theorem \ref{alphabetaqualitative}.

In Section \ref{Section 5.7}, we do an analysis of the center of the spheres containing the spherical curvature lines, and prove that the height of the center with respect to a plane perpendicular to the straight line of centers is monotone decreasing (Theorem \ref{centeranalysis}), implying that the singly periodic minimal Wente torus contains minimal strips whose boundaries are contained in two spheres of different center.

Finally, in Section \ref{Section 5.8}, we focus in a special choice of parameters for which the singly periodic minimal Wente torus is more symmetric. In this particular situation, we have more control of the solutions of the Hamiltonian system and we show that it satisfies an additional symmetry property, see Theorem \ref{hamiltoniansystemg0=1}. Consequently, as an application to the theory of capillary and free boundary minimal annuli in two spheres we prove Theorem \ref{theorem8}.\\\\
\textbf{Acknowledgements.} I thank the X Workshop of Differential Geometry in Maceió-Brazil where the idea for this project was originated during a lecture of professor Isabel Fernández, whom I also thank for her inspiration and corrections that helped me to develop this work. I thank my PhD advisor Lucas Ambrozio for the insightful ideas and constant encouragement to pursue the problem. I am grateful to professor Vanderson Lima for his valuable questions and improvements on preliminary versions of this work. I thank the XXII Escola de Geometria Diferencial in Teresina-Brazil for the opportunity to present this research project, which gave me the energy and motivation to conclude it. Finally, I thank FAPERJ (Grant number E26/202.321/2024) for supporting this work.
\section{Geometrical considerations}\label{Section 5.2}

Let $\Omega\subset \bC$ be a simply connected region and $\Psi:\Omega\to \bR^3$ be a conformally immersed minimal surface in $\bR^3$ parametrized by its lines of curvature and with fundamental forms
\begin{gather}\label{fundamentalforms1}
    I=e^{2\omega}(du^2+dv^2),
\end{gather}
\begin{gather}\label{fundamentalforms2}
    II=-du^2+dv^2.
\end{gather}
The vectors $e^{-\omega}\p_u$ and $e^{-\omega}\p_v$, form an orthonormal basis with respect to the first fundamental form $I$, and point into the principal directions of the surface, with principal curvatures $k_1=-e^{-2\omega}$ and $k_2=e^{-2\omega}$, respectively.

On one hand, by the Gauss equation in $\bR^3$
\begin{equation*}
    K=k_1k_2=-e^{-4\omega},
\end{equation*}
while on the other hand, by the transformation property of the Gaussian curvature $K$ under a conformal change we know that
\begin{equation*}
    Ke^{2\omega}=-\Delta_{can}\omega.
\end{equation*}
Thus, $\omega(u,v)$ is a solution to the equation
\begin{equation*}
    \Delta_{can} \omega-e^{-2\omega}=0.
\end{equation*}
Assume that the immersion is of \textit{Enneper type}, that is, suppose that one family of curvature lines, the vertical $v$-lines $\Psi(u,\cdot)$ of the immersed surface, is spherical. The center of the spheres containing the family of spherical lines of curvature lies in a straight line $l$ (see \cite[Proof of Theorem 4.1]{Wentespherical}), which after a rotation of the surface in $\bR^3$, we can assume to be parallel to the $x_3$ axis. Let
\begin{gather}\label{Weierstrassdata}
    \Phi(z)=\left(\frac{1}{2}\left(\frac{1}{g(z)}-g(z)\right),\frac{i}{2}\left(\frac{1}{g(z)}+g(z)\right),1\right)\phi(z)dz
\end{gather}
be the Weierstrass representation data of $\Psi(u,v)$. By \cite[equation 4.11]{Wentespherical} the function $\phi(z)$ is, up to reparametrization and scaling, a solution to the differential equation
\begin{equation}\label{equationphi}
    \phi'(z)^2=-\phi(z)^3+p\phi(z)^2+q\phi(z)+1
\end{equation}
where $p,q$ are real constants. The solution to this differential equation is well-known from the theory of elliptic functions.
\begin{defn}
\normalfont{
    Let $2\omega_1,2\omega_2\in \bC$ be two linearly independent vectors in $\bR^2$. A meromorphic function $f:\bC\to \bC$ is called \textit{elliptic} if it is doubly periodic, \textit{i.e.} for all $z\in \bC$,
    \begin{equation*}
        f(z+2\omega_1)=f(z),\quad f(z+2\omega_2)=f(z).
    \end{equation*}
    }
\end{defn}
The classical example of an elliptic function in the complex plane is the Weierstrass function:
\begin{exam}
\normalfont{
    Let $\omega_1,\omega_2\in \bC$ be such that $Im\left(\frac{\omega_2}{\omega_1}\right)>0$ and consider the lattice $L=\{\Omega_{m,n}|m,n\in \bZ\}$ in the complex plane, where $\Omega_{m,n}=2m\omega_1+2n\omega_2$. Then the following series 
    \begin{equation*}
        \wp(z)\coloneq \frac{1}{z^2}+\sum_{(m,n)\in \bZ^2\setminus(0,0)}\frac{1}{(z-\Omega_{m,n})^2}-\frac{1}{(\Omega_{m,n})^2}
    \end{equation*}
    defines an elliptic function with periods $2\omega_1,2\omega_2$, which has poles of order exactly two at the lattice points $\Omega_{m,n}$ \cite[Section 20.2]{Whittaker}. Moreover, there exists complex invariants $g_2,g_3$, independent of the choice of generators of the lattice $L$, given by
    \begin{gather*}
    g_2=60\sum_{(m,n)\in \bZ^2\setminus(0,0)}\frac{1}{\Omega_{m,n}^4},\quad  g_3=140\sum_{(m,n)\in \bZ^2\setminus(0,0)}\frac{1}{\Omega_{m,n}^6},
\end{gather*}
such that the Weierstrass $\wp(z)$ function satisfies the differential equation
\begin{equation}\label{differentialequationp}
    \wp'(z)^2=4\wp(z)^3-g_2\wp(z)-g_3.
\end{equation}
Associated with the Weierstrass $\wp$ function, we can introduce two meromorphic functions $\zeta(z),\ \sigma(z)$ on the complex plane, defined by
\begin{equation*}
    \zeta'(z)=-\wp(z),\quad \frac{\sigma'(z)}{\sigma(z)}=\zeta(z).
\end{equation*}
The $\zeta$ function has simple poles at the lattice points, whereas the $\sigma$ function is holomorphic and has simple zeros at the lattice points. These functions are called \textit{quasi-elliptic}, since they satisfied the following translational relations for $j=1,2$:
\begin{gather*}
\zeta(z+2\omega_j)=\zeta(z)+2\eta_j,\quad \sigma(z+2\omega_j)=(-1)^je^{2\eta_j(z+\omega_j)}\sigma(z),\quad \eta_j\coloneq\zeta(\omega_j)
\end{gather*}
Conversely, given complex invariants $g_2,g_3$ such that their \textit{modular discriminant} $\Delta_{mod}=g_2^3-27g_3^2\neq 0$, there exist a \textit{non-degenerate} Weierstrass elliptic function $\wp(z)$ associated with these invariants \cite[Section 21.73]{Whittaker}.
}
\end{exam}
\begin{thm}[{\textit{Cf.} \cite[Theorem 4.2]{Wentespherical}}]\label{caradephi}
    The function $\phi(z)$ is a solution to equation \eqref{equationphi} with $p,\ q\in \bR$ if and only if
    \begin{equation}\label{phifunction}
        \phi(z)=b-4\wp(z),
    \end{equation}
    where $b=\frac{p}{3}$ and $\wp(z)$ is the Weierstrass function with real invariants $g_2,g_3$ defined by the equations
    \begin{equation*}
        g_2=\frac{p^2+3q}{12},
    \end{equation*}
    \begin{equation}\label{cubicequationb}
        b^3-4g_2b-16g_3=1,\quad b\in \bR.
    \end{equation}
\end{thm}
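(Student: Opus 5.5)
The plan is to reduce equation \eqref{equationphi} to the Weierstrass equation \eqref{differentialequationp} by an affine change of the dependent variable, and then read off the invariants. The constant $b=p/3$ is exactly the shift that kills the quadratic term of the cubic.

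For the direction ``$\Leftarrow$'', suppose $\phi=b-4\wp$, where $\wp$ has real invariants $g_2,g_3$ satisfying the two displayed relations. Then $\phi'=-4\wp'$, and by \eqref{differentialequationp}
\begin{equation*}
\phi'^2=16\wp'^2=16\left(4\wp^3-g_2\wp-g_3\right).
\end{equation*}
Substitute $\wp=(b-\phi)/4$ and expand the cubic in $\phi$:
\begin{equation*}
16\Big(4\big(\tfrac{b-\phi}{4}\big)^3-g_2\tfrac{b-\phi}{4}-g_3\Big)=-\phi^3+3b\phi^2+(4g_2-3b^2)\phi+\big(b^3-4g_2b-16g_3\big).
\end{equation*}
Now compare coefficients with $-\phi^3+p\phi^2+q\phi+1$:
\begin{itemize}
\item the $\phi^2$ coefficient gives $3b=p$;
\item the $\phi$ coefficient gives $4g_2-3b^2=q$, i.e. $g_2=(3b^2+q)/4=(p^2+3q)/12$;
\item the constant term gives exactly \eqref{cubicequationb}.
\end{itemize}
So $\phi$ solves \eqref{equationphi}, and $p,q$ are real because $b$ and $g_2$ are.

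For the direction ``$\Rightarrow$'', start from a (nonconstant) solution $\phi$ of \eqref{equationphi} with $p,q\in\bR$. Set $b=p/3$ and define $g_2$ and $g_3$ by the stated formulas; the relation \eqref{cubicequationb} determines $g_3=(b^3-4g_2b-1)/16$, which is real. Let $f:=(b-\phi)/4$. Reversing the computation above, $f$ satisfies $f'^2=4f^3-g_2f-g_3$.

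Two facts about this ODE finish the argument.
\begin{itemize}
\item The discriminant is nonzero, so the non-degenerate $\wp$ with invariants $g_2,g_3$ exists by \cite[Section 21.73]{Whittaker}.
\item Every nonconstant solution of $f'^2=4f^3-g_2f-g_3$ is a translate $\wp(z-z_0)$. This is the standard uniqueness argument: locally one inverts the elliptic integral, and the solution extends meromorphically.
\end{itemize}
The translation $z_0$ is absorbed into the reparametrization freedom ``up to reparametrization and scaling'' allowed in the statement, which gives $\phi=b-4\wp$.

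The main obstacle is the degenerate case $g_2^3-27g_3^2=0$, when the cubic has a repeated root. Then the solutions are trigonometric or rational rather than elliptic, and the invariants no longer come from a genuine lattice. I would handle it by following \cite[Theorem 4.2]{Wentespherical}: either note that the theorem is only asserted for a non-degenerate lattice, or treat the degenerate $\wp$ as the corresponding limiting function. A second, minor point: constant solutions of \eqref{equationphi} (roots of the cubic) must be excluded, since they correspond to a degenerate immersion.
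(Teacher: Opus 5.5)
The paper gives no proof of this statement; it only refers to Wente. Your argument is the natural one, and its core is correct. The expansion of $16\bigl(4(\tfrac{b-\phi}{4})^3-g_2\tfrac{b-\phi}{4}-g_3\bigr)$ is right, and matching coefficients yields exactly $p=3b$, $g_2=(p^2+3q)/12$ and \eqref{cubicequationb}. For the converse, reducing to $f'^2=4f^3-g_2f-g_3$ with $f=(b-\phi)/4$ and then applying uniqueness works. You are also right that the statement holds literally only for nonconstant $\phi$ and up to a translation of $z$.

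The step that is wrong is your first bullet, ``the discriminant is nonzero''. The numbers $r_j=b-4e_j$ are the roots of $-x^3+px^2+qx+1$, so the normalization \eqref{cubicequationb} only says $r_1r_2r_3=1$. Moreover $\Delta_{mod}=0$ exactly when this cubic has a repeated root, and real $p,q$ allow that:
\begin{itemize}
\item $p=3$, $q=-3$ gives $-(\phi-1)^3$ and $g_2=g_3=0$;
\item $p=-1$, $q=1$ gives $-(\phi+1)^2(\phi-1)$ and $g_2=1/3$, $g_3=-1/27$.
\end{itemize}
So \cite[Section 21.73]{Whittaker} does not apply in general. Your first way out (``the theorem is only asserted for a non-degenerate lattice'') is not available either, because the remark right after the statement explicitly includes the degenerate cases I and II.

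Use your second option: let $\wp$ be the degenerate Weierstrass function, namely $1/z^2$ in case I and the trigonometric or hyperbolic form in case II. The uniqueness step then goes through unchanged. Differentiating gives $f''=6f^2-g_2/2$, and $\wp$ satisfies the same equation. Every pair $(v,w)$ with $w^2=4v^3-g_2v-g_3\neq 0$ is attained by $(\wp,\wp')$, also for the degenerate functions, since they omit only the repeated root and $\wp'$ is odd. Hence $f$ agrees with a translate of $\wp$ near a point where $f'\neq 0$, and therefore everywhere. With the first bullet replaced by this, your proof is complete.
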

Let $L$ be a lattice in the complex plane and suppose that $b$ satisfies equation \ref{cubicequationb}. Notice that, by the properties of the Weierstrass function, the solutions to the equation $b=4\wp(x)$ are of the form $x=\pm \mu+L$ with $\mu$ inside the fundamental parallelogram. Since $b$ satisfies equation \eqref{cubicequationb}, we have
    \begin{equation*}
        16(4\wp^3(\pm \mu)-g_2\wp(\pm \mu)-g_3)=1,
    \end{equation*}
    which implies, by the differential equation \eqref{differentialequationp} that
    \begin{equation*}
        \wp'(\pm \mu)^2=\frac{1}{16}.
    \end{equation*}
     Since $\wp'(z)$ is an odd function, we can define $\mu$ so that $\wp'(\mu)=-\frac{1}{4}$ and $\wp'(-\mu)=\frac{1}{4}.$
\begin{defn}\label{mudefinition}
\normalfont
    Let $L$ be a lattice in the complex plane and suppose that $b\in \bR$ is a solution to equation \ref{cubicequationb}. We define $\mu=\mu(b,L)$ as the unique solution in the fundamental parallelogram to the equations
    \begin{equation*}
    \wp(\mu|L)=\frac{b}{4},\quad \wp'(\mu|L)=-\frac{1}{4}.
    \end{equation*}
\end{defn}
\begin{rem}
\normalfont{
    The Weierstrass function $\wp(z)$ generated by the \textit{real} invariants $g_2,g_3$ could be \textit{degenerated} when the modular discriminant $\Delta_{mod}=0$. There are four cases to consider according to \cite[Section 2.2]{Isabel}:
    \begin{itemize}
        \item[I.] $g_2=g_3=0$. In this case $\wp(z)=\frac{1}{z^2}$ and the associated lattice is trivial $L={0}$.
        \item[II.] The modular discriminant is trivial $\Delta_{mod}=0$ with $g_2g_3\neq 0$. The Weierstrass $\wp(z)$ function is singly periodic with respect to a lattice $L=\{m\omega| m\in \bZ\}$, with period $\omega$ purely real or purely imaginary.
        \item[III.] The modular discriminant is positive $\Delta_{mod}>0$. In this case the Weierstrass $\wp(z)$ function is non-degenerated \textit{i.e.} doubly periodic, associated to a \textit{rectangular lattice i.e.} with generators $2\omega_1\in \bR$, $2\omega_2\in i\bR$.
        \item[IV.] The modular discriminant is negative $\Delta_{mod}<0$. In this case the Weierstrass $\wp(z)$ function is non-degenerated \textit{i.e.} doubly periodic, associated to a \textit{rhombic lattice i.e.} with generators $2\omega_1\in \bC$, $2\omega_2\in \bC$ such that $\omega_2=\overline{\omega_1}$.
    \end{itemize}
    }
\end{rem}
Wente \cite[Examples 1-3]{Wentespherical} studied the Weierstrass data $\Phi$ associated to the degenerated cases I) and II), while I. Fernández, P. Mira and L. Hauswirth studied the case III). We explore the case IV) and show its importance to investigate the existence of a minimal Wente torus with ends in $\bR^3$.

Notice that the Gauss map can be calculated in terms of the function $\phi(z)$. In fact, by equation \eqref{fundamentalforms2},
    \begin{equation*}
   -\frac{1}{2}dz^2= II(\p_z,\p_z)dz^2=-\frac{1}{2}\frac{dg}{g}\otimes \phi(z)dz,
\end{equation*}
    so that
    \begin{equation*}
        \frac{g'}{g}=\frac{1}{\phi},
    \end{equation*}
    and therefore after a further rotation of the surface around the $x_3$ axis, there exists $\hat{g}_0\in \bR^+$ such that
    \begin{equation}\label{gaussmap}
    g(z)=\hat{g_0}\exp \left( \int_{\gamma_{z}}\frac{1}{\phi(\nu)}d\nu\right),
\end{equation}
where $\gamma_z$ is any path in $\Omega$ joining a fixed based point to $z\in U$.

Conversely if we have Weierstrass data as in equation \eqref{Weierstrassdata} defined on a simply connected region $\Omega\subset \bC$ satisfying equations \eqref{phifunction}, \eqref{cubicequationb} and \eqref{gaussmap}, then we construct through the Weierstrass representation formula a conformal minimal immersion of Enneper type $\Psi:\Omega\subset \bC\to \bR^3$ with fundamental forms \eqref{fundamentalforms1}, \eqref{fundamentalforms2}.

In fact we can make this work globally on the  whole complex plane punctured at the lattice points.
\begin{defn}
\normalfont
    Let $L$ be a lattice in the complex plane with real invariants and let $b\in \bR$, $\hat{g}_0\in \bR^+$. \textit{A Wente-Weierstrass data} $\Phi_{L,b,\hat{g}_0}$ is a one-form defined by \eqref{Weierstrassdata} satisfying the conditions \eqref{phifunction}, \eqref{cubicequationb} and \eqref{gaussmap}. 
\end{defn}
\begin{thm}\label{gaussmapformula}
     Let $\Phi_{L,b,\hat{g}_0}$ be a Wente-Weierstrass data defined on a simply connected open set $\Omega\subset \bC$ with $\Omega\cap L=\emptyset$. Then it extends to the whole $\bC\setminus L$, and moreover the Gauss map can be calculated as
    \begin{equation}\label{gaussmapsigmafunctions}
        g(z)=\hat{g_0}\exp(2\zeta(\mu|L)z)\frac{\sigma(\mu-z|L)}{\sigma(\mu+z|L)}.
    \end{equation}
\end{thm}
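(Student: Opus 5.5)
The plan is to compute the logarithmic derivative $g'/g=1/\phi$ explicitly as an elliptic function and integrate it using $\zeta$ and $\sigma$. By Theorem \ref{caradephi}, $\phi(z)=b-4\wp(z)=4(\wp(\mu)-\wp(z))$, where $\mu=\mu(b,L)$ is as in Definition \ref{mudefinition}. The reciprocal $1/\phi$ is elliptic with respect to $L$. Its only poles occur where $\wp(z)=\wp(\mu)$, that is, at $z\equiv\pm\mu$ modulo $L$. At the lattice points, $1/\phi$ has a double zero, since $\phi$ has a double pole there.

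First we need $\mu\not\equiv-\mu$, so that the poles at $\pm\mu$ are simple. This holds because $\wp'(\mu)=-\tfrac14\neq 0$, so $\mu$ is not a half-period. The residues then follow from $\phi'(z)=-4\wp'(z)$:
\begin{equation*}
\operatorname{Res}_{z=\mu}\frac{1}{\phi}=\frac{1}{-4\wp'(\mu)}=1,\qquad \operatorname{Res}_{z=-\mu}\frac{1}{\phi}=\frac{1}{-4\wp'(-\mu)}=-1.
\end{equation*}

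Next we use the classical addition-type identity for elliptic functions:
\begin{equation*}
\frac{\wp'(\mu)}{\wp(z)-\wp(\mu)}=\zeta(z-\mu)-\zeta(z+\mu)+2\zeta(\mu).
\end{equation*}
To justify it, note that both sides are elliptic with the same simple poles and residues. They therefore differ by a constant, and evaluating at $z=0$ shows that constant is zero, since the left side vanishes there and $\zeta$ is odd. With $\wp'(\mu)=-\tfrac14$ this gives
\begin{equation*}
\frac{1}{\phi(z)}=-\frac{1}{4(\wp(z)-\wp(\mu))}=\zeta(z-\mu)-\zeta(z+\mu)+2\zeta(\mu).
\end{equation*}

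Integrating from the base point $z=0$, with $\sigma'/\sigma=\zeta$, gives
\begin{equation*}
\int_0^z\frac{d\nu}{\phi(\nu)}=2\zeta(\mu)z+\log\frac{\sigma(z-\mu)}{\sigma(-\mu)}-\log\frac{\sigma(z+\mu)}{\sigma(\mu)}.
\end{equation*}
Since $\sigma$ is odd, exponentiating yields
\begin{equation*}
\exp\Big(\int_0^z\frac{d\nu}{\phi(\nu)}\Big)=e^{2\zeta(\mu)z}\,\frac{\sigma(\mu-z)}{\sigma(\mu+z)}.
\end{equation*}
Substituting into \eqref{gaussmap} gives \eqref{gaussmapsigmafunctions}.

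The base point is $0\in L$, which lies outside $\Omega$, so this needs one check. The integrand $1/\phi$ is holomorphic at $0$, so the path may pass through it. The normalization $\hat g_0$ in \eqref{gaussmap} is only defined up to this choice of base point, so we fix $0$ as the base point (absorbing any constant into $\hat g_0$). A rotation about the $x_3$-axis keeps $\hat g_0\in\bR^+$.

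For the extension to all of $\bC\setminus L$: the formula \eqref{gaussmapsigmafunctions} is a meromorphic function on $\bC$. Its zeros and poles lie at $\mu+L$ and $-\mu+L$, and it never vanishes and is never infinite elsewhere. The residues $\pm1$ are integers, so the exponential of the integral is single-valued even though $\log$ has monodromy around $\pm\mu$. Hence $g$ is globally defined. Likewise $\phi=b-4\wp$ is holomorphic on $\bC\setminus L$.

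The one-form $\Phi$ in \eqref{Weierstrassdata} involves $\phi/g$ and $\phi g$. Where $g$ has a simple pole or zero (at $\mp\mu$), $\phi$ has a simple zero, so both products stay holomorphic there. At the lattice points, $\phi$ has double poles, and these are the punctures. Finally, the new $\Phi$ agrees with the original data on $\Omega$, so by uniqueness of analytic continuation it gives the desired extension.

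The main obstacle is the bookkeeping of signs and constants. The choice $\wp'(\mu)=-\tfrac14$ fixes the residues at exactly $\pm1$, and the odd symmetry of $\sigma$ is what produces $\sigma(\mu-z)/\sigma(\mu+z)$ rather than its reciprocal. A sign error in either place would flip the formula.
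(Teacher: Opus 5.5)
Your proposal is correct and follows essentially the same route as the paper. Both arguments rest on two ingredients: the residues $\pm1$ of $dz/\phi$ at $\pm\mu+L$, which make $g$ single-valued, and the addition identity $\zeta(\mu+\xi)+\zeta(\mu-\xi)=2\zeta(\mu)+\wp'(\mu)/(\wp(\mu)-\wp(\xi))$ with $\wp'(\mu)=-\tfrac14$. The only cosmetic differences are that you decompose $1/\phi$ into $\zeta$'s and integrate, proving the identity by a Liouville argument, whereas the paper differentiates a branch of $\log\bigl(\sigma(\mu-z)/\sigma(\mu+z)\bigr)$ and cites the identity.
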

\begin{proof}
    The $\phi(z)$ function \eqref{phifunction} clearly extends to $\bC\setminus L$, so we focus on the Gauss map. The first thing to prove is that the Gauss map is well-defined on the whole punctured complex plane. The zeros $z=\pm \mu+L$ of $\phi(z)=b-4\wp(z)$ are all simple because $\wp'(\mu)\neq 0$, therefore the meromorphic differential \begin{equation*}
       \eta= \frac{dz}{\phi(z)}=\frac{dz}{b-4\wp(z)}
    \end{equation*}
    has simple poles at those points, with residues
    \begin{align*}
    \begin{split}
        Res_{z=\mu+L}\eta&=\lim_{z\to \mu}\frac{z-(\mu+L)}{b-4\wp(z)}=-\frac{1}{4}\lim_{z\to \mu}\frac{z-(\mu+L)}{\wp(z)-\wp(\mu+L)}=-\frac{1}{4\wp'(\mu+L)}=1,
        \end{split}
    \end{align*}
    \begin{gather*}
        Res_{z=-\mu+L}\eta=\lim_{z\to \mu}\frac{z-(-\mu+L)}{b-4\wp(z)}=-\frac{1}{4}\lim_{z\to \mu}\frac{z-(-\mu)}{\wp(z)-\wp(-\mu+L)}=-\frac{1}{4\wp'(-\mu+L)}=-1.
    \end{gather*}
    Hence
    \begin{equation*}
        Res_{z=\pm \mu+L}\eta\in\{-1,1\}.
    \end{equation*}
    This implies that along any two paths $\gamma_1,\gamma_2$ from a fixed based point to the point $z$
    \begin{equation*}
        \int_{\gamma_{1,z}}\frac{d\nu}{\phi(\nu)}-\int_{\gamma_{2,z}}\frac{d\nu}{\phi(\nu)}=2\pi i k,\quad k\in \bZ,
    \end{equation*}
    and therefore the Gauss map, as in equation \eqref{gaussmap}, is well-defined.
    
The rest of the proof follows \cite[Lemma 5.4]{Isabel}. Let $z\in \bC\setminus L$ and consider a simply connected open set $U\subset \bC$ containing the points $0,\ z$ avoiding the points $\mu,\ -\mu$ and its partners related by translations along the lattice $L$. In this way, the function $\frac{\sigma(\mu-z)}{\sigma(\mu+z)}$ is holomorphic and non-vanishing in $U$. Therefore there exists a branch of the logarithm in this region $U$ \cite[Theorem 6.2]{Stein},
\begin{equation*}
    F(z)=\log_U\frac{\sigma(\mu-z)}{\sigma(\mu+z)}=\int_0^z\frac{\left(\frac{\sigma(\mu-\xi)}{\sigma(\mu+\xi)}\right)'}{\left(\frac{\sigma(\mu-\xi)}{\sigma(\mu+\xi)}\right)}d\xi,
\end{equation*}
such that
\begin{equation}\label{exponentialF}
    \frac{\sigma(\mu-z)}{\sigma(\mu+z)}=e^{F(z)}\quad \text{on $U$}.
\end{equation}
Performing the calculation, we obtain
\begin{align*}
    F(z)&=\int_0^z\frac{\sigma(\mu+\xi)}{\sigma(\mu-\xi)}\cdot\frac{-\sigma(\mu+\xi)\sigma'(\mu-\xi)-\sigma(\mu-\xi)\sigma'(\mu+\xi)}{\sigma(\mu+\xi)^2}d\xi\\&=-\int_0^z\zeta(\mu-\xi)+\zeta(\mu+\xi)d\xi.
\end{align*}
Consider the identity \cite[Eq. 5.13]{Isabel}
\begin{equation}\label{identityzeta}
    \zeta(z_1+z_2)+\zeta(z_1-z_2)=2\zeta(z_1)+\frac{\wp'(z_1)}{\wp(z_1)-\wp(z_2)}.
\end{equation}
Taking $z_1=\mu$, $z_2=\xi$, and since $\wp'(\mu)=-\frac{1}{4}$, we obtain
\begin{equation*}
     \zeta(\mu+\xi)+\zeta(\mu-\xi)=2\zeta(\mu)+\frac{\wp'(\mu)}{\wp(\mu)-\wp(\xi)}=2\zeta(\mu)-\frac{1}{b-4\wp(\xi)},
\end{equation*}
so that
\begin{equation*}
    F(z)= -\int_0^z\zeta(\mu+\xi)+\zeta(\mu-\xi)d\xi=-2\zeta(\mu)+\int_0^z\frac{1}{b-4\wp(\xi)}d\xi,
\end{equation*}
that is,
\begin{equation}\label{integralequationphi}
    \int_0^z\frac{1}{b-4\wp(\xi)}d\xi=2\zeta(\mu)z+F(z).
\end{equation}
Taking the exponential function in both sides and using equation \eqref{exponentialF} we obtain 
\begin{equation*}
        g(z)=\hat{g_0}\exp(2\zeta(\mu|L)z)\frac{\sigma(\mu-z|L)}{\sigma(\mu+z|L)},
    \end{equation*}
and the proof is completed.
\end{proof}
\begin{thm}[{\textit{Cf.} \cite[Section 2.2]{Isabel}}]\label{EnnepertypeImmersion}
    Let $\Phi_{L,b,\hat{g}_0}$ be a Wente-Weierstrass data on $\bC\setminus L$. Then the map
    \begin{gather*}
\Psi:\bC\setminus L\to\bR^3
\end{gather*}
defined by 
\begin{equation*}
    \Psi(z)=Re\int_{z_0}^z\Phi_{L,b,\hat{g_0}}(\xi)d\xi,
\end{equation*}
is a well-defined complete minimal immersion in $\bR^3$, with infinitely many embedded flat ends at the lattice points $L$, and with the property that the $v$-lines $\Psi(u_0,\cdot)$ are spherical lines of curvature.
\end{thm}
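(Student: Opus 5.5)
The plan is to verify in turn the properties claimed for $\Psi$: well-definedness on the non-simply-connected domain $\bC\setminus L$, regularity as an immersion, the behaviour at the lattice points, completeness, and the spherical curvature lines.

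\textbf{Poles and zeros of the components.} By Theorem \ref{gaussmapformula}, $g$ is a well-defined meromorphic function on $\bC\setminus L$, with:
\begin{itemize}
\item a simple zero at each point of $\mu+L$, since $\eta=dz/\phi$ has residue $1$ there;
\item a simple pole at each point of $-\mu+L$, since the residue there is $-1$;
\item no other zeros or poles.
\end{itemize}
The three components of $\Phi$ are $\tfrac12(\phi/g-\phi g)$, $\tfrac i2(\phi/g+\phi g)$ and $\phi$, with $\phi=b-4\wp$.
\begin{itemize}
\item At $\mu+L$, $\phi$ has a simple zero. It cancels the simple zero of $g$ in $\phi/g$, and $\phi g$ vanishes there.
\item Symmetrically, at $-\mu+L$ the simple zero of $\phi$ cancels the pole of $g$.
\end{itemize}
Hence $\Phi$ is holomorphic on $\bC\setminus L$.

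\textbf{Immersion.} The induced metric is $\tfrac14|\phi|^2(|g|+|g|^{-1})^2|dz|^2$. Its conformal factor vanishes nowhere, because every zero of $\phi$ is exactly compensated by the zero or pole of $g$. So $\Psi$ is a conformal minimal immersion wherever it is defined.

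\textbf{Well-definedness.} Every closed curve in $\bC\setminus L$ is homologous to a finite sum of small circles around lattice points. It therefore suffices to show that $\Phi$ has zero residue at each $\omega\in L$.
\begin{itemize}
\item Since $\wp$ has no residues, $\mathrm{Res}_\omega\phi=0$.
\item Since $\omega\notin\pm\mu+L$, the function $g$ is holomorphic and nonzero near $\omega$.
\item Since $g'/g=1/\phi$ and $\phi$ has a double pole at $\omega$, we get $g'(\omega)=0$.
\end{itemize}
Near $\omega$, $\phi=-4(z-\omega)^{-2}+O(1)$, so the residue of $\phi h$ for holomorphic $h$ is $-4h'(\omega)$. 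Taking $h=g$ and $h=1/g$, both derivatives vanish at $\omega$. Thus all residues are zero, $\Phi$ is exact, and $\Psi$ is well defined. This is the step where the specific relation $g'/g=1/\phi$ is essential.

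\textbf{Ends and completeness.}
\begin{itemize}
\item At each $\omega\in L$, the Gauss map extends holomorphically with $g(\omega)\neq0,\infty$. $\Phi$ has poles of order exactly $2$ with vanishing residues, and by the above $dg$ vanishes at $\omega$. These are the standard conditions for an embedded planar (flat) end, by the Jorge–Meeks/Schoen analysis.
\item The metric behaves like $C|z-\omega|^{-4}|dz|^2$ near $\omega$, so any path ending at a lattice point has infinite length.
\end{itemize}
The main obstacle is a divergent path escaping to infinity in $\bC$. The reason is that $g$ is only quasi-periodic: $g(z+2\omega_j)=c_jg(z)$, with the constants $c_j$ read off from the $\sigma$-transformation law. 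So the metric is not doubly periodic.

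To handle this, I would use the bound $|g|+|g|^{-1}\ge2$, so that the length is at least $\int|\phi||dz|$, with $\phi$ genuinely doubly periodic. I would then choose a closed neighbourhood $K$ of the finite set $\pm\mu+L$ and $L$ inside one period cell, consisting of small discs, and translate it by $L$. On the complement, $|\phi|\ge c>0$. A path going to infinity must cross infinitely many cells, and in each crossing it travels a Euclidean distance $\ge\delta$ outside the translated discs. This gives infinite length, hence completeness.

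\textbf{Spherical curvature lines.} The Hopf differential equals $-\tfrac12\frac{dg}{g}\phi\,dz=-\tfrac12dz^2$, which shows that $u,v$ are curvature-line coordinates with the fundamental forms \eqref{fundamentalforms1}, \eqref{fundamentalforms2}. Since $\phi$ satisfies \eqref{equationphi} by Theorem \ref{caradephi}, the converse direction of Wente's analysis \cite[Theorem 4.1 and eq. 4.11]{Wentespherical} gives that the $v$-lines are spherical. Alternatively, one can check directly that the candidate centre on the $x_3$-axis has a derivative that vanishes along each $v$-line.
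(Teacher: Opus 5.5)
Your proposal is correct and follows the paper's route. You match the simple zeros of $\phi$ at $\pm\mu+L$ against the zeros and poles of $g$, and show that the residues of $\phi$, $\phi g$ and $\phi/g$ vanish at $L$; you get this more cleanly than the paper's series computation, via $g'(\omega)=0$ from $g'/g=1/\phi$. You then read off embedded planar ends from the order-two poles, and take the spherical property of the $v$-lines from Wente's local theory, as the paper does.

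Your extra argument for divergent paths escaping to infinity in $\bC$, using $|g|+|g|^{-1}\ge 2$ and the double periodicity of $\phi$, covers a point the paper's proof passes over, since its Jorge--Meeks argument only handles the punctures. The ``distance $\ge\delta$ per cell crossing'' step should be phrased as a bound on how much displacement a path can achieve inside the small disjoint discs.
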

\begin{proof}
We proceed to verify that the hypothesis in the global Weierstrass representation Theorem are satisfied. By the previous discussion the theorem is true in simply connected regions $\Omega\subset \bC$ and moreover, by Theorem \ref{gaussmapformula}, the Weierstrass data extends to the complex plane punctured at the lattice points. Next we show that the minimal immersion is well-defined globally in $\bC\setminus L$.

The first thing to prove is that the poles of the Gauss map are exactly the zeros of the one-form $\phi(z)dz$, with the same multiplicity. In fact, notice that, from \eqref{gaussmapsigmafunctions}, $g$ has simple poles at the points $z=-\mu+L$, whereas the one-form $\phi(z)dz=(b-4\wp(z))dz=4(\wp(\mu)-\wp(z))dz$ has simple zeros at $z=\mu+L$ and $z=-\mu+L$, since $\wp'(\mu)\neq 0$.

The next condition to be verified is that $dh\coloneqq \phi(z)dz$ has real residues at the lattice points $L$. Since $\phi(z)=b-4\wp(z)$ has only poles of order two at $L$ with no residue (by the representation in series of the Weierstrass elliptic function, see below \eqref{pseries}, this condition is satisfied.

For the final condition, we need to check that, at points $z_0\in L$,    \begin{equation}\label{residuecondition}
        Res_{z=z_0}\frac{dh}{g}+\overline{Res_{z=z_0}gdh}=0.
    \end{equation}
  We will prove that in fact each residue vanishes separately. Since the Gauss map $g(z)$ is regular at the lattice points, then both $\frac{dh}{g}$ and $gdh$ have poles of order exactly two at the lattice points. Hence, by a formula of complex analysis \cite[Theorem 1.4]{Stein},
  \begin{align*}
  \begin{split}
      Res_{z=z_0}\frac{dh}{g}&=\lim_{z\to z_0}\frac{d}{dz}(z-z_0)^2\frac{b-4\wp(z)}{g(z)}\\&=\lim_{z\to z_0}\frac{2(z-z_0)(b-4\wp(z))}{g(z)}-\frac{4(z-z_0)^2\wp'(z)}{g(z)}-\frac{(z-z_0)^2(b-4\wp(z))g'(z)}{g(z)^2}\\ &=\lim_{z\to z_0}\frac{2(z-z_0)(b-4\wp(z))}{g(z)}-\frac{4(z-z_0)^2\wp'(z)}{g(z)}-\frac{(z-z_0)^2}{g(z)}\\&=\frac{1}{g(z_0)}\lim_{z\to z_0}-8(z-z_0)\wp(z)-4(z-z_0)^2\wp'(z).
      \end{split}
  \end{align*}
  Next, we use the representation in series 
  \begin{equation}\label{pseries}
      \wp(z)=\frac{1}{(z-z_0)^2}+\sum_{n=2}^{\infty}c_n(z-z_0)^{2n-2},
  \end{equation}
  and
  \begin{equation*}
      \wp'(z)=-\frac{2}{(z-z_0)^3}+\sum_{n=2}^{\infty}(2n-2)c_n(z-z_0)^{2n-3}.
  \end{equation*}
  We see that
  \begin{align*}
  \begin{split}
      -8(z-z_0)\wp(z)-4(z-z_0)^2\wp'(z)&=\cancel{-\frac{8}{(z-z_0)}}-8\sum_{n=2}^{\infty}c_n(z-z_0)^{2n-1}\\&+\cancel{\frac{8}{(z-z_0)}}-4\sum_{n=2}^{\infty}(2n-2)c_n(z-z_0)^{2n-1},
       \end{split}
  \end{align*}
  so that
  \begin{equation}\label{limit0}
      \lim_{z\to z_0}-8(z-z_0)\wp(z)-4(z-z_0)^2\wp'(z)=0,
  \end{equation}
  and therefore
  \begin{equation*}
       Res_{z=z_0}\frac{dh}{g}=0.
  \end{equation*}
  Similarly
  \begin{align*}
  \begin{split}
    Res_{z=z_0}gdh&= \lim_{z\to z_0}\frac{d}{dz}(z-z_0)^2g(z)(b-4\wp(z))\\&=\lim_{z\to z_0}2(z-z_0)g(z)(b-4\wp(z))+(z-z_0)^2g'(z)(b-4\wp(z))-4(z-z_0)^2g(z)\wp'(z)\\&=\lim_{z\to z_0}-8(z-z_0)g(z)\wp(z)+(z-z_0)^2g(z)-4(z-z_0)^2g(z)\wp'(z)\\&=\lim_{z\to z_0}-8(z-z_0)g(z)\wp(z)-4(z-z_0)^2g(z)\wp'(z)\\&=g(z_0)\lim_{z\to z_0}-8(z-z_0)\wp(z)+(z-z_0)^2g(z)-4(z-z_0)^2\wp'(z)=0.
  \end{split}
  \end{align*}
  by equation \eqref{limit0}. Therefore we conclude that 
  \begin{equation*}
      Res_{z=z_0}gdh=0,
  \end{equation*}
  so that condition \eqref{residuecondition} is clearly satisfied and therefore we have a well-defined global minimal immersion on the whole $\bC\setminus L(\tau,s)$.
  
Moreover from our considerations we see that at each lattice point $z_0$
  \begin{equation*}
      ord_{z=z_0}dh= ord_{z=z_0}\frac{dh}{g}= ord_{z=z_0}gdh=2,
  \end{equation*}
  so that the winding number on Jorge-Meeks formula is exactly $1$ for each end, showing that the immersion is complete and all ends are embedded. Since the residue of the height differential is zero, we see that we have a flat planar end at each lattice point.
\end{proof}
Motivated by the previous construction, we introduce the following definitions
\begin{defn}
\normalfont
  Let $\Phi_{L,b,\hat{g}_0}$ be a Wente-Weierstrass data on $\bC\setminus L$. We define the map $\Psi_{L,b,\hat{g}_0}:\bC\setminus L\to \bR^3$ as the complete minimal immersion of Enneper type constructed from Theorem \ref{EnnepertypeImmersion}.
\end{defn}
\begin{defn}\label{minimalwentetorus}
\normalfont
Consider a lattice $L$ in the complex plane with real invariants, $b$ a real constant satisfying equation \eqref{cubicequationb} and $\hat{g}_0\in \bR$. Let $\Lambda$ be a sub-lattice of $L$ and $\vec{a}\in \bR^3$.
\begin{itemize}
    \item \textit{A minimal Wente torus with ends of type $(L,b,\hat{g}_0,\Lambda)$} is a conformal minimal immersion $$\hat{\Psi}_{L,b,\hat{g}_0}:\left(\bC\setminus L\right)/\Lambda\to \bR^3$$ such that $\hat{\Psi}_{L,b,\hat{g}_0}\circ \pi=\Psi_{L,b,\hat{g}_0}$, where $\pi:\bC\to \bC/\Lambda$ is the canonical projection map.
    \item \textit{A singly periodic minimal Wente torus with ends of type $(L,b,\hat{g}_0,\Lambda,\vec{a})$} is a conformal minimal immersion $$\hat{\Psi}_{L,b,\hat{g}_0}:\left(\bC\setminus L\right)/\Lambda\to \bR^3/\vec{a}$$ such that $\hat{\Psi}_{L,b,\hat{g}_0}\circ \pi=\pi_0\circ \Psi_{L,b,\hat{g}_0}$, where $\pi_0:\bR^3\to \bR^3/\vec{a}$ and $\pi:\bC\to \bC/\Lambda$ are the canonical projection maps.
\end{itemize}
\end{defn}
We introduce the natural generalization of the period map of \cite[Eq. 5.16 and Chapter 6]{Isabel}.
\begin{defn}
\normalfont
    Let $L=\text{span}\{2\omega_1,2\omega_2\}$ be a non-degenerate lattice with real invariants and let $b\in \bR$. We define the two period maps as the following integrals
    \begin{equation*}
        \kappa_j(b,L)\coloneqq-\frac{1}{2\pi i}\int_{\alpha_j}\frac{dz}{b-4\wp(z|L)}
    \end{equation*}
    where $\alpha_j:[0,1]\to \bC$ are lifts of the two generators of the integral homology group of the torus $\bC/L$.
\end{defn}
We now introduce a formula to calculate the periods.
\begin{prop}
 Let $L=\text{span}\{2\omega_1,2\omega_2\}$ be a non-degenerate lattice with real invariants and let $b\in \bR$. Then there exists an integer $N=N(b,L)\in \bZ$ such that
    \begin{equation}\label{Period}
        \kappa_2(b,L)=-\frac{1}{2\pi i}\left(4\zeta(\mu|L)\omega_2-4\zeta(\omega_2|L)\mu+2\pi i N\right).
    \end{equation}
\end{prop}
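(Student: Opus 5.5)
The plan is to use identity \eqref{integralequationphi} from the proof of Theorem \ref{gaussmapformula}, which expresses the primitive of $\eta=\frac{dz}{b-4\wp(z)}$ through $\zeta$ and $\sigma$. From
\begin{equation*}
\frac{d}{d\xi}\log\frac{\sigma(\mu-\xi)}{\sigma(\mu+\xi)}=-\zeta(\mu-\xi)-\zeta(\mu+\xi)=-2\zeta(\mu)+\frac{1}{b-4\wp(\xi)},
\end{equation*}
we get, along any path $\gamma$ avoiding $\pm\mu+L$ and $L$,
\begin{equation*}
\int_\gamma \frac{d\xi}{b-4\wp(\xi)}=2\zeta(\mu)\,(\text{endpoint}-\text{startpoint})+\Delta_\gamma \log\frac{\sigma(\mu-\xi)}{\sigma(\mu+\xi)},
\end{equation*}
where $\Delta_\gamma\log$ is the change of a continuous branch of the logarithm along $\gamma$. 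Take $\alpha_2$ to be a path from a base point $z_0$ to $z_0+2\omega_2$, avoiding the singular points. The first term then gives $4\zeta(\mu)\omega_2$.

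For the logarithmic term, use the quasi-periodicity $\sigma(z+2\omega_2)=-e^{2\eta_2(z+\omega_2)}\sigma(z)$ with $\eta_2=\zeta(\omega_2)$. This gives
\begin{align*}
\sigma(\mu-z_0-2\omega_2)&=-e^{-2\eta_2(\mu-z_0-\omega_2)}\sigma(\mu-z_0),\\
\sigma(\mu+z_0+2\omega_2)&=-e^{2\eta_2(\mu+z_0+\omega_2)}\sigma(\mu+z_0).
\end{align*}
For the first line, apply the relation at the point $\mu-z_0-2\omega_2$ and solve. Taking the quotient, the signs cancel, and so do the $z_0$ and $\omega_2$ contributions in the exponent, which leaves the factor $e^{-4\eta_2\mu}$. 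Hence the value of $\frac{\sigma(\mu-\xi)}{\sigma(\mu+\xi)}$ at the endpoint is $e^{-4\zeta(\omega_2)\mu}$ times its value at the start. The continuous branch of the logarithm therefore changes by $-4\zeta(\omega_2)\mu+2\pi i N$ for some integer $N$. Multiplying the total by $-\frac{1}{2\pi i}$ gives \eqref{Period}.

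The only delicate point is bookkeeping. First, the exponent cancellation in the $\sigma$ quasi-periodicity must be checked carefully, including the sign convention $(-1)^j$ stated in the paper, which gives $-1$ for $j=1,2$ alike. Second, the integer $N$ must be justified: it reflects both the choice of lift $\alpha_2$, since different homotopy classes differ by residues $\pm1$ of $\eta$ at $\pm\mu$ as computed in Theorem \ref{gaussmapformula}, and the choice of logarithm branch. Since $\kappa_2$ is defined through a lift of a homology generator, it is only determined up to such integers, so $N=N(b,L)$ simply absorbs this ambiguity. I expect no further obstacle.
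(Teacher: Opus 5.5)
Your proposal is correct and follows essentially the same route as the paper: both write the integral of $dz/(b-4\wp)$ as $2\zeta(\mu)z$ plus the change of $\log\frac{\sigma(\mu-z)}{\sigma(\mu+z)}$ via identity \eqref{identityzeta}, and use the quasi-periodicity of $\sigma$ to show this logarithmic term changes by $-4\zeta(\omega_2)\mu$ modulo $2\pi i\bZ$. The only differences are cosmetic. You use a general base point $z_0$, whereas the paper integrates from $0$ to $2\omega_2$, which is allowed because $1/(b-4\wp)$ is regular at lattice points. The sign factor in the $\sigma$ relation cancels in the quotient whichever convention is used.
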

\begin{proof}
From equation \eqref{integralequationphi} we have in particular
\begin{equation*}
    \int_0^{2\omega_2}\frac{1}{b-4\wp(\xi)}d\xi=4\zeta(\mu)\omega_2+F(2\omega_2),
\end{equation*}
where by equation \eqref{exponentialF},
\begin{equation*}
    e^{F(2\omega_2)}=\frac{\sigma(\mu-2\omega_2)}{\sigma(\mu+2\omega_2)}.
\end{equation*}
Using the identity \cite[Eq. 5.14]{Isabel}
\begin{equation}\label{sigmaidentity}
    \sigma(z+2j\omega_1+2k\omega_2)=(-1)^{j+k+jk}\exp \left[(2j\zeta(\omega_1)+2k\zeta(\omega_2))(j\omega_1+k\omega_2+z)\right]\sigma(z),
\end{equation}
we have
\begin{equation*}
     e^{F(2\omega_2)}=e^{-4\zeta(\omega_2)\mu}.
\end{equation*}
This then implies that there exists an integer $N=N(b,L)\in \bZ$ such that
\begin{equation*}
    F(2\omega_2)=-4\zeta(\omega_2)\mu+2\pi i N,
\end{equation*}
which concludes the proof.
\end{proof}
In the particular case of rhombic lattices we can prove the result from the property $\overline{\wp}(\overline{z}|L)=\wp(z|L)$ of the Weierstrass elliptic function associated to real invariants.
  \begin{lema}\label{periodconjugates}
     Let $L=\text{span}\{2\omega_1,2\omega_2\}$ be a non-degenerate lattice of rhombic type with $2\omega_2=\overline{2\omega_1}$ and let $b\in \bR$. Then, the two periods maps are related to each other by the formula
    \begin{equation*}
        \kappa_1(b,L)=-\overline{\kappa_2(b,L)}.
    \end{equation*}
\end{lema}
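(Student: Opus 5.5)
We prove the identity directly from the definition of the periods, using the reflection symmetry $\overline{\wp(\overline{z}|L)}=\wp(z|L)$. This symmetry holds because $g_2,g_3$ are real, so complex conjugation maps $L$ to itself. The plan is to choose the cycles concretely, $\alpha_1(t)=z_0+2\omega_1 t$ and $\alpha_2(t)=\overline{z_0}+2\omega_2 t$. The base point $z_0$ is chosen off the real axis and so that $\alpha_1$ avoids $L$ and the poles $\pm\mu+L$ of $\eta=dz/(b-4\wp(z))$. Since $2\omega_2=\overline{2\omega_1}$, the path $\alpha_2$ is the pointwise complex conjugate of $\alpha_1$: $\alpha_2(t)=\overline{\alpha_1(t)}$.

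Next, conjugate the integral for $\kappa_1$. With $z=\alpha_1(t)$ and $dz=2\omega_1\,dt$, and using $b\in\bR$ together with the reflection symmetry,
\begin{equation*}
\overline{\int_{\alpha_1}\frac{dz}{b-4\wp(z)}}=\int_0^1\frac{\overline{2\omega_1}\,dt}{b-4\overline{\wp(\alpha_1(t))}}=\int_0^1\frac{2\omega_2\,dt}{b-4\wp(\overline{\alpha_1(t)})}=\int_{\alpha_2}\frac{dz}{b-4\wp(z)}.
\end{equation*}
The prefactor transforms as $\overline{-\tfrac{1}{2\pi i}}=\tfrac{1}{2\pi i}$. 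Combining the two gives $\overline{\kappa_1}=-\kappa_2$, which is equivalent to $\kappa_1=-\overline{\kappa_2}$.

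The main obstacle is well-definedness. The periods depend on the choice of lift $\alpha_j$ within its homology class, because $\eta$ has residues $\pm1$ at $\pm\mu+L$. Moving a path across such a pole changes $\int\eta$ by $\pm 2\pi i$, and hence changes $\kappa_j$ by an integer. I would handle this in one of two ways. The first option is to state the lemma for the conjugate-symmetric choice of lifts above. In that case the conjugation map sends the set of poles $\{\pm\mu\}+L$ to itself, since $\wp(\overline{\mu})=\overline{b/4}=b/4$, and it sends the admissible lifts of the first generator to admissible lifts of the second. The second option is to observe that the identity holds modulo $\bZ$ for arbitrary lifts, which is exactly the ambiguity already present in formula \eqref{Period}. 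I would also double-check that conjugation preserves orientation consistently, so that $\alpha_2$ really represents the generator $2\omega_2$ and not $-2\omega_2$; the parametrization above makes this explicit.
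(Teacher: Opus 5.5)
Your proof is correct and is the argument the paper has in mind. The paper justifies the lemma only by citing the reflection symmetry $\overline{\wp(\overline{z}|L)}=\wp(z|L)$ for real invariants, and your computation along the conjugate-symmetric lifts $\alpha_2=\overline{\alpha_1}$ (using $\overline{2\omega_1}=2\omega_2$ and the sign change of the prefactor $-\frac{1}{2\pi i}$) writes that argument out. Your remark that the periods are otherwise defined only modulo $\bZ$, because $dz/(b-4\wp)$ has residues $\pm1$ at $\pm\mu+L$, is a precision the paper leaves implicit; it matches the integer $N$ in \eqref{Period}, and the mod-$\bZ$ identity suffices for the later uses, e.g.\ in Theorem~\ref{importancerombic}.
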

\begin{lema}\label{gtranslated}
     Let $L=\text{span}\{2\omega_1,2\omega_2\}$ be a non-degenerate lattice with real invariants and let $b\in \bR$. Then \begin{equation*}
    \forall l_1,l_2\in \bZ \quad g(z+2l_1\omega_1+2l_2\omega_2)=g(z)e^{-2\pi i( l_1 \kappa_1(b,L)+ l_2\kappa_2(b,L))}.
\end{equation*}
\end{lema}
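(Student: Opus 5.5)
The plan is to work directly from the integral formula \eqref{gaussmap} for the Gauss map, using Theorem \ref{gaussmapformula}. That theorem ensures $g$ is well defined on $\bC\setminus L$ and equals $\hat g_0\exp\left(\int_{\gamma_z}\eta\right)$ with $\eta=\frac{dz}{b-4\wp(z)}$. The choice of path only matters modulo $2\pi i\bZ$, because the residues of $\eta$ lie in $\{\pm1\}$.

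Fix $z\in\bC\setminus L$ away from $\pm\mu+L$, and set $\Omega=2l_1\omega_1+2l_2\omega_2$. Choose a path $\gamma_{z+\Omega}$ formed by concatenating a path $\gamma_z$ from the base point to $z$ with a path $\beta$ from $z$ to $z+\Omega$, both avoiding $L\cup(\pm\mu+L)$. Then
\begin{equation*}
\frac{g(z+\Omega)}{g(z)}=\exp\left(\int_\beta \frac{d\nu}{b-4\wp(\nu)}\right).
\end{equation*}
Since $\eta$ is $L$-periodic, the value of $\int_\beta\eta$ depends, modulo $2\pi i\bZ$, only on the homology class of the image of $\beta$ in the torus $\bC/L$ punctured at the poles. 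Moving $\beta$ across a pole of $\eta$ changes the integral by $\pm2\pi i$. Moving it across a lattice point changes nothing, because $\wp$ has a double pole there and $\eta$ has a zero there, so there is no residue.

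Next, deform $\beta$, modulo these $2\pi i\bZ$ ambiguities, into $l_1$ translates of a lift of $\alpha_1$ followed by $l_2$ translates of a lift of $\alpha_2$. By periodicity, each translate of $\alpha_j$ contributes the same integral $\int_{\alpha_j}\eta=-2\pi i\,\kappa_j(b,L)$. Hence
\begin{equation*}
\int_\beta \eta = -2\pi i\,(l_1\kappa_1+l_2\kappa_2)+2\pi i k,\qquad k\in\bZ.
\end{equation*}
Exponentiating removes the integer ambiguity and gives the claim.

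An alternative, more explicit check is to use the closed formula \eqref{gaussmapsigmafunctions} together with the quasi-periodicity \eqref{sigmaidentity}. This gives $g(z+\Omega)/g(z)=\exp\bigl(4\zeta(\mu)(l_1\omega_1+l_2\omega_2)-4\mu(l_1\zeta(\omega_1)+l_2\zeta(\omega_2))\bigr)$, where the sign factors $(-1)^{\cdots}$ cancel between numerator and denominator. Comparing with \eqref{Period} and its analogue for $\kappa_1$, the exponent equals $-2\pi i(l_1\kappa_1+l_2\kappa_2)$ up to $2\pi i\bZ$.

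The one delicate point is the well-definedness of $\kappa_j$ itself. It depends on the chosen lift $\alpha_j$, but only modulo $\bZ$, because crossing a pole changes the integral by $\pm 2\pi i$ and so changes $\kappa_j$ by an integer. This ambiguity disappears after exponentiation, so the identity holds for any admissible choice of lifts.
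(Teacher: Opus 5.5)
Your proposal is correct and follows essentially the same route as the paper. Both arguments split the path defining $g(z+2l_1\omega_1+2l_2\omega_2)$, use the $L$-periodicity of $1/\phi$ to reduce to $l_1$ and $l_2$ copies of the generator integrals, and then exponentiate. Your explicit treatment of the $2\pi i\bZ$ ambiguity, and your sigma-function cross-check, are somewhat more careful than the paper, which tacitly identifies $\int_0^{2\omega_j}d\xi/\phi$ with $-2\pi i\kappa_j$.
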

\begin{proof}
    In fact,
\begin{align*}
    \begin{split}
g(z+2l_1\omega_1+2l_2\omega_2)=&\hat{g}_0\exp\left(\int_{0}^{z+2l_1\omega_1+2l_2\omega_2}\frac{d\xi}{\phi(\xi)}\right)=\hat{g}_0\exp\left(\int_0^{2l_1\omega_1+2l_2\omega_2}\frac{d\xi}{\phi(\xi)}+\int_{2l_1\omega_1+2l_2\omega_2}^{z+2l_1\omega_1+2l_2\omega_2}\frac{d\xi}{\phi(\xi)}\right)\\=&\hat{g}_0\exp\left(\int_0^{2l_1\omega_1}\frac{d\xi}{\phi(\xi)}+\int_{2l_1\omega_1}^{2l_1\omega_1+2l_2\omega_2}\frac{d\xi}{\phi(\xi)}+\int_{0}^{z}\frac{d\xi}{\phi(\xi)}\right)\\=&g(z)\exp\left(\int_0^{2l_1\omega_1}\frac{d\xi}{\phi(\xi)}+\int_{0}^{2l_2\omega_2}\frac{d\xi}{\phi(\xi)}\right)\\=&g(z)\exp\left(\int_0^{2\omega_1}\frac{d\xi}{\phi(\xi)}+\ldots + \int_{(l_1-1)2\omega_1}^{2l_1\omega_1}\frac{d\xi}{\phi(\xi)}+\int_0^{2\omega_2}\frac{d\xi}{\phi(\xi)}+\ldots + \int_{(l_2-1)2\omega_2}^{2l_2\omega_2}\frac{d\xi}{\phi(\xi)}\right)\\=&g(z)\exp\left(l_1\int_{0}^{2\omega_1}\frac{d\xi}{\phi(\xi)}+l_2\int_{0}^{2\omega_2}\frac{d\xi}{\phi(\xi)}\right)
    \end{split}
\end{align*}
which proves the claim. 
\end{proof}
The next Theorem shows that it is impossible to produce a minimal Wente torus with ends starting from the rectangular lattices.
\begin{thm}\label{importancerombic}
    Suppose that $\wp$ is the Weierstrass function associated to a non-degenerate lattice $L=\text{span}\{2\omega_1,2\omega_2\}$ with real invariants. Let $b\in\bR$, $\hat{g}_0\in \bR^+$ and let $$\Lambda=\text{span}\{2m_1\omega_1+2m_2\omega_2,2n_1\omega_1+2n_2\omega_2\}\subset L$$ be any sub-lattice of $L$. If the minimal immersion $\Psi_{L,b,\hat{g}_0}$ factors to $\left(\bC\setminus L\right)/\Lambda$, then the lattice $L$ has to be rhombic and moreover the period maps $\kappa_j(b,L)$ are real rational numbers.
\end{thm}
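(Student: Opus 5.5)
The plan is to derive both conclusions from invariance of the Gauss map under $\Lambda$, using Lemma \ref{gtranslated}. Rationality comes from a linear-algebra step. Rhombicity comes from showing that for a rectangular lattice one period map always has a nonzero imaginary part.

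\emph{Step 1: rationality.} Suppose $\Psi_{L,b,\hat g_0}$ factors through $(\bC\setminus L)/\Lambda$. Then $\Psi(z+\lambda)=\Psi(z)$ for every $\lambda\in\Lambda$. Hence the differential of $\Psi$, and in particular the Gauss map $g$, is $\Lambda$-invariant. By Lemma \ref{gtranslated}, applied to the two generators of $\Lambda$, we get
\begin{equation*}
m_1\kappa_1+m_2\kappa_2\in\bZ,\qquad n_1\kappa_1+n_2\kappa_2\in\bZ .
\end{equation*}
Since $\Lambda$ is a sub-lattice of rank two, the integer matrix $\begin{pmatrix} m_1 & m_2\\ n_1 & n_2\end{pmatrix}$ has nonzero determinant. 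Solving the system by Cramer's rule gives $\kappa_1,\kappa_2\in\bQ$, and in particular both are real.

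\emph{Step 2: rhombicity.} It remains to exclude case III of the remark, i.e. rectangular lattices with $2\omega_1\in\bR$ and $2\omega_2\in i\bR$. I will show that in this case at least one $\kappa_j$ has nonzero imaginary part, contradicting Step 1. The inputs are the following facts.
\begin{itemize}
\item $\wp$ is real on the horizontal lines $\operatorname{Im} z=0$ and $\operatorname{Im} z=\operatorname{Im}\omega_2$, and on the corresponding vertical lines. There $\wp'$ is real on the horizontal lines and purely imaginary on the vertical ones.
\item On the half-period rectangle, $\wp$ takes each real value exactly once on the boundary. Here $b/4\in\bR$, and $\wp'(\mu)=-1/4$ is real and nonzero.
\end{itemize}
It follows that $\mu$, and hence every zero $\pm\mu+L$ of $\phi=b-4\wp$, lies on only one of the two families of horizontal lines, either $\operatorname{Im} z\in 2\operatorname{Im}\omega_2\bZ$ or $\operatorname{Im} z\in \operatorname{Im}\omega_2+2\operatorname{Im}\omega_2\bZ$. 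Choose the horizontal line $\ell$ of the other family. Along $\ell$, the function $1/\phi$ is real-valued and continuous. At lattice points $1/\phi$ has a removable zero, since $\phi$ has a double pole there, so $\ell$ may pass through them. Moreover $1/\phi$ does not vanish except at those isolated points, so it has constant sign on $\ell$. Hence the integral of $dz/\phi$ over the segment of $\ell$ of length $2\omega_1$ is a nonzero real number $R$.

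\emph{Step 3: comparing with the homology period.} That segment is a lift of the same homology class as $\alpha_1$. The two paths differ only by passing on different sides of poles $\pm\mu+L$ of $dz/\phi$, whose residues are $\pm1$ by the proof of Theorem \ref{gaussmapformula}. Therefore
\begin{equation*}
\int_{\alpha_1}\frac{dz}{\phi}=R+2\pi i k,\qquad k\in\bZ,
\end{equation*}
and so
\begin{equation*}
\kappa_1=-\frac{R}{2\pi i}-k,\qquad \operatorname{Im}\kappa_1=\frac{R}{2\pi}\neq 0 .
\end{equation*}
This contradicts Step 1, so $L$ must be rhombic.

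The main obstacle is the location statement in Step 2: justifying carefully that $\mu$ lies on a horizontal real-$\wp$ line and that the other family of such lines carries no zeros of $\phi$. I would handle it through the classical description of $\wp$ on the rectangle. The function $\wp$ is a conformal bijection of the half-period rectangle onto a half-plane, with the boundary mapped onto $\bR\cup\{\infty\}$. The sign of $\wp'$ then separates the horizontal sides from the vertical ones, and this pins down the line containing $\mu$.
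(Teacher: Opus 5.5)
Your proof is correct, and it reorganizes the paper's argument in a way that is both simpler and slightly more complete.

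\textbf{How the paper proceeds.} The paper first excludes rectangular lattices. It asserts $\mu\in(0,2\omega_1)$ and integrates $dz/\phi$ along an L-shaped path from $0$ to a vector $2m_1\omega_1+2m_2\omega_2\in\Lambda$ with $m_1\neq 0$. The horizontal piece on $\operatorname{Im} z=\lvert\omega_2\rvert$ is a nonzero real number, while the vertical pieces are purely imaginary, and this contradicts $\lvert g\rvert$-invariance. Only then does it prove rationality, via the rhombic conjugation $\kappa_1=-\overline{\kappa_2}$ of Lemma~\ref{periodconjugates} and a case analysis on the generators of $\Lambda$.

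\textbf{How you proceed.} You reverse the order. Cramer's rule gives $\kappa_1,\kappa_2\in\bQ$ for an arbitrary lattice. So Lemma~\ref{periodconjugates} is not needed, and the conclusion covers the period along every vector of $L$, independent of the chosen basis. The rectangular case is then excluded by $\operatorname{Im}\kappa_1=R/(2\pi)\neq 0$. Your comparison between the horizontal segment and $\alpha_1$ is correctly justified by the residues $\pm1$.

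\textbf{What is shared, and what you do better.} The key computation is common to both: a horizontal period integral of $1/\phi$ along a real-$\wp$ line free of zeros of $\phi$ is a nonzero real number. Your handling of where $\mu$ sits is better. Since $\wp'$ is real on both families of horizontal lines, $\mu$ may lie on $\operatorname{Im} z\equiv\lvert\omega_2\rvert$; this does occur for suitably scaled lattices. The paper's assertion $\mu\in(0,2\omega_1)$ overlooks this case, and there its horizontal path would run through poles of $1/\phi$. Your choice of the other family, allowing lines through lattice points where $1/\phi$ has a removable zero, handles it.

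\textbf{One small slip.} The inference ``$1/\phi$ vanishes only at isolated points, hence has constant sign'' is not valid in general, since a continuous function can change sign at an isolated zero. It is true here for either of two reasons:
\begin{itemize}
\item The zeros of $1/\phi$ at lattice points are double, so $\phi\to-\infty$ from both sides.
\item More directly, $\wp$ stays strictly on one side of $b/4$ along the chosen line. With $e_1>e_2>e_3$ the real roots, $\wp\ge e_1>b/4$ on the real axis when $\mu$ lies on the upper family, and $\wp\le e_2<b/4$ on the upper family when $\mu$ is real.
\end{itemize}
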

\begin{proof}
Since the immersion $\Psi_{L,b,\hat{g}_0}$ factors to the quotient $\left(\bC\setminus L\right)/\Lambda$, this means, using the Cauchy-Riemann equations, that the Weierstrass data $\Phi_{L,b,\hat{g}_0}$ has to be \textit{elliptic} (invariant) with respect to the lattice $\Lambda$, \textit{i.e.} $\Phi_{L,b,\hat{g}_0}(z+\Lambda)=\Phi_{L,b,\hat{g}_0}(z)$. In particular, since the function $\phi$ is already elliptic in $L$, it is automatically elliptic in $\Lambda$ and we have that the Gauss map must be elliptic in $\Lambda$ as well. Suppose by contradiction that $\Delta_{mod}>0$, \textit{i.e.} the lattice $L$ is rectangular and generated by $2\omega_1\in \bR$, $2\omega_2\in i\bR$. Since the generators of $\Lambda$ are linearly independent, we must have that $m_1$ and $n_1$ can not vanish simultaneously. Without loss of generality, suppose $m_1\neq 0$. Then, by the ellipticity of $g(z)$ in $\Lambda$, we have
\begin{equation*}
    g(z+2m_1\omega_1+2m_2\omega_2)=g(z).
\end{equation*}
 Since by hypothesis $\wp(\mu)=\frac{b}{4},\wp'(\mu)=-\frac{1}{4}\in \bR$ we must have $\mu\in (0,2\omega_1).$ Consider a path $\alpha\coloneqq \alpha_1*\alpha_2*\alpha_3$ joining $0$ to $2m_1\omega_1+2m_2\omega_2$ such that $\alpha_1$ is a vertical path going from $0$ to $\omega_2$, $\alpha_2$ a horizontal path from $\omega_2$ to $2m_1\omega_1+\omega_2$ and finally $\alpha_3$ a vertical path from $2m_1\omega_1+\omega_2$ to $2m_1\omega_1+2m_2\omega_2$. We have
        \begin{gather*}
            \int_{\alpha}\frac{dz}{\phi(z)}=\int_{\alpha_1}\frac{dz}{\phi(z)}+\int_{\alpha_2}\frac{dz}{\phi(z)}+\int_{\alpha_3}\frac{dz}{\phi(z)}.
        \end{gather*}
        Let $\tilde{\alpha}_1$ be a vertical path going from $2m_1\omega_1$ to $2m_1\omega_1+\omega_2$. Then by the $2\omega_1$-periodicity of $\phi$, we have that
        \begin{equation*}
            \int_{\alpha_1}\frac{dz}{\phi(z)}=\int_{\tilde{\alpha}_1}\frac{dz}{\phi(z)}.
        \end{equation*}
        Therefore we can write
        \begin{equation*}
             \int_{\alpha}\frac{dz}{\phi(z)}= \int_{\alpha_2}\frac{dz}{\phi(z)}+ \int_{\tilde{\alpha}_1*\alpha_3}\frac{dz}{\phi(z)}.
        \end{equation*}
  The Weierstrass $\wp$ function on the rectangular lattice $L$ is real along the vertical lines $u=k\omega_1$ and along the horizontal lines $v=k\abs{\omega_2}$ with $k\in \bZ$. Moreover the $\wp$ function along $v=\abs{\omega_2}$ is strictly less than the $\wp$ function along the real axis $v=0$, so we have the following consequences:
        \begin{gather}\label{integralalpha2}
            \int_{\alpha_2}\frac{dz}{\phi(z)} =\frac{1}{4}\int_0^1\frac{2m_1\omega_1dt}{\wp(\mu)-\wp((1-t)\omega_2)+(2m_1\omega_1+\omega_2)t)}\in \bR^{*},
        \end{gather}
        \begin{gather}\label{integralalpha1alpha3}
            \int_{\tilde{\alpha}_1*\alpha_3}\frac{dz}{\phi(z)} =\frac{1}{4}\int_0^1\frac{2m_2\omega_2dt}{\wp(\mu)-\wp((1-t)2m_1\omega_1)+(2m_1\omega_1+2m_2\omega_2)t)}\in i\bR.
        \end{gather}
        Therefore, by using a change of variable and the $2\omega_1$,$2\omega_2$-periodicity of $\phi$, we have
        \begin{align*}
        \begin{split}
           g(z)&= g(z+2m_1\omega_1+2m_2\omega_2)=\hat{g}_0\exp\left(\int_0^{z+2m_1\omega_1+2m_2\omega_2}\frac{d\xi}{\phi(\xi)}\right)\\&=\hat{g}_0\exp\left(\int_0^{2m_1\omega_1+2m_2\omega_2}\frac{d\xi}{\phi(\xi)}+\int_{2m_1\omega_1+2m_2\omega_2}^{z+2m_1\omega_1+2m_2\omega_2}\frac{d\xi}{\phi(\xi)}\right)\\&=\hat{g}_0\exp\left(\int_{\alpha}\frac{d\xi}{\phi(\xi)}+\int_{0}^{z}\frac{d\xi}{\phi(\xi)}\right)=g(z)\exp\left(\int_{\tilde{\alpha}_1*\alpha_3}\frac{d\xi}{\phi(\xi)}\right)\exp\left(\int_{\alpha_2}\frac{d\xi}{\phi(\xi)}\right).
        \end{split}
        \end{align*}
       Therefore, by taking the module in both sides and using equation \eqref{integralalpha1alpha3}, we obtain
        \begin{equation*}
            \int_{\alpha_2}\frac{d\xi}{\phi(\xi)}=0,
        \end{equation*}
        which is a contradiction with \eqref{integralalpha2}. Hence the lattice $L$ must be rhombic.
        
For the second part of the proof, we show that the period maps $\kappa_1$ and $\kappa_2$ are real rationals. In fact, since $g$ is elliptic in $\Lambda$ we must have by Lemma \ref{gtranslated} that 
\begin{equation}\label{pairsintegers}
    m_1\kappa_1+m_2\kappa_2\in \bZ,\quad n_1\kappa_1+n_2\kappa_2\in \bZ 
\end{equation}
By Lemma \ref{periodconjugates} we must have that
\begin{equation}\label{periodscomplex}
    \kappa_1=a+ib,\quad \kappa_2=-a+ib
\end{equation}
After substituting equation \ref{periodscomplex} into equation \ref{pairsintegers} and taking real and imaginary parts we obtain
\begin{gather*}
    \begin{cases}
        (m_1-m_2)a\in \bZ,\quad (n_1-n_2)a\in \bZ\\ (m_1+m_2)b=0,\quad (n_1+n_2)b=0
    \end{cases}
\end{gather*}
We claim that $b=0$. Indeed, if we assume by contradiction that $b\neq 0$, then
\begin{equation*}
    m_2=-m_1,\quad n_2=-n_1
\end{equation*}
which would imply that the generators of $\Lambda$
\begin{gather*}
    \begin{cases}
        2m_1\omega_1+2m_2\omega_2=2m_1(\omega_1-\omega_2)\\ 2n_1\omega_1+2n_2\omega_2=2n_1(\omega_1-\omega_2)
    \end{cases}
\end{gather*}
are not linearly independent, which is a contradiction. Hence, it must be that $b=0$, and therefore $\kappa_1,\ \kappa_2\in \bQ$.
\end{proof}
\begin{rem}\label{singlyperiodicperiod}
\normalfont
    A slight modification on the proof of Theorem \ref{importancerombic} shows that for singly periodic minimal Wente torus with ends the Wente-Weierstrass data is also elliptic in the corresponding sublattice, implying that the lattice is rhombic and the period map is also a real rational number.
\end{rem}
We end this section with the following useful tool
\begin{lema}[{\textit{Cf. }\cite[Lemma 5.5]{Isabel}}]\label{translationalperiod}
 Let $L=\text{span}\{2\omega_1,2\omega_2\}$ be a lattice in the complex plane and let $\Phi_{L,b,\hat{g}_0}$ be a Wente-Weierstrass data on $\bC\setminus L$. If the period map $\kappa_2(b,L)=\frac{m}{n}$ for some $m,n\in \bZ$ with $\gcd(m,n)=1$, then, there exists a translational period $\vec{a}=\vec{a}_{L,b,\hat{g}_0}\in \bR^3$ such that
    \begin{gather}\label{translationalequation}
     \forall z\in \bC\setminus L\quad \Psi_{L,b,\hat{g_0}}(z+2n\omega_1)=\Psi_{L,b,\hat{g_0}}(z)+\vec{a},\quad \Psi_{L,b,\hat{g_0}}(z+2n\omega_2)=\Psi_{L,b,\hat{g_0}}(z)+\vec{a}.
    \end{gather}
\end{lema}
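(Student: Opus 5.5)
We need to show that translating by $2n\omega_j$ shifts $\Psi$ by a constant vector $\vec a$, the same for $j=1,2$. Throughout, $\Psi_{L,b,\hat{g}_0}(z)=Re\int_{z_0}^z\Phi_{L,b,\hat{g}_0}$, with $\Phi$ as in \eqref{Weierstrassdata}. The plan has three steps: show the Weierstrass data is invariant under $z\mapsto z+2n\omega_j$, conclude that $\Psi(z+2n\omega_j)-\Psi(z)$ is constant, and show the two constants coincide.

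\textbf{Step 1: invariance of the data.} The height differential $\phi\,dz=(b-4\wp)dz$ is already $L$-periodic. For the Gauss map, Lemma \ref{gtranslated} gives
\[
g(z+2n\omega_2)=g(z)e^{-2\pi i n\kappa_2}=g(z)e^{-2\pi i m}=g(z).
\]
The assumption $\kappa_2=\frac mn$ is real. By Lemma \ref{periodconjugates} (in the rhombic case, which is forced by Theorem \ref{importancerombic} and Remark \ref{singlyperiodicperiod}, and is the case of interest) we have $\kappa_1=-\overline{\kappa_2}=-\frac mn$. Hence $g(z+2n\omega_1)=g(z)e^{2\pi i m}=g(z)$. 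So $\Phi$ is invariant under both translations.

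\textbf{Step 2: the difference is constant.} Set $F_j(z)=\Psi(z+2n\omega_j)-\Psi(z)$. Then
\[
dF_j=Re\bigl(\Phi(z+2n\omega_j)-\Phi(z)\bigr)=0,
\]
and $\bC\setminus L$ is connected, so $F_j$ equals a constant vector $\vec a_j$. Its value is
\[
\vec a_j=Re\int_{z_0}^{z_0+2n\omega_j}\Phi,
\]
taken along any path avoiding $L$. This integral is path-independent because the residues of $\Phi$ at the ends vanish, as shown in the proof of Theorem \ref{EnnepertypeImmersion}.

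\textbf{Step 3: $\vec a_1=\vec a_2$.} This is the main obstacle. Because $\Phi$ is invariant under the lattice $\Lambda$ generated by $2n\omega_1$ and $2n\omega_2$, the map $z\mapsto Re\int_{z_0}^{z}\Phi$ sends each element of $\Lambda$ to a translation, and the translation vectors add. It is therefore enough to show that the real period of $\Phi$ along the cycle $2n(\omega_1-\omega_2)$ vanishes.

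In the rhombic case $\omega_1-\omega_2=2i\,Im\,\omega_1$ is purely imaginary. I would exploit the symmetry $\overline{\wp(\bar z)}=\wp(z)$, which gives $\overline{\phi(\bar z)}=\phi(z)$. This should yield a relation of the form $\overline{g(\bar z)}=\hat g_0^2/g(z)$ up to normalization, or $g(\bar z)=\overline{g(z)}$; I would check which one holds using \eqref{gaussmapsigmafunctions} and the fact that $\mu$ is real. Either relation induces a reflection symmetry of $\Psi$, from which the required vanishing follows. One caveat: the period along the vertical direction may only reduce to a reflection relation rather than to zero outright. In that case I would instead compute $\vec a_1-\vec a_2$ directly, integrating the three components along the vertical segment from $\omega_2$ to $\omega_1$ (which avoids $L$) and using the conjugation symmetry to show the real parts cancel.

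If that fails, the fallback is to follow \cite[Lemma 5.5]{Isabel}. There, the translational period is computed explicitly via the identities \eqref{identityzeta} and \eqref{sigmaidentity}, writing $\int\Phi$ through $\zeta$ and $\sigma$. This gives closed forms for $\vec a_j$ in terms of $\zeta(\omega_j)$, $\omega_j$ and $\mu$, and I would then compare them using $\omega_2=\overline{\omega_1}$ and $\zeta(\omega_2)=\overline{\zeta(\omega_1)}$.
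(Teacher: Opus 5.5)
Your Steps 1 and 2 are fine and match the paper's proof: you reduce to the rhombic case, obtain $\kappa_1=-\overline{\kappa_2}=-m/n$, and use Lemma \ref{gtranslated} to get invariance of $\Phi$ under $2n\omega_j$, hence constant vectors $\vec a_j$.

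The gap is Step 3. It is the real content of the lemma, and none of the routes you list closes it.

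The conjugation symmetry holds in the form $\phi(\bar z)=\overline{\phi(z)}$, $g(\bar z)=\overline{g(z)}$, because $\mu$, $\zeta(\mu)$ and $\hat g_0$ are real. So the pullback of $\Phi$ under $z\mapsto\bar z$ is $\overline{D\Phi}$ with $D=\mathrm{diag}(1,-1,1)$. This reflection reverses the vertical cycle $w=2n(\omega_2-\omega_1)\in i\bR$. For $P=\vec a_2-\vec a_1$ it therefore only gives $DP=-P$, i.e.\ $P_1=P_3=0$, and leaves the second component completely free.

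The other candidate, $z\mapsto-\bar z$, satisfies $g(-\bar z)=\hat g_0^{2}/\overline{g(z)}$. This is induced by an isometry only when $\hat g_0=1$, and since it preserves the cycle it gives only $P_3=0$. So ``the required vanishing follows'' is not justified; your own caveat is exactly what happens.

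Integrating directly along a vertical segment has the same defect: conjugation maps the segment to its reverse and again controls only $P_1,P_3$. You also cannot reduce to one short segment, because $g(z+2(\omega_2-\omega_1))=e^{-4\pi i m/n}g(z)$, so $\Phi$ is rotated rather than invariant unless $n\mid 2$. The fallback to \cite{Isabel} does not supply an argument either: in the rectangular setting there is only one translational direction, so there is nothing to compare.

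The missing idea is the one the paper uses, and it is geometric. The translation $z\mapsto z+2n(\omega_2-\omega_1)$ is vertical, so it maps a $v$-line $\{u=u_*\}$ avoiding $L$ into itself. The curve $\Psi(u_*,\cdot)$ is a spherical curvature line, so it lies in one fixed sphere, of finite radius for generic $u_*$ (i.e.\ $\alpha(u_*)\neq 0$). Iterating gives $\Psi(z_*+2kn(\omega_2-\omega_1))=\Psi(z_*)+k(\vec a_2-\vec a_1)$. This must stay bounded for all $k\in\bZ$, which forces $\vec a_2=\vec a_1$.

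If you prefer an algebraic route, it only gets you part of the way. You have $\Psi(z+2(\omega_2-\omega_1))=R\Psi(z)+c$, with $R$ the rotation about the $x_3$-axis by $-4\pi m/n$. The vector $c$ is horizontal, since $\omega_2-\omega_1$ and $\zeta(\omega_2)-\zeta(\omega_1)$ are purely imaginary. Then $P=(I+R+\cdots+R^{n-1})c=0$ whenever $R\neq I$, i.e.\ $n\ge 3$. The cases $n\in\{1,2\}$ still need the boundedness argument.
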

\begin{proof}
First, notice that by Theorem \ref{EnnepertypeImmersion} the Weierstrass data $\Phi_{L,b,\hat{g}_0}$ induces a complete minimal immersion $\Psi_{L,b,\hat{g}_0}:\bC\setminus L\to \bR^3$ such that every $v$-line $\Psi_{L,b,\hat{g}_0}(u_0,\cdot)$ is a spherical line of curvature.

The hypothesis $\kappa_2(b,L)\in \bQ$ implies that the lattice $L$ is of rhombic type by Theorem \ref{importancerombic}. Hence from Lemma \ref{periodconjugates} we have that 
    \begin{equation}\label{kapasrational}
        \kappa_1(b,L)=-\overline{\kappa_2(b,L)}=-\frac{m}{n}\in \bQ,
    \end{equation}
    On one hand, from the definition of $\phi(z)=b-4\wp(z)$ we see that, for $j=1,2$,
    \begin{gather*}
        \phi(z+2n\omega_j)=\phi(z).
    \end{gather*}
   On the other hand, by an application of Lemma \ref{gtranslated} we obtain
    \begin{equation*}
         g(z+2n\omega_j)=g(z)e^{-2\pi i n \kappa_j(b,L)},
    \end{equation*}
    which implies by equation \eqref{kapasrational} that the Gauss map satisfies
    \begin{equation*}
         g(z+2n\omega_j)=g(z),\quad j=1,2.
    \end{equation*}
    Therefore the Wente-Weierstrass data has the property
    \begin{equation*}
        \Phi_{L,b,\hat{g_0}}(z+2n\omega_j)= \Phi_{L,b,\hat{g_0}}(z),\quad j=1,2,
    \end{equation*}
    which implies that
    \begin{gather*}
        \Psi_{L,b,\hat{g_0}}(z+2n\omega_j)= \Psi_{L,b,\hat{g_0}}(z)+\vec{a}_j,
    \end{gather*}
    where
    \begin{equation*}
        \vec{a}_j\coloneqq Re\int_{z_0}^{z_0+2n\omega_j}\Phi_{L,b,\hat{g_0}}(\xi)d\xi,\quad j=1,2.
    \end{equation*}
    We can check that each $\vec{a}_j$ independ of the choice of base point $z_0$ due to the periodicity property of $\Phi_{L,b,\hat{g_0}}$.
    
    Now fix a point $z_{*}\in \bC\setminus L$. Notice that the direction $\omega_2-\omega_1$ is parallel to the $v$ coordinate axis which is the direction of the spherical line of curvature. We then have
    \begin{equation*}
         \Psi_{L,b,\hat{g_0}}(z_{*}+2n(\omega_2-\omega_1))=\Psi_{L,b,\hat{g_0}}(z_{*})+\vec{a}_2-\vec{a}_1.
    \end{equation*}
    Since the $v$-lines are spherical lines of curvature, and since spheres are compact (in particular fixing $z_{*}$ in a way that the $v$-line does not go through and end point), we conclude by iterating the previous two equations that we must have
    \begin{equation*}
        \vec{a}_2-\vec{a}_1=0,
    \end{equation*}
    which implies that $\vec{a}_1=\vec{a}_2\coloneqq \vec{a}$, finishing our proof.
\end{proof}
\section{Construction}\label{Section 5.3}
In this section we investigate the existence of a minimal Wente torus with ends in $\bR^3$. Motivated by the work \cite{Isabel}, we begin by establishing a construction which we prove to be canonical. 

Consider the two parameter space 
\begin{equation*}
    (\tau,s)\in \Omega=\bR^+\times(0,2)\subset\bR^2 .
\end{equation*}
For all $\tau \in \bR^+$, define half-periods
\begin{gather*}
    \tilde{\omega}_1(\tau)=1-i\tau,\quad \tilde{\omega}_2(\tau)=1+i\tau,\quad  \tilde{\omega}_3(\tau)=\tilde{\omega}_1(\tau)+\tilde{\omega}_2(\tau)=2,
\end{gather*}
which generate a rhombic lattice 
\begin{equation*}
    \Tilde{L}=\Tilde{L}(\tau)\coloneqq\text{span}\{2\Tilde{\omega}_1,2\Tilde{\omega_2}\}.
\end{equation*}
This lattice has associated real invariants
\begin{gather*}
    \Tilde{g}_2(\tau)=60\sum_{(m,n)\in \bZ_2^{*}}\frac{1}{(2m\Tilde{\omega}_1(\tau)+2n\Tilde{\omega}_2(\tau))^4}\in \bR,\quad  \Tilde{g}_3(\tau)=140\sum_{(m,n)\in \bZ_2^{*}}\frac{1}{(2m\Tilde{\omega}_1(\tau)+2n\Tilde{\omega}_2(\tau))^6}\in \bR,
\end{gather*}
which actually depend only on the lattice and not on its generators. By construction, the discriminant satisfies
\begin{equation*}
    \Tilde{\Delta}_{mod}=\Tilde{g}_2^3-27\Tilde{g}_3^2<0.
\end{equation*}
We consider the Weierstrass elliptic function associated to this lattice $\Tilde{L}$,
\begin{equation*}
\wp(z|\Tilde{L})=\wp(z|\tilde{\omega}_1(\tau),\tilde{\omega}_2(\tau)).
\end{equation*} There is an interesting characterization of the sign of the invariants $\Tilde{g}_2,\ \Tilde{g}_3$ in terms of the shape of the lattice
\begin{thm}[{\cite[Theorem 2.9]{duval}}]\label{signinvariants}
    Let $\tau\in \bR^+$ and $\theta_\tau\in (0,\frac{\pi}{2})$ such that $\tau=\tan \theta_\tau$. The invariants $\Tilde{g}_2(\tau)$ and $\Tilde{g}_3(\tau)$ associated to the latice $\Tilde{L}(\tau)$ satisfy
    \begin{gather*}
        \begin{cases}
         \Tilde{g}_2(\tau)<0\quad \text{if $\theta_\tau\in(\frac{\pi}{6},\frac{\pi}{3})$},\\\Tilde{g}_2(\tau)=0\quad \text{if $\theta_\tau\in \{\frac{\pi}{6},\frac{\pi}{3}\}$},\\ \Tilde{g}_2(\tau)>0\quad \text{if $\theta_\tau\in(0,\frac{\pi}{6})\cup(\frac{\pi}{3},\frac{\pi}{2})$}.
    \end{cases}\quad  \begin{cases}
         \Tilde{g}_3(\tau)<0\quad \text{if $\theta_\tau\in (0,\frac{\pi}{4})$},\\\Tilde{g}_3(\tau)=0\quad \text{if $\theta_\tau=\frac{\pi}{4}$},\\ \Tilde{g}_3(\tau)>0\quad \text{if $\theta_\tau\in(\frac{\pi}{4},\frac{\pi}{2})$}.
    \end{cases}
    \end{gather*}
\end{thm}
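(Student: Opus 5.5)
The plan is to reduce the signs of $\Tilde{g}_2(\tau)$ and $\Tilde{g}_3(\tau)$ to the classical behaviour of the Eisenstein series on the boundary of the fundamental domain of $\mathrm{SL}_2(\bZ)$. First I normalize the lattice. A basis of $\Tilde{L}(\tau)$ is $2\Tilde{\omega}_1=2-2i\tau$ and $2\Tilde{\omega}_3=4$. By homogeneity, $g_2(\lambda L)=\lambda^{-4}g_2(L)$ and $g_3(\lambda L)=\lambda^{-6}g_3(L)$. Taking $\lambda$ real and positive, for instance $\lambda=1/4$, sends $\Tilde{L}(\tau)$ to $\mathrm{span}\{1,\rho\}$ with
\begin{equation*}
\rho=\frac{1+i\tau}{2},
\end{equation*}
using the basis $\{4, 2\Tilde{\omega}_2\}$. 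Positive real scaling preserves the signs, so $\mathrm{sign}\,\Tilde{g}_2(\tau)=\mathrm{sign}\,E_4(\rho)$ and $\mathrm{sign}\,\Tilde{g}_3(\tau)=\mathrm{sign}\,E_6(\rho)$, since $g_2=\tfrac{4\pi^4}{3}E_4$ and $g_3=\tfrac{8\pi^6}{27}E_6$ for the lattice $\mathrm{span}\{1,\rho\}$. The point $\rho$ runs over the vertical line $\mathrm{Re}\,\rho=\tfrac12$. Note that $\tau=\tan\theta_\tau$, and $|\rho|=1$ exactly when $\tau=\sqrt3$, i.e. $\theta_\tau=\pi/3$. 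Applying $\rho\mapsto\rho-1$ and then $S$, the point $\rho$ moves to $1/(1-\rho)$, which lies on the unit circle arc or on $\mathrm{Re}=\tfrac12$, mirroring the two ranges.

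Next I split the range. For $\theta_\tau\ge\pi/3$, $\rho$ lies in the closure of the standard fundamental domain on the right boundary line. For $\theta_\tau\in(0,\pi/3)$, the map $\rho\mapsto -1/\rho$ composed with a translation sends $\rho$ to the arc $|z|=1$, $0\le\mathrm{Re}\,z\le\tfrac12$. Explicitly, $-1/\rho+1=\bar\rho/|\rho|^2\cdot(\ldots)$; I would compute the image point and its angle in terms of $\theta_\tau$. Since $E_4$ and $E_6$ have weights $4$ and $6$, the sign changes by the factor $\rho^4$ or $\rho^6$. These factors are not real, so it is cleaner to work with the lattice directly: the rhombic lattice $\Tilde{L}(\tau)$ is homothetic by a \emph{real} factor to $\Tilde{L}(1/\tau)$ rotated by $\pi/2$. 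Multiplying by $i$ multiplies $g_2$ by $i^{-4}=1$ and $g_3$ by $i^{-6}=-1$. Hence $\mathrm{sign}\,\Tilde{g}_2(\tau)=\mathrm{sign}\,\Tilde{g}_2(1/\tau)$ and $\mathrm{sign}\,\Tilde{g}_3(\tau)=-\mathrm{sign}\,\Tilde{g}_3(1/\tau)$. This explains the symmetry $\theta\leftrightarrow\pi/2-\theta$ in the statement, and it immediately gives $\Tilde{g}_3=0$ at $\theta_\tau=\pi/4$, which is the square lattice. So it suffices to treat $\theta_\tau\in[\pi/4,\pi/2)$.

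On that range I use the classical facts about the zeros of $E_4$ and $E_6$. The zeros of $E_4$ in the fundamental domain occur only at $e^{2\pi i/3}$, equivalently at $e^{\pi i/3}=1+e^{2\pi i/3}$. The zeros of $E_6$ occur only at $i$, by the valence formula. The point $\theta_\tau=\pi/3$ corresponds to the hexagonal lattice, so $\Tilde{g}_2=0$ there. For $\theta_\tau\in(\pi/3,\pi/2)$, $\rho=\tfrac12+it$ with $t>\sqrt3/2$. Along this line $E_4$ and $E_6$ are real by the conjugation symmetry $\overline{E_k(\rho)}=E_k(1-\bar\rho)$. They do not vanish there, and as $t\to\infty$ both tend to $1$, so both are positive. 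For $\theta_\tau\in(\pi/4,\pi/3)$, after the modular transformation the point lies on the arc between $i$ and $e^{\pi i/3}$, where the zeros are located only at the endpoints. I would determine the signs by continuity from the endpoint behaviour, using the real values of $g_2$ and $g_3$ directly, which are real on rhombic lattices. $\Tilde{g}_3$ cannot vanish there, and it is positive near $\pi/3$ because $g_2=0$ and $\Delta<0$ force $g_3^2>0$, with sign obtained by continuity from the positive range. $\Tilde{g}_2$ changes sign exactly once, at the simple zero at $\pi/3$.

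The main obstacle is this sign determination on $(\pi/4,\pi/3)$. It requires a nonvanishing argument together with one sign computation. I would get the sign of $\Tilde{g}_2$ from the fact that $E_4$ has a simple zero at $e^{\pi i/3}$ and that $g_2^3>27g_3^2\cdot$ fails near it. More concretely, at the square lattice ($\theta=\pi/4$) we have $g_3=0$ and $\Delta<0$ would force $g_2^3<0$. So $\Tilde{g}_2(1)<0$, which together with the single zero at $\pi/3$ fixes all the signs.
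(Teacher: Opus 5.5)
The paper gives no proof of this statement; it quotes it from \cite[Theorem 2.9]{duval}, so there is no internal argument to compare against. Your proposal is a correct, self-contained proof by a modular-forms route, and it works as follows.
\begin{itemize}
\item The real rescaling $\tilde L(\tau)=4\,\mathrm{span}\{1,\rho\}$, with $\rho=\frac{1+i\tau}{2}$, turns the signs of $\tilde g_2,\tilde g_3$ into the signs of $E_4(\rho),E_6(\rho)$.
\item The identity $i\tilde L(\tau)=\tau\tilde L(1/\tau)$ gives $\tilde g_2(\tau)=\tau^{-4}\tilde g_2(1/\tau)$ and $\tilde g_3(\tau)=-\tau^{-6}\tilde g_3(1/\tau)$. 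This explains the (anti)symmetry under $\theta_\tau\mapsto\frac{\pi}{2}-\theta_\tau$ and reduces everything to $\tau\ge 1$.
\item The valence formula locates the zeros.
\item Continuity plus two anchors fixes all signs: as $\tau\to\infty$ both invariants are positive, and at $\tau=1$ one has $\tilde g_2<0$.
\end{itemize}
What this buys over the citation is transparency: the sign pattern is forced by the values of $\theta_\tau$ at which $\tilde L(\tau)$ is hexagonal or square. The same $q$-product argument also justifies $\tilde\Delta_{mod}<0$, which the paper only asserts ``by construction''.

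A few points need cleaning up.

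First, your intermediate claim about $1/(1-\rho)$ is wrong. Since $1-\rho=\bar\rho$, the point $1/(1-\rho)=\rho/|\rho|^2$ lies on the circle $|z-1|=1$, not on $|z|=1$ or on $\mathrm{Re}\,z=\frac12$. The transformation you actually want is $\rho\mapsto 1-1/\rho=\frac{\tau+i}{\tau-i}=e^{i(\pi-2\theta_\tau)}$. This point lies on the arc from $e^{i\pi/3}$ to $i$ exactly when $\theta_\tau\in[\pi/4,\pi/3]$. For $\theta_\tau<\pi/4$ its real part is negative, contrary to what you wrote for all of $(0,\pi/3)$. Still, $[\pi/4,\pi/3]$ is precisely the range you need after the symmetry reduction.

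Second, the anchor $\tilde g_2(1)<0$ rests on $\Delta<0$ for $\tilde L(1)$, which you use without justification. You can supply it in either of two ways:
\begin{itemize}
\item From the product formula: $q=e^{2\pi i\rho}=-e^{-\pi\tau}\in(-1,0)$ in $\Delta=(2\pi)^{12}q\prod_{n\ge1}(1-q^n)^{24}$, so $\Delta<0$.
\item Directly: $\tilde L(1)=(2-2i)\bZ[i]$ and $(2-2i)^4=-64$, so $\tilde g_2(1)=-g_2(\bZ[i])/64$. This is negative because $E_4(i)=1+240\sum_n\sigma_3(n)e^{-2\pi n}>0$.
\end{itemize}

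Finally, the garbled sentence about ``$g_2^3>27g_3^2$ fails'' and the appeal to simplicity of the zero at $\pi/3$ are unnecessary. Nonvanishing on $[\pi/4,\pi/3)$ and on $(\pi/3,\pi/2)$, together with the two anchors, already determines every sign.
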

It is known \cite[Section 20]{duval} that the Weierstrass function is only real along the diagonals of the rhombus, and strictly decreasing moving along $[0,\tilde{\omega}_3]$, and then keep decreasing along $[\tilde{\omega}_3,2\tilde{\omega}_2]$. Therefore,
\begin{equation}\label{derivativeP}
    \forall\ 0<s<2=\Tilde{\omega}_3,\quad \wp'(s|\tilde{\omega}_1,\tilde{\omega}_2)<0.
\end{equation}
Consider
\begin{equation*}
    \lambda(\tau,s)\coloneqq \sqrt[3]{-4\wp'(s|\tilde{\omega}_1(\tau),\tilde{\omega}_2(\tau))}>0,
\end{equation*}
and define
\begin{equation*}
    M(\tau,s)=\lambda(\tau,s)\cdot s.
\end{equation*}
We now introduce a rescaling of the half periods of the rhombic lattice $\{\tilde{\omega}_1,\tilde{\omega_2},\tilde{\omega_3}\}$
\begin{gather*}
    \omega_1(\tau,s)=\lambda(\tau,s)\cdot \tilde{\omega}_1(\tau),\quad \omega_2(\tau,s)=\lambda(\tau,s)\cdot \tilde{\omega}_2(\tau),\quad \omega_3(\tau,s)=\lambda(\tau,s)\cdot \tilde{\omega}_3(\tau),
\end{gather*}
which induce a rescaled lattice
\begin{equation}\label{rescaledlattice}
    L=L(\tau,s)\coloneqq\text{span}\{2\omega_1(\tau,s),2\omega_2(\tau,s)\},
\end{equation}
with associated invariants $g_2,\ g_3$, which are also rescalings of $\Tilde{g}_2, \Tilde{g}_3$. By construction, 
\begin{equation*}
    0<M(\tau,s)<\omega_3(\tau,s).
\end{equation*}
Using the rescaling properties of the Weierstrass elliptic functions \cite[Section 18.2]{Handbook}, we have that 
\begin{equation}
    \wp'(M(\tau,s)|\omega_1,\omega_2)=\wp'(\lambda\cdot s|\lambda\cdot \tilde{\omega}_1,\lambda\cdot \tilde{\omega}_2)=\frac{1}{\lambda^3}\wp'(s|\tilde{\omega}_1,\tilde{\omega}_2)=-\frac{1}{4},
\end{equation}
that is
\begin{equation}\label{eqmu}
    \wp'(M(\tau,s)|\omega_1,\omega_2)=-\frac{1}{4}.
\end{equation}
Next, we define the real constant 
\begin{equation}\label{defb}
    b(\tau,s)\coloneqq 4\wp(M(\tau,s)|\omega_1(\tau,s),\omega_2(\tau,s))\in \bR.
\end{equation}
By the differential equation satisfied by the Weierstrass elliptic function \eqref{differentialequationp}, we obtain that $b(\tau,s)$ satisfies the equation
\begin{gather*}
    b^3-4g_2b-16g_3=16\left(4\wp^3(\mu)-g_2\wp(\mu)-g_3\right)=\left(4\wp'(\mu)\right)^2=1,
\end{gather*}
that is
\begin{equation}\label{cubicequation2}
    b^3-4g_2b-16g_3=1.
\end{equation}
Notice that by Definition \ref{mudefinition} and equations \eqref{eqmu}, \eqref{defb}, \eqref{cubicequation2} we must have
\begin{equation}\label{equivalentdefinitions}
    M(\tau,s)=\mu(b(\tau,s),L(\tau,s))
\end{equation}
\begin{cor}\label{construWenteWeierstrassdata}
    For each point $(\tau,s,\hat{g}_0)\in \Omega\times \bR^+$, the one-form $\Phi_{L(\tau,s),b(\tau,s),\hat{g}_0}$ given by equations \eqref{Weierstrassdata}, \eqref{phifunction}, \eqref{gaussmap}, \eqref{rescaledlattice} and \eqref{defb} is a well-defined Wente-Weierstrass data that induces by Theorem \ref{EnnepertypeImmersion}, a conformal minimal immersion
\begin{equation}\label{inducedminimalimmersion}
\Psi_{L(\tau,s),\hat{g}_0}\coloneq \Psi_{L(\tau,s),b(\tau,s),\hat{g}_0}:\bC\setminus L(\tau,s)\to \bR^3,
\end{equation}
 with infinitely many embedded planar ends at the lattice points $L(\tau,s)$ such that every $v$-line $\Psi_{L,b,\hat{g}_0}(u_0,\cdot)$ is a spherical line of curvature.
\end{cor}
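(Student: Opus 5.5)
The plan is to check that the data built from $(\tau,s,\hat g_0)$ satisfy the three defining conditions of a Wente-Weierstrass data. Once that is done, Theorem \ref{gaussmapformula} and Theorem \ref{EnnepertypeImmersion} give everything else in the statement.

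First, the lattice $L(\tau,s)$ must have real invariants. I will argue this by scaling. The unscaled lattice $\tilde L(\tau)$ is invariant under complex conjugation, since $\overline{2\tilde\omega_1}=2\tilde\omega_2$, so $\tilde g_2,\tilde g_3\in\bR$. The rescaling factor $\lambda(\tau,s)$ is a positive real number because of \eqref{derivativeP}. By homogeneity, $g_2=\lambda^{-4}\tilde g_2$ and $g_3=\lambda^{-6}\tilde g_3$, so both are real. In addition $\Delta_{mod}=\lambda^{-12}\tilde\Delta_{mod}<0$, so $\wp(\cdot|L)$ is non-degenerate of rhombic type, case IV.

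Next, the constant $b(\tau,s)$ from \eqref{defb} must be real and satisfy \eqref{cubicequationb}. Since $M=\lambda s$ is real and lies on the real diagonal $(0,\omega_3)$, where $\wp$ takes real values, $b$ is real. The cubic equation is exactly \eqref{cubicequation2}. I then set $\phi=b-4\wp(\cdot|L)$. To see that $\phi$ solves \eqref{equationphi} with real $p,q$, I apply Theorem \ref{caradephi} in reverse. Define $p=3b$, and choose $q$ so that $g_2=(p^2+3q)/12$, namely $q=(12g_2-p^2)/3\in\bR$. The constant term of the resulting equation then matches by \eqref{cubicequationb}. Alternatively, one can expand $\phi'^2=16\wp'^2$ directly using \eqref{differentialequationp} and compare coefficients.

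For the Gauss map, I take $g$ as in \eqref{gaussmap}. By \eqref{equivalentdefinitions} the point $M$ is the $\mu$ of Definition \ref{mudefinition}, and $\wp'(\mu)=-\tfrac14\neq0$. This is what allows the residue computation in Theorem \ref{gaussmapformula} to go through. That theorem then extends the data to all of $\bC\setminus L$ and gives formula \eqref{gaussmapsigmafunctions}.

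With these conditions verified, $\Phi_{L(\tau,s),b(\tau,s),\hat g_0}$ is a Wente-Weierstrass data. Theorem \ref{EnnepertypeImmersion} then yields the complete conformal minimal immersion \eqref{inducedminimalimmersion}, with embedded planar ends at the points of $L(\tau,s)$ and with spherical $v$-lines of curvature.

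The only delicate point I anticipate is the sign claim \eqref{derivativeP}, which makes $\lambda$ real and positive. For this I rely on the cited fact from \cite{duval} that $\wp$ is strictly decreasing along $(0,\tilde\omega_3)$.
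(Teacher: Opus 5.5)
Your proposal is correct and follows the paper's route. The paper gives no separate proof: the corollary rests on the construction just before it, namely $\lambda>0$ from \eqref{derivativeP}, the rescaled lattice, $\wp'(M)=-\tfrac14$, $b=4\wp(M)$ satisfying \eqref{cubicequation2}, and $M=\mu(b,L)$ from \eqref{equivalentdefinitions}, followed by an appeal to Theorem \ref{EnnepertypeImmersion}. Your additional checks simply make explicit what the paper leaves implicit: reality of $g_2,g_3$ and the sign of $\Delta_{mod}$ by homogeneity, and $p=3b$, $q=4g_2-3b^2$.
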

Now we study the possibility to choose the parameters adequately in $\Omega\times \bR^+$ so that our minimal immersion descends to $\bC/\Lambda$, where $\Lambda$ is an appropriate sub-lattice of $L$. According to Theorem \ref{importancerombic} a necessary condition is that the period map $$\kappa_2(\tau,s)\coloneq \kappa_2(b(\tau,s),L(\tau,s))$$ is a real rational number. We break the analysis into two steps. The first step is to find a curve $C$ inside $\Omega$ such that the period map $\kappa_2$ is purely real along it. The second step is to prove that the period map $\kappa_2$ is not constant along $C$. As a consequence of these two steps, by continuity and the intermediate value theorem, the period $\kappa_2$ will assume rational values.
\begin{cor}\label{realimaginaryperiods}
    The real and imaginary parts of the period $\kappa_2(\tau,s)$ can be written as
    \begin{equation*}
         \pi\ Im\ \kappa_2(\tau,s)=2\zeta(s|\Tilde{L})-\zeta(2|\Tilde{L}) s,
    \end{equation*}
    \begin{equation*}
        -i\pi\ Re\ \kappa_2(\tau,s)-i\pi N(\tau,s)=2i\tau\zeta(s|\Tilde{L})-\zeta(2i\tau|\Tilde{L})s.
    \end{equation*}
\end{cor}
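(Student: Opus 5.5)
Starting from the formula \eqref{Period} applied to $L=L(\tau,s)$, $b=b(\tau,s)$, I would rewrite $\kappa_2$ in terms of the unscaled lattice $\Tilde{L}$, where $\mu=M(\tau,s)=\lambda s$ by \eqref{equivalentdefinitions}. Under $z\mapsto\lambda z$, $L\mapsto \lambda L$, the zeta function is homogeneous of degree $-1$: $\zeta(\lambda z|\lambda\Tilde{L})=\lambda^{-1}\zeta(z|\Tilde{L})$. Hence
\begin{equation*}
4\zeta(\mu|L)\omega_2-4\zeta(\omega_2|L)\mu=4\zeta(s|\Tilde{L})\tilde{\omega}_2-4\zeta(\tilde{\omega}_2|\Tilde{L})s,
\end{equation*}
and the factors of $\lambda$ cancel. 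This gives $-2\pi i\kappa_2=4\zeta(s|\Tilde{L})(1+i\tau)-4\zeta(\tilde{\omega}_2|\Tilde{L})s+2\pi i N$.

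The next step is to express $\zeta(\tilde{\omega}_2|\Tilde{L})$ through the real and imaginary diagonals of the rhombus. I would use the quasi-periodicity $\zeta(z+2\tilde\omega_j)=\zeta(z)+2\zeta(\tilde\omega_j)$. It gives $\zeta(2|\Tilde{L})=\zeta(\tilde\omega_1+\tilde\omega_2)=\zeta(\tilde\omega_1)+\zeta(\tilde\omega_2)$, for instance by evaluating at $z=-\tilde\omega_1+\tilde\omega_2$ and using oddness. Similarly $\zeta(2i\tau|\Tilde{L})=\zeta(\tilde\omega_2-\tilde\omega_1)=\zeta(\tilde\omega_2)-\zeta(\tilde\omega_1)$. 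Adding these yields $2\zeta(\tilde\omega_2)=\zeta(2)+\zeta(2i\tau)$. Substituting, I get
\begin{equation*}
-2\pi i(\kappa_2+N)=2\bigl[2\zeta(s)-\zeta(2)s\bigr]+2\bigl[2i\tau\zeta(s)-\zeta(2i\tau)s\bigr],
\end{equation*}
with all $\zeta$'s taken relative to $\Tilde{L}$.

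It remains to separate real and imaginary parts. Because the invariants are real, $\overline{\zeta(\bar z)}=\zeta(z)$, so $\zeta$ is real on the real axis: $\zeta(s),\zeta(2)\in\bR$. Oddness together with the conjugation symmetry gives $\overline{\zeta(it)}=\zeta(-it)=-\zeta(it)$, so $\zeta(2i\tau)\in i\bR$. Thus the first bracket is real and the second is purely imaginary. Writing $\kappa_2=\mathrm{Re}\,\kappa_2+i\,\mathrm{Im}\,\kappa_2$, the left side equals $2\pi\,\mathrm{Im}\,\kappa_2-2\pi i(\mathrm{Re}\,\kappa_2+N)$. Matching the real parts gives the first identity, and matching the imaginary parts gives the second identity after dividing by $2$.

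The main obstacle is bookkeeping rather than anything conceptual. Three points need care: the addition identity for $\zeta$ at half-period sums must hold with the correct signs; the integer $N$ is real, so it sits entirely in the imaginary-part equation; and the scaling of $\zeta$ must be applied consistently, with both $\mu$ and $\omega_2$ rescaled by the same $\lambda$.
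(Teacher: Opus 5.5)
Your proposal is correct and is essentially the paper's argument: rescale to $\tilde L$ using the homogeneity of $\zeta$, then combine the conjugation symmetry $\overline{\zeta(\bar z|\tilde L)}=\zeta(z|\tilde L)$ with the half-period identities $\zeta(\tilde\omega_1)+\zeta(\tilde\omega_2)=\zeta(2|\tilde L)$ and $\zeta(\tilde\omega_2)-\zeta(\tilde\omega_1)=\zeta(2i\tau|\tilde L)$. The paper conjugates the whole expression $\zeta(s)\tilde\omega_2-\zeta(\tilde\omega_2)s$ first, whereas you split $\zeta(\tilde\omega_2)$ into real and imaginary parts first, which gives both identities in one computation (the paper only says ``a similar argument'' for the second); one minor slip is that the quasi-periodicity at $z=-\tilde\omega_1+\tilde\omega_2$ with shift $2\tilde\omega_1$ yields $\zeta(2)-\zeta(2i\tau)=2\zeta(\tilde\omega_1)$, not the sum identity, while the relation you actually use, $2\zeta(\tilde\omega_2)=\zeta(2)+\zeta(2i\tau)$, follows directly from the shift by $2\tilde\omega_2$ at $z=\tilde\omega_1-\tilde\omega_2$ together with oddness.
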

\begin{proof}
    From equation \eqref{equivalentdefinitions}, we have that can take the imaginary part in equation \eqref{Period}, we have
    \begin{gather*}
    Im\ \kappa_2(\tau,s)=\frac{2}{\pi}Re\ \left(\zeta(\lambda\cdot s|\lambda\cdot \Tilde{\omega}_1,\lambda\cdot \Tilde{\omega}_2)\lambda\cdot \Tilde{\omega}_2-\zeta(\lambda\cdot \Tilde{\omega}_2|\lambda\cdot \Tilde{\omega}_1,\lambda\cdot \Tilde{\omega}_2)\lambda\cdot s\right).
\end{gather*}
Using the homogeneity relations \cite[Section 18.2]{Handbook} and the identity \cite[Eq. 18.4.3]{Handbook}
\begin{equation*}
\zeta(\Tilde{\omega}_1|\Tilde{L})+\zeta(\Tilde{\omega}_2|\Tilde{L})=\zeta(\Tilde{\omega}_1+\Tilde{\omega}_2|\Tilde{L}),
\end{equation*}
we obtain
\begin{align*}
\begin{split}
    Im\ \kappa_2(\tau,s)&=\frac{2}{\pi}Re\ \left(\zeta( s| \Tilde{L}) \Tilde{\omega}_2-\zeta( \Tilde{\omega}_2| \Tilde{L})\ s\right)\\&=\frac{1}{\pi}\left(\zeta( s| \Tilde{L}) \Tilde{\omega}_2-\zeta( \Tilde{\omega}_2| \Tilde{L})\ s+\overline{\zeta( s|\Tilde{L}) \Tilde{\omega}_2-\zeta( \Tilde{\omega}_2| \Tilde{L})\ s}\right)\\&=\frac{1}{\pi}\left(\zeta( s| \Tilde{L}) \Tilde{\omega}_2-\zeta( \Tilde{\omega}_2| \Tilde{L}) s+\zeta( \overline{s}|\Tilde{L}) \overline{\Tilde{\omega}_2}-\zeta( \overline{\Tilde{\omega}_2}| \Tilde{L}) \overline{s}\right)\\&= \frac{1}{\pi}\left(\zeta( s| \Tilde{L}) \Tilde{\omega}_2-\zeta( \Tilde{\omega}_2|\Tilde{L}) s+\zeta(s| \Tilde{L}) \Tilde{\omega}_1-\zeta( \Tilde{\omega}_1| \Tilde{L}) s\right)\\&= \frac{1}{\pi}\left(\zeta(s| \Tilde{L}) \Tilde{\omega}_3-\zeta( \Tilde{\omega}_3| \Tilde{L}) s\right).
    \end{split}
\end{align*}
A similar argument taking real part in equation \eqref{Period} shows the second claim.
\end{proof}
In order to identify the rhombic lattices for which the period $\kappa_2$ is purely real we need some preliminary lemmas
\begin{lema}\label{functionF}
    Consider the function $F:\bR^+\to \bR$ defined by
    \begin{equation*}
        F(\tau)\coloneqq \zeta(2|\Tilde{L}(\tau))+2\wp(2|\Tilde{L}(\tau)).
    \end{equation*}
    Then this function has the following properties
    \begin{enumerate}
    \item[I.] $F$ is monotone increasing.
     \item[II.] $\lim_{\tau\to +\infty}F(\tau)=\frac{\pi^2}{8}>0$.
    \item[III.] $\lim_{\tau\to 0^+}F(\tau)=-\infty$.
    \item[IV.] There exists a unique critical value $\tau_0<1$ such that $F(\tau_0)=0$.
        \item[V.] $\forall \tau\in(0,\tau_0)$, $F(\tau)<0$.
        \item[VI.] $\forall \tau\in(\tau_0,\infty)$, $F(\tau)>0$.
        
     \end{enumerate}
\end{lema}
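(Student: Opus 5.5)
Since $\tilde\omega_3=2$ is a half-period of $\tilde L(\tau)$, we have $F(\tau)=\eta_3+2e_3$ with $\eta_3=\zeta(\tilde\omega_3|\tilde L)$ and $e_3=\wp(\tilde\omega_3|\tilde L)$. Both are real, because the invariants are real and $\tilde\omega_3\in\bR$. The plan is to write $F$ as an explicit $q$-series in the real variable $r=e^{-\pi\tau}$, read off II from the constant term, get I from the sign of a derivative, get III from a modular (Jacobi imaginary) transformation, and then deduce IV--VI from I--III.

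\emph{Series expression and limit II.} I choose the basis of $\tilde L$ with real half-period $\omega=2$ (so $2\omega=4=2\tilde\omega_1+2\tilde\omega_2$) and second half-period $\omega'=\tilde\omega_2=1+i\tau$. Then $T=\omega'/\omega=\tfrac12+\tfrac{i\tau}{2}$ and $q=e^{i\pi T}=i\,e^{-\pi\tau/2}$, so $q^2=-r$ is real. I will use the classical expansions
\begin{equation*}
\eta\,\omega=\frac{\pi^2}{12}\Bigl(1-24\sum_{n\ge1}\frac{nq^{2n}}{1-q^{2n}}\Bigr),\qquad
\wp(\omega)=\Bigl(\frac{\pi}{2\omega}\Bigr)^2\Bigl(\frac23+16\sum_{n\ge1}\frac{nq^{2n}}{1-q^{2n}}\cdot(\text{odd-}n\ \text{weights})\Bigr),
\end{equation*}
and I will write the second one precisely through $\theta_3^4+\theta_4^4$ (equivalently $E_2$, $E_4$-type series). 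This gives $F=\frac{\pi^2}{8}\bigl(1+\sum_{n\ge1}a_n(-r)^n\bigr)$ with explicit integer-type coefficients $a_n$. As a check, letting $r\to0$ recovers the degenerate trigonometric case $\wp(z)=(\pi/4)^2\csc^2(\pi z/4)-\pi^2/48$. There $\eta_3=e_3=\pi^2/24$, so $F\to\pi^2/24+2\pi^2/24=\pi^2/8$, which is II.

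\emph{Limit III.} As $\tau\to0^+$, the lattice vector $2\tilde\omega_1=2-2i\tau$ stays bounded while the fundamental rhombus flattens, so $r\to1$ and the series gives no control. I will instead apply the transformation $T\mapsto -1/T$ (combined with $T\mapsto T+1$), using the homogeneity relations \cite[Section 18.2]{Handbook}. In the transformed frame the nome tends to $0$, and $\eta_3$ and $e_3$ become the degenerate hyperbolic-type expressions plus the quasi-modular correction term coming from Legendre's relation $\eta\omega'-\eta'\omega=\pi i/2$. My expectation is that this correction behaves like $-c/\tau$ with $c>0$; it would make $F\to-\infty$, but this must be confirmed by the computation. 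Tracking the exact form and sign of the $-c/\tau$ term is the delicate bookkeeping in this step.

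\emph{Monotonicity I (main obstacle).} Differentiating the series termwise in $\tau$ and controlling the alternating signs of $(-r)^n$ directly looks hard. The first thing I would try is to express $F$ via Eisenstein series $E_2,E_4$ evaluated at $T$, and then use Ramanujan's identities $q\,dE_2/dq=(E_2^2-E_4)/12$ and $q\,dE_4/dq=(E_2E_4-E_6)/3$. These turn $dF/d\tau$ into a polynomial in $e_1,e_2,e_3$: on a rhombic lattice $e_3$ is real and $e_1=\overline{e_2}$, since $\Delta_{mod}<0$. The goal is to reduce this polynomial to a manifestly positive quantity, something like $|e_1-e_3|^2$ times a positive factor. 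If that fails, the fallback is to show $dF/d\tau>0$ for $\tau\ge1$ from the rapidly convergent series, and for $\tau<1$ from the transformed series used in III.

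\emph{Deduction of IV--VI.} Once I, II and III hold, the intermediate value theorem and strict monotonicity give a unique zero $\tau_0$ of $F$, with $F<0$ on $(0,\tau_0)$ and $F>0$ on $(\tau_0,\infty)$; this is IV, V and VI. For $\tau_0<1$ it suffices to show $F(1)>0$. At $\tau=1$ the lattice is a square lattice rotated by $\pi/4$, so $\tilde g_3=0$ by Theorem \ref{signinvariants}. Then $e_3=0$, since $\tilde\omega_3$ lies on the real diagonal and the roots of $4x^3-\tilde g_2x$ are $0$ and $\pm\sqrt{\tilde g_2}/2$. Hence $F(1)=\eta_3$, and Legendre's relation for this symmetric lattice gives $\eta_3=\pi/4>0$.
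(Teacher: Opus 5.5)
Your treatment of II and your deduction of IV--VI from I--III are fine. Your proof of $\tau_0<1$ via $\tilde g_3=0$, $e_3=0$ and $F(1)=\zeta(2|\tilde L(1))=\pi/4>0$ is correct, and it is tidier than the paper's appeal to the signs of $\zeta(2)$ and $\wp(2)$ for all $\tau\ge 1$.

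The gap is that I and III, which carry the real content, are not proved. For I you only list strategies: Ramanujan's identities with a hoped-for positive expression, or a two-regime fallback. For III you record an expectation still to be confirmed.

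The idea you are missing is that the series you were about to write already gives both, provided you do not expand it in powers of $-r$. Evaluate the Fourier expansion of $\wp$ with real half-period $2$ (\cite[Eq. 23.8.1]{manual}) at $z=2$. Two cancellations occur: the $\eta$-term cancels exactly against $\zeta(2)$, and the factor $\cos(n\pi)=(-1)^n$ cancels the sign of $q^{2n}=(-r)^n$. This gives
\[
F(\tau)=\frac{\pi^2}{8}-\pi^2\sum_{n\ge1}\frac{n}{e^{\pi n\tau}-(-1)^n}.
\]
Every summand is positive and strictly decreasing in $\tau$, which yields:
\begin{itemize}
\item I, by termwise differentiation, since the series converges locally uniformly on $\bR^+$;
\item II, by dominated convergence;
\item III, because the sum is at least $\frac12\sum_n n e^{-\pi n\tau}\to+\infty$.
\end{itemize}
The alternating signs you single out as the main obstacle are an artifact of rewriting this Lambert series as a power series in $-r$.

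Your modular route to III can also be completed, but not by the mechanism you predict. Use $\tilde L(\tau)=i\tau\,\tilde L(1/\tau)$ and $2i/\tau\equiv 2$ modulo $\tilde L(1/\tau)$. Homogeneity, together with the Legendre relation in the form $\zeta(2i\sigma|\tilde L(\sigma))=-\frac{i\pi}{2}+i\sigma\,\zeta(2|\tilde L(\sigma))$, gives the exact identity
\[
F(\tau)=\frac{\pi}{2\tau}-\frac{1}{\tau^{2}}\,F(1/\tau).
\]
So the quasi-modular correction is $+\pi/(2\tau)$, the opposite sign to your guess. The divergence $F(\tau)\sim-\pi^2/(8\tau^2)$ comes from the transformed main term $-\tau^{-2}F(1/\tau)$ combined with II, not from the correction.

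This identity also re-derives $F(1)=\pi/4$. Differentiating it shows that $F'>0$ on $[1,\infty)$ forces $F'>0$ on $(0,1)$, which is exactly what your fallback for I would need. None of this appears in the proposal as written, so as it stands I and III remain unproved.
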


\begin{proof}
    We have the following uniformly convergent Fourier series of the Weierstrass $\wp$ function \cite[Eq. 23.8.1]{manual}
    \begin{equation*}
        \wp(2|\Tilde{L})+\frac{\zeta(2|\Tilde{L})}{2}-\frac{\pi^2}{16}=-\frac{\pi^2}{2}\sum_{n=1}^\infty\frac{nq^{2n}}{1-q^{2n}}\cos\left(n\pi \right),
    \end{equation*}
    where according to our choice of lattice $\Tilde{L}(\tau)$
    \begin{equation*}
        q=e^{\frac{i\pi \Tilde{\omega}_2}{\Tilde{\omega}_3}}=e^{i\pi \frac{1+i\tau}{2}}=ie^{-\frac{\pi \tau}{2}}.
    \end{equation*}
    Therefore
    \begin{equation*}
        F(\tau)=\frac{\pi^2}{8}-\pi^2\sum_{n=1}^\infty\frac{n}{e^{\pi n\tau}-(-1)^{n}}.
    \end{equation*}
    Since the convergence is uniformly we can take the derivative by differentiating each term of the series
    \begin{equation*}
        F'(\tau)=\pi^3\sum_{n=1}^{\infty}\frac{ n^2e^{\pi n \tau}}{\left(e^{\pi n\tau}-(-1)^n\right)^2}>0.
    \end{equation*}
    This shows that the function $F(\tau)$ is monotone increasing and proves Item I. For Item II, we start by fixing $\tau>0$ large enough so that $e^{\pi\tau}\geq 2$. Then for all $n\geq 1$ we have that
    \begin{equation*}
        1-(-1)^ne^{-n\pi\tau}\geq \frac{1}{2}.
    \end{equation*}
    On the other hand it also holds
    \begin{equation*}
        e^{\frac{n\pi\tau }{2}}\geq 1+\frac{n\pi \tau}{2}\geq \frac{n\pi \tau}{2},
    \end{equation*}
    so that for the choice of $\tau$ large enough, we have
    \begin{equation*}
        \frac{n}{e^{n\pi \tau}-(-1)^n}=\frac{1}{e^{\frac{n\pi \tau}{2}}}\left(\frac{\frac{n}{e^{\frac{n\pi \tau}{2}}}}{1-(-1)^ne^{-n\pi\tau}}\right)\leq \frac{4}{\pi\tau}\frac{1}{e^{\frac{n\pi \tau}{2}}}.
    \end{equation*}
    Now since the geometric series converges to
    \begin{equation*}
        \frac{4}{\pi \tau}\sum_{n=1}^{\infty}\left(e^{-\frac{\pi\tau}{2}}\right)^n= \frac{4}{\pi \tau}\frac{e^{-\frac{\pi \tau}{2}}}{1-e^{-\frac{\pi\tau}{2}}}\to0\quad \text{as\ } \tau\to +\infty,
    \end{equation*}
    it follows that $\lim_{\tau\to +\infty}F(\tau)=\frac{\pi^2}{8}.$ For Item III, we chose $\tau>0$ small enough so that $\frac{1}{2}\leq e^{\pi \tau}\leq 2$. This implies that, for all $n\geq 6$,
    \begin{equation*}
 \frac{n}{e^{\pi \tau n}-(-1)^n}\geq \frac{1}{e^{\pi (n-1)\tau}}.
    \end{equation*}
    Since
    \begin{equation*}
        \sum_{n=0}^{\infty}e^{-n\pi \tau}=\frac{1}{1-e^{-\pi \tau}}\to +\infty\ \text{as\ }\ \tau\to 0^+,
    \end{equation*}
    it follows that $\lim_{\tau\to 0^+}F(\tau)=-\infty$.
\end{proof}
The existence part of Item IV follows as a consequence of the intermediate value theorem, whereas the uniqueness follows from Item 1 and Rolle's Theorem. We point out that $\tau_0<1$ because it is known that if $\tau\geq 1$, in which case $\Tilde{g}_3\geq 0$ by Theorem \ref{signinvariants}, then from \cite[Eq. 18.3.7, 18.3.10]{Handbook} $\zeta(2|\Tilde{L})>0$ and $\wp(2|\Tilde{L})\geq 0$, and therefore $F(\tau)>0$ for $\tau\geq 1$. Items V and VI follow automatically.
\begin{lema}\label{criterio}
        Fix $\tau\in\bR^+$ and consider the function
        \begin{equation*}
            f_{\tau}:(0,2]\to \bR
        \end{equation*}
        given by
        \begin{equation*}
            f_{\tau}(s)=2\zeta(s|\Tilde{L})-\zeta(2|\Tilde{L}) s.
        \end{equation*}
        \begin{itemize}
            \item[I.] If $0<\tau<\tau_0$ then the function $f_{\tau}$ has a unique zero $s_0$ in the interval $(0,2)$. Moreover $f_{\tau}'(s_0)< 0$.
            \item[II.] If $\tau_0\leq\tau$  then the function $f_{\tau}$ does not have any zeros on the interval $(0,2)$.
        \end{itemize}
    \end{lema}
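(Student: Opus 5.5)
The plan is to exploit convexity of $f_\tau$ on $(0,2)$, together with its boundary behaviour at both endpoints and the sign of $F(\tau)$ from Lemma \ref{functionF}.

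First I will record the basic facts about $f_\tau$.

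\emph{Reality.} The segment $(0,2]$ lies on the diagonal $[0,\tilde\omega_3]$ of the rhombus, where $\wp$ is real. Since the invariants are real, $\zeta(\bar z|\Tilde L)=\overline{\zeta(z|\Tilde L)}$. Hence $\zeta(s|\Tilde L)$ and $\zeta(2|\Tilde L)$ are real, and $f_\tau$ is a smooth real function on $(0,2]$.

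\emph{Right endpoint.} Directly from the definition,
\begin{equation*}
f_\tau(2)=2\zeta(2|\Tilde L)-2\zeta(2|\Tilde L)=0.
\end{equation*}

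\emph{Left endpoint.} Since $\zeta(s)=\frac1s+O(s^3)$ near $0$, we get $f_\tau(s)\to+\infty$ as $s\to0^+$.

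\emph{Derivatives.} Differentiating, using $\zeta'=-\wp$,
\begin{equation*}
f_\tau'(s)=-2\wp(s|\Tilde L)-\zeta(2|\Tilde L),\qquad f_\tau''(s)=-2\wp'(s|\Tilde L)>0 \quad (0<s<2),
\end{equation*}
where the sign of $f_\tau''$ comes from \eqref{derivativeP}. So $f_\tau$ is strictly convex on $(0,2)$ and $f_\tau'$ is strictly increasing there.

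\emph{Slope at the endpoint.} The key observation is
\begin{equation*}
f_\tau'(2)=-2\wp(2|\Tilde L)-\zeta(2|\Tilde L)=-F(\tau).
\end{equation*}
This links the lemma directly to the sign analysis in Lemma \ref{functionF}.

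Case II, $\tau\ge\tau_0$. By Items IV and VI of Lemma \ref{functionF}, $F(\tau)\ge0$, so $f_\tau'(2)\le0$. Since $f_\tau'$ is strictly increasing, $f_\tau'(s)<f_\tau'(2)\le0$ for all $s\in(0,2)$. Thus $f_\tau$ is strictly decreasing on $(0,2]$. Combined with $f_\tau(2)=0$, this gives $f_\tau>0$ on $(0,2)$, so there are no zeros.

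Case I, $0<\tau<\tau_0$. By Item V, $F(\tau)<0$, so $f_\tau'(2)>0$.
\begin{itemize}
\item \emph{Existence.} Because $f_\tau'(2)>0$ and $f_\tau(2)=0$, $f_\tau$ is negative just to the left of $2$. Since $f_\tau\to+\infty$ at $0^+$, the intermediate value theorem gives a zero $s_0\in(0,2)$.
\item \emph{Uniqueness.} A strictly convex function has at most two zeros on an interval. One of them is already $s=2$, so $s_0$ is the only zero in $(0,2)$.
\item \emph{Sign of the derivative.} Apply the mean value theorem on $[s_0,2]$. Since $f_\tau(s_0)=f_\tau(2)=0$, there is $c\in(s_0,2)$ with $f_\tau'(c)=0$. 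Strict monotonicity of $f_\tau'$ then yields $f_\tau'(s_0)<f_\tau'(c)=0$.
\end{itemize}

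I expect no serious obstacle. The only delicate points are:
\begin{itemize}
\item justifying that $\zeta$ is real on the diagonal $(0,2]$, which follows from the symmetry $\overline{\zeta(\bar z)}=\zeta(z)$ for real invariants;
\item using the strict inequality \eqref{derivativeP} on the open interval, so that convexity is strict.
\end{itemize}
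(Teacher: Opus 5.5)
Your proof is correct and follows essentially the paper's route: strict convexity from $f_\tau''=-2\wp'>0$, the endpoint behaviour $f_\tau\to+\infty$ as $s\to 0^+$ and $f_\tau(2)=0$, and the sign of $f_\tau'(2)=-F(\tau)$ from Lemma \ref{functionF}. The differences are cosmetic. The paper first locates the critical point $s_1$ and uses Rolle's theorem for uniqueness, whereas you use the sign of $f_\tau$ just left of $2$ and the fact that a strictly convex function has at most two zeros. Your chain $f_\tau'(s)<f_\tau'(2)\le 0$ in Case II is also slightly more careful than the paper's written inequality, which asserts $-F(\tau)<0$ even at $\tau=\tau_0$, where $F(\tau_0)=0$.
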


    \begin{figure}
     \centering
     \begin{subfigure}[b]{0.45\textwidth}
         \centering
         \includegraphics[width=\textwidth]{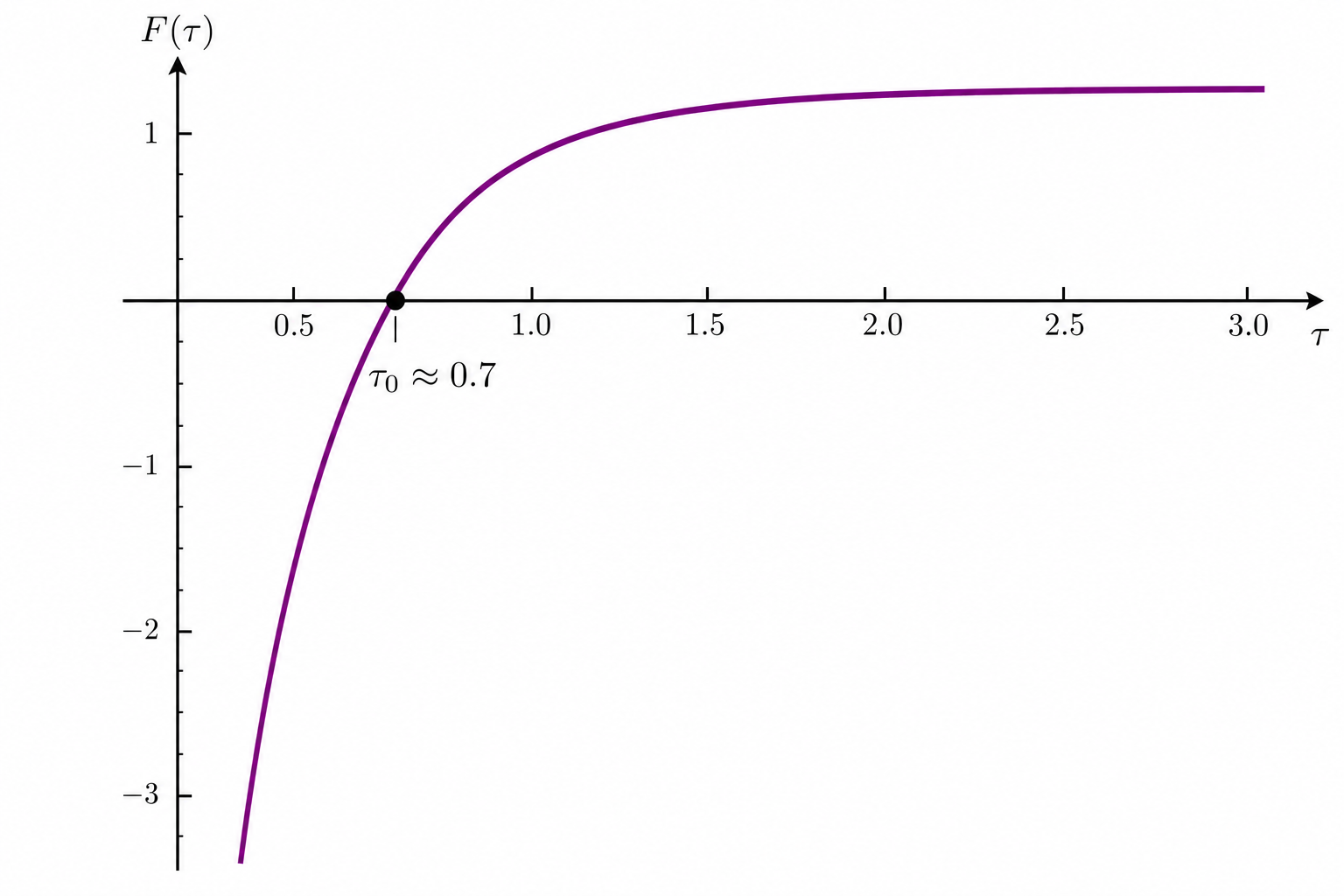}
         \caption{The function $F:\bR^+\to \bR$.}
         \label{fig:Ftau}
     \end{subfigure}
     \hfill
     \begin{subfigure}[b]{0.45\textwidth}
         \centering
         \includegraphics[width=\textwidth]{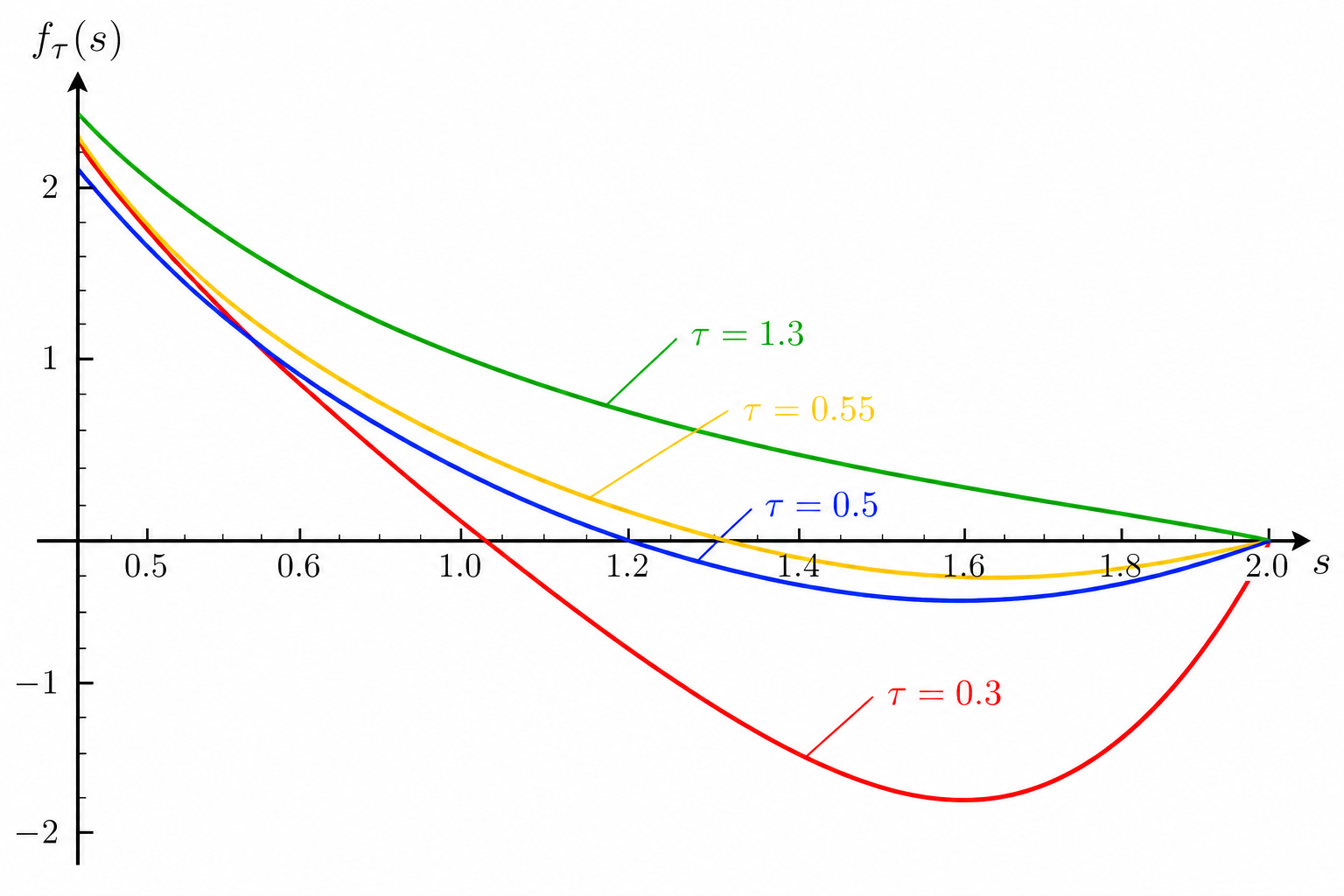}
         \caption{The function $f_{\tau}:(0,2]\to \bR$ for different values of the $\tau$ parameter.}
         \label{fig:sturmliouville}
     \end{subfigure}
     \caption{The functions $F$ and $f_\tau$ of Lemma \ref{functionF} and Lemma \ref{criterio}, respectively.}
     \end{figure}
    \begin{proof}
    Notice that, independently of the value of $\tau\in \bR^+$, we have the general properties from the definition
    \begin{equation*}
        \lim_{s\to 0^+}f_{\tau}(s)=+\infty,\quad f_{\tau}(2)=0.
    \end{equation*}
    Furthermore
    \begin{equation*}
        f_{\tau}'(s)=-2\wp(s|\Tilde{L})-\zeta(2|\Tilde{L}),\quad f_{\tau}''(s)=-2\wp'(s|\Tilde{L}).
    \end{equation*}
    By equation \eqref{derivativeP} we see that 
    \begin{equation*}
        f_{\tau}''(s)>0,\ \forall s\in(0,2).
    \end{equation*}
        \begin{itemize}            \item[I.] In this case we have that 
        \begin{equation*}
            \lim_{s\to 2^-}f_{\tau}'(s)=-F(\tau)>0,
        \end{equation*}
        and since
        \begin{equation*}
            \lim_{s\to 0^+}f_{\tau}'(s)=-\infty,
        \end{equation*}
        then, by the intermediate value theorem, there there exists $s_1\in (0,2)$ such that $f_{\tau}'(s_1)=0$. Notice that it cannot be that $f_{\tau}(s_1)>0$, otherwise since the function $f_{\tau}$ is concave up, then $f_{\tau}(s_1)>0$ will be a global minimum of the function $f_{\tau}$ in the interval $(0,2]$ contradicting the fact that $f_{\tau}(2)=0$. Therefore it has to be that $f_{\tau}(s)<0$ and then, by the intermediate value theorem, there exits $s_0\in (0,s_1)$ such that $f_{\tau}(s_0)=0$. Notice also that at this point we have $f'_{\tau}(s_0)<0.$
        
        For the uniqueness, notice that since $f_{\tau}''(s)>0$ and since $f_{\tau}'(s_1)=0$ then $f_{\tau}(s_1)<0$ is a global minimum of the function so that for all $s\in(0,s_1)$ $f_{\tau}'(s)<0$ and for all $s\in(s_1,2)$ $f_{\tau}'(s)>0$. Assume by contradiction that there exists a point $s_0^*\neq s_0$ $s_0^*\in (0,2)$ such that $f_{\tau}(s_0^*)=0$, then clearly $s_0^*\neq s_1$ so that $s_0^*\in (0,s_0)\cup (s_0,s_1)\cup (s_1,2)$. In any case by Rolle's theorem there will exists $s_2\in (0,s_0)\cup (s_0,s_1)\cup (s_1,2)$ such that $f_{\tau}'(s_2)=0$ contradicting the fact that $f_{\tau}$ is strictly decreasing in $(0,s_1)$ and strictly increasing in $(s_1,2)$ showing the uniqueness of $s_0\in (0,2)$ such that $f_{\tau}(s_0)=0.$ 
            \item[II.] In this case we have that, for all $s\in(0,2)$,
            \begin{gather*}
                 f_{\tau}'(s)=-2\wp(s|\Tilde{L})-\zeta(2|\Tilde{L}) \leq-2\wp(2|\Tilde{L})-\zeta(2|\Tilde{L})=-F(\tau)<0, 
            \end{gather*}  because the Weierstrass elliptic function is real and decreasing along the horizontal diagonal segment $(0,2)$. If we suppose by contradiction that there exists $s_0\in (0,2)$ such that $f_{\tau}(s_0)=0$, then by Rolle's theorem there will exists a time $s_1\in (s_0,2)$ such that $f_{\tau}'(s_1)=0$ contradicting the fact that $f_{\tau}$ is strictly decreasing.
        \end{itemize}
    \end{proof}
    \begin{cor}\label{curveCs}
        There exists an open subset $\Omega_{\tau_0}\coloneqq (0,\tau_0)\times (0,2)\subset \Omega$ and a well-defined smooth function
        \begin{equation*}
            s:(0,\tau_0)\to (0,2)
        \end{equation*}
        such that
        \begin{equation*}
            s'(\tau)=-\frac{\p Im\ \kappa_2(\tau,s)/\p \tau}{\p Im\ \kappa_2(\tau,s)/\p s},
        \end{equation*}
        and 
        \begin{equation*}
            \forall\tau\in (0,\tau_0),\quad Im\ \kappa_2(\tau,s(\tau))=0.
        \end{equation*}
    \end{cor}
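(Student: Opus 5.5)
By Corollary \ref{realimaginaryperiods}, the imaginary part of the period is given by
\begin{equation*}
\pi\, Im\ \kappa_2(\tau,s)=f_\tau(s)=2\zeta(s|\Tilde{L}(\tau))-\zeta(2|\Tilde{L}(\tau))\,s .
\end{equation*}
This is exactly the function of Lemma \ref{criterio}. The plan is therefore to define $s(\tau)$ as the zero of $f_\tau$ in $(0,2)$ and to obtain its smoothness and the formula for $s'(\tau)$ from the implicit function theorem.

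\textbf{Step 1: definition of $s(\tau)$.} For every $\tau\in(0,\tau_0)$, Item I of Lemma \ref{criterio} gives a unique $s_0=s_0(\tau)\in(0,2)$ with $f_\tau(s_0)=0$. Set $s(\tau)\coloneqq s_0(\tau)$. Then $Im\ \kappa_2(\tau,s(\tau))=0$ for all $\tau\in(0,\tau_0)$, and the graph of $s$ lies in $\Omega_{\tau_0}=(0,\tau_0)\times(0,2)\subset\Omega$.

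\textbf{Step 2: smoothness of the defining function.} Consider $G(\tau,s)\coloneqq \pi\, Im\ \kappa_2(\tau,s)=f_\tau(s)$ on $\Omega_{\tau_0}$. We must check that $G$ is smooth, indeed real-analytic, in $(\tau,s)$.
\begin{itemize}
\item The lattice generators $2(1\mp i\tau)$ depend analytically on $\tau$.
\item $\zeta(z|\omega_1,\omega_2)$ is holomorphic jointly in $z$ and in the half-periods wherever $z\notin L$. This can be seen from the locally uniformly convergent series for $\zeta$, or from its expression through the invariants $g_2,g_3$, which are analytic in the periods.
\item The evaluation points satisfy $s\in(0,2)$ and $2=\tilde\omega_3$, and neither is a lattice point.
\end{itemize}
Hence $G$ is smooth on $\Omega_{\tau_0}$.

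\textbf{Step 3: implicit function theorem.} The key nondegeneracy condition is
\begin{equation*}
\partial_s G(\tau,s(\tau))=f_\tau'(s_0)<0,
\end{equation*}
which is supplied by Item I of Lemma \ref{criterio}. Fix $\tau_1\in(0,\tau_0)$.
\begin{itemize}
\item The implicit function theorem gives a neighborhood of $\tau_1$ and a smooth function $\tilde s$ with $\tilde s(\tau_1)=s(\tau_1)$ and $G(\tau,\tilde s(\tau))=0$.
\item By the uniqueness of the zero in $(0,2)$, we get $\tilde s=s$ on that neighborhood.
\item Hence $s$ is smooth on all of $(0,\tau_0)$.
\end{itemize}
Differentiating $G(\tau,s(\tau))=0$ then gives
\begin{equation*}
s'(\tau)=-\frac{\p Im\ \kappa_2/\p\tau}{\p Im\ \kappa_2/\p s},
\end{equation*}
where the factor $\pi$ cancels.

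\textbf{Main obstacle.} The only delicate point is the joint smoothness of $G$ in $\tau$ in Step 2, namely justifying that the quasi-periodic function $\zeta$ varies smoothly with the lattice. I would handle it by differentiating the uniformly convergent defining series termwise on compact subsets away from the lattice. The rest of the argument follows directly from Lemma \ref{criterio}.
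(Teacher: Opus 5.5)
Your proposal is correct and takes the paper's route: you identify $\pi\, Im\,\kappa_2(\tau,s)$ with $f_\tau(s)$, use Item I of Lemma \ref{criterio} for the unique zero in $(0,2)$ and the nondegeneracy $f_\tau'(s_0)<0$, and conclude with the implicit function theorem. You also spell out two points the paper leaves implicit --- the joint smoothness of $Im\,\kappa_2$ in $(\tau,s)$, and gluing the local implicit solutions into one global smooth $s$ via uniqueness of the zero --- which makes the argument more complete.
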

    \begin{proof}
        Indeed, consider the smooth function $Im\ \kappa_2(\tau,s)$ defined on the open set $\Omega_{\tau_0}\subset \bR^2$. From Lemma \ref{criterio}, for any point $t^*\in (0,\tau_0)$ there exists a unique $s^*\in (0,2)$ such that $Im\ \kappa_2(t^*,s^*)=0$, and it holds that
        \begin{equation*}
            \eval{\frac{\p}{\p s}Im\ \kappa_2(\tau,s)}_{(t^*,s^*)}=\frac{1}{\pi}f'_{t^*}(s^*)<0.
        \end{equation*}
        The conclusion follows from the application of the Implicit Function Theorem \cite[pag. 161]{analiseelon}.
    \end{proof}
    The last part of the argument is to show that the real part of the period is not constant along the curve 
    \begin{equation*}
        C_s\coloneqq\text{graph(s)}=\{(\tau,s(\tau))|\tau\in(0,\tau_0)\}\subset \Omega_{\tau_0}.
    \end{equation*}
\begin{cor}\label{realperiodsimplified}
    Along the curve $C_s\subset \Omega_{\tau_0}$, the real part of the period can be written as
     \begin{equation*}
      \ Re\ \kappa_2(\tau,s(\tau))=-\frac{1}{2}s(\tau)- N(\tau,s(\tau)),
    \end{equation*}
    where $N(\tau,s)\in\bZ$
\end{cor}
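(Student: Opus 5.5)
The plan is to start from the period formula \eqref{Period}, strip off the rescaling by $\lambda$, and compute the real part explicitly. The imaginary part of $\zeta(\tilde\omega_2|\tilde L)$ will come from the Legendre relation, and the term multiplying $\tau$ will turn out to be a multiple of $f_\tau(s)$, which vanishes on $C_s$ by Corollary \ref{curveCs}.

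\emph{Step 1 (removing the rescaling).} Using \eqref{equivalentdefinitions} and the homogeneity relation $\zeta(\lambda z|\lambda\tilde L)=\lambda^{-1}\zeta(z|\tilde L)$, the factors of $\lambda$ cancel in \eqref{Period}. This gives
\begin{equation*}
\kappa_2(\tau,s)=\frac{2i}{\pi}\Big(\zeta(s|\tilde L)\,\tilde\omega_2-\zeta(\tilde\omega_2|\tilde L)\,s\Big)-N(\tau,s).
\end{equation*}
Since $\tilde L$ has real invariants, we have $\overline{\zeta(\bar z|\tilde L)}=\zeta(z|\tilde L)$. Because $s\in(0,2)$ is real, $\zeta(s|\tilde L)\in\bR$. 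Writing $X=\zeta(s|\tilde L)(1+i\tau)-\zeta(\tilde\omega_2|\tilde L)s$, we get
\begin{equation*}
Re\,\kappa_2=-\frac{2}{\pi}\,Im\,X-N=-\frac{2}{\pi}\Big(\tau\,\zeta(s|\tilde L)-s\,Im\,\zeta(\tilde\omega_2|\tilde L)\Big)-N.
\end{equation*}

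\emph{Step 2 (computing $Im\,\zeta(\tilde\omega_2|\tilde L)$).} Set $\eta_j=\zeta(\tilde\omega_j|\tilde L)$. Since $\tilde\omega_1=\overline{\tilde\omega_2}$, conjugation symmetry gives $\eta_1=\overline{\eta_2}$; write $\eta_2=x+iy$ and $\eta_1=x-iy$.

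The identity $\eta_1+\eta_2=\zeta(\tilde\omega_3|\tilde L)=\zeta(2|\tilde L)$, already used in Corollary \ref{realimaginaryperiods}, gives $x=\tfrac12\zeta(2|\tilde L)$.

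The Legendre relation $\eta_1\tilde\omega_2-\eta_2\tilde\omega_1=\tfrac{\pi i}{2}$ applies here, since $Im(\tilde\omega_2/\tilde\omega_1)>0$ for $\tilde\omega_1=1-i\tau$, $\tilde\omega_2=1+i\tau$. Expanding it gives $2i(x\tau-y)=\tfrac{\pi i}{2}$. Hence
\begin{equation*}
Im\,\zeta(\tilde\omega_2|\tilde L)=y=\frac{\tau}{2}\zeta(2|\tilde L)-\frac{\pi}{4}.
\end{equation*}

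\emph{Step 3 (conclusion).} Substituting into Step 1:
\begin{equation*}
Im\,X=\frac{\tau}{2}\Big(2\zeta(s|\tilde L)-\zeta(2|\tilde L)s\Big)+\frac{\pi s}{4}=\frac{\tau}{2}f_\tau(s)+\frac{\pi s}{4}.
\end{equation*}
Along $C_s$ we have $Im\,\kappa_2=0$, which by Corollary \ref{realimaginaryperiods} means $f_{\tau}(s(\tau))=0$. Therefore
\begin{equation*}
Re\,\kappa_2(\tau,s(\tau))=-\frac{2}{\pi}\cdot\frac{\pi s(\tau)}{4}-N=-\frac12 s(\tau)-N(\tau,s(\tau)),
\end{equation*}
as claimed.

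The main obstacle is the sign and normalization of the Legendre relation for this rhombic choice of half-periods. An error there would flip $\pi/4$ to $-\pi/4$ and change the sign of the $s/2$ term. I would check it against the orientation convention $Im(\omega_2/\omega_1)>0$ that the paper implicitly uses through the $q$-expansion in Lemma \ref{functionF}.
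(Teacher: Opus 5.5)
Your proof is correct and follows essentially the same route as the paper. In both, you remove the rescaling in \eqref{Period}, take the real part, evaluate $Im\,\zeta(\tilde\omega_2|\tilde L)$ using the Legendre relation together with $\zeta(\tilde\omega_1|\tilde L)+\zeta(\tilde\omega_2|\tilde L)=\zeta(2|\tilde L)$ and conjugation symmetry, and then discard the $\tau$-term because $f_\tau(s(\tau))=0$. The only difference is packaging: the paper writes this quantity as $\zeta(2i\tau|\tilde L)=-\tfrac{i\pi}{2}+i\tau\zeta(2|\tilde L)$, and since $\zeta(\tilde\omega_2-\tilde\omega_1|\tilde L)=\eta_2-\eta_1=2i\,Im\,\eta_2$, that is exactly your Step 2.
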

\begin{proof}
    From Corollary \ref{realimaginaryperiods} we have that along $C_s$
   \begin{equation}\label{eqrealperiod}
        -i\pi\ Re\ \kappa_2(\tau,s(\tau))-i\pi N(\tau,s(\tau))=2i\tau\zeta(s(\tau)|\Tilde{L})-\zeta(2i\tau|\Tilde{L})s(\tau),
    \end{equation}    
    where $N(\tau,s(\tau))\in \bZ$. On the other hand, along the curve $C_s$, we have, by Corollary \ref{realimaginaryperiods} and Corollary \ref{curveCs}, that
    \begin{equation}\label{eq1}
        2\zeta(s(\tau)|\Tilde{L})=\zeta(2|\Tilde{L})s(\tau).
    \end{equation}
     Using the Legendre relation \cite[Section 20.411]{Whittaker}
    \begin{equation*}
        \zeta(\Tilde{\omega}_1|\Tilde{L})\Tilde{\omega}_2-\zeta(\Tilde{\omega}_2|\Tilde{L})\Tilde{\omega}_1=\frac{i\pi}{2},
    \end{equation*}
   and defining $\Tilde{\omega}'_3\coloneqq \Tilde{\omega}_2-\Tilde{\omega}_1=2i\tau$, we obtain, with respect to the lattice $\Tilde{L}$, that
    \begin{equation}\label{eq2}
        \zeta(2i\tau\Tilde{L})=-\frac{i\pi}{2}+ i\tau \zeta(2|\Tilde{L}).
    \end{equation}
    Substituting equations \eqref{eq1} and \eqref{eq2} into \eqref{eqrealperiod}, we obtain
    \begin{equation*}
         -i\pi\ Re\ \kappa_2(\tau,s(\tau))-i\pi N(\tau,s(\tau))=i\tau s(\tau)\zeta(2|\Tilde{L})-\left(-\frac{i\pi}{2}s(\tau)+i\tau s(\tau)\zeta(2|\Tilde{L})\right),
    \end{equation*}
    so that
    \begin{equation*}
         -i\pi\ Re\ \kappa_2(\tau,s(\tau))-i\pi N(\tau,s(\tau))=\frac{i\pi}{2}s(\tau),
    \end{equation*}
    which proves our claim.
\end{proof}

\begin{thm}\label{smonotone}
    The function $s:(0,\tau_0)\to (0,2)$ is monotone increasing. Moreover 
    \begin{equation*}
        \lim_{\tau\to \tau_0}s(\tau)=2.
    \end{equation*}
\end{thm}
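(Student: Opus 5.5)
Write $G(\tau,s)\coloneqq f_\tau(s)=2\zeta(s|\Tilde{L}(\tau))-\zeta(2|\Tilde{L}(\tau))s$, so that $\pi\,Im\,\kappa_2=G$ by Corollary \ref{realimaginaryperiods}. By Corollary \ref{curveCs},
\begin{equation*}
s'(\tau)=-\frac{\p_\tau G(\tau,s(\tau))}{\p_s G(\tau,s(\tau))},
\end{equation*}
and Lemma \ref{criterio} gives $\p_s G(\tau,s(\tau))=f'_\tau(s(\tau))<0$. Monotonicity of $s$ is therefore equivalent to
\begin{equation*}
\p_\tau G(\tau,s(\tau))>0 \quad\text{for } \tau\in(0,\tau_0).
\end{equation*}

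To compute $\p_\tau G$ I would use the Fourier/$q$-expansions of $\zeta$ that were already used for $\wp$ in Lemma \ref{functionF}. With $\Tilde{\omega}_3=2$ fixed and nome $q=ie^{-\pi\tau/2}$,
\begin{equation*}
\zeta(s|\Tilde{L})=\frac{\eta_3 s}{2}+\frac{\pi}{4}\cot\frac{\pi s}{4}+\pi\sum_{n\ge1}\frac{q^{2n}}{1-q^{2n}}\sin\frac{n\pi s}{2},
\end{equation*}
where $\eta_3=\zeta(2|\Tilde{L})$. Substituting, the linear terms cancel and
\begin{equation*}
G(\tau,s)=\frac{\pi}{2}\cot\frac{\pi s}{4}+2\pi\sum_{n\ge1}\frac{(-1)^n}{e^{n\pi\tau}-(-1)^n}\sin\frac{n\pi s}{2},
\end{equation*}
a form in which $\tau$ appears only through the explicit coefficients. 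Differentiating termwise (the convergence is uniform, as in Lemma \ref{functionF}) gives
\begin{equation*}
\p_\tau G=-2\pi^2\sum_{n\ge1}\frac{(-1)^n n\,e^{n\pi\tau}}{\left(e^{n\pi\tau}-(-1)^n\right)^2}\sin\frac{n\pi s}{2}.
\end{equation*}
This equals $-\frac{1}{\pi}\cdot\frac{\p}{\p s}$ of a series of the same shape as $F'$. The cleaner alternative is to identify $\p_\tau G$ via the heat-type equation for $\sigma$ or $\zeta$ in the period variable. I expect this to be expressible through $\wp$, $\zeta$ and $\wp'$ at $s$, for example something like $c\,(\wp'(s)+\text{linear combination of }\zeta(s),\,s\wp(s))$. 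Then I would use $G(\tau,s(\tau))=0$, i.e. $2\zeta(s)=\eta_3 s$, to eliminate $\zeta(s)$ and reduce the sign question to the sign of a combination of $\wp(s)$, $\wp'(s)$ and $\eta_3$ on $(0,2)$. Equation \eqref{derivativeP} and $f'_\tau(s(\tau))<0$, i.e. $2\wp(s)+\eta_3>0$, would then close it.

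\textbf{Main obstacle.} Proving $\p_\tau G>0$ along $C_s$ is the step I expect to be hardest. The alternating signs in the series prevent a termwise sign argument. My first attempt would be the Legendre-relation/heat-equation route: compute $\p_\tau\zeta(s|\Tilde{L})$ through the derivative with respect to the half-periods, using $\frac{d}{d\tau}\Tilde{\omega}_{1,2}=\mp i$. If that fails, I would try convexity: show that $G(\tau,\cdot)$ pointwise decreases in $\tau$ near its zero by comparing with $F$ as $\tau\to\tau_0$.

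\textbf{Limit.} For the limit, $s$ is increasing and bounded by $2$, so $s(\tau)\to s_*\le2$. Suppose $s_*<2$. By continuity $f_{\tau_0}(s_*)=0$, which contradicts Lemma \ref{criterio} II, since $f_{\tau_0}$ has no zeros in $(0,2)$. The possibility $s_*=0$ is excluded because $f_\tau\to+\infty$ as $s\to0^+$, uniformly for $\tau$ near $\tau_0$. Hence $s_*=2$.
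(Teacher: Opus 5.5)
Your reduction is correct. Since $\p_sG(\tau,s(\tau))=f'_\tau(s(\tau))<0$, monotonicity of $s$ is equivalent to $\p_\tau G(\tau,s(\tau))>0$. Your limit argument is also fine once monotonicity is known. But that inequality is the whole content of the theorem, and the proposal does not prove it, so there is a genuine gap at the decisive step. As you note, the termwise series for $\p_\tau G$ has no sign. The identity you say you ``expect'' is only a guess, and you never establish it. The structural point you are missing is that the correction term is a multiple of $G$ itself, so it vanishes identically on $C_s$. Consequently $\p_\tau G$ there depends on $\wp'(s)$ alone. Your planned use of $2\wp(s)+\zeta(2|\Tilde{L})>0$ is not needed for the sign of $\p_\tau G$; it only enters through the denominator $\p_sG$.

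The missing idea is the heat equation for $\vartheta_1$. Your Fourier series says exactly that $G=2\p_s\log\Theta$, where $\Theta(s,\tau)=\vartheta_1\bigl(\tfrac{\pi s}{4}\bigm| ie^{-\pi\tau/2}\bigr)$; this is \eqref{eq4}. Write $\vartheta_1(\nu|q)=2\sum_{n\ge0}(-1)^nq^{(n+1/2)^2}\sin((2n+1)\nu)$. Each term then satisfies $\p_\tau\Theta=\frac{2}{\pi}\p_s^2\Theta$. With $\ell=\log\Theta$ this gives $\ell_\tau=\frac{2}{\pi}(\ell_{ss}+\ell_s^2)$. Differentiating in $s$ yields a viscous Burgers equation for $G$:
\begin{equation*}
\p_\tau G=\frac{2}{\pi}\left(\p_s^2G+G\,\p_sG\right)=\frac{2}{\pi}\left(-2\wp'(s|\Tilde{L})+G\,\p_sG\right),
\end{equation*}
where the second form uses $\p_sG=-2\wp(s|\Tilde{L})-\zeta(2|\Tilde{L})$. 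On $C_s$ we have $G=0$, hence
\begin{equation*}
\p_\tau G(\tau,s(\tau))=-\frac{4}{\pi}\wp'(s(\tau)|\Tilde{L})>0
\end{equation*}
by \eqref{derivativeP}, and $s'>0$ follows. This is exactly the paper's argument via \eqref{heat1}--\eqref{heat2}. The paper's displayed constants differ from these because $\p_z$ and $\p_\nu=\frac{4}{\pi}\p_z$ are conflated there, but only the sign is used. Your suggested alternative, differentiating $\zeta$ with respect to the half-periods, could in principle reach the same identity, but as proposed it is not carried out.
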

\begin{proof}
 From Corollary \ref{curveCs} and its proof, it is enough to show that $$\eval{\frac{\p}{\p \tau} Im\ \kappa_2(\tau,s)}_{(\tau,s(\tau))}>0.$$
We start by introducing the theta function $\vartheta(z|q)$, which depends upon two complex parameters $z\in \bC$ and $q\in \bC$, with $|q|<1$, and defined by the uniformly convergent series
\begin{equation*}
    \vartheta_1(z|q)=2q^{\frac{1}{4}}\sum_{n=0}^{\infty}(-1)^nq^{n(n+1)}\sin(2n+1)z.
\end{equation*}
This function satisfies the following heat equations which can be readily checked from the series definition (see also \cite[Section 21.4]{Whittaker} and \cite{Wolfram})
\begin{equation}\label{heat1}
    4q\frac{\p }{\p q}\vartheta_1(z|q)+\frac{\p^2}{\p z^2}\vartheta_1(z|q)=0,
\end{equation}
\begin{equation}\label{heat2}
    4q\frac{\p }{\p q}\frac{\p \vartheta_1(z|q)}{\p z}+\frac{\p^2}{\p z^2}\frac{\p \vartheta_1(z|q)}{\p z}=0.
\end{equation}
From \cite[Eq. 23.6.13]{manual} we have the following formula
\begin{equation}\label{eq3}
    \zeta(z|\Tilde{L})=\frac{\zeta(\Tilde{\omega}_3|\Tilde{L})z}{\Tilde{\omega}_3}+\frac{\pi}{2\Tilde{\omega}_3}\frac{\frac{\p}{\p z}\vartheta_1(\nu|q)}{\vartheta_1(\nu| q)},
\end{equation}
where
\begin{equation*}
    \nu=\frac{\pi z}{2\Tilde{\omega}_3}
\end{equation*}
and
\begin{equation*}
    q=e^{\frac{i\pi \Tilde{\omega}_2}{\Tilde{\omega}_3}}=e^{\frac{i\pi (1+i\tau)}{2}}=ie^{-\frac{\pi \tau}{2}},
\end{equation*}
so that, by the chain rule,
\begin{equation*}
    \frac{\p}{\p z}=\frac{\pi}{4}\frac{\p}{\p \nu}\quad \frac{\p}{\p \tau}=-\frac{\pi q}{2}\frac{\p}{\p q}.
\end{equation*}
Notice that, from \eqref{eq3}, we can rewrite
\begin{equation}\label{eq4}
   2\zeta(z|\Tilde{L})-\zeta(2|\Tilde{L})z=\frac{\pi}{2}\frac{\frac{\p}{\p z}\vartheta_1(\nu|q)}{\vartheta_1(\nu| q)}.
\end{equation}
We start by calculating
\begin{equation*}
    \frac{\p}{\p q}\frac{\frac{\p}{\p z}\vartheta_1(\nu|q)}{\vartheta_1(\nu| q)}=\frac{\vartheta_1(\nu|q) \frac{\p}{\p q}\frac{\p}{\p z}\vartheta_1(\nu|q)-\frac{\p}{\p z}\vartheta_1(\nu|q)\frac{\p}{\p q}\vartheta_1(\nu|q)}{\vartheta_1(\nu|q)^2}.
\end{equation*}
Using the heat equations \eqref{heat1} and \eqref{heat2} this can be rewritten as
\begin{align*}
\begin{split}
    \frac{\p}{\p q}\frac{\frac{\p}{\p z}\vartheta_1(\nu|q)}{\vartheta_1(\nu| q)}&=\frac{-\vartheta_1(\nu|q) \frac{\p^3}{\p z^3}\vartheta_1(\nu|q)+\frac{\p}{\p z}\vartheta_1(\nu|q)\frac{\p^2}{\p z^2}\vartheta_1(\nu|q)}{4q\vartheta_1(\nu|q)^2}\\&=-\frac{1}{4q}\frac{\p}{\p z}\left(\frac{\frac{\p^2}{\p z^2}\vartheta_1(\nu|q)}{\vartheta_1(\nu|q)}\right).
    \end{split}
\end{align*}
This implies that
\begin{equation}\label{equationpartial}
     \frac{\p}{\p \tau}\frac{\frac{\p}{\p z}\vartheta_1(\nu|q)}{\vartheta_1(\nu| q)}=\frac{\pi }{8}\frac{\p}{\p z}\left(\frac{\frac{\p^2}{\p z^2}\vartheta_1(\nu|q)}{\vartheta_1(\nu|q)}\right).
\end{equation}
But then, taking derivatives in equation \eqref{eq4} with respect to $\tau$ and using \eqref{equationpartial}, we obtain
\begin{equation}\label{eq5}
    \frac{\p}{\p \tau}\left( 2\zeta(z|\Tilde{L})-\zeta(2|\Tilde{L})z\right)=\frac{\pi^2}{16}\frac{\p}{\p z}\left(\frac{\frac{\p^2}{\p z^2}\vartheta_1(\nu|q)}{\vartheta_1(\nu|q)}\right).
\end{equation}
Now consider the derivative with respect to the $z$ coordinate on equation \eqref{eq4},
\begin{align*}
\begin{split}
    -2\wp(z|\Tilde{L})-\zeta(2|\Tilde{L})&=\frac{\pi}{2}\frac{\vartheta_1(\nu|q)\frac{\p^2}{\p z^2}\vartheta_1(\nu|q)-\left(\frac{\p}{\p z}\vartheta_1(\nu|q)\right)^2}{\vartheta_1(\nu|q)^2}\\&=\frac{\pi}{2}\frac{\frac{\p^2}{\p z^2}\vartheta_1(\nu|q)}{\vartheta_1(\nu|q)}-\frac{\pi}{2}\left(\frac{\frac{\p}{\p z}\vartheta_1(\nu|q)}{\vartheta_1(\nu|q)}\right)^2.
    \end{split}
\end{align*}
Taking derivative once more in this last equation we obtain
\begin{equation}\label{eq6}
    -2\wp'(z|\Tilde{L})=\frac{\pi}{2}\frac{\p}{\p z}\left(\frac{\frac{\p^2}{\p z^2}\vartheta_1(\nu|q)}{\vartheta_1(\nu|q)}\right)-\pi \left(\frac{\frac{\p}{\p z}\vartheta_1(\nu|q)}{\vartheta_1(\nu|q)}\right)\frac{\p}{\p z}\left(\frac{\frac{\p}{\p z}\vartheta_1(\nu|q)}{\vartheta_1(\nu|q)}\right).
\end{equation}
Plugging equation \eqref{eq4} and \eqref{eq5} into \eqref{eq6},
\begin{align*}
\begin{split}
     -2\wp'(z|\Tilde{L})&=\frac{8}{\pi} \frac{\p}{\p \tau}\left( 2\zeta(z|\Tilde{L})-\zeta(2|\Tilde{L})z\right)\\&-\frac{4}{\pi}\left( 2\zeta(z|\Tilde{L})-\zeta(2|\Tilde{L})z\right)\frac{\p}{\p z}\left( 2\zeta(z|\Tilde{L})-\zeta(2|\Tilde{L})z\right).
     \end{split}
\end{align*}
However, notice that along the curve $C_s$ we have that
\begin{equation*}
    0=\pi Im\ \kappa_2(\tau,s(\tau))=2\zeta(s(\tau)|\Tilde{L})-\zeta(2|\Tilde{L})s(\tau),
\end{equation*}
and 
\begin{equation*}
    \pi \eval{\frac{\p}{\p \tau} Im\ \kappa_2(\tau,s)}_{(\tau,s(\tau))}=\frac{\p}{\p \tau}\left( 2\zeta(s|\Tilde{L})-\zeta(2|\Tilde{L})s\right).
\end{equation*}
Therefore
\begin{equation*}
 -2\wp'(s(\tau)|\Tilde{L})=8\eval{\frac{\p}{\p \tau} Im\ \kappa_2(\tau,s)}_{(\tau,s(\tau))},
\end{equation*}
so that
\begin{equation*}
    \eval{\frac{\p}{\p \tau} Im\ \kappa_2(\tau,s)}_{(\tau,s(\tau))} =-\frac{1}{4}\wp'(s(\tau)|\Tilde{L})>0,
\end{equation*}
by equation \eqref{derivativeP}, since along the curve we have that $0<s(\tau)<\Tilde{\omega}_3=2$. This concludes the proof that $s'(\tau)> 0$, which means that the function $s(\tau)$ is monotone increasing, so in particular it is not constant and therefore must assume rational values.

For the last part of the proof, it is enough to prove that for any sequence $\tau_n\in (0,\tau_0)$ with $\lim_{n\to +\infty}\tau_n= \tau_0$ we have that $\lim_{n\to +\infty}s(\tau_n)=2.$

Let us define
\begin{gather*}
    \liminf_{n\to +\infty}s(\tau_n)=s_{*}\in [0,2],\\ \limsup_{n\to +\infty}s(\tau_n)=s^{*}\in [0,2].
\end{gather*}
Since
\begin{equation*}
    Im\ \kappa_2(\tau_n,s(\tau_n))= 0,\quad \forall n\in \bN
\end{equation*}
and since the function $Im\ \kappa_2(\tau,s)=2\zeta(s|\Tilde{L}(\tau)-\zeta(2|\Tilde{L}(\tau))s$ is continuous in the two variables $(\tau,s)$ we must have
\begin{gather*}
    0=\liminf_{n\to +\infty}\ Im\ \kappa_2 (\tau_n,s(\tau_n))=Im\ \kappa_2(\tau_0,s_{*}),\\ 0=\limsup_{n\to +\infty}\ Im\ \kappa_2 (\tau_n,s(\tau_n))=Im\ \kappa_2(\tau_0,s^{*}),
\end{gather*}
which means that $s_{*},s^{*}\in [0,2]$ are zeros of the function $f_{\tau_0}(s)$ of Lemma \ref{criterio}. By Item II of the same Lemma, $s_{*}=s^{*}=2$, which implies that $\lim_{n\to +\infty}s(\tau_n)=2$ as we wanted to show.
\end{proof}
\begin{rem}
\normalfont{
    From Figure \ref{fig:sturmliouville} it seems that $s(\tau)$ assumes all values in $(1,2)$. However, there is a problem in studying the asymptotic behavior of the Weierstrass elliptic functions when $\tau\to 0$, because the two generators of the lattice are becoming linearly dependent. I am unaware whether or not this limit is among the classical degenerated cases studied in the literature, so it seems we can not say too much about it.}
\end{rem}
\begin{cor}\label{periodisrationalalongCs}
    There exists a non-empty countable set $\cJ\subset (0,\tau_0)$ such that  for all $\tau\in \cJ$ the period map $\kappa_2(\tau,s(\tau))$ is a real rational number.
\end{cor}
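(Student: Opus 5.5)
The plan is to combine Corollary \ref{curveCs}, Corollary \ref{realperiodsimplified} and Theorem \ref{smonotone}. By Corollary \ref{curveCs}, for every $\tau\in(0,\tau_0)$ we have $Im\ \kappa_2(\tau,s(\tau))=0$, so $\kappa_2(\tau,s(\tau))$ is real along $C_s$. By Corollary \ref{realperiodsimplified},
\begin{equation*}
\kappa_2(\tau,s(\tau))=-\frac{1}{2}s(\tau)-N(\tau,s(\tau)),\quad N(\tau,s(\tau))\in\bZ .
\end{equation*}
Hence $\kappa_2(\tau,s(\tau))\in\bQ$ if and only if $s(\tau)\in\bQ$, since $N$ is an integer. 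This reduces the statement to showing that $s$ takes rational values on a nonempty countable set of parameters.

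Next I would use the analytic properties of $s$. By Corollary \ref{curveCs}, $s:(0,\tau_0)\to(0,2)$ is smooth, hence continuous. By Theorem \ref{smonotone} it is strictly increasing, since $s'(\tau)>0$, and $s(\tau)\to 2$ as $\tau\to\tau_0$. Fix any $\tau_1\in(0,\tau_0)$. Then $s$ maps $[\tau_1,\tau_0)$ continuously onto $[s(\tau_1),2)$, a nondegenerate interval, so by the intermediate value theorem every rational $r\in(s(\tau_1),2)$ is attained. Strict monotonicity shows it is attained at a unique $\tau_r=s^{-1}(r)$. Define
\begin{equation*}
\cJ\coloneqq\{\tau\in(0,\tau_0)\mid s(\tau)\in\bQ\}=s^{-1}(\bQ\cap s((0,\tau_0))).
\end{equation*}
Since $s$ is injective, $\cJ$ is in bijection with a subset of $\bQ$, so it is countable. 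It is infinite because the interval $(s(\tau_1),2)$ contains infinitely many rationals.

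The step that needs the most care is legitimacy rather than any hard computation. Corollary \ref{realperiodsimplified} involves an integer $N(\tau,s(\tau))$ that may jump with $\tau$, so I should not argue continuity of $\kappa_2$ itself along $C_s$. The argument above avoids this by working with $s(\tau)$ directly: rationality of $\kappa_2$ is insensitive to the integer shift. One should also note that the proof of Corollary \ref{realperiodsimplified} rests on the imaginary-part vanishing and the Legendre relation. That corollary is valid on all of $C_s$, so no further obstacle is expected.
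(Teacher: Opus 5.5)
Your proposal is correct and follows essentially the same argument as the paper. The paper also sets $\cJ=s^{-1}(\bQ\cap(0,2))$, gets countability from the injectivity of $s$ (Theorem~\ref{smonotone}), and deduces $\kappa_2(\tau,s(\tau))\in\bQ$ from Corollaries~\ref{curveCs} and~\ref{realperiodsimplified}; your use of $\lim_{\tau\to\tau_0}s(\tau)=2$ and the intermediate value theorem to show $\cJ$ is non-empty (indeed infinite) spells out a point the paper only asserts.
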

\begin{proof}
    Theorem \ref{smonotone} implies that the function $s:(0,\tau_0)\to (0,2)$ is a bijective function into its image, hence the subset
\begin{equation}\label{countableJ}
        \cJ\coloneq s^{-1}\left(\bQ\cap (0,2)\right)\subset (0,\tau_0)
    \end{equation}
    is non-empty and countable. For all $\tau\in \cJ$, we have $(\tau,s(\tau))\in C_s\subset \Omega_{\tau_0}$ with $s(\tau)\in \bQ$ and therefore $\kappa_2(\tau,s(\tau))\in \bQ$ by Corollary \ref{curveCs} and Corollary \ref{realperiodsimplified}.
\end{proof}

\begin{thm}\label{singlyperiodic}
    If $\tau\in \cJ$ then there exist integers $m(\tau),n(\tau)\in \bZ$, $\gcd(m(\tau),n(\tau))=1$ such that the period map is rational $$\kappa_2(\tau,s(\tau))=\frac{m(\tau)}{n(\tau)}\in \bQ.$$ Furthermore, for any $\tau\in \cJ$ and all $\hat{g}_0\in \bR$ there exists $\vec{a}_{\tau,\hat{g}_0}\in \bR^3\setminus\{0\}$ and a singly periodic minimal Wente torus with $n(\tau)^2$ embedded flat ends and finite total curvature $-4\pi n(\tau)^2$ of type $$(L(\tau,s(\tau)),b(\tau,s(\tau)),\hat{g}_0,\Lambda_\tau, \vec{a}_{\tau,\hat{g}_0}),$$ where $$\Lambda_\tau=\langle 2n(\tau)\omega_1(\tau,s(\tau)),2n(\tau)\omega_2(\tau,s(\tau))\rangle.$$
\end{thm}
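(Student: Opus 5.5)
Given $\tau\in\cJ$, Corollary \ref{periodisrationalalongCs} shows that $\kappa_2(\tau,s(\tau))$ is a real rational number. We write it in lowest terms as $m(\tau)/n(\tau)$ with $n(\tau)>0$. By Corollary \ref{realperiodsimplified}, $\kappa_2=-\tfrac12 s(\tau)-N$, so $n(\tau)$ is the reduced denominator of $s(\tau)/2$. Since $0<s(\tau)<2$, we get $n(\tau)\ge 2$. We then apply Lemma \ref{translationalperiod} to the Wente--Weierstrass data $\Phi_{L(\tau,s(\tau)),b(\tau,s(\tau)),\hat g_0}$ from Corollary \ref{construWenteWeierstrassdata}. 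This gives a vector $\vec a=\vec a_{\tau,\hat g_0}$ with $\Psi(z+2n\omega_j)=\Psi(z)+\vec a$ for $j=1,2$. Consequently $\pi_0\circ\Psi$ is invariant under $\Lambda_\tau$ and descends to a conformal minimal immersion $\hat\Psi:(\bC\setminus L)/\Lambda_\tau\to\bR^3/\vec a$. This is exactly the definition of a singly periodic minimal Wente torus of the stated type. Completeness and the embedded flat ends pass to the quotient from Theorem \ref{EnnepertypeImmersion}.

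Next we count the ends and compute the total curvature. The quotient $\bC/\Lambda_\tau$ is a torus, and $L/\Lambda_\tau\cong(\bZ/n\bZ)^2$, so the punctures are exactly $n(\tau)^2$ points $P_k^\tau$. For the total curvature, the Gauss map $g$ is $\Lambda_\tau$-periodic (as in the proof of Lemma \ref{translationalperiod}), so it defines a meromorphic function on the compact torus $\bT^2_\tau=\bC/\Lambda_\tau$. Its poles are the simple poles at $-\mu+L$ (Theorem \ref{EnnepertypeImmersion}), and there are $n^2$ of them modulo $\Lambda_\tau$. Hence $\deg g=n^2$ and the total curvature is $-4\pi\deg g=-4\pi n(\tau)^2$. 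Equivalently, one can apply the Jorge--Meeks type formula for the quotient, using Euler characteristic $0$ and $n^2$ ends each with winding number $1$.

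The remaining and main point is to show $\vec a\neq 0$. I would compute $\vec a=\mathrm{Re}\int_{z_0}^{z_0+2n\omega_1}\Phi$ and add it to the same integral over $2n\omega_2$; since the two translations coincide, this gives $2\vec a=\mathrm{Re}\int_{z_0}^{z_0+2n\omega_3}\Phi$. Integrate along the real diagonal, taking $z_0$ real and generic so the path avoids the points of $L$, where $\omega_3=2\lambda\in\bR$.

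On this segment, $\wp$ is real, so $\phi=b-4\wp$ is real. Also $1/\phi$ is real there, so $g/\hat g_0=\exp\int 1/\phi$ is real and positive wherever the path avoids the zeros $\pm\mu+L$ of $\phi$. Since $\mu=M(\tau,s)$ is real, those zeros do lie on this line, and I would detour around them with small semicircles. Each semicircle around a simple pole of $1/\phi$ with residue $\pm1$ multiplies $g$ by $-1$, so $g$ stays real along the segment.

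Given that $g$ is real, the second coordinate of $\Phi$ is purely imaginary and contributes nothing to the real part. The third coordinate gives $\mathrm{Re}\int\phi\,dz$ over a full period $2n\omega_3$ of $\wp$. Since $\int\wp=-\zeta$, this equals $2n\omega_3 b+8n\zeta(\omega_3)$ up to the rescaling by $\lambda$. I would show this quantity is nonzero, which would give $\vec a\neq0$ with $\vec a$ parallel to the $x_3$-axis. The fallback is to use the first coordinate $\tfrac12(1/g-g)\phi$, which is real, and find a sign argument for it.

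Checking the nonvanishing of $2n\omega_3 b+8n\zeta(\omega_3)$, which amounts to controlling $b/4+\zeta(2|\tilde L)/2\cdot(\dots)$ via Lemma \ref{functionF}, is the step I expect to be the real obstacle. If a direct sign argument fails, I would argue by contradiction: if $\vec a=0$, the surface would be a minimal Wente torus with ends, and I would aim to contradict this through the compactness of the quotient and the finite total curvature.
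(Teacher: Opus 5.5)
\textbf{Gap: the proof that $\vec a\neq 0$ is missing.} Everything before this point matches the paper's proof: rationality from Corollary \ref{periodisrationalalongCs}, descent via Lemma \ref{translationalperiod}, the $n(\tau)^2$ punctures from $L/\Lambda_\tau$, and $\deg g=n(\tau)^2$ (you count the poles at $-\mu+L$, the paper counts the zeros at $\mu+L$, which is equivalent).

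You correctly reduce the question to showing $2n\omega_3 b+8n\zeta(\omega_3)\neq 0$. This quantity is path independent, since $\phi\,dz=d(bz+4\zeta)$ on $\bC\setminus L$. So your detours, and the reality of $g$ along the path, are not needed. Note also that a real segment of length $2n\omega_3$ always contains $n$ lattice points $2k\omega_3$, so no choice of $z_0$ avoids $L$.

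Your two proposed ways to finish both fail.
\begin{itemize}
\item Lemma \ref{functionF} is the wrong tool. $F(\tau)=-f_\tau'(2)$, and for $\tau<\tau_0$ it gives $f'_\tau>0$ near $s=2$. That says nothing about the sign at $s(\tau)$, where in fact $f'_\tau<0$.
\item The contradiction fallback is circular. The compactness argument of Theorem \ref{nonexistence} takes $\vec a\neq 0$ as input. If $\vec a=0$, the immersion simply factors through the torus, and compactness or finite total curvature alone give no contradiction.
\end{itemize}

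The missing step is a homogeneity computation followed by Lemma \ref{criterio}. Write $s=s(\tau)$. Then $\omega_3=2\lambda$, $b=4\wp(\lambda s|L)=4\lambda^{-2}\wp(s|\tilde L)$ and $\zeta(\omega_3|L)=\lambda^{-1}\zeta(2|\tilde L)$, so
\[
2n\omega_3 b+8n\zeta(\omega_3)=\frac{8n}{\lambda}\bigl(2\wp(s|\tilde L)+\zeta(2|\tilde L)\bigr)=-\frac{8n}{\lambda}\,f_\tau'(s(\tau)).
\]
Item I of Lemma \ref{criterio} gives $f'_\tau(s_0)<0$ at the unique zero $s_0=s(\tau)$. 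This is the same transversality that feeds the implicit function theorem in Corollary \ref{curveCs}. Hence $\vec a_3=-\frac{4n}{\lambda}f'_\tau(s(\tau))>0$ for $n>0$.

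This is exactly the paper's argument. The paper integrates $\phi$ over $2n\omega_2$ and uses $\mathrm{Re}\,\zeta(\omega_2)=\tfrac12\zeta(\omega_3)$, which is equivalent to your computation of $2\vec a$ over $2n\omega_3$.
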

\begin{proof}
   In fact, for any $(\tau,s,\hat{g}_0)\in C_s\times \bR^{+}\subset \Omega_{\tau_0}\times \bR^+\subset \Omega\times \bR^+$ there exists a complete minimal immersion $\Psi_{L(\tau,s(\tau)),\hat{g}_0}:\bC\setminus L(\tau,s)\to \bR^3$ with infinitely many embedded flat ends at the lattice points $L(\tau,s)$ and such that the $v$-lines $ \Psi_{L(\tau,s),\hat{g}_0}(u_0,\cdot)$ are spherical lines of curvature, by Corollary \ref{construWenteWeierstrassdata}. We have that for all $\tau\in \cJ$ the period map $\kappa_2(\tau,s(\tau))\in \bQ$ by Corollary \ref{periodisrationalalongCs}. Let $m(\tau),n(\tau)\in \bZ$, $\gcd(m(\tau),n(\tau))=1$ such that $$\kappa_2(b(\tau,s),L(\tau,s))=\kappa_2(\tau,s(\tau))=\frac{m(\tau)}{n(\tau)}\in \bQ.$$ By Lemma \ref{translationalperiod} there exists a translational vector $\vec{a}_{\tau,\hat{g}_0}$ such that equation \eqref{translationalequation} holds, which means that the map $\Psi_{L(\tau,s(\tau)),\hat{g}_0}$ factors in the quotient to a well-defined map 
   \begin{equation}
       \hat{\Psi}_{L(\tau,s(\tau)),b(\tau,s(\tau)),\hat{g}_0}:\left(\bC/\Lambda_\tau\right)\setminus \{P_1,\ldots P_{n(\tau)^2}\}\to \bR^3/\vec{a}_{\tau,\hat{g}_0},
   \end{equation}
   where $\Lambda_\tau=\langle 2n(\tau)\omega_1(\tau,s(\tau)),2n(\tau)\omega_2(\tau,s(\tau))\rangle$ and $P_j$ are the lattice points inside the fundamental parallelogram $\Lambda_\tau$, which are exactly $n(\tau)^2$.
   
   Moreover since the Gauss map has simple zeros at $\mu+2j_1\omega_1+2j_2\omega_2$ $j_1,j_2=1,\ldots n$ by Theorem \ref{gaussmapformula} and equation \eqref{equivalentdefinitions}, we see that the degree of the extension of the Gauss map defined on the torus
    \begin{equation*}
        g:\bC/\Lambda_\tau\to \bS^2,
    \end{equation*}
    is $n(\tau)^2$ and therefore the total curvature of $\hat{\Psi}_{L(\tau,s(\tau)),b(\tau,s(\tau)),\hat{g}_0}$ is $-4\pi n(\tau)^2.$ In order to prove that $\vec{a}_{\tau,\hat{
    g}_0}\neq 0$ we show that $\vec{a}_3\neq 0$ along $C_s\times \bR^{+}$. In fact the third component of the translational period is
   \begin{align*}
   \begin{split}
       \vec{a}_3&=Re\int_{z_0}^{z_0+2n\omega_2}\phi(\xi)d\xi=Re\int_{z_0}^{z_0+2n\omega_2}(b-4\wp(\xi))d\xi\\&=Re\int_{z_0}^{z_0+2n\omega_2}(b+4\zeta'(\xi))d\xi=Re\left(b(2n\omega_2)+4(\zeta(z_0+2n\omega_2)-\zeta(z_0)\right)\\&=2nb\lambda+8nRe\ \zeta(\omega_2)=8n\lambda\wp(\mu)+8n\left(\frac{\zeta(\omega_2)+\zeta(\omega_1)}{2}\right)\\&=8n\lambda\wp(\lambda s|L)+4n\zeta(2\lambda|L)=\frac{4n}{\lambda}\left(2\wp(s|\Tilde{L})+\zeta(2|\Tilde{L)}\right).
       \end{split}
   \end{align*}
   Therefore
   \begin{equation*}
       \vec{a}_3=-\frac{4n}{\lambda}\eval{f_{\tau}'(s)}_{s=s(\tau)}\neq 0,
   \end{equation*}
   according with Lemma \ref{criterio}.
\end{proof}
\begin{rem}
\normalfont{
    From Theorem \ref{smonotone} we see that as $\tau\to \tau_0$ then $s\to 2$ and $f'_{\tau}(s(\tau))\to 0$ so that $\vec{a}_3\to 0$, however the conformal factor collapses to zero $\lambda=\sqrt[3]{-4\wp'(s|\Tilde{L}})\to 0$.}
\end{rem}
Now we explain that the construction we have described is canonical in the following sense
\begin{thm}\label{canonicalconstruction}
    Let $\hat{\Psi}_{L^*,b^*,\hat{g}_0^*}$ denote either a minimal Wente torus with ends of type $(L^*,b^*,\hat{g}_0^*,\Lambda^*)$ or a singly periodic minimal Wente torus with ends of type $(L^*,b^*,\hat{g}_0^*,\Lambda^*,\vec{a}^*)$, and denote by $$\Psi_{L^*,b^*,\hat{g}_0^*}:\bC\setminus L^*\to \bR^3$$ the corresponding lifting complete minimal immersion of Enneper type. Then there exists a point in the parameter space $(\tau,s,\hat{g}_0^*)\in C_s\times \bR^+\subset  \Omega\times \bR^+$ such that 
    \begin{equation*}
\Psi_{L^*,b^*,\hat{g}_0^*}=\Psi_{L(\tau,s),\hat{g}_0^*}
    \end{equation*}
\end{thm}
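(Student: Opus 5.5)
By Theorem \ref{importancerombic} (or Remark \ref{singlyperiodicperiod} in the singly periodic case), the lifted immersion $\Psi_{L^*,b^*,\hat{g}_0^*}$ forces $L^*$ to be a rhombic lattice with real invariants, and forces $\kappa_2(b^*,L^*)$ to be a real rational number. So the plan is to show two things. First, every pair $(L^*,b^*)$ with $L^*$ rhombic and $b^*$ a real root of \eqref{cubicequationb} comes from some $(\tau,s)\in\Omega$. Second, the reality of $\kappa_2$ forces this point to lie on $C_s$. The Gauss map is then determined by $\hat g_0^*$ through Theorem \ref{gaussmapformula}. Since the immersion depends only on the Weierstrass data (up to the base point), the two immersions coincide.

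\textbf{Step 1: normalize the lattice.} A rhombic lattice with real invariants has generators $2\omega_1^*,2\omega_2^*$ with $\omega_2^*=\overline{\omega_1^*}$. After possibly changing the sign of a generator, or replacing the pair by $\omega_1^*,-\overline{\omega_1^*}$, we may assume $\omega_1^*+\omega_2^*$ is a positive real number; call it $2\lambda$. Then $\omega_1^*=\lambda(1-i\tau)$ and $\omega_2^*=\lambda(1+i\tau)$ for some $\tau>0$. Hence $L^*=\lambda\tilde L(\tau)$ with $\lambda>0$.

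\textbf{Step 2: locate $\mu$ and recover $s$.} Let $\mu^*=\mu(b^*,L^*)$ as in Definition \ref{mudefinition}. Both $\wp(\mu^*)=b^*/4$ and $\wp'(\mu^*)=-1/4$ are real. Since $\wp$ is real only on the diagonals of the rhombus, and on the horizontal diagonal $(0,2\lambda)\cup(2\lambda,4\lambda)$ we have $\wp'<0$ on the first half, $\mu^*$ lies on one of the two diagonals. I expect this to be the main obstacle: ruling out the vertical diagonal $\lambda\cdot[0,2i\tau]$, or handling it.

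\begin{itemize}
\item[(a)] On the vertical diagonal, $\wp'$ is purely imaginary. The chain rule along $z=it$ gives $\frac{d}{dt}\wp(it)=i\wp'(it)\in\bR$, so $\wp'(it)\in i\bR$. It therefore cannot equal $-1/4$ unless it is zero, which excludes that diagonal.
\item[(b)] On the horizontal diagonal, oddness of $\wp'$ and periodicity give $\wp'(4\lambda-x)=-\wp'(x)$. So the unique point of the fundamental domain with $\wp'=-1/4$ is $\mu^*=\lambda s$ for a unique $s\in(0,2)$.
\end{itemize}

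Finally, $\wp'(\lambda s|\lambda\tilde L)=\lambda^{-3}\wp'(s|\tilde L)=-1/4$ forces $\lambda=\lambda(\tau,s)$. Thus $L^*=L(\tau,s)$, $\mu^*=M(\tau,s)$, and $b^*=4\wp(\mu^*)=b(\tau,s)$ by \eqref{defb}.

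\textbf{Step 3: land on $C_s$.} With $(L^*,b^*)=(L(\tau,s),b(\tau,s))$, the condition $Im\ \kappa_2=0$ from Step 1 reads $f_\tau(s)=0$ by Corollary \ref{realimaginaryperiods}. Lemma \ref{criterio} then gives $\tau<\tau_0$ (Item II excludes $\tau\ge\tau_0$) and $s=s(\tau)$ by uniqueness. So $(\tau,s)\in C_s$.

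\textbf{Step 4: conclude.} The Wente–Weierstrass data $\Phi_{L^*,b^*,\hat g_0^*}$ and $\Phi_{L(\tau,s),b(\tau,s),\hat g_0^*}$ agree, since $\phi$ and $g$ are given by the same formulas \eqref{phifunction} and \eqref{gaussmapsigmafunctions}. Therefore the immersions agree up to the translation fixed by the choice of base point $z_0$.
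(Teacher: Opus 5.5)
Your proof is correct and follows the paper's argument: normalize $L^*=\lambda\tilde L(\tau)$, place $\mu^*$ in $(0,2\lambda)$ because $\wp'$ is purely imaginary on the vertical diagonal, use Corollary \ref{realimaginaryperiods} and Lemma \ref{criterio} to force $\tau<\tau_0$ and $s=s(\tau)$, and fix the scale by homogeneity. The differences are minor: you fix the scale first through $\lambda^3=-4\wp'(s|\tilde L)$ where the paper does it last via the cubic \eqref{cubicequationb} and $\gamma^6=1$, and in Step 1 the normalization should negate or swap both generators together, since negating only one (or using $-\overline{\omega_1^*}$) makes $\omega_1^*+\omega_2^*$ purely imaginary.
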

\begin{proof}
   By Definition \ref{minimalwentetorus}, Theorem \ref{importancerombic} and Remark \ref{singlyperiodicperiod}, the lattice $L^*$ is rhombic and the period map $\kappa_2(b^*,L^*)$ is a real rational number. Let us choose $2\omega_1^*$ and $2\omega_2^*$ the generators of $L^*$ such that $\omega_2^*=\overline{\omega_1^*}$ and $Re\left(\omega_1^*\right)>0$, $Im\left(\frac{\omega_2^*}{\omega_1^*}\right)>0$. Let us define $\omega_3^*=\omega_1^*+\omega_2^*\in \bR^+\setminus \{0\}$ and $\omega_3'^*=\omega_2^*-\omega_1^*\in i\bR$. Consider the constants
   \begin{equation*}
       \tau\coloneq \frac{\abs{\omega_3'^*}}{\omega_3^*}\in \bR^+,\quad \lambda^*\coloneq \frac{\omega_3^*}{2}\in \bR^+ 
   \end{equation*}
   Then we have that $L^{*}=\lambda^*\cdot \tilde{L}(\tau)$ which implies that
   \begin{equation}\label{rescalings}
       2\omega_1^*=2\lambda^*-2\lambda^*i\tau,\quad 2\omega_2^*=2\lambda^*+2\lambda^*i\tau
   \end{equation}
   We know that the Weierstrass function $\wp(z|L^*)$ is only real along the diagonals of the rhombus and it is a diffeomorphism mapping the straight line segments $[0,\omega_3^*]\cup [\omega_3^*,2\omega_2^*]$ into the real line. By hypothesis we know that $b^*\in \bR$, so there exists $\mu^*\in [0,\omega_3^*]\cup [\omega_3^*,2\omega_2^*]$ such that $b^*=4\wp(\mu^*|L^*).$ Moreover, since by hypothesis $b^*s$ satisfies the equation \eqref{cubicequationb}, using the differential equation satisfied by $\wp$ we show that $\wp'(\mu^*|L^*)\in \bR\setminus \{0\}$, which implies that $\mu^*\in (0,\omega_3^*)\subset \bR.$ Since the period map $\kappa_2(b^*,L^*)$ is a real rational number, we have according to equation \eqref{Period} that
   \begin{equation*}\label{muestrela}
       Re\left[\zeta(\mu^*|L^*)\omega_2^*-\zeta(\omega_2^*|L^*)\mu^*\right]=0.
   \end{equation*}
Substituting equation \eqref{rescalings} into equation \eqref{muestrela} and taking into account that $\mu^*\in \bR$ we obtain that
\begin{equation*}
    \zeta(\mu^*|L^*)(2\lambda^*)-\zeta(2\lambda^*|L^*)\mu^*=0
\end{equation*}
Therefore, using the homogeneity property of the $\zeta$ function we obtain the following equation
\begin{equation*}
    2\zeta\left(\frac{\mu^*}{\lambda^*}\Bigm|\tilde{L}(\tau)\right)-\zeta\left(2\Bigm| \tilde{L}(\tau)\right)\frac{\mu^*}{\lambda^*}=0,\quad \frac{\mu^*}{\lambda^*}\in (0,2)
\end{equation*}
Therefore by Lemma \ref{criterio} we must have that $\tau\in (0,\tau_0)$ and moreover by uniqueness $\mu^*=\lambda^*s(\tau)$. Let us define the dilation factor
\begin{equation*}
    \gamma=\frac{\lambda^*}{\lambda(\tau,s(\tau))}>0.
\end{equation*}
    Then we have that
    \begin{equation*}
        L^*=\lambda^*\tilde{L}(\tau)=\gamma L(\tau,s(\tau)),\quad \mu^*=\lambda^*s(\tau)=\gamma \mu(\tau,s(\tau)).
    \end{equation*}
    Using the homogeneity relations \cite[Eq. 18.2.2-18.2.5-18.2.6]{Handbook} we then obtain
    \begin{gather*}
        g_2^*(L^*)=\frac{g_2(L(\tau,s(\tau))}{\gamma^4},\hspace{0.1cm} g_3^*(L^*)=\frac{g_3(L(\tau,s(\tau))}{\gamma^6},\hspace{0.1cm} b^*=4\wp(\mu^*|L^*)=\frac{4\wp(\mu(\tau,s(\tau))|L(\tau,s(\tau)))}{\gamma^2}=\frac{b(\tau,s(\tau))}{\gamma^2}
    \end{gather*}
    On the other hand, we know that $b$ and $b^*$ satisfy the cubic equations
    \begin{gather*}
        (b^*)^3-4g_2^*(L^*)b^*-16g_3^*(L^*)=1,\quad b^3(\tau,s(\tau))-4g_2(L(\tau,s(\tau))b(\tau,s(\tau))-16g_3(L(\tau,s(\tau)))=1
    \end{gather*}
    Therefore we must have that $\gamma^6=1$ and since $\gamma\in \bR^+$ it must be that $\gamma=1$ implying that
    \begin{equation*}
        L^*=L(\tau,s(\tau)),\quad b^*=b(\tau,s(\tau)),
    \end{equation*}
    which is enough to conclude our proof.
\end{proof}
\begin{thm}\label{nonexistence}
   There are no minimal Wente torus with ends in the sense of Definition \ref{minimalwentetorus}. 
\end{thm}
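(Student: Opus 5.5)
Suppose, for contradiction, that a minimal Wente torus with ends $\hat{\Psi}_{L^*,b^*,\hat{g}_0^*}$ of type $(L^*,b^*,\hat{g}_0^*,\Lambda^*)$ exists. By Theorem \ref{canonicalconstruction}, its lift $\Psi_{L^*,b^*,\hat{g}_0^*}$ coincides with $\Psi_{L(\tau,s(\tau)),\hat{g}_0^*}$ for some $\tau\in(0,\tau_0)$. By Theorem \ref{importancerombic}, the period $\kappa_2(\tau,s(\tau))=m/n$ is a real rational number. The plan is to show that factoring through $(\bC\setminus L)/\Lambda^*$ into $\bR^3$ itself (not a quotient $\bR^3/\vec{a}$) forces a vanishing translational period, and that this contradicts the computation in the proof of Theorem \ref{singlyperiodic}.

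First, write the generators of $\Lambda^*$ as $2m_1\omega_1+2m_2\omega_2$ and $2n_1\omega_1+2n_2\omega_2$. Factoring means $\Psi(z+\ell)=\Psi(z)$ for every $\ell\in\Lambda^*$. Consider the sublattice $\Lambda'=\Lambda^*\cap 2n L$, which has finite index in $\Lambda^*$ and so is still of rank two. On $2nL$ we know from Lemma \ref{translationalperiod} that $\Psi(z+2n\omega_j)=\Psi(z)+\vec{a}$ for $j=1,2$, with the same $\vec{a}$ for both. It follows that for $\ell=2n(k_1\omega_1+k_2\omega_2)$ we have $\Psi(z+\ell)=\Psi(z)+(k_1+k_2)\vec{a}$.

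Since $\Lambda'$ has rank two, it contains some element with $k_1+k_2\neq0$; otherwise every element would be a multiple of $\omega_1-\omega_2$, contradicting linear independence. For that element, invariance of $\Psi$ under $\Lambda^*$ gives $(k_1+k_2)\vec{a}=0$, and hence $\vec{a}=0$.

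Finally, the proof of Theorem \ref{singlyperiodic} computes the third component of the translational period explicitly:
\[
\vec{a}_3=-\frac{4n}{\lambda}f'_\tau(s(\tau)).
\]
By Lemma \ref{criterio}, $f'_\tau(s(\tau))<0$, so $\vec{a}_3\neq0$. This contradicts $\vec{a}=0$, and the theorem follows.

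The main obstacle is the bookkeeping in the middle step. One has to check that the vector $\vec{a}$ from Lemma \ref{translationalperiod} really is independent of $j$, which relies on the compactness-of-spheres argument in that lemma, and that the integer $n$ used there is the denominator of $\kappa_2$. If necessary, I would instead pass directly to $\Lambda'\subset\Lambda^*\cap 2nL$, which avoids any need for compatibility between the two lattices. Everything else is a direct application of earlier results.
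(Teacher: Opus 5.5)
Your argument is correct, and it differs from the paper only in the final step. Both proofs use the same inputs: Theorem~\ref{canonicalconstruction} to land on $C_s$, Theorem~\ref{importancerombic} for $\kappa_2=m/n$, Lemma~\ref{translationalperiod} for the translational period, and the computation $\vec{a}_3=-\frac{4n}{\lambda}f'_\tau(s(\tau))\neq 0$ from the proof of Theorem~\ref{singlyperiodic}.

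The paper then argues by compactness. It takes $K$ to be the fundamental parallelogram of $\Lambda$ minus small disks around $L$, and uses $\Lambda$-invariance to see that $\Psi$ is bounded on $\bC$ minus those disks. It then observes that $\Psi(P+2jn\omega_1)=\Psi(P)+j\vec{a}$ is unbounded.

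You instead find a single vector of $\Lambda^*\cap nL$ on which $\Psi$ is both invariant and translated by $(k_1+k_2)\vec{a}$. The lattice spanned by $2n\omega_1,2n\omega_2$ is $nL$, not $2nL$, but this slip is harmless. Your route kills $\vec{a}$ in one line and avoids choosing $K$ and the bookkeeping around the punctures. The paper's route uses only the one translation $2n\omega_1$, so it never needs $\vec{a}_1=\vec{a}_2$.

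The obstacle you flag, the equality $\vec{a}_1=\vec{a}_2$, is in fact unnecessary for the contradiction. Since $\Lambda^*$ has finite index $d$ in $L$, you have $2nd\,\omega_1\in\Lambda^*$, hence $d\vec{a}_1=0$ directly. Moreover, the third components of $\vec{a}_1$ and $\vec{a}_2$ both equal $-\frac{4n}{\lambda}f'_\tau(s(\tau))$ by the explicit $\zeta$-computation, using $\mathrm{Re}\,\omega_j=\lambda$ and $\mathrm{Re}\,\zeta(\omega_j)=\zeta(\omega_3)/2$. So the sphere-compactness argument of Lemma~\ref{translationalperiod} is not needed here.
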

\begin{proof}
    Suppose by contradiction that there exists $\hat{\Psi}_{L,b,\hat{g}_0}$ a minimal Wente torus with ends of type $(L,b,\hat{g}_0,\Lambda)$ and consider its associated complete minimal immersion of Enneper type $\Psi_{L,b,\hat{g}_0}:\bC\setminus L\to \bR^3$. By Theorem \ref{importancerombic} the period map $\kappa_2(b,L)\in \bQ$ and by the canonical construction Theorem \ref{canonicalconstruction} there exists $\tau\in (0,\tau_0)$ such that 
    \begin{equation*}
\Psi_{L,b,\hat{g}_0}=\Psi_{L(\tau,s(\tau)),\hat{g}_0},\quad L=L(\tau,s(\tau))=\text{span}\{2\omega_1(\tau,s(\tau)),2\omega_2(\tau,s(\tau))\}
    \end{equation*}
    This implies that $\kappa_2(\tau,s(\tau))=\frac{m(\tau)}{n(\tau)}$ where $m(\tau),n(\tau)\in \bZ$ and therefore, by the same argument as in the proof of Theorem \ref{singlyperiodic}, there exists a non-zero vector $\vec{a}_{\tau,\hat{g}_0}$ such that equation \eqref{translationalequation} holds, in particular
    \begin{equation}\label{particularequation}
        \forall z\in \bC\setminus L\quad \Psi_{L,b,\hat{g_0}}(z+2n(\tau)\omega_1(\tau,s(\tau)))=\Psi_{L,b,\hat{g_0}}(z)+\vec{a}_{\tau,\hat{g}_0}
    \end{equation}
    Now consider a countable collection of disks $D^j_{\epsilon}\subset \bC$ of radius $\epsilon$ sufficiently small centered at the vertex points of the lattice $L$. Let $K$ be the compact set defined as the intersection of the fundamental parallelogram defined by the sub-lattice $\Lambda$ with the complement of the collection of the disks. Then the subset $\Psi_{L,b,\hat{g}_0}(K)\subset \bR^3$ is also compact and therefore it is contained in a ball $B_R$ centered at the origin and of sufficiently large radius $R$. Fix a point $P\in K$ and consider the sequence of points
    \begin{equation*}
        P_j\coloneq P+2jn(\tau)\omega_1(\tau,s(\tau)),\quad j=1,2,\ldots, 
    \end{equation*}
    By applying appropriate translations using the generators of the sub-lattice $\Lambda$ we can produce a sequence of points $\{Q_j\}_{j\in \bN}$ contained in the compact set $K$ and such that
    \begin{equation}\label{pq}
        \forall j\in \bN\quad \pi(Q_j)=\pi(P_j)
    \end{equation}
    where $\pi:\bC\setminus L\to (\bC\setminus L)/\Lambda$ is the canonical projection map. On one hand we have by hypothesis and equation \eqref{pq} that
    \begin{equation*}
        \forall j\in \bN\quad \Psi_{L,b,\hat{g}_0}(P_j)=\hat{\Psi}_{L,b,\hat{g}_0}\circ \pi (P_j)=\hat{\Psi}_{L,b,\hat{g}_0}\circ \pi (Q_j)=\Psi_{L,b,\hat{g}_0}(Q_j)\subset B_R.
    \end{equation*}
    However, on the other hand, since $\vec{a}_{\tau,\hat{g}_0}\neq 0$, we have by equation \eqref{particularequation} that 
    \begin{equation*}
        \forall j\quad \Psi_{L,b,\hat{g}_0}(P_j)=\Psi_{L,b,\hat{g}_0}(P)+j\vec{a}_{\tau,\hat{g}_0},
    \end{equation*}
    meaning that the sequence $\{\Psi_{L,b,\hat{g}_0}(P_j)\}_{j\in \bN}$ eventually escapes from the ball $B_R$, which is a contradiction. Therefore there are no minimal Wente torus with ends of any type in the sense of Definition \ref{minimalwentetorus}.
\end{proof}
\section{Symmetry analysis}\label{Section 5.4}
In the subsequent work, we assume we are working  with a choice of the parameter $\tau$ such that along $C_s$, $\kappa_2=\frac{m}{n}$. There will be an important angle which we denote by \begin{equation*}
    \theta_{m,n}=\frac{2\pi m}{n}.
\end{equation*}
We split the analysis of the symmetries of the immersion $\Psi_{L,\hat{g}_0}$ into the general case $\hat{g}_0\in \bR^+$ and the special case $\hat{g}_0=1$. We also set the base point $z_0=\frac{\omega_3}{2}$ so that $\Psi(z_0)=0.$ According to Wente's construction, see \cite[Proof of Theorem 4.1]{Wentespherical}, the center of the spheres containing the spherical curvature lines belong to a fixed line, which we will denote by $l$ and which is parallel to the $x_3$-axis.

In the following analysis we make use of the transformation
    \begin{equation*}
        R_{x_3=0}=\begin{pmatrix}
            1&0&0\\0&1&0\\ 0&0&-1
        \end{pmatrix},
    \end{equation*}
    which is a reflection with respect to the plane $x_3=0.$ We also make use of the following lemma.
\begin{lema}
    Let $\alpha,\theta \in \bR$ and define the vector in the $x_1x_2$-plane 
    \begin{equation*}
        \vec{v}_{\theta}\coloneqq \left(\cos \left(\frac{\pi}{2}-\theta\right),\sin \left(\frac{\pi}{2}-\theta\right),0\right)\in \bR^3.
    \end{equation*}
    Then the matrix
  \begin{gather*}
       M_{{\alpha}} \coloneqq   \begin{pmatrix}
            \cos \alpha& -\sin \alpha & 0\\ -\sin \alpha & -\cos \alpha & 0\\ 0& 0& 1
        \end{pmatrix},
    \end{gather*}
    is a reflection with respect to a plane passing through the origin and perpendicular to $\vec{v}_{\frac{\alpha}{2}}$.
\end{lema}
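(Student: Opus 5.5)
The plan is to verify directly the three properties that characterize a reflection across a plane through the origin: $M_\alpha$ is orthogonal, it fixes pointwise a two-dimensional subspace, and it sends a unit normal of that plane to its negative. Equivalently, I will show that
\begin{equation*}
M_\alpha=I-2\,\vec{v}_{\frac{\alpha}{2}}\vec{v}_{\frac{\alpha}{2}}^{\,T},
\end{equation*}
which is the Householder form of the reflection across the plane $\vec{v}_{\frac{\alpha}{2}}^{\perp}$.

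First, I will simplify the normal vector. Since $\cos(\frac{\pi}{2}-\theta)=\sin\theta$ and $\sin(\frac{\pi}{2}-\theta)=\cos\theta$, we have $\vec{v}_{\frac{\alpha}{2}}=(\sin\frac{\alpha}{2},\cos\frac{\alpha}{2},0)$, a unit vector in the $x_1x_2$-plane.

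Next, I will compute the outer product. We get
\begin{equation*}
\vec{v}\vec{v}^{\,T}=\begin{pmatrix}\sin^2\frac{\alpha}{2}&\sin\frac{\alpha}{2}\cos\frac{\alpha}{2}&0\\ \sin\frac{\alpha}{2}\cos\frac{\alpha}{2}&\cos^2\frac{\alpha}{2}&0\\0&0&0\end{pmatrix}.
\end{equation*}
Using the double-angle identities $1-2\sin^2\frac{\alpha}{2}=\cos\alpha$, $1-2\cos^2\frac{\alpha}{2}=-\cos\alpha$ and $2\sin\frac{\alpha}{2}\cos\frac{\alpha}{2}=\sin\alpha$, the matrix $I-2\vec{v}\vec{v}^{\,T}$ has entries $\cos\alpha$, $-\sin\alpha$, $-\sin\alpha$, $-\cos\alpha$ in the upper-left $2\times 2$ block and $1$ in the $(3,3)$ entry. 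This is exactly $M_\alpha$.

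Finally, the Householder form gives the conclusion. $M_\alpha$ is symmetric and satisfies $M_\alpha^2=I$ because $\vec{v}$ is a unit vector. It fixes every vector orthogonal to $\vec{v}_{\frac{\alpha}{2}}$, in particular $e_3$ and $(\cos\frac{\alpha}{2},-\sin\frac{\alpha}{2},0)$, and it sends $\vec{v}_{\frac{\alpha}{2}}$ to $-\vec{v}_{\frac{\alpha}{2}}$. Hence $M_\alpha$ is the reflection across the plane through the origin with normal $\vec{v}_{\frac{\alpha}{2}}$. This plane contains the $x_3$-axis, which is consistent with its later use for planes containing the line of centers $l$.

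There is no real obstacle here. The only point needing care is the sign bookkeeping in the off-diagonal entries, and the check $M_\alpha\vec{v}=-\vec{v}$ directly confirms it: the first component is $\cos\alpha\sin\frac{\alpha}{2}-\sin\alpha\cos\frac{\alpha}{2}=\sin(\frac{\alpha}{2}-\alpha)=-\sin\frac{\alpha}{2}$, and the second is similar.
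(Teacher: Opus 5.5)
Your proof is correct: the identity $M_\alpha=I-2\,\vec{v}_{\frac{\alpha}{2}}\vec{v}_{\frac{\alpha}{2}}^{\,T}$ with $\vec{v}_{\frac{\alpha}{2}}=(\sin\frac{\alpha}{2},\cos\frac{\alpha}{2},0)$ holds entry by entry, and it immediately gives the reflection across the plane $\vec{v}_{\frac{\alpha}{2}}^{\perp}$. The paper states this lemma without proof, treating it as a routine computation, so your Householder verification supplies exactly the omitted check (and your observation that the plane contains the $x_3$-axis matches how the paper later uses $M_{2k\theta_{m,n}}$ for planes through the line of centers $l$).
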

\subsection{Symmetry for general $\hat{g}_0$}
\begin{thm}\label{symmetriesg0neq1}
    The minimal immersion $\Psi_{L,\hat{g}_0}$ is invariant under reflections with respect to planes $\Pi^1_{k,\theta_{m,n}}$ perpendicular to the vectors $\vec{v}_{k\theta_{m,n}}$ and containing the center axis $l$. Moreover the curvature lines $\Psi_{L,\hat{g}_0}(\cdot, k\abs{\omega_3'})$ lie in the correspondent $\Pi^1_{k,\theta_{m,n}}$ planes. The immersion $\Psi_{L,\hat{g}_0}$ has dihedral symmetry group $\cD_{n}$ for $n$ odd and $\cD_{\frac{n}{2}}$ for $n$ even.
\end{thm}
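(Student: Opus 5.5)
We propose to obtain the symmetries from conjugation invariance of the Weierstrass data under anti-holomorphic reflections of the domain that preserve the rhombic lattice $L$. Since $\wp$ has real invariants, $\overline{\wp(\bar z)}=\wp(z)$. The reflections $z\mapsto \bar z$ (across the real diagonal) and $z\mapsto -\bar z$ (across the imaginary diagonal) both preserve $L$. We will use the second one, composed with translations, because it fixes the horizontal lines $v=\text{const}$. Concretely, set $\rho_k(z)=-\bar z+2k\,i|\omega_3'|$ in real coordinates adapted to the base point $z_0=\omega_3/2$. Here $\omega_3'=\omega_2-\omega_1\in i\bR$, so $2i|\omega_3'|$ is a lattice vector. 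The map $\rho_k$ fixes the horizontal line $v=k|\omega_3'|$ pointwise; this is the line $\Psi(\cdot,k|\omega_3'|)$ in the statement.

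First we compute how $\phi$ and $g$ transform under $\rho_0(z)=-\bar z$. Since $\phi=b-4\wp$ is even with real coefficients, $\phi(-\bar z)=\overline{\phi(z)}$. For $g$ we use formula \eqref{gaussmapsigmafunctions}, with $\mu=M\in\bR$ and $\zeta(\mu)\in\bR$. The $\sigma$ function of a lattice with real invariants satisfies $\overline{\sigma(\bar z)}=\sigma(z)$ and is odd. Hence $g(-\bar z)=\hat g_0\,\overline{\exp(2\zeta(\mu)z)^{-1}\sigma(\mu+z)/\sigma(\mu-z)}=\hat g_0^2/\overline{g(z)}$. This relation sends the Weierstrass data to their reflected version. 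Pulling back $\Phi$ by $\rho_0$ gives $\overline{\Phi}$ composed with the matrix $\mathrm{diag}(-1,1,1)$ up to normalisation by $\hat g_0$ (the factor $\hat g_0^2$ is absorbed because the dependence enters through $\hat g_0^{-1}g$). Consequently $\Psi\circ\rho_0=M_{\alpha_0}\Psi+c_0$, where $M_{\alpha_0}$ is a reflection whose plane contains the $x_3$-direction. Because the fixed line of $\rho_0$ is mapped into the fixed plane, the curve $\Psi(\cdot,0)$ lies in that plane, so $c_0$ is determined. That plane is the symmetry plane of the spherical curvature lines; by Wente's result the centres lie on $l$ and the curves meet the plane orthogonally, so the plane contains $l$.

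Next we treat general $k$. Write $\rho_k=T_k\circ\rho_0$, where $T_k$ is translation by $2k\,i|\omega_3'|=k(2\omega_2-2\omega_1)$. By Lemma \ref{gtranslated} and Lemma \ref{periodconjugates} with $\kappa_1=-\kappa_2=-m/n$, we get $g\circ T_k=g\,e^{-2\pi i k(\kappa_2-\kappa_1)}=g\,e^{-2ik\theta_{m,n}}$. The choice of $T_k$ here must be fixed carefully, since the rotation angle depends on it. A rotation of $g$ by $e^{i\beta}$ corresponds to a rotation of $\Psi$ about the $x_3$-axis (up to translation) by the angle $\beta$. Conjugating $M_{\alpha_0}$ by that rotation gives $M_{\alpha_0+2k\theta}$. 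By the lemma on $M_\alpha$, this is a reflection in the plane perpendicular to $\vec v_{k\theta_{m,n}}$ after normalising $\alpha_0=0$ through the choice of $z_0$ and the rotation fixing $\hat g_0$. The horizontal curve $v=k|\omega_3'|$ lies in the new plane $\Pi^1_{k,\theta_{m,n}}$. This plane again contains $l$, because translation along the $v$-direction preserves each spherical line's sphere family. More precisely, the $\vec a_3$-type component along $\omega_2-\omega_1$ vanishes, exactly as $\vec a_2-\vec a_1=0$ in Lemma \ref{translationalperiod}.

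Finally we identify the group. The planes $\Pi^1_{k,\theta_{m,n}}$ all contain $l$ and have normals at angles $k\theta_{m,n}=2\pi km/n$. As $k$ runs over $\bZ$, with $\gcd(m,n)=1$, these angles fill $\frac{2\pi}{n}\bZ$. The normals are taken modulo $\pi$, since a plane is determined by $\pm\vec v$. So the number of distinct planes is $n$ when $n$ is odd and $n/2$ when $n$ is even. Compositions of reflections in planes through a common axis generate rotations about $l$, which gives $\cD_n$ or $\cD_{n/2}$ respectively. The main obstacle is the bookkeeping of the translation constants. We must show that each reflected copy of $\Psi$ coincides with $\Psi$ exactly, not merely up to a horizontal translation, and that all planes pass through the same axis $l$. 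We will handle this by evaluating both sides on the fixed line of $\rho_k$, which forces the translation part to vanish, together with Wente's statement that the centres lie on $l$.
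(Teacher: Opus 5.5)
\textbf{There is a genuine gap: you chose the wrong anti-holomorphic involution.}

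Your $\rho_0(z)=-\bar z$ sends $u+iv\mapsto -u+iv$. It fixes the \emph{vertical} line $u=0$, which is a spherical curvature line, and it exchanges $\Psi(u,\cdot)$ with $\Psi(-u,\cdot)$; it fixes no horizontal line pointwise. For $k\neq 0$, your $\rho_k(z)=-\bar z+2ki\abs{\omega_3'}$ is a glide reflection with no fixed points, so it does not fix $v=k\abs{\omega_3'}$.

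Now carry out the pullback using your own formulas $g\circ\rho_0=\hat g_0^2/\bar g$ and $\rho_0^*dz=-d\bar z$. For $\hat g_0=1$ this gives
\[
\rho_0^*\Phi=\mathrm{diag}(1,1,-1)\,\overline{\Phi},\qquad \Psi\circ\rho_0=R_{x_3=0}\Psi+c .
\]
That is a reflection in a plane \emph{perpendicular} to $l$, i.e.\ the extra symmetry of Theorem \ref{symmetriesg0=1}, not the one claimed here.

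For $\hat g_0\neq 1$, the map $w\mapsto \hat g_0^2/\bar w$ is inversion in the circle of radius $\hat g_0$. It is not induced by any isometry of $\bS^2$. Equivalently, $\abs{g}$ is not preserved, so the conformal factor $\frac{|\phi|}{2}(|g|+|g|^{-1})$ changes and $\rho_0$ is not even an intrinsic isometry. The factor $\hat g_0^2$ cannot be ``absorbed'': the data depend on $g$ itself, not on $\hat g_0^{-1}g$. So the claimed identity with $\mathrm{diag}(-1,1,1)$ is false, and everything built on it collapses: the conjugation to $M_{\alpha_0+2k\theta_{m,n}}$, the fixed-line argument, and the plane containing $l$.

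\textbf{The repair, which is the paper's route.} Use $T_k(z)=\overline{z-2k\omega_3'}=\bar z+2k\omega_3'$. This is the reflection across $v=k\abs{\omega_3'}$; it preserves $L$ and maps every vertical line into itself.

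\begin{itemize}
\item Since $\hat g_0$, $\mu$ and $\zeta(\mu)$ are real, $g(\bar z)=\overline{g(z)}$ with no extra factor.
\item Your rotation computation is correct and carries over: Lemma \ref{gtranslated} with $\kappa_2-\kappa_1=2m/n$ gives $\phi\circ T_k=\bar\phi$ and $g\circ T_k=\bar g\,e^{-2ik\theta_{m,n}}$.
\item Hence $T_k^*\Phi=\overline{M_{2k\theta_{m,n}}\Phi}$ and $\Psi\circ T_k=M_{2k\theta_{m,n}}\Psi+\Psi(T_kz_0)$, for every $\hat g_0>0$.
\item Involutivity handles the translation bookkeeping. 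Applying the identity twice and evaluating at $z_0$ shows $\Psi(T_kz_0)$ is a $(-1)$-eigenvector of $M_{2k\theta_{m,n}}$. So the map is a reflection in an affine plane perpendicular to $\vec v_{k\theta_{m,n}}$, and the fixed line $v=k\abs{\omega_3'}$ lands in that plane.
\item The plane contains $l$ because $T_k$ preserves each vertical line. Hence each spherical curvature line, and so the centre of its sphere, is fixed by the reflection. Your appeal to $\vec a_2-\vec a_1=0$ is not the relevant reason.
\end{itemize}

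Your count of distinct planes modulo $\pi$, giving $n$ planes for $n$ odd and $n/2$ for $n$ even, is fine.
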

\begin{proof}
     Consider the involution in the complex plane
     \begin{equation*}
         T_k(z)=\overline{z-2k\omega_3'},
     \end{equation*}
     which is a reflection with respect to the horizontal line $y=k\abs{\omega_3'}$.
      We have by the properties of the Weierstrass $\wp$ function that
     \begin{gather*}
     \phi(T_k(z))=\overline{\phi(z)},
     \end{gather*}
     \begin{gather*}
         g(T_k(z))=\hat{g}_0\exp\left(\int_0^{T_k(z)}\frac{d\xi}{\phi(\xi)}\right)=\hat{g}_0\exp\left(\int_0^{1}\frac{\beta'(t)dt}{\phi(\beta(t))}\right),
     \end{gather*}
     where $\beta$ is a path such that $\beta(0)=0$ and $\beta(1)=T_k(z)=\overline{z-2k\omega_3'}$. Consider the path $\delta(t)=\overline{\beta(t)}+2k\omega_3'$ with $\delta(0)=2k\omega_3'$ and $\delta(1)=z$.
     
     \begin{align*}
     \begin{split}
         g(T_k(z))&=\hat{g}_0\exp\left(\int_0^{1}\frac{\overline{\delta'(t)}dt}{\phi(\overline{\delta(t)}-\overline{2k\omega_3'})}\right)=\hat{g}_0\exp\left(\int_0^{1}\frac{\overline{\delta'(t)}dt}{\phi(\overline{\delta(t)})}\right)\\&=\hat{g}_0\overline{\exp}\left(\int_{2k\omega_3'}^{z}\frac{d\xi}{\phi(\xi)}\right)=\overline{g(z)}\overline{\exp}\left(-\int_0^{2k\omega_3'}\frac{d\xi}{\phi(\xi)}\right).
         \end{split}
     \end{align*}
     Notice that
     \begin{gather*}
         \int_0^{2k\omega_3'}\frac{d\xi}{\phi(\xi)}=\int_0^{2k\omega_2-2k\omega_1}\frac{d\xi}{\phi(\xi)}=\int_0^{2k\omega_2}\frac{d\xi}{\phi(\xi)}+\int_{2k\omega_2}^{2k\omega_2-2k\omega_1}\frac{d\xi}{\phi(\xi)}.
     \end{gather*}
     By a change of variables and using the $2\omega_1,2\omega_2$- periodicity of $\phi$ and its parity we have
     \begin{align}\label{integral1sobrephi}
     \begin{split}
         \int_0^{2k\omega_3'}\frac{d\xi}{\phi(\xi)}&=k\int_0^{2\omega_2}\frac{d\xi}{\phi(\xi)}-k\int_0^{2\omega_1}\frac{d\xi}{\phi(\xi)}\\&=-2\pi i k(\kappa_2-\kappa_1)=-4\pi i k \frac{m}{n}=-2ik\theta_{m,n}.
          \end{split}
     \end{align}
     Therefore
     \begin{equation*}
         g(T_k(z))=\overline{g(z)}e^{-2ik\theta_{m,n}}.
     \end{equation*}
     Then
     \begin{align*}
     \begin{split}
         \Phi(T_k(z))&=\left(\frac{1}{2}\left(\frac{1}{g(T_k(z))}-g(T_k(z))\right),\frac{i}{2}\left(\frac{1}{g(T_k(z))}+g(T_k(z))\right),1\right)\phi(T_k(z))\\&=\left(\frac{1}{2}\left(\frac{e^{2ik\theta_{m,n}}}{\overline{g(z)}}-\overline{g(z)}e^{-2ik\theta{m,n}}\right),\frac{i}{2}\left(\frac{e^{2ik\theta_{m,n}}}{\overline{g(z)}}+\overline{g(z)}e^{-2ik\theta{m,n}}\right),1\right)\overline{\phi(z)}.
         \end{split}
     \end{align*}
     After some calculations
     \begin{gather*}
         T^{*}\left(\Phi(z)dz\right)=\overline{M_{{2k\theta_{m,n}}}\left(\Phi(z)dz\right)},
     \end{gather*}
     and therefore we have
     \begin{gather}\label{symmetryPsig0neq1}
         \Psi(T_k(z))=M_{2k\theta_{m,n}}\Psi(z)+\Psi(T_k(z_0)).
     \end{gather}
     Notice that since $T_k$ is an involution we have
     \begin{equation*}
     \Psi(z)=\Psi(T_k(T_k(z)))=M_{2k\theta_{m,n}}\Psi(T_k(z))+\Psi(T_k(z_0)),
     \end{equation*}
     so plugging in $z=z_0$ we see that
     \begin{equation*}
         M_{2k\theta_{m,n}}\Psi(T_k(z_0))=-\Psi(T_k(z_0)).
     \end{equation*}
     That is $\Psi(T_k(z_0))$ is an eigenvector of $M_{2k\theta_{m,n}}$ with eigenvalue $-1$. This implies that 
     \begin{equation*}
         \Psi(T_k(z_0))\quad|| \quad \vec{v}_{k\theta_{m,n}} \in x_1x_2\text{-plane}.
     \end{equation*}
     Since $M_{2k\theta_{m,n}}$ is a reflection matrix with respect to a plane passing through the origin and perpendicular to $\vec{v}_{k\theta_{m,n}}$ we see that equation \eqref{symmetryPsig0neq1} says that $\Psi(T_k(z))$ is the reflection of the point $\Psi(z)$ with respect to a plane $\Pi^1_{k,\theta_{m,n}}$ passing through $\frac{1}{2}\Psi(T_k(z_0))$ and perpendicular to the vector $\vec{v}_{k\theta_{m,n}}$.
     
     Notice that the transformation $T_k$ leaves invariant the spherical lines of curvature i.e for fixed $u$ we have $\Psi(T_k(u+i\bR))\subset \Psi(u+i\bR)$, therefore we must have that the plane $\Pi^1_{k,\theta_{m,n}}$ contains the axis $l$ of the centers.
     
     In conclusion we have that for all $k$, $\Psi(T_k(z))$ is a reflection with respect to a plane passing through the line $l$ and perpendicular to the vector $\vec{v}_{k\theta_{m,n}}$.
\end{proof}
\subsection{Additional symmetries for $\hat{g}_0=1$}
In the special case when $\hat{g}_0=1$, the minimal immersions $\Psi_{L,\hat{g}_0}$ still have the reflection symmetries of Theorem \ref{symmetriesg0neq1} with respect to planes containing the center line $l$, and moreover, they will have an extra reflection symmetry about planes perpendicular to $l$.
    \begin{thm}\label{symmetriesg0=1}
        Suppose $\hat{g}_0=1$. Then the minimal immersion $\Psi_{L,\hat{g}_0}$ is invariant under reflections with respect to a family of equally spaced planes $\Pi^2_k$ containing the spherical curvature lines $\Psi_{L,\hat{g}_0}(k\omega_3,\cdot)$ and perpendicular to the center axis $l$. Furthermore there exist a translational vector parallel to $l$ 
        \begin{equation*}
            \vec{v}\coloneqq (0,0,2b\omega_3+8\zeta(\omega_3)),
        \end{equation*}
        such that
        \begin{gather*}
            \Psi_{L,\hat{g}_0}(z+2\omega_3)=\Psi_{L,\hat{g}_0}(z)+\vec{v}.
        \end{gather*}
    \end{thm}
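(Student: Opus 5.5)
Mirroring the proof of Theorem \ref{symmetriesg0neq1}, I will use the involution of the complex plane given by reflection across the vertical line $u=k\omega_3$,
\begin{equation*}
    S_k(z)=2k\omega_3-\overline{z}.
\end{equation*}
Since $\omega_3\in\bR$ and $L$ is rhombic, hence invariant under $z\mapsto\overline{z}$ and $z\mapsto -z$, we have $S_k(L)=L$. As $\wp$ is even, $L$-periodic and has real invariants, this gives $\phi(S_k(z))=\overline{\phi(z)}$.

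For the Gauss map I will use formula \eqref{gaussmapsigmafunctions} with $\hat g_0=1$ and $\mu=M\in(0,\omega_3)\subset\bR$. Since $\sigma$ is odd and satisfies $\sigma(\overline z)=\overline{\sigma(z)}$, and since $\zeta(\mu)$ is real, a direct computation at $k=0$ gives
\begin{equation*}
    g(-\overline z)=e^{-2\zeta(\mu)\overline z}\,\overline{\sigma(\mu+z)}/\overline{\sigma(\mu-z)}=1/\overline{g(z)}.
\end{equation*}
For general $k$ I will combine this with translation by $2k\omega_3=2k(\omega_1+\omega_2)$. By Lemma \ref{gtranslated} this multiplies $g$ by $e^{-2\pi i k(\kappa_1+\kappa_2)}$, and this factor equals $1$ because $\kappa_1=-\kappa_2$ when $\kappa_2\in\bQ$ is real (Lemma \ref{periodconjugates}). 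So $g(S_k(z))=1/\overline{g(z)}$. This is exactly where $\hat g_0=1$ is used: for general $\hat g_0$ one only gets $\hat g_0^2/\overline{g}$.

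Next I substitute into \eqref{Weierstrassdata}. Swapping $g$ with $1/\overline g$ sends the first component of $\Phi$ to minus the conjugate of the first component and fixes the conjugate of the second. The differential $dz$ pulls back to $-d\overline z$. Together these give
\begin{equation*}
    S_k^*(\Phi\,dz)=\overline{\operatorname{diag}(1,-1,-1)\,\Phi\,dz}.
\end{equation*}
I will double-check these signs in the actual computation; the expected outcome is that, after absorbing them, the linear part is the reflection $R_{x_3=0}$ up to the orientation convention. Integrating from $z_0$ then yields $\Psi(S_k(z))=R_{x_3=0}\Psi(z)+c_k$. Because $S_k$ is an involution, $c_k$ lies in the $(-1)$-eigenspace of $R_{x_3=0}$, so $c_k$ is parallel to $e_3$ and the map is the reflection across a horizontal plane $\Pi^2_k$. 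The line $u=k\omega_3$ is fixed pointwise by $S_k$, so its image curve, which is the spherical curvature line $\Psi(k\omega_3,\cdot)$, lies in $\Pi^2_k$. The plane $\Pi^2_k$ is perpendicular to $l$, since $l$ is parallel to $e_3$.

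For the translation, I will compose two consecutive reflections: $S_{k+1}\circ S_k(z)=z+2\omega_3$. This gives $\Psi(z+2\omega_3)=\Psi(z)+(c_{k+1}-c_k)$, a vertical vector that is independent of $z$, so the planes $\Pi^2_k$ are equally spaced. To identify the vector explicitly, I note that $\Phi$ is $2\omega_3$-periodic (just shown for $g$, and clear for $\phi$). Hence $\vec v=Re\int_{z_0}^{z_0+2\omega_3}\Phi$. The horizontal components vanish by the reflection argument. The third component is computed as in the proof of Theorem \ref{singlyperiodic}:
\begin{equation*}
    Re\bigl(2b\omega_3+4(\zeta(z_0+2\omega_3)-\zeta(z_0))\bigr)=2b\omega_3+8\zeta(\omega_3),
\end{equation*}
using $\zeta(z+2\omega_3)=\zeta(z)+2\zeta(\omega_3)$, which follows from $\eta_1+\eta_2=\zeta(\omega_3)$.

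I expect the main obstacle to be the bookkeeping of signs in the pullback of $\Phi$. Showing that the resulting linear map is exactly $R_{x_3=0}$, and not a reflection composed with a rotation, depends on the identity $g(S_k z)=1/\overline{g(z)}$ holding with no extra unimodular factor. That identity in turn rests on $\kappa_1+\kappa_2=0$ together with the correct choice of branch in the $\sigma$-quotient.
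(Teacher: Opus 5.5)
Your argument is the natural analogue of the paper's proof of Theorem \ref{symmetriesg0neq1}; the paper omits the proof of this statement and defers to the thesis. All of its ingredients check out except one computation. Specifically, the following are correct: $S_k(L)=L$, $\phi\circ S_k=\overline{\phi}$, $g(-\overline z)=\hat g_0^2/\overline{g(z)}$, $g(z+2\omega_3)=g(z)$ along $C_s$, the involution argument placing $c_k$ on the $x_3$-axis, $S_{k+1}\circ S_k(z)=z+2\omega_3$, and the value $2b\omega_3+8\zeta(\omega_3)$ of the third component.

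The error is in the pullback of $\Phi$. Under $g\mapsto 1/\overline g$, $\phi\mapsto\overline\phi$, the second component becomes $\frac{i}{2}\bigl(\overline g+1/\overline g\bigr)\overline\phi=-\overline{\Phi_2}$, because the factor $i$ must be conjugated as well. So both horizontal components change sign, and with $S_k^*dz=-d\overline z$ one gets
\begin{equation*}
S_k^*(\Phi\,dz)=\overline{\mathrm{diag}(1,1,-1)\,\Phi\,dz}=\overline{R_{x_3=0}\,\Phi\,dz},
\end{equation*}
hence $\Psi\circ S_k=R_{x_3=0}\Psi+c_k$ exactly.

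Your displayed $\mathrm{diag}(1,-1,-1)$ is not the same map ``up to orientation convention''. It is the half-turn about the $x_1$-axis, and its affine versions fix only a line. With it, the conclusion that $\Psi(k\omega_3,\cdot)$ lies in a plane $\Pi^2_k$ would not follow. Replace that step by the corrected computation and the proof is complete.

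There is also no branch issue to worry about. Formula \eqref{gaussmapsigmafunctions} defines $g$ as a single-valued meromorphic function. The quasi-periodicity of $\sigma$ gives $g(z+2\omega_3)=g(z)\exp\bigl(4\zeta(\mu)\omega_3-4\zeta(\omega_3)\mu\bigr)$ for all $z$, and this equals $g(z)$ on $C_s$. This is the computation the paper does on the real axis, and it lets you skip Lemma \ref{gtranslated} and the discussion of $\kappa_1+\kappa_2$.
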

We omit the proof of Theorem \ref{symmetriesg0=1} and refer the reader to \cite{CarlosThesis}[{Theorem 5.4.3}].
    \section{Hamiltonian System}\label{Section 5.5}
     Consider the immersion $\Psi_{L(\tau,s),\hat{g}_0}$ with the parameters $(\tau,s,\hat{g}_0)\in \Omega \times \bR^+$. By the work of Wente, see \cite[equations 2.18 and 4.3]{Wentespherical}, there exists a constant $\delta\in \bR$ and two real functions $\alpha,\beta:\bR\to \bR$ depending only on the $u$-variable such that
    \begin{equation}\label{Enneperequation}
        2\omega_u(u,v)=\alpha(u)e^{\omega(u,v)}+\beta(u)e^{-\omega(u,v)},
    \end{equation}
    which are solutions to the Hamiltonian system 
    \begin{gather}\label{hamiltonianequations}
        \begin{cases}
            \alpha''=\delta \alpha-2\alpha^2\beta,\\ \beta''=\delta \beta-2\alpha \beta^2 -2\alpha,
        \end{cases}
    \end{gather}
    with prescribed initial conditions $\alpha(0),\beta(0),\alpha'(0),\beta'(0)$ depending on the parameter space.
    
    The radius $R(u)$ of the spheres $S(u)$ containing the lines of curvature $\Psi(u,\cdot)$ and the angle of intersection $\theta(u)$ between the surface and $S(u)$ are
\begin{equation}\label{radiusangle}
    R^2(u)=\frac{4+\beta(u)^2}{\alpha^2(u)},\quad \tan\theta(u)=\frac{2}{\beta(u)}.
\end{equation}
Moreover, the centers $c(u)$ of the spheres $S(u)$ are \cite[Proof of Theorem 4.1]{Wentespherical}:
\begin{equation}\label{centerequationgeneral}
    c(u)=\Psi(u,v)-\frac{2}{\alpha(u)}\frac{\Psi_u(u,v)}{\abs{\Psi_u(u,v)}}-\frac{\beta(u)}{\alpha(u)}N_{\Sigma}(\Psi(u,v)).
\end{equation}
The Hamiltonian system has two conserved quantities
    \begin{gather*}
            h=\alpha'\beta'+\alpha^2-\delta\alpha\beta+\alpha^2\beta^2,\quad 4k=(\alpha\beta'-\alpha'\beta)^2+4\alpha'^2+4\alpha^3\beta-4\delta\alpha^2.
    \end{gather*}
    Furthermore defining the polynomial
    \begin{equation}\label{pdefinition}
        p(u,X)=-\alpha^2X^4-4\alpha'X^3+(6\alpha\beta-4\delta)X^2+4\beta'X-(4+\beta^2),
    \end{equation}
    the conformal factor satisfies the equation
    \begin{equation}\label{polynomialpequation}
        4((e^{\omega})_v)^2=p(u,e^{\omega}).
    \end{equation}
    Notice that the function $\phi(z)=b-4\wp(z)$ satisfies the equation
    \begin{align*}
    \begin{split}
        \phi'(z)^2&=16\wp'(z)^2=64(\wp(z)-e_1)(\wp(z)-e_2)(\wp(z)-e_3)\\&=64\prod_{j=1}^3\frac{b-4\wp(z)+4e_j-b}{-4}=-\prod_{j=1}^3(\phi(z)-(b-4e_j)).
        \end{split}
    \end{align*}
    As in \cite[equation 5.1]{Isabel} we define
    \begin{equation}\label{rootproperties}
        r_j=b-4e_j,\quad r_1=\overline{r_2}\in \bC\setminus \bR,\ r_3\in \bR,
    \end{equation}
    and the polynomial \begin{equation}\label{polynomialq}
        \hat{q}(x)=-(x-r_1)(x-r_2)(x-r_3).
    \end{equation}
    Then
    \begin{equation}\label{phiequation}
        \phi'(z)^2=\hat{q}(\phi(z)).
    \end{equation}
    We have the well known identities 
    \begin{gather*}
            e_1+e_2+e_3=0,\quad e_1e_2+e_1e_3+e_2e_3=-\frac{g_2}{4},\quad  e_1e_2e_3=\frac{g_3}{4},
    \end{gather*}
    which implies that
    \begin{equation*}
        \sum r_j=3b,
    \end{equation*}
   and since $b$ satisfies equation \eqref{cubicequation2} we also obtain
    \begin{align*}
    \begin{split}
        r_1r_2r_3&=b^3-4(e_1+e_2+e_3)b^2+16(e_1e_2+e_1e_3+e_2e_3)b-64e_1e_2e_3\\&=b^3-4g_2b-16g_3.
        \end{split}
    \end{align*}
    that is
    \begin{equation*}
        r_1r_2r_3=1.
    \end{equation*}
   In particular we see that $r_3$ is positive,
    \begin{equation*}
        r_3=\frac{1}{r_1r_2}=\frac{1}{r_1\overline{r_1}}=\frac{1}{|r_1|^2}\in \bR^{+}.
    \end{equation*}
   The purpose now is to determine the constants $\delta,h,k$ and the dependence of the initial conditions of the Hamiltonian system in terms of the parameter space.
   \begin{prop}\label{initialconditionsparameterspace}
       Consider the minimal immersion $\Psi_{L(\tau,s),\hat{g}_0}$ with parameters $(\tau,s,\hat{g}_0)\in C_s\times \bR^+$ and let $C\coloneqq \frac{\hat{g_0}+\hat{g_0}^{-1}}{2}>0$. Then the initial conditions of the associated Hamiltonian system and its constants of motion satisfy
        \begin{gather*}
            \alpha(0)=0,\quad  \alpha'(0)=-\frac{1}{C},\quad \beta'(0)=C\sum_{i<j}r_ir_j,\quad \beta(0)^2=4(C^2-1),
    \end{gather*}
    \begin{gather*}
         \delta=3b,\quad    h=-(r_1r_2+r_1r_3+r_2r_3),\quad k=1.
    \end{gather*}
    Moreover, at $u=\omega_3$ the Hamiltonian system satisfy
    \begin{gather*}
        \alpha(\omega_3)=0,\quad  \alpha'(\omega_3)=-\frac{1}{C},\quad \beta'(\omega_3)=C\sum_{i<j}r_ir_j,\quad \beta^2(\omega_3)=4(C^2-1).
\end{gather*}
   \end{prop}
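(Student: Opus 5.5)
The plan is to compute $\alpha,\beta$ directly from the Weierstrass data. For an immersion given by \eqref{Weierstrassdata}, the conformal factor is
\[
e^{\omega}=\tfrac{1}{2}|\phi|\bigl(|g|+|g|^{-1}\bigr).
\]
Differentiating \eqref{Enneperequation} identifies $\alpha(u),\beta(u)$ in terms of $\phi$, $g$ and their derivatives. The system \eqref{hamiltonianequations} and the conserved quantities are then consequences of \eqref{phiequation}, via Wente's computation in \cite{Wentespherical} (and as carried out for rectangular lattices in \cite{Isabel}).

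\textbf{Step 1: restrict to $u=0$.} Along the line $u=0$ we have $z=iv$. Since $L$ is rhombic with diagonals $\bR$ and $i\bR$, both $\wp$ and $\phi=b-4\wp$ are real on $i\bR$, and $\phi'$ is purely imaginary there. From \eqref{gaussmap} with base point $0$, $\int_0^{iv}d\xi/\phi$ is purely imaginary, so $|g(iv)|=\hat g_0$ is constant. Hence
\[
e^{\omega(0,v)}=C|\phi(iv)|,\qquad \omega_u(0,v)=\mathrm{Re}\Bigl(\frac{\phi'}{\phi}\Bigr)+\frac{\hat g_0-\hat g_0^{-1}}{\hat g_0+\hat g_0^{-1}}\,\mathrm{Re}\Bigl(\frac{1}{\phi}\Bigr).
\]
Both ingredients are explicit on $i\bR$. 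Since $\phi'/\phi$ is imaginary there, $\mathrm{Re}(\phi'/\phi)=0$, and so
\[
2\omega_u(0,v)=\frac{2(\hat g_0-\hat g_0^{-1})}{(\hat g_0+\hat g_0^{-1})\,\phi}.
\]
Matching this with $\alpha e^{\omega}+\beta e^{-\omega}=\alpha C|\phi|+\beta/(C|\phi|)$ for all $v$ forces $\alpha(0)=0$ and $\beta(0)=\pm(\hat g_0-\hat g_0^{-1})$. This gives $\beta(0)^2=4(C^2-1)$ after fixing the sign of $\phi$ on the imaginary axis. I need to check that $\phi$ has a fixed sign on $i\bR\setminus L$; this holds because the zeros $\pm\mu$ lie on $\bR$.

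\textbf{Step 2: first derivatives.} Differentiating \eqref{Enneperequation} in $u$ at $u=0$ requires $\omega_{uu}$. This comes from $\Delta\omega=e^{-2\omega}$ together with $\omega_{vv}$, which is computable from $e^{\omega}=C|\phi|$ and $\phi'^2=\hat q(\phi)$. Writing $X=e^{\omega}=C\phi$ (up to sign), \eqref{polynomialpequation} gives
\[
4X_v^2=4C^2\phi_v^2=-4C^2\hat q(\phi)=4C^2(\phi-r_1)(\phi-r_2)(\phi-r_3).
\]
The right side is a cubic in $X$. Comparing coefficients with $p(0,X)$ from \eqref{pdefinition}, where $\alpha(0)=0$ kills the $X^4$ term, gives:
\begin{itemize}
\item the $X^3$ coefficient yields $-4\alpha'(0)=4/C$;
\item the $X^2$ coefficient yields $-4\delta=-12b$, using $\sum r_j=3b$;
\item the $X$ coefficient yields $4\beta'(0)=4C\sum_{i<j}r_ir_j$;
\item the constant term yields $-(4+\beta(0)^2)=-4C^2r_1r_2r_3=-4C^2$, consistent with Step 1 via $r_1r_2r_3=1$.
\end{itemize}
This is the cleanest route: it avoids computing $\omega_{uu}$ altogether and only uses \eqref{polynomialpequation}, which holds for every $u$. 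The signs depend on orientation conventions and must be tracked carefully.

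\textbf{Step 3: constants of motion.} Plug the initial data into $h$ and $4k$. This gives
\[
h=\alpha'(0)\beta'(0)=-\sum_{i<j}r_ir_j,\qquad 4k=\alpha'(0)^2\beta(0)^2+4\alpha'(0)^2=\frac{4(C^2-1)+4}{C^2}=4,
\]
so $k=1$.

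\textbf{Step 4: the line $u=\omega_3$.} The same argument applies on $u=\omega_3$, which is again a diagonal line of the rhombus, shifted by $\omega_3$. There $\phi$ is real by the symmetry $\wp(\omega_3+iy)=\overline{\wp(\omega_3-iy)}$ and evenness. To see that $|g|$ is again $\hat g_0$ on this line, compute $\int_0^{\omega_3}d\xi/\phi$ along the real axis, deforming around the poles $\pm\mu$ as needed. This integral is real only up to the residue contributions, and its real part must vanish. I expect to relate it to $\mathrm{Im}\,\kappa_2=0$ on $C_s$, or to the $\hat g_0=1$ symmetry of Theorem \ref{symmetriesg0=1}. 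This is the main obstacle: showing that $|g(\omega_3+iv)|=\hat g_0$ using the period condition (half of $2\omega_3=2\omega_1+2\omega_2$, together with Lemma \ref{gtranslated} and Lemma \ref{periodconjugates}, which make $\kappa_1+\kappa_2$ purely imaginary and hence zero). Once that is in place, the coefficient comparison of Step 2 repeats verbatim.
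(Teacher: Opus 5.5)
Your treatment of the line $u=0$ is correct, and Step 2 is exactly the paper's argument: compare $4X_v^2=-4C^2\hat q(\phi)$ with $p(0,X)$ coefficient by coefficient, then substitute into $h$ and $k$. Your Step 1 even gives the sign $\beta(0)=\hat g_0-\hat g_0^{-1}$ directly.

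One small point: the signs $\alpha'(0)=-1/C$ and $\beta'(0)=+C\sum_{i<j}r_ir_j$ require $\phi>0$ on $i\bR\setminus L$, not merely a fixed sign. This holds because $\phi$ has no zeros there and $\phi\to+\infty$ when a lattice point is approached vertically (there $\wp\sim -y^{-2}$). Similarly, on $u=\omega_3$ you need $\phi>0$, which follows from $\phi(\omega_3)=r_3>0$ and the absence of zeros on that line.

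The genuine gap is Step 4. The second half of the statement rests on $|g(\omega_3+iv)|=\hat g_0$, which is the only place the hypothesis $(\tau,s)\in C_s$ is used. You only assert that the principal value of $\int_0^{\omega_3}d\xi/\phi$ ``must vanish''. What Lemmas \ref{gtranslated} and \ref{periodconjugates} give is $\kappa_1+\kappa_2=2i\,\mathrm{Im}\,\kappa_2=0$ on $C_s$, i.e.\ $g(z+2\omega_3)=g(z)$. That is information at $2\omega_3$, not at $\omega_3$, and the passage to the half-period is where the content lies. Indeed $1/\phi$ is negative on $(0,\mu)$ and positive on $(\mu,\omega_3)$, and for general $(\tau,s)$ one has $|g(\omega_3)|=\hat g_0e^{2\pi\,\mathrm{Im}\,\kappa_2}$.

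The paper closes this with Theorem \ref{gaussmapformula}. By \eqref{sigmaidentity}, $\sigma(\mu+\omega_3)=-e^{2\zeta(\omega_3)\mu}\sigma(\mu-\omega_3)$, hence $g(\omega_3)=-\hat g_0\,e^{2(\zeta(\mu)\omega_3-\zeta(\omega_3)\mu)}$. The exponent vanishes on $C_s$ by \eqref{eq1} after rescaling, giving $g(\omega_3)=-\hat g_0$.

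Your halving idea can also be made rigorous. The function $G(z)=g(z)\,g(2\omega_3-z)$ has logarithmic derivative $1/\phi(z)-1/\phi(2\omega_3-z)=0$, because $\phi$ is even and $L$-periodic. So $g(\omega_3)^2=G(\omega_3)=G(0)=\hat g_0\,g(2\omega_3)=\hat g_0^2e^{-2\pi i(\kappa_1+\kappa_2)}$, which equals $\hat g_0^2$ on $C_s$.

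With $|g(\omega_3)|=\hat g_0$ in hand, reality of $\phi$ on the vertical diagonal gives $|g|\equiv\hat g_0$ there, and your Step 2 repeats. Drop the fallback via Theorem \ref{symmetriesg0=1}: it concerns only $\hat g_0=1$, speaks about $\Psi$ rather than $g$, and its proof is not given in the paper.
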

  \begin{proof}
We know that the conformal factor is
    \begin{equation}\label{conformalfactorimersion}
        e^{\omega}=\frac{|\phi|}{2}(|g|+|g|^{-1}).
    \end{equation}
    Along $(0,iv)$ we have that $\phi=4\wp(\mu)-4\wp(iv)>0$ and therefore 
    \begin{equation*}
        |g(iv)|=\hat{g}_0,
    \end{equation*}
    \begin{equation*}
        e^{\omega(0,v)}=C(\hat{g_0})\phi(iv),\quad C=C(\hat{g_0})=\frac{\hat{g_0}+\hat{g_0}^{-1}}{2}.
    \end{equation*}
    Notice that since $\hat{g_0}\in \bR^{+}$ then $C\geq 1$. Then by equation \eqref{polynomialpequation} 
    \begin{equation*}
       ( e^{\omega(0,v)})_v=iC\phi'(iv),
    \end{equation*}
    \begin{equation*}
        \frac{p(0,e^{\omega(0,v)})}{4}=(( e^{\omega(0,v)})_v)^2=-C^2\phi'(iv)^2.
    \end{equation*}
    Therefore using equations \eqref{pdefinition} and \eqref{phiequation} we obtain
    \begin{align*}
    \begin{split}
        -\frac{1}{4}\alpha(0)^2C^4\phi(iv)^4-&\alpha'(0)C^3\phi(iv)^3+\frac{6\alpha(0)\beta(0)-4\delta}{4}C^2\phi(iv)^2+\beta'(0)C\phi(iv)-\frac{4+\beta(0)^2}{4}\\&=-C^2\hat{q}(\phi(iv))=C^2(\phi(iv)-r_1)(\phi(iv)-r_2)(\phi(iv)-r_3)\\& =C^2\phi(iv)^3-C^2(r_1+r_2+r_3)\phi(iv)^2+C^2(r_1r_2+r_1r_3+r_2r_3)\phi(iv)-C^2r_1r_2r_3\\&=C^2\phi(iv)^3-3C^2b\phi(iv)^2+C^2\sum_{i<j} r_ir_j \phi(iv)-C^2.
        \end{split}
    \end{align*}
    We conclude that
    \begin{gather*}
            \alpha(0)=0,\quad  \alpha'(0)=-\frac{1}{C},\quad \delta=3b,\quad \beta'(0)=C\sum_{i<j}r_ir_j,\quad \beta(0)^2=4(C^2-1).
    \end{gather*}
     Therefore the constants of motion are
        \begin{gather*}
                h=-(r_1r_2+r_1r_3+r_2r_3),\quad k=1.
        \end{gather*}
Notice that $h,k,\delta$ are independent of $\hat{g}_0$, which implies that the Hamiltonian system equations \eqref{hamiltonianequations} are independent of $\hat{g}_0$. However we see that the \textit{initial conditions $\alpha'(0),\beta(0),\beta'(0)$ depend on the choice of $\hat{g}_0$}.

We want to study the value of $\alpha,\alpha',\beta, \beta'$ at $\omega_3$. Notice that
\begin{gather*}
    g(\omega_3)=\hat{g}_0\exp\left(2\zeta(\mu)\omega_3\right)\frac{\sigma(\mu-\omega_3)}{\sigma(\mu+\omega_3)}.
\end{gather*}
Using the identity \eqref{sigmaidentity} we have
\begin{equation*}
    \sigma(\mu+\omega_3)=-\exp\left(2\zeta(\omega_3)\mu\right)\sigma(\mu-\omega_3),
\end{equation*}
and along the curve $C_s$, using the rescaling properties and equation \eqref{eq1} we have $\zeta(\mu)\omega_3=\zeta(\omega_3)\mu$ and therefore
\begin{equation}\label{g(omega3)}
    g(\omega_3)=-\hat{g}_0\in \bR^{-}.
\end{equation}
Since $$\phi(\omega_3+iv)>0,$$ and $$g(\omega_3+iv)=g(\omega_3)\exp\left(\int_{\omega_3}^{\omega_3+iv}\frac{1}{\phi(\nu)}d\nu\right),$$ we have
\begin{equation*}
    \abs{g(\omega_3+iv)}=\hat{g}_0,
\end{equation*}
so that
\begin{equation*}
    e^{\omega(\omega_3,iv)}=C\phi(\omega_3,iv).
\end{equation*}We can prove in an analogous way that
\begin{gather*}
        \alpha(\omega_3)=0,\quad  \alpha'(\omega_3)=-\frac{1}{C},\quad \beta'(\omega_3)=C\sum_{i<j}r_ir_j,\quad \beta^2(\omega_3)=4(C^2-1).
\end{gather*}
 \end{proof}
 Even though $\beta^2(0)=\beta^2(\omega_3)$, we want to determine the sign relation between them. The following analysis studies that question.
 
 We begin with a simple lemma which comes from the properties of the Weiertrass $\wp$ function
\begin{figure}[h]
    \centering
    \includegraphics[width=0.7\linewidth]{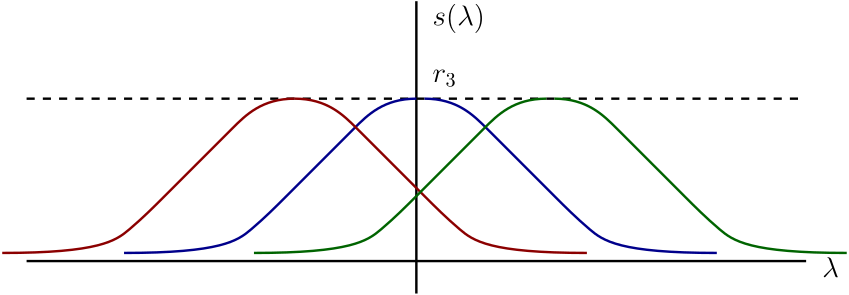}
    \caption{The $s(\lambda)$ orbits for the system $s'(\lambda)^2=s^2(\lambda)\hat{q}(s(\lambda))$.}
    \label{fig:s orbit}
\end{figure}
\begin{lema}\label{signphi}
    \begin{gather*}
    \forall |u|<\omega_3,\quad \phi(u)=4(\wp(\mu)-\wp(u))\in \begin{cases}
        \bR^{-}\quad \text{if}\quad  |u|< \mu,\\\bR^{+}\quad \text{if}\quad \mu <|u|< \omega_3.
    \end{cases}
\end{gather*} 
\end{lema}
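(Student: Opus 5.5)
The plan is to restrict to the real segment $(-\omega_3,\omega_3)$ and use two facts stated earlier: the sign of $\phi$ there is governed entirely by the monotonicity of $\wp$ on the horizontal diagonal, and $\mu$ sits on that diagonal.

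First I record the location of $\mu$. By equation \eqref{equivalentdefinitions} we have $\mu=M(\tau,s)=\lambda(\tau,s)\,s$, and by construction $0<M(\tau,s)<\omega_3(\tau,s)$. So $\mu$ is a real number in $(0,\omega_3)$, and $\phi(u)=b-4\wp(u)=4(\wp(\mu)-\wp(u))$ because $b=4\wp(\mu)$ by \eqref{defb}.

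Next I use parity to reduce to $u\in(0,\omega_3)$. Since $\wp$ is even, $\phi(u)=\phi(|u|)$. The point $u=0$ is a lattice point, where $\phi$ has a pole with $\phi\to-\infty$ because $\wp\to+\infty$; in that sense it lies in the region $|u|<\mu$. For $t=|u|\in(0,\omega_3)$, I rescale: $\wp(t\,|\,L)=\lambda^{-2}\wp(t/\lambda\,|\,\tilde L)$, with $t/\lambda\in(0,2)$. By \eqref{derivativeP}, $\wp(\cdot\,|\,\tilde L)$ is real and strictly decreasing on $(0,2)$. Hence $\wp(\cdot\,|\,L)$ is real and strictly decreasing on $(0,\omega_3)$.

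Consequently $\wp(t)>\wp(\mu)$ for $0<t<\mu$, which gives $\phi<0$, and $\wp(t)<\wp(\mu)$ for $\mu<t<\omega_3$, which gives $\phi>0$. This is exactly the claimed dichotomy, and equality $\phi=0$ occurs only at $|u|=\mu$, consistent with the simple zeros $\pm\mu+L$.

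There is no real obstacle. The only point needing care is justifying that $\wp(\cdot\,|\,L)$ is strictly monotone on the whole segment $(0,\omega_3)$ for the rescaled lattice, and not just for $\tilde L$. This follows from the homogeneity relation together with $\lambda>0$, which preserves both realness and monotonicity.
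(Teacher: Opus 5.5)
Your proof is correct and is the argument the paper intends. The paper gives no separate proof and attributes the lemma to the properties of $\wp$: it is real and strictly decreasing on the horizontal diagonal (equation \eqref{derivativeP}), and $0<\mu=M(\tau,s)<\omega_3$, which is exactly what you make explicit via evenness and the rescaling $\wp(t\,|\,L)=\lambda^{-2}\wp(t/\lambda\,|\,\tilde L)$ (with your reading of $u=0$ as the excluded pole).
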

We also have a similar property of the Gauss map
\begin{lema}\label{signg}
\begin{gather*}
    \forall |u|<\omega_3,\quad g(u)\in \begin{cases}
        \bR^{+}\quad \text{if}\quad  |u|< \mu,\\\bR^{-}\quad \text{if}\quad \mu< |u|< \omega_3.
    \end{cases}
\end{gather*}
\end{lema}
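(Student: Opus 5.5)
The plan is to work directly with the closed formula of Theorem \ref{gaussmapformula},
\begin{equation*}
g(u)=\hat{g}_0\exp(2\zeta(\mu|L)u)\frac{\sigma(\mu-u|L)}{\sigma(\mu+u|L)},
\end{equation*}
restricted to the real segment $(-\omega_3,\omega_3)$. This segment is a diagonal of the rhombus $L(\tau,s(\tau))$. Since the invariants are real, the functions $\wp,\zeta,\sigma$ are real on the real axis. We also know $\mu=M(\tau,s)\in(0,\omega_3)$ is real, so $g(u)\in\bR$ for every real $u$ that is not a zero or pole. As $u$ runs through $(-\omega_3,\omega_3)$, the only zeros of $\sigma(\mu\mp u)$ occur at $u=\pm\mu$. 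Indeed, the lattice points on the real axis are the multiples of $2\omega_3$, and $\mu\pm u\in(-\omega_3+\mu,\omega_3+\mu)\subset(-2\omega_3,2\omega_3)$, which vanishes only when $u=\mp\mu$. Hence $g$ is real, nonzero and finite on each of the three intervals $(-\omega_3,-\mu)$, $(-\mu,\mu)$ and $(\mu,\omega_3)$. By continuity it has constant sign on each of them.

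First I fix the sign on the middle interval. At $u=0$ we have $g(0)=\hat{g}_0>0$, so $g>0$ on $(-\mu,\mu)$.

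Next I track the crossings at $\pm\mu$. At $u=\mu$ the factor $\sigma(\mu-u)$ has a simple zero, since all zeros of $\sigma$ are simple. Equivalently, by the proof of Theorem \ref{gaussmapformula}, the form $du/\phi$ has residue $1$ at $\mu$, so $g$ has a simple zero there. A real-analytic real function with a simple zero changes sign, so $g<0$ on $(\mu,\omega_3)$.

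Symmetrically, $\sigma(\mu+u)$ has a simple zero at $u=-\mu$, which corresponds to residue $-1$. So $g$ has a simple pole at $-\mu$, and a real function with a simple pole also changes sign. Therefore $g<0$ on $(-\omega_3,-\mu)$.

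A cross-check comes from the ODE $g'/g=1/\phi$ together with Lemma \ref{signphi}. On $(-\mu,\mu)$ we have $\phi<0$, so $\log|g|$ decreases, which is consistent with a pole at $-\mu$ and a zero at $\mu$. The endpoint value $g(\omega_3)=-\hat{g}_0<0$ from \eqref{g(omega3)} confirms the sign on $(\mu,\omega_3)$. The alternative identity $g(-u)=\hat{g}_0^2/g(u)$, read off from oddness of $\zeta$ and $\sigma$, gives the negative interval directly from the positive one.

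The only delicate point is justifying that $g$ is real on the real axis. This needs $\zeta(\mu)$ and $\sigma$ real there, which follows from $\overline{\sigma(\bar z)}=\sigma(z)$ for real invariants. It also needs that no lattice point other than $0$ and $\pm2\omega_3$ lies on $\bR$. The latter holds because the real lattice vectors of $\text{span}\{2\omega_1,2\omega_2\}$ with $\omega_2=\overline{\omega_1}$ are exactly the integer multiples of $2\omega_1+2\omega_2=2\omega_3$.
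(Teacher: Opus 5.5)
Your argument is correct, but it is organized differently from the paper's.

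The paper stays with the integral representation \eqref{gaussmap}. On $(-\mu,\mu)$ the integrand $1/\phi$ is real and pole-free, so $g>0$ there. For the outer range it anchors at the value $g(\omega_3)=-\hat{g}_0$ from \eqref{g(omega3)} and integrates the real, pole-free $1/\phi$ along the segment from $\omega_3$ to $u$.

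You instead use the $\sigma$-quotient of Theorem \ref{gaussmapformula} to show that $g$ is real on the whole real axis. You then locate its only zero ($u=\mu$) and only pole ($u=-\mu$) in $(-\omega_3,\omega_3)$, and propagate the sign from $g(0)=\hat{g}_0$ across this simple zero and simple pole.

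Your route buys two things. First, it never uses $g(\omega_3)=-\hat{g}_0$, which rests on the relation $\zeta(\mu)\omega_3=\zeta(\omega_3)\mu$ defining the curve $C_s$. So it shows the sign pattern holds for every $(\tau,s)\in\Omega$: it depends only on the residues $\pm1$ of $dz/\phi$ at $\pm\mu$, since passing $\mu$ multiplies $g$ by $e^{\pm i\pi}=-1$. Second, it treats $(-\omega_3,-\mu)$ on the same footing as $(\mu,\omega_3)$. For $u<-\mu$, the paper's segment from $\omega_3$ to $u$ runs through both poles $\pm\mu$ of $1/\phi$. That case therefore really needs $g(-\omega_3)=-\hat{g}_0$ or, more simply, your identity $g(-u)=\hat{g}_0^2/g(u)$.

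The paper's version, on the other hand, is shorter on the positive side. It expresses $g$ on $(\mu,\omega_3)$ as $g(\omega_3)$ times an explicit positive exponential, tying the lemma to the value $g(\omega_3)$ that is used again in Proposition \ref{beta(omega3)}.
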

\begin{proof}
In fact since $\frac{1}{\phi(\nu)}$ is free of poles on the interval $(-\mu,\mu)$ we have
\begin{equation*}
    \forall |u|<\mu,\quad g(u)=\hat{g}_0\exp\left(\int_0^u\frac{1}{\phi(t)}dt\right)\in \bR^+.
\end{equation*}
We can not apply this argument beyond $\mu$ because $\frac{1}{\phi(\nu)}$ has a pole at $\nu=\pm \mu$. However for all $\mu<|u|<\omega_3$, using equation \eqref{g(omega3)} we can write
\begin{align*}
\begin{split}
    g(u)&=\hat{g}_0\exp\left(\int_0^u\frac{1}{\phi(\nu)}d\nu\right)=\hat{g}_0\exp\left(\int_0^{\omega_3}\frac{1}{\phi(\nu)}d\nu+\int_{\omega_3}^{u}\frac{1}{\phi(\nu)}d\nu\right)\\&=g(\omega_3)\exp\left(\int_{0}^1\frac{(u-\omega_3)dt}{\phi(\omega_3(1-t)+tu)}\right)\in \bR^{-}.
    \end{split}
\end{align*}
which proves the claim.
\end{proof}
\begin{lema}
    The conformal factor along the real line is $2\omega_3$-periodic i.e
    \begin{equation*}
        \forall u\in \bR,\quad e^{\omega(u+2\omega_3,0)}=e^{\omega(u,0)}.
    \end{equation*}
\end{lema}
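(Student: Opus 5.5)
We use the explicit formula \eqref{conformalfactorimersion} for the conformal factor, $e^{\omega}=\frac{|\phi|}{2}\left(|g|+|g|^{-1}\right)$. It then suffices to show two things along the real axis: $|\phi(u+2\omega_3)|=|\phi(u)|$, and $|g(u+2\omega_3)|\in\{|g(u)|,|g(u)|^{-1}\}$. The expression $|g|+|g|^{-1}$ is invariant under $|g|\mapsto |g|^{-1}$, so either alternative is enough.

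The first point is immediate. We have $2\omega_3=2\omega_1+2\omega_2\in L$, and $\phi=b-4\wp$ is $L$-elliptic, so $\phi(u+2\omega_3)=\phi(u)$.

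For the Gauss map we apply Lemma \ref{gtranslated} with $l_1=l_2=1$:
\begin{equation*}
g(z+2\omega_3)=g(z)\,e^{-2\pi i(\kappa_1+\kappa_2)}.
\end{equation*}
By Lemma \ref{periodconjugates} we have $\kappa_1=-\overline{\kappa_2}$, hence $\kappa_1+\kappa_2=2i\,\mathrm{Im}\,\kappa_2$. Along the curve $C_s$ this imaginary part vanishes by Corollary \ref{curveCs}, which is exactly where we are working in this section. Therefore $e^{-2\pi i(\kappa_1+\kappa_2)}=e^{4\pi\,\mathrm{Im}\,\kappa_2}=1$, so $g(u+2\omega_3)=g(u)$ and in particular $|g|$ is $2\omega_3$-periodic.

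Combining the two periodicities in \eqref{conformalfactorimersion} gives $e^{\omega(u+2\omega_3,0)}=e^{\omega(u,0)}$ for all real $u$ away from the lattice points.

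The main step to watch is the cancellation $\kappa_1+\kappa_2=0$. It relies on being on $C_s$; off that curve the factor $e^{4\pi\,\mathrm{Im}\,\kappa_2}$ is a genuine real multiplier and periodicity fails. It also relies on $g$ being well defined along paths avoiding the poles $\pm\mu+L$, which Theorem \ref{gaussmapformula} guarantees.

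An alternative route avoids the period maps. One can use \eqref{gaussmapsigmafunctions} together with the $\sigma$ quasi-periodicity \eqref{sigmaidentity} for $j=k=1$. This gives the multiplier $\exp\left(4\zeta(\mu)\omega_3-4\zeta(\omega_3)\mu\right)$, which equals $1$ by the relation $\zeta(\mu)\omega_3=\zeta(\omega_3)\mu$ on $C_s$ already used in deriving \eqref{g(omega3)}.
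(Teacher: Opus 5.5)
Your proof is correct. Your main argument differs from the paper's, whose proof is exactly your ``alternative route''. The paper substitutes $u+2\omega_3$ into the closed formula \eqref{gaussmapsigmafunctions} and uses the $\sigma$ quasi-periodicity to extract the factor $e^{4\zeta(\mu)\omega_3-4\zeta(\omega_3)\mu}$. It then removes that factor with the relation $\zeta(\mu)\omega_3=\zeta(\omega_3)\mu$ on $C_s$, and finishes with the periodicity of $\phi$ as you do. In the paper's display, the exponential attached to $\sigma(\mu-u-2\omega_3)$ should read $e^{-2\zeta(\omega_3)(\mu-u-\omega_3)}$. With that sign the quotient does reduce to $e^{-4\zeta(\omega_3)\mu}$, so your combined multiplier is the right one.

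Your primary route never uses $\sigma$. It treats $g$ as the exponential of $\int dz/\phi$ and combines Lemma \ref{gtranslated} with the conjugation symmetry of Lemma \ref{periodconjugates}. The two computations express the same fact. By homogeneity of $\zeta$ one has $\pi\,\mathrm{Im}\,\kappa_2=\zeta(\mu|L)\omega_3-\zeta(\omega_3|L)\mu$, so your multiplier $e^{4\pi\,\mathrm{Im}\,\kappa_2}$ is literally the paper's factor.

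Your version makes it transparent that the multiplier of $g$ along the diagonal period $2\omega_3$ is governed by $\mathrm{Im}\,\kappa_2$ alone. Hence $g(z+2\omega_3)=g(z)$ holds exactly when $(\tau,s)\in C_s$, which explains why the lemma lives on that curve. The paper's version is more self-contained: it needs only the standard $\sigma$ identities and the relation already used to derive \eqref{g(omega3)}. Yours relies on Lemma \ref{periodconjugates}, which the paper states with only a one-line hint. That lemma does follow from $\overline{\phi(\bar z)}=\phi(z)$ and $\omega_1=\overline{\omega_2}$, and the integer ambiguity in the $\kappa_j$ is harmless inside the exponential.
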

\begin{proof}
    Since the function $\phi(u)$ is clearly $2\omega_3$-periodic, it is enough to prove that $g(u,0)$ is $2\omega_3$-periodic. In fact for all $u\in \bR$
    \begin{align*}
    \begin{split}
        g(u+2\omega_3)&=\hat{g}_0e^{2\zeta(\mu)(u+2\omega_3)}\frac{\sigma(\mu-u-2\omega_3)}{\sigma(\mu+u+2\omega_3)}=\hat{g}_0e^{2\zeta(\mu)(u+2\omega_3)}\frac{e^{2\zeta(\omega_3)(\mu-u-\omega_3)}}{e^{2\zeta(\omega_3)(\mu+u+\omega_3)}}\frac{\sigma(\mu-u)}{\sigma(\mu+u)}\\&=\hat{g}_0e^{2\zeta(\mu)(u+2\omega_3)}e^{-4\zeta(\omega_3)\mu}\frac{\sigma(\mu-u)}{\sigma(\mu+u)}=g(u).
        \end{split}
    \end{align*}
    that is
    \begin{equation}\label{gaussmap2omega3periodicrealline}
         \forall u\in \bR,\quad g(u+2\omega_3)=g(u).
    \end{equation}
\end{proof}
\begin{cor}\label{conformalfactorimmersionrealaxis}
    The conformal factor can be written as
    \begin{gather*}
        \forall u\in \bR,\quad e^{\omega(u,0)}=-\frac{\phi(u,0)}{2}\left(\frac{1}{g(u,0)}+g(u,0)\right).
    \end{gather*}
\end{cor}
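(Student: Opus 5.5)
Starting from the general conformal factor formula \eqref{conformalfactorimersion}, $e^{\omega}=\frac{|\phi|}{2}(|g|+|g|^{-1})$, the task is to remove the absolute values on the real axis. By \eqref{gaussmap2omega3periodicrealline}, and since $\phi$ is $2\omega_3$-periodic and even, both sides of the claimed identity are $2\omega_3$-periodic. It therefore suffices to verify the identity for $u\in(-\omega_3,\omega_3]$, away from the isolated points $u=\pm\mu$ and $u\in L$. At those excluded points both sides are defined by continuity or are singular simultaneously: at $\pm\mu$, $\phi$ has a simple zero and $g$ a simple zero or pole, and the products extend continuously.

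On the real segment $|u|<\omega_3$ the function $\phi(u)=4(\wp(\mu)-\wp(u))$ is real, since $\wp$ is real on the horizontal diagonal. The Gauss map $g(u)$ is also real, by Lemma \ref{signg}. Consequently $\frac{1}{g}+g$ is real with the same sign as $g$, because $|g|+|g|^{-1}=\mathrm{sgn}(g)(g+g^{-1})$.

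\textbf{Case $|u|<\mu$.} Lemma \ref{signphi} gives $\phi<0$ and Lemma \ref{signg} gives $g>0$. Hence $|\phi|=-\phi$ and $|g|+|g|^{-1}=g+g^{-1}$, and the product is $-\frac{\phi}{2}(g+g^{-1})$.

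\textbf{Case $\mu<|u|<\omega_3$.} Here $\phi>0$ and $g<0$. Hence $|\phi|=\phi$ and $|g|+|g|^{-1}=-(g+g^{-1})$, which again yields $-\frac{\phi}{2}(g+g^{-1})$.

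\textbf{Endpoint $u=\omega_3$.} We have $\phi(\omega_3)>0$, as already used in Proposition \ref{initialconditionsparameterspace}, and $g(\omega_3)=-\hat g_0<0$ by \eqref{g(omega3)}, so the same conclusion holds.

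Combining the three cases proves the formula on $(-\omega_3,\omega_3]$ minus isolated points, and then on all of $\bR$ by continuity and periodicity. The main subtle point is ensuring that the sign information in Lemmas \ref{signphi} and \ref{signg} covers the whole fundamental interval, including the points $u=\pm\mu$ where the signs of $\phi$ and $g$ switch simultaneously. This simultaneous switch is precisely why the product $\phi\,(g+g^{-1})$ keeps a constant sign and is continuous across $\pm\mu$.
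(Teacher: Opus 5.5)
Your proof is correct and follows essentially the same route as the paper. You reduce by the $2\omega_3$-periodicity of $\phi$ and of $g$ on the real axis to a fundamental interval, then use Lemmas \ref{signphi} and \ref{signg} to see that $\phi$ and $g$ have opposite signs there, which removes the absolute values in \eqref{conformalfactorimersion}. Your explicit treatment of the endpoint $u=\omega_3$ (which the paper's open interval $(-\omega_3,\omega_3)$ technically misses) and of the points $\pm\mu$ by continuity is a welcome tightening rather than a different argument.
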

\begin{proof}
    In fact, by the periodicity property, we only need to study the conformal factor in the interval $(-\omega_3,\omega_3)$ and since in that interval $\phi(u)$ and $g(u)$ have opposite signs according to equation \eqref{conformalfactorimersion} and Lemmas \ref{signphi}, \ref{signg} we prove our claim.
\end{proof}
\begin{prop}\label{beta(omega3)}
    The $\beta$ function satisfies
    \begin{equation*}
         \beta(\omega_3)=\hat{g}_0-\frac{1}{\hat{g}_0}.
    \end{equation*}
\end{prop}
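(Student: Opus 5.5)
The plan is to read off $\beta(\omega_3)$ from Wente's relation \eqref{Enneperequation} at the point $(u,v)=(\omega_3,0)$, using $\alpha(\omega_3)=0$ from Proposition \ref{initialconditionsparameterspace}. At $u=\omega_3$ the $\alpha$-term drops out, and multiplying \eqref{Enneperequation} by $e^{\omega}$ gives
\begin{equation*}
\beta(\omega_3)=2\,\omega_u(\omega_3,0)\,e^{\omega(\omega_3,0)}=2\,\big(e^{\omega}\big)_u(\omega_3,0).
\end{equation*}
So it suffices to compute the $u$-derivative of the conformal factor along the real axis at $u=\omega_3$.

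For this I would use Corollary \ref{conformalfactorimmersionrealaxis}, which gives the real-analytic expression $e^{\omega(u,0)}=-\frac{\phi(u)}{2}\big(g(u)^{-1}+g(u)\big)$ on the real line. The formula is valid near $u=\omega_3$: by Lemma \ref{signphi} we have $\phi(\omega_3)>0$, and by \eqref{g(omega3)} the value $g(\omega_3)=-\hat g_0$ is finite and nonzero. Differentiating in $u$ and using $g'/g=1/\phi$ yields
\begin{equation*}
\big(e^{\omega}\big)_u=-\frac{\phi'(u)}{2}\Big(\frac{1}{g}+g\Big)-\frac{\phi(u)}{2}\Big(g-\frac{1}{g}\Big)\frac{1}{\phi(u)}=-\frac{\phi'(u)}{2}\Big(\frac{1}{g}+g\Big)-\frac12\Big(g-\frac1g\Big).
\end{equation*}

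The key observation is that $\phi'(\omega_3)=-4\wp'(\omega_3)=0$. This holds because $\omega_3=\omega_1+\omega_2$ is a half-period of $L$, and $\wp'$ is odd and $2\omega_3$-periodic, so $\wp'(\omega_3)=-\wp'(-\omega_3)=-\wp'(\omega_3)$. Substituting $g(\omega_3)=-\hat g_0$ from \eqref{g(omega3)} then gives
\begin{equation*}
\big(e^{\omega}\big)_u(\omega_3,0)=-\tfrac12\big(-\hat g_0+\hat g_0^{-1}\big),
\end{equation*}
and therefore $\beta(\omega_3)=\hat g_0-\hat g_0^{-1}$.

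The only delicate point is sign bookkeeping. One must use the correctly signed form of $e^{\omega}$ from Corollary \ref{conformalfactorimmersionrealaxis} rather than the absolute-value formula \eqref{conformalfactorimersion}, and the sign of $g(\omega_3)$ from \eqref{g(omega3)}. This sign choice is exactly what distinguishes $\beta(\omega_3)$ from $-\beta(\omega_3)$, given that Proposition \ref{initialconditionsparameterspace} only determines $\beta(\omega_3)^2=4(C^2-1)$. As a consistency check, $(\hat g_0-\hat g_0^{-1})^2=4(C^2-1)$.
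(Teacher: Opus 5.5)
Your proposal is correct and follows essentially the same route as the paper. Both arguments differentiate the signed expression for $e^{\omega(u,0)}$ from Corollary \ref{conformalfactorimmersionrealaxis}, use $\phi'(\omega_3)=-4\wp'(\omega_3)=0$ together with $\alpha(\omega_3)=0$ in \eqref{Enneperequation}, and substitute $g(\omega_3)=-\hat g_0$ from \eqref{g(omega3)}; the only cosmetic difference is that the paper takes the logarithmic derivative $\omega_u$ and then multiplies by $e^{\omega}$, whereas you differentiate $e^{\omega}$ directly.
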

\begin{proof}
    Taking derivative in the $u$ direction in Corollary \ref{conformalfactorimmersionrealaxis} we obtain
\begin{equation}\label{derivativeconformal}
   \forall u\in \bR,\quad \omega_u(u,0)=\frac{\phi_u(u,0)}{\phi(u,0)}+\frac{1}{\phi(u,0)}\left(\frac{g^2(u,0)-1}{g^2(u,0)+1}\right).
\end{equation}
Since the conformal factor $e^{\omega(\omega_3,0)}$ is finite and since $$\phi_u(\omega_3,0)=-4\wp'(\omega_3)=0,$$ then using equations \eqref{Enneperequation}, \eqref{derivativeconformal}, Proposition \ref{initialconditionsparameterspace} and Corollary \ref{conformalfactorimmersionrealaxis} we obtain
\begin{equation*}
    \beta(\omega_3)=\frac{1}{g(\omega_3,0)}-g(\omega_3,0)=\hat{g}_0-\frac{1}{\hat{g}_0},
\end{equation*}
as we wanted to prove.
\end{proof}
We can not use the same analysis to study the sign of $\beta(0)$ since the conformal factor explodes at $(0,0)$. We then need to study another symmetry property of the conformal factor

\begin{lema}\label{conformalfactoratiomega3'level}
    \begin{equation*}
        \forall u\in \bR,\quad e^{\omega(u,i\abs{\omega_3'})}=e^{\omega(u-\omega_3,0)}.
    \end{equation*}
\end{lema}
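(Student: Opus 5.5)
The identity should follow from the explicit formula \eqref{conformalfactorimersion} for the conformal factor, $e^{\omega}=\frac{|\phi|}{2}\left(|g|+|g|^{-1}\right)$, once we express the point $u+i\abs{\omega_3'}$ as a lattice translate of $u-\omega_3$. Recall that $\omega_3'=\omega_2-\omega_1$ and, for the rescaled rhombic lattice \eqref{rescaledlattice}, $\omega_3'=2i\lambda\tau$ is purely imaginary with positive imaginary part, so $i\abs{\omega_3'}=\omega_3'$. Since $\omega_3=\omega_1+\omega_2$, we get $\omega_3'=2\omega_2-\omega_3$, and therefore
\begin{equation*}
u+i\abs{\omega_3'}=(u-\omega_3)+2\omega_2 .
\end{equation*}
Thus the claim reduces to showing that $|\phi|$ and $|g|$ are invariant under translation by $2\omega_2$. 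We then apply this at the point $u-\omega_3$.

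For $\phi$ this is immediate: $\phi=b-4\wp(\cdot|L)$ is elliptic with respect to $L$, so $\phi(u-\omega_3+2\omega_2)=\phi(u-\omega_3)$. For the Gauss map, Lemma \ref{gtranslated} with $(l_1,l_2)=(0,1)$ gives
\begin{equation*}
g(u-\omega_3+2\omega_2)=g(u-\omega_3)\,e^{-2\pi i\kappa_2(b,L)} .
\end{equation*}
We work on $C_s$, where $Im\ \kappa_2(\tau,s(\tau))=0$ by Corollary \ref{curveCs}. Hence $\left|e^{-2\pi i\kappa_2}\right|=1$, so $|g(u+i\abs{\omega_3'})|=|g(u-\omega_3)|$. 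Note that only the reality of $\kappa_2$ is used, not its rationality. Substituting both invariances into \eqref{conformalfactorimersion} yields $e^{\omega(u,\abs{\omega_3'})}=e^{\omega(u-\omega_3,0)}$ at every $u$ for which $u-\omega_3\notin L$.

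The only mild obstacle is bookkeeping at singular points. At the exceptional values $u\in\omega_3+L\cap\bR$ both sides correspond to ends, so $\phi$ has a pole there and both sides blow up simultaneously; the identity is understood there either as equality in $(0,+\infty]$ or by continuity. The zeros of $\phi$ at $\pm\mu+L$, which are poles of $g$, cause no trouble, because the product $|\phi|(|g|+|g|^{-1})$ extends smoothly and positively there: the simple pole of $g$ cancels the simple zero of $\phi$. Moreover these zeros and poles are themselves matched by the translation $2\omega_2\in L$. One should also double-check the orientation convention, namely that $\omega_3'$ has positive imaginary part, so that $i\abs{\omega_3'}=\omega_3'$ rather than $-\omega_3'$. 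If the sign were reversed, the same argument would apply with $-2\omega_1$ in place of $2\omega_2$, using $|e^{2\pi i\kappa_1}|=1$, which follows from Lemma \ref{periodconjugates}.
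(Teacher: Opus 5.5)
Your proof is correct and is essentially the paper's argument: you write $u+\omega_3'=(u-\omega_3)+2\omega_2$, use the $L$-periodicity of $\phi$ and the multiplier $e^{-2\pi i\kappa_2}$ of $g$ under translation by $2\omega_2$, and conclude from the reality of $\kappa_2$ along $C_s$ that $|\phi|$ and $|g|$, hence the conformal factor, agree at the two points. One small slip in your side remark: $g$ has zeros at $\mu+L$ and poles at $-\mu+L$, not poles at both, but in either case $|g|+|g|^{-1}$ has a simple pole that cancels the simple zero of $\phi$, so your conclusion there is unaffected.
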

\begin{proof}
First notice that for all $u\in \bR$
\begin{equation*}
    \phi(u+\omega_3')=b-\wp(u+\omega_3')=b-\wp(u-\omega_3)=\phi(u-\omega_3).
\end{equation*}
On the other hand
\begin{gather*}
    g(u+\omega_3')=\hat{g}_0\exp\left(\int_0^{u+\omega_3'}\frac{1}{\phi(\nu)}d\nu\right)=\hat{g}_0\exp\left(\int_0^{2\omega_2}\frac{1}{\phi(\nu)}d\nu+\int_{2\omega_2}^{u+\omega_3'}\frac{1}{\phi(\nu)}d\nu\right)\\=\hat{g}_0\exp(-2\pi i\kappa_2)\exp\left(\int_0^{u-\omega_3}\frac{1}{\phi(\nu)}d\nu\right)=\exp(-2\pi i\kappa_2)g(u-\omega_3).
\end{gather*}
Therefore along the curve $C_s$
\begin{equation}\label{modulusglevelomega3'omega3}
    \abs{g(u+\omega_3')}=\abs{g(u-\omega_3)},
\end{equation}
which implies
\begin{align*}
\begin{split}
e^{\omega(u,i\abs{\omega_3'})}&=\frac{\abs{\phi(u+\omega_3')}}{2}\left(\frac{1}{\abs{g(u+\omega_3')}}+\abs{g(u+\omega_3')}\right)\\&=\frac{\abs{\phi(\omega_3-u)}}{2}\left(\frac{1}{\abs{g(u-\omega_3)}}+\abs{g(u-\omega_3)}\right)=e^{\omega(u-\omega_3,0)}.
\end{split}
\end{align*}
\end{proof}
\begin{prop}\label{betafunctionat0omega3}
    The $\beta$ function satisfies
    \begin{equation*}
        \beta(0)=\beta(\omega_3)=\hat{g}_0-\frac{1}{\hat{g}_0}.
    \end{equation*}
\end{prop}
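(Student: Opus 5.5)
Since $\beta(\omega_3)=\hat{g}_0-\hat{g}_0^{-1}$ is already known from Proposition \ref{beta(omega3)}, it remains to show $\beta(0)=\beta(\omega_3)$. We cannot evaluate $\beta$ at $u=0$ directly, because the conformal factor blows up at the lattice point $0$. The plan is to move the computation to the horizontal line $\mathrm{Im}\,z=\abs{\omega_3'}$, where the point above $u=0$ is regular. Indeed, $i\abs{\omega_3'}=\omega_3'=2\omega_2-\omega_3$ is not a lattice point and is congruent to $\omega_3$ modulo $L$.

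\textbf{Step 1.} Use that $\alpha,\beta$ depend only on $u$ and are independent of $v$. Equation \eqref{Enneperequation} holds for every $v$, so on the line $v=\abs{\omega_3'}$ we have
\begin{equation*}
2\omega_u(u,\abs{\omega_3'})=\alpha(u)e^{\omega(u,\abs{\omega_3'})}+\beta(u)e^{-\omega(u,\abs{\omega_3'})}.
\end{equation*}
By Lemma \ref{conformalfactoratiomega3'level}, $\omega(u,\abs{\omega_3'})=\omega(u-\omega_3,0)$. Hence $\omega_u(u,\abs{\omega_3'})=\omega_u(u-\omega_3,0)$ as well.

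\textbf{Step 2.} Evaluate at $u=0$. Since $\alpha(0)=0$ by Proposition \ref{initialconditionsparameterspace}, we get
\begin{equation*}
\beta(0)=2\omega_u(-\omega_3,0)\,e^{\omega(-\omega_3,0)}.
\end{equation*}
The $2\omega_3$-periodicity of the conformal factor along the real line gives $e^{\omega(-\omega_3,0)}=e^{\omega(\omega_3,0)}$. Its $u$-derivative is periodic too, so $\omega_u(-\omega_3,0)=\omega_u(\omega_3,0)$. Applying \eqref{Enneperequation} at $(\omega_3,0)$ with $\alpha(\omega_3)=0$ yields
\begin{equation*}
2\omega_u(\omega_3,0)e^{\omega(\omega_3,0)}=\beta(\omega_3).
\end{equation*}
Therefore $\beta(0)=\beta(\omega_3)$.

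\textbf{Step 3, a cross-check.} If one prefers an explicit value, compute $\omega_u(-\omega_3,0)$ from \eqref{derivativeconformal}. Here $\phi_u(-\omega_3)=-4\wp'(-\omega_3)=0$, and by \eqref{g(omega3)} and periodicity $g(-\omega_3)=-\hat{g}_0$. Combined with Corollary \ref{conformalfactorimmersionrealaxis}, this reproduces $\hat{g}_0-\hat{g}_0^{-1}$, exactly as in Proposition \ref{beta(omega3)}.

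The main obstacle is justifying the differentiation in Step 1. Lemma \ref{conformalfactoratiomega3'level} is an identity in $u$ for all real $u$, so differentiating it is legitimate, but we must check that the line $v=\abs{\omega_3'}$ avoids the lattice at $u=0$. It does, because $\omega_3'\equiv\omega_3 \pmod L$ and $\omega_3\notin L$, so $e^{\omega}$ is finite there.

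A subtler issue is the sign of $g$ under the identification. The relation $g(u+\omega_3')=e^{-2\pi i\kappa_2}g(u-\omega_3)$ involves a unimodular factor. Only $\abs{g}$ enters $e^{\omega}$, so the argument goes through, provided the parametrization of Wente's $\alpha,\beta$ is global in $v$. I take that from \cite{Wentespherical}.
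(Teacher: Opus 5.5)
Your proof is correct and follows the paper's route: transfer Wente's relation to the regular line $v=\abs{\omega_3'}$ via Lemma \ref{conformalfactoratiomega3'level}, differentiate in $u$, evaluate at $u=0$ using $\alpha(0)=0$, and return to $\pm\omega_3$ on the real axis by $2\omega_3$-periodicity. The only difference is in how the argument is closed. The paper computes $\beta(0)=g(-\omega_3)^{-1}-g(-\omega_3)$ explicitly from \eqref{derivativeconformal} and then uses periodicity of $g$; this is your Step 3. Your main line instead compares directly with Wente's relation at $(\omega_3,0)$ via periodicity of the conformal factor, which avoids the explicit Gauss-map computation.
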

\begin{proof}
Using Corollary \ref{conformalfactorimmersionrealaxis} and Lemma \ref{conformalfactoratiomega3'level} we have
\begin{equation*}
    \forall u\in \bR\quad e^{\omega(u,i\abs{\omega_3'})}= -\frac{\phi(u-\omega_3)}{2}\left(\frac{1}{g(u-\omega_3)}+g(u-\omega_3)\right).
\end{equation*}
In the same way as before, taking derivative in the $u$ direction and evaluating at $u=0$ we obtain
\begin{equation*}
    \beta(0)=-\left(g(-\omega_3)-\frac{1}{g(-\omega_3)}\right).
\end{equation*}
From equation \eqref{gaussmap2omega3periodicrealline} we saw that the Gauss map is $2\omega_3$-periodic on the real line, so using equation \eqref{g(omega3)} and Proposition \ref{beta(omega3)} we have
\begin{equation*}
    \beta(0)=-\left(g(\omega_3)-\frac{1}{g(\omega_3)}\right)=\hat{g}_0-\frac{1}{\hat{g}_0}=\beta(\omega_3).
\end{equation*}
\end{proof}
By the uniqueness of the solution to the Hamiltonian system with initial conditions, we have
\begin{cor}\label{alphabetaomega3periodic}
The Hamiltonian system $(\alpha(u),\beta(u))$ is $\omega_3$ periodic, that is
    \begin{gather*}
        \forall u\in \bR,\quad \begin{cases}
            \alpha(u+\omega_3)=\alpha(u),\\ \beta(u+\omega_3)=\beta(u).
        \end{cases}
    \end{gather*}
\end{cor}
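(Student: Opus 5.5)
The plan is to apply uniqueness of solutions to the initial value problem for the second-order system \eqref{hamiltonianequations}. The right-hand sides $\delta\alpha-2\alpha^2\beta$ and $\delta\beta-2\alpha\beta^2-2\alpha$ are polynomial in $(\alpha,\beta)$, so they are locally Lipschitz on the phase space $(\alpha,\beta,\alpha',\beta')\in\bR^4$. The constant $\delta=3b$ is fixed by Proposition \ref{initialconditionsparameterspace} and does not depend on $u$, so the system is autonomous. Consequently, for any solution $(\alpha,\beta)$, the shifted pair $(\tilde\alpha(u),\tilde\beta(u))\coloneqq(\alpha(u+\omega_3),\beta(u+\omega_3))$ is again a solution of \eqref{hamiltonianequations}, on the interval where the original solution is defined.

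Next we compare initial data at $u=0$. Proposition \ref{initialconditionsparameterspace} gives
\[
\alpha(0)=\alpha(\omega_3)=0,\qquad \alpha'(0)=\alpha'(\omega_3)=-\frac{1}{C},\qquad \beta'(0)=\beta'(\omega_3)=C\sum_{i<j}r_ir_j .
\]
That proposition only provides $\beta(0)^2=\beta(\omega_3)^2$. The sign ambiguity is removed by Proposition \ref{betafunctionat0omega3}, which gives $\beta(0)=\beta(\omega_3)=\hat g_0-\hat g_0^{-1}$. Hence $(\tilde\alpha,\tilde\beta,\tilde\alpha',\tilde\beta')(0)=(\alpha,\beta,\alpha',\beta')(0)$. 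The Picard–Lindelöf uniqueness theorem then forces $\tilde\alpha\equiv\alpha$ and $\tilde\beta\equiv\beta$ on the common interval of existence.

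It remains to check that $\alpha,\beta$ are defined on all of $\bR$, so that the identity holds for every $u$. There are two routes. The first is to note that, by Wente's construction, $\alpha,\beta$ come from the globally defined immersion through \eqref{Enneperequation}, and are therefore defined for all $u$. The second is to extend by periodicity: if the maximal solution lives on $(a,c)$ with $a<0<\omega_3<c$, the shift identity shows that the solution restricted to $(a,c)$ and its translate by $\omega_3$ agree on the overlap. Gluing them gives a solution on $(a-\omega_3,c)\cup(a,c+\omega_3)$, which contradicts maximality unless $(a,c)=\bR$.

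The only delicate step is the sign of $\beta(0)$, since Proposition \ref{initialconditionsparameterspace} alone fixes only its square. That step has already been handled by Proposition \ref{betafunctionat0omega3}, so the corollary follows directly.
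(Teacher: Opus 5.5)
Your core argument is exactly the paper's one-line proof. The system \eqref{hamiltonianequations} is autonomous. The full data $(\alpha,\beta,\alpha',\beta')$ coincide at $u=0$ and $u=\omega_3$ by Proposition \ref{initialconditionsparameterspace}, with the sign of $\beta$ fixed by Proposition \ref{betafunctionat0omega3}. ODE uniqueness then concludes.

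\textbf{The gap is your last paragraph.} In this paper $\alpha,\beta$ are \emph{not} defined on all of $\bR$. Theorem \ref{alphabetaqualitative} shows that they blow up at $u(\lambda_t)\in(0,\omega_3)$, and Theorems \ref{centeranalysis} and \ref{freeboundaryrhombictwospheres} rely on this blow-up. So both of your routes fail:
\begin{itemize}
\item Route 2 assumes the maximal solution through the data at $0$ lives on $(a,c)$ with $c>\omega_3$. In fact $c=u(\lambda_t)<\omega_3$. Your shifted pair ``on the interval where the original solution is defined'' is therefore not even defined near $u=0$. The gluing step, which would prove global existence and contradict the blow-up, never starts.
\item Route 1 fails because a smooth immersion does not give smooth coefficients. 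By \eqref{radiusangle}, $\alpha^2=4/(R^2\sin^2\theta)$ and $\beta=2\cot\theta$. So $\alpha,\beta$ become infinite exactly where the surface is tangent to the sphere $S(u)$. This is what happens at $u(\lambda_t)$, where $R$ stays finite and $\theta\to0$ or $\pi$.
\end{itemize}

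\textbf{What uniqueness actually gives.} The branch of $(\alpha,\beta)$ through $u=\omega_3$ is a \emph{different} maximal solution from the branch through $u=0$; each is defined geometrically, and they are separated by the pole. Their data agree, so the translate of the first by $-\omega_3$ is the second. Hence the identity holds on $(u(\lambda_t)-\omega_3,u(\lambda_t))$.

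To reach the other branches you need the same data at every $k\omega_3$. One way: on $C_s$ we have $\kappa_1=-\kappa_2$ by Lemma \ref{periodconjugates}, so Lemma \ref{gtranslated} gives $g(z+2\omega_3)=g(z)e^{-2\pi i(\kappa_1+\kappa_2)}=g(z)$. This makes $\omega$, and hence $(\alpha,\beta)$, $2\omega_3$-periodic. The statement ``for all $u\in\bR$'' must then be read as ``for all $u\notin u(\lambda_t)+\omega_3\bZ$''.

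\textbf{A cleaner alternative avoids ODE continuation entirely.} Translation by $2\omega_2=\omega_3+\omega_3'$ preserves $\phi$, and, since $\kappa_2\in\bR$, also preserves $|g|$. Hence $\omega(u+\omega_3,v+|\omega_3'|)=\omega(u,v)$. Inserting this into \eqref{Enneperequation} gives
\[
\bigl(\alpha(u+\omega_3)-\alpha(u)\bigr)e^{2\omega(u,v)}+\beta(u+\omega_3)-\beta(u)=0\quad\text{for all } v.
\]
The map $v\mapsto e^{2\omega(u,v)}$ is non-constant for all but discretely many $u$. So periodicity follows on the whole regular set, using continuity at any remaining points.
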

\begin{cor}
    The radii of the spheres $R(u)$ of the spherical curvature lines and the intersection angle  $\theta(u)$ of the surface with the corresponding spheres are $\omega_3$ periodic.
\end{cor}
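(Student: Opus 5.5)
The plan is to read the statement off from Corollary \ref{alphabetaomega3periodic} together with the explicit formulas \eqref{radiusangle}, which give $R(u)$ and $\theta(u)$ in terms of $\alpha(u)$ and $\beta(u)$ alone.

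\textbf{Step 1 (the angle).} By \eqref{radiusangle} we have $\tan\theta(u)=2/\beta(u)$. Corollary \ref{alphabetaomega3periodic} gives $\beta(u+\omega_3)=\beta(u)$, hence
\begin{equation*}
\tan\theta(u+\omega_3)=\tan\theta(u).
\end{equation*}
The angle $\theta(u)$ is the angle between the surface normal and the sphere normal along $\Psi(u,\cdot)$. It is naturally normalized in $(0,\pi)$ via $\cot\theta=\beta/2$, with $\theta=\pi/2$ exactly when $\beta=0$. With this normalization, $\theta$ is a genuine function of $\beta$, namely $\theta=\operatorname{arccot}(\beta/2)$, so it inherits the period $\omega_3$. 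No branch ambiguity arises, because $\beta$ itself is periodic and $\operatorname{arccot}$ is single valued and continuous on $\bR$.

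\textbf{Step 2 (the radius).} By \eqref{radiusangle}, $R^2(u)=(4+\beta(u)^2)/\alpha(u)^2$, and this is $\omega_3$-periodic wherever $\alpha(u)\neq 0$. The points where $\alpha$ vanishes need care. Proposition \ref{initialconditionsparameterspace} shows $\alpha(0)=\alpha(\omega_3)=0$, and at those parameters the curvature line is planar, i.e. contained in a "sphere of infinite radius". The zero set of $\alpha$ is invariant under $u\mapsto u+\omega_3$, again by periodicity of $\alpha$. Hence $R(u)\in(0,\infty]$ satisfies $R(u+\omega_3)=R(u)$ for all $u$, with the convention $R=\infty$ on that zero set. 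Since $R$ is defined as a positive radius, the periodicity of $R^2$ gives the periodicity of $R$.

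\textbf{Main obstacle.} The only delicate point is interpretive: making sure $\theta$ is well defined modulo the correct normalization, and handling the planar lines where $\alpha=0$. The analytic content is entirely contained in the earlier periodicity result.
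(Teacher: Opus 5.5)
Your proposal is correct and matches the paper, which gives no separate argument for this corollary: it is read off directly from Corollary \ref{alphabetaomega3periodic} and the formulas \eqref{radiusangle}. Your extra care about normalizing $\theta\in(0,\pi)$ and about the zeros of $\alpha$ goes beyond what the paper writes but is harmless. The same applies to the blow-up points $u(\lambda_t)+k\omega_3$, which you could add since they are likewise shifted by $\omega_3$.
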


\section{Associated $(s,t)$ System}\label{Section 5.6}
In order to analyze the behavior of the center we proceed to study the solutions to the Hamiltonian system motivated by the works \cite{Wentespherical} and \cite{Isabel}.

Consider the solution $(s(\lambda),t(\lambda))\in \bR^+\times \bR^-$ to the autonomous ODE system
\begin{gather}\label{stsystem}
    \begin{cases}
        s'(\lambda)^2=s^2(\lambda)\hat{q}(s(\lambda)),\\t'(\lambda)^2=t^2(\lambda)\hat{q}(t(\lambda)).
    \end{cases}
\end{gather}
and define
\begin{equation}\label{udefinition}
    u'(\lambda)=\frac{s(\lambda)-t(\lambda)}{2}>0.
\end{equation}
Then the transformation
\begin{gather}\label{diffeostalphabeta}
    \begin{cases}
        \alpha(u(\lambda))\beta(u(\lambda))=s(\lambda)+t(\lambda),\\ \alpha^2(u(\lambda))=-s(\lambda)t(\lambda),
    \end{cases}
\end{gather}
defines a diffeorphism between the phase space $(s,t)\in \bR^+\times \bR^-$ to either the half plane $(\alpha,\beta)\in \bR^+\times \bR$ or $(\alpha,\beta)\in \bR^-\times \bR$ such that the pair $(\alpha(u),\beta(u))$ is a solution to the Hamiltonian system \ref{hamiltonianequations}. Moreover every solution to the Hamiltonian system \eqref{hamiltonianequations} comes from this procedure. Following \cite[equation 3.3]{Isabel}, the $(s,t)$ system can be broken into the following four systems depending on $\epsilon\in \{1,-1\}$
\begin{gather*}
   a) \begin{cases}
        s'(\lambda)=\epsilon s(\lambda)\sqrt{\hat{q}(s(\lambda)})\\ t'(\lambda)=\epsilon t(\lambda)\sqrt{\hat{q}(t(\lambda)})
    \end{cases}\quad  b)\begin{cases}
        s'(\lambda)=-\epsilon s(\lambda)\sqrt{\hat{q}(s(\lambda)})\\ t'(\lambda)=\epsilon t(\lambda)\sqrt{\hat{q}(t(\lambda)})
    \end{cases}.
\end{gather*}
We proceed to describe the solutions of the $(s,t)$ system coming from the Hamiltonian system associated to the minimal immersion $\Psi_{L(\tau,s),\hat{g}_0}$. 
\begin{figure}
    \centering
    \includegraphics[width=0.7\linewidth]{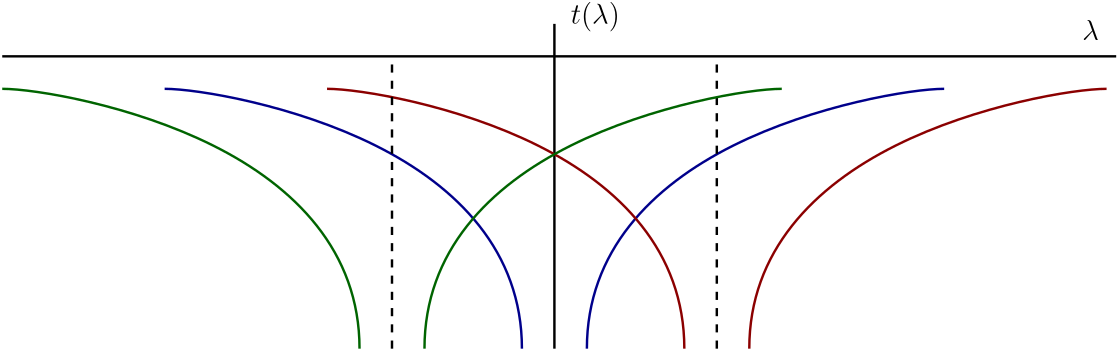}
    \caption{The $t(\lambda)$ orbits for the system $t'(\lambda)^2=t^2(\lambda)\hat{q}(t(\lambda))$}.
    \label{fig:t orbit}
\end{figure}
\subsection{The $s$ orbits}
Observe that from equations \eqref{polynomialq}, \eqref{rootproperties}
\begin{equation}\label{rewriteqpolynomial}
    \hat{q}(x)=(r_3-x)Q(x),\quad r_3\in \bR^+,
\end{equation}
where the polynomial $Q(x)=(x-r_1)(x-r_2)$ has non-real complex conjugate roots $r_2=\overline{r_1}$ so that
\begin{equation}\label{lowerboundQ}
    \exists C>0,\ \forall x\in \bR,\quad  Q(x)=x^2-2Re(r_1)x+|r_1|^2>C.
\end{equation}
\begin{lema}
    $s(\lambda)\in (0,r_3)$.
\end{lema}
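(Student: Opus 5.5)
The plan is to read off the phase portrait of the one-dimensional autonomous equation $s'(\lambda)^2=s^2(\lambda)\hat{q}(s(\lambda))$ from the sign of its right-hand side. By the standing assumption in \eqref{stsystem} we have $s(\lambda)>0$. The equation also forces $s^2\hat{q}(s)=(s')^2\geq 0$ along the orbit, so $\hat{q}(s(\lambda))\geq 0$ wherever $s(\lambda)\neq0$.

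First, use the factorization \eqref{rewriteqpolynomial}, $\hat{q}(x)=(r_3-x)Q(x)$, together with the positivity \eqref{lowerboundQ}, $Q(x)>C>0$ on $\bR$. Together these show that the sign of $\hat{q}(x)$ on the real line is the sign of $r_3-x$. Hence for $x>r_3$ we get $x^2\hat{q}(x)<0$, which is incompatible with being a square. This gives the closed bound $0<s(\lambda)\leq r_3$ for all $\lambda$.

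It remains to exclude $s(\lambda_0)=r_3$ at some $\lambda_0$, and this is the only delicate step.

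\begin{itemize}
\item Here $r_3$ is a simple root of $s^2\hat{q}(s)$, because $r_3\neq0$ and $Q(r_3)\neq0$. So $s\equiv r_3$ is not a genuine solution of either first-order branch $s'=\pm s\sqrt{\hat{q}(s)}$ in the sense of the second-order system. Indeed, differentiating gives $s''=\tfrac12 (s^2\hat{q})'(s)$, and at $r_3$ this equals $\tfrac12 r_3^2\hat{q}'(r_3)=-\tfrac12 r_3^2Q(r_3)\neq0$.
\item A solution touching $r_3$ therefore has a strict local maximum there and turns back, exactly as in the classical analysis of \cite{Isabel} and \cite{Wentespherical}. It reaches $r_3$ only at isolated turning times.
\item To obtain the open interval, I would use the relation with the Hamiltonian system. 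By \eqref{diffeostalphabeta} and \eqref{udefinition}, the orbit of $s$ is the orbit attached to the immersion $\Psi_{L(\tau,s),\hat{g}_0}$. The natural parametrization of that orbit identifies $s$ with the real values of $\phi$, or of a related quantity, which lie strictly inside $(0,r_3)$ on the open parameter interval. Compare Lemma~\ref{signphi} and the fact that $r_3=b-4e_3$ is the value attained at the half-period.
\item Concretely, I would show that $s(\lambda)=r_3$ would force $\alpha$ and $\beta$ to take values corresponding to $u$ at a half-period endpoint. I would then note that the lemma is meant for $\lambda$ in the open interval of definition. If this identification proves awkward, I would settle for the interpretation that $s$ takes values in $(0,r_3)$ except at isolated turning points.
\end{itemize}

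I expect this boundary case to be the main obstacle. The strict upper inequality depends on how the solution interval is chosen, not just on the ODE. My first attempt would be the second-order turning-point argument above.
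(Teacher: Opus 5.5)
Your opening paragraph is exactly the paper's proof. From $s'^2=s^2\hat{q}(s)$, the factorization \eqref{rewriteqpolynomial} and $Q>0$, you get $0<s\le r_3$. The paper reaches the strict inequality only by asserting that $\hat{q}(s(\lambda))>0$ follows from \eqref{stsystem}. It does not follow, and you are right to flag this.

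Your third and fourth bullets, however, try to prove something false. The lemma right after this one, and Theorem~\ref{qualitativest}, show that the analytically continued $s$ reaches $s=r_3$ at a finite time $\lambda_s$. The proof of $u(+\infty)=\omega_3$ even splits the $s$-integral at $\lambda_s$ precisely because $s(\lambda_s)=r_3$. So no appeal to the immersion can exclude the endpoint, and both of your specific mechanisms fail:
\begin{itemize}
\item $s(\lambda)=r_3$ does not force $u$ to a half-period endpoint. The values $u=0$ and $u=\omega_3$ correspond to $\lambda=-\infty$ and $\lambda=+\infty$, where $s,t\to0$ and $\alpha=0$. At $\lambda_s$ instead one has $\alpha^2=-r_3\,t(\lambda_s)\neq0$ (or $\alpha$ blows up if $\lambda_s=\lambda_t$), and $u(\lambda_s)$ lies in the interior of $(0,\omega_3)$.
\item There is no pointwise identification $s(\lambda)=\phi(u(\lambda))$. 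The substitution $x=\phi(\xi)$ in the paper only evaluates the combined integral $\int_{-\infty}^{r_3}dx/\sqrt{\hat{q}(x)}$ of the $s$- and $t$-branches.
\end{itemize}

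What is true, and what your first two bullets already prove, is that $0<s(\lambda)\le r_3$, with equality only at the turning time. There $s'=0$ and $s''=-\tfrac12 r_3^2Q(r_3)<0$. The differentiated equation $s''=\tfrac12(s^2\hat{q})'(s)$, valid for the non-constant analytic solution, is what rules out the spurious constant solution $s\equiv r_3$. Since $s'^2=s^2\hat{q}(s)>0$ on $(0,r_3)$, this turning time $\lambda_s$ is unique.

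This weaker statement is all the paper uses afterwards: initial data $s_0\in(0,r_3)$, arrival at the wall $s=r_3$ in finite time, and return toward $0$. So drop the last two bullets and state the lemma as $s(\lambda)\in(0,r_3]$, or equivalently as $s(\lambda)\in(0,r_3)$ for $\lambda\neq\lambda_s$.
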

\begin{proof}
    In fact since by equation \eqref{stsystem}$, \hat{q}(s(\lambda))>0$, it follows from equations \eqref{rewriteqpolynomial} and \eqref{lowerboundQ} that $r_3-s(\lambda)>0$ where the conclusion follows.
\end{proof}
Let's fix initial conditions $s(0)=s_0\in (0,r_3)$. Recall that since we are dealing with an autonomous system, if $s(\lambda)$ is a solution with initial condition $s(0)=s_0$ then $\tilde{s}(\lambda)\coloneqq s(\lambda-\lambda_*)$ is a solution to the ODE with initial condition $\tilde{s}(\lambda_*)=s_0.$ We begin by proving that $s(\lambda)$ is not asymptotic to the upper bound $r_3.$
\begin{lema}
    The solution $s'(\lambda)=s(\lambda)\sqrt{\hat{q}(s(\lambda))}$ with $s(0)=s_0$ reaches $s=r_3$ in finite $\lambda$-time, \textit{i.e.} at $\lambda<+\infty$.
\end{lema}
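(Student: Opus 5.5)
The plan is to separate variables in the scalar ODE $s'=s\sqrt{\hat q(s)}$ and show that the time needed to climb from $s_0$ to $r_3$ is a convergent improper integral. Since $s_0\in(0,r_3)$ and the right-hand side is strictly positive on $(0,r_3)$ (we have $s>0$, $r_3-s>0$, and $Q(s)>C>0$ by \eqref{lowerboundQ}), the solution is strictly increasing as long as it stays in $(0,r_3)$. It therefore cannot approach an equilibrium below $r_3$, and $s$ is an invertible function of $\lambda$ on its interval of existence inside $(0,r_3)$.

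On that interval we can write
\begin{equation*}
\lambda(s)=\int_{s_0}^{s}\frac{dx}{x\sqrt{\hat q(x)}}=\int_{s_0}^{s}\frac{dx}{x\sqrt{(r_3-x)Q(x)}},
\end{equation*}
using the factorization \eqref{rewriteqpolynomial}. The time needed to reach $r_3$ is the limit $\lambda_*=\lim_{s\to r_3^-}\lambda(s)$, and we must show it is finite.

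To bound the integrand on $[s_0,r_3)$, use $x\geq s_0>0$ and $Q(x)>C$ from \eqref{lowerboundQ}. This gives
\begin{equation*}
\frac{1}{x\sqrt{(r_3-x)Q(x)}}\leq \frac{1}{s_0\sqrt{C}}\,\frac{1}{\sqrt{r_3-x}}.
\end{equation*}
The function $(r_3-x)^{-1/2}$ is integrable near $r_3$, so
\begin{equation*}
\lambda_*\leq \frac{2\sqrt{r_3-s_0}}{s_0\sqrt{C}}<+\infty.
\end{equation*}

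It remains to check that the maximal solution really attains the value $r_3$ at $\lambda_*$, rather than ceasing to exist or staying away from $r_3$. Because $s$ is increasing and bounded above by $r_3$, it has a limit as $\lambda\to\lambda_*^-$. Because $\lambda(s)\to\lambda_*$ as $s\to r_3$ and $\lambda(\cdot)$ is the inverse of $s(\cdot)$, that limit is exactly $r_3$. Since the right-hand side $x\sqrt{\hat q(x)}$ is continuous up to $x=r_3$, the solution extends continuously to $\lambda_*$ with $s(\lambda_*)=r_3$.

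The only delicate point is that $\sqrt{\hat q}$ fails to be Lipschitz at $r_3$. This is precisely what allows the boundary to be reached in finite time, as with the simple root of $\hat q$ at $r_3$: the square-root singularity is integrable. The analogous statement with initial time shifted, via the autonomy remark, follows immediately.
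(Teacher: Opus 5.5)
Your proposal is correct and follows the paper's proof. Both arguments separate variables, use the factorization $\hat q(x)=(r_3-x)Q(x)$ together with the bounds $x\ge s_0$ and $Q>C$, and arrive at the same estimate $\frac{2\sqrt{r_3-s_0}}{s_0\sqrt{C}}$; the paper simply writes it after the substitution $\xi=r_3-s$. Your extra paragraph checking that the maximal solution actually attains $r_3$ at $\lambda_*$ adds rigor that the paper leaves implicit.
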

\begin{proof}
    \begin{equation*}
     \int_{s_0}^{s(\lambda)} \frac{ds}{s\sqrt{\hat{q}(s)}}=\int_0^\lambda d\lambda=\lambda.
    \end{equation*}
    Make the change of variables $\xi=r_3-s$
    then
    \begin{equation*}
          \int_{s_0}^{r_3} \frac{ds}{s\sqrt{\hat{q}(s)}}=\int_{0}^{r_3-s_0}\frac{d\xi}{(r_3-\xi)\sqrt{\xi}\sqrt{Q(r_3-\xi})}\leq \frac{2\sqrt{r_3-s_0}}{s_0\sqrt{C}}<+\infty.
    \end{equation*}
\end{proof}
\begin{lema}\label{sdecaysexponentially}
    The $s$ solution to $s'(\lambda)=-s(\lambda)\sqrt{\hat{q}(s(\lambda)}$ converges exponentially to zero when $\lambda\to +\infty$ and it reaches $s=0$ at infinite $\lambda$-time, \textit{i.e.} at $\lambda=+\infty$.
\end{lema}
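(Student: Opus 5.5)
We argue directly from the separable ODE $s'=-s\sqrt{\hat q(s)}$ with $s(0)=s_0\in(0,r_3)$. Since the right-hand side is negative on $(0,r_3)$, the solution is strictly decreasing for as long as it stays in $(0,r_3)$. The point $s=0$ is an equilibrium, and the vector field $F(s)=-s\sqrt{\hat q(s)}$ is smooth (hence locally Lipschitz) near $s=0$, because $\hat q(0)=r_1r_2r_3=1>0$. By uniqueness of solutions, $s(\lambda)$ can therefore never reach $0$ in finite time.

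So $s$ decreases and stays positive. It is also trapped in $(0,s_0]$, so it cannot blow up, and it exists for all $\lambda\ge 0$. Being monotone and bounded, it has a limit $s_\infty\in[0,s_0)$. If $s_\infty>0$, then $s'\le -s_\infty\min_{[s_\infty,s_0]}\sqrt{\hat q}<0$ uniformly, which contradicts boundedness. Hence $s(\lambda)\to 0$ as $\lambda\to+\infty$; the value $0$ is attained only at $\lambda=+\infty$.

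For the exponential rate, set $m\coloneqq\min_{s\in[0,s_0]}\sqrt{\hat q(s)}$. This is positive: $\hat q(x)=(r_3-x)Q(x)$ with $Q>C>0$ by \eqref{lowerboundQ}, and $r_3-x\ge r_3-s_0>0$ on $[0,s_0]$. Then $(\log s)'=-\sqrt{\hat q(s)}\le -m$, which gives
\begin{equation*}
0<s(\lambda)\le s_0e^{-m\lambda}.
\end{equation*}
Conversely, $\sqrt{\hat q(s)}\to 1$ as $s\to0$, which yields the sharper asymptotic $s(\lambda)\sim c\,e^{-\lambda}$. This can be seen by writing
\begin{equation*}
\lambda=\int_{s(\lambda)}^{s_0}\frac{ds}{s\sqrt{\hat q(s)}}=\log\frac{s_0}{s(\lambda)}+\int_{s(\lambda)}^{s_0}\frac{1}{s}\left(\frac{1}{\sqrt{\hat q(s)}}-1\right)ds,
\end{equation*}
where the last integrand is bounded near $0$. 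The same identity confirms again that $\lambda\to\infty$ exactly when $s\to0$.

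The only delicate point is justifying that the solution cannot reach $0$ in finite time. Uniqueness at the equilibrium settles this, provided we check that $\hat q(0)>0$, so that the square root is smooth there.
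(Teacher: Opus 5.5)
Your proof is correct and follows the paper's argument. The exponential bound from $-s'/s=\sqrt{\hat{q}(s)}\ge\sqrt{(r_3-s_0)C}$ is exactly the paper's estimate, and your identity $\lambda=\log(s_0/s(\lambda))+\int_{s(\lambda)}^{s_0}\frac{1}{s}\bigl(\hat{q}(s)^{-1/2}-1\bigr)ds$ sharpens the paper's divergence bound $\int_\epsilon^{s_0}\frac{ds}{s\sqrt{\hat{q}(s)}}\ge K^{-1/2}\ln(s_0/\epsilon)$; your uniqueness-at-the-equilibrium argument (using $\hat{q}(0)=r_1r_2r_3=1$) is a valid extra way to rule out reaching $0$ in finite time, and it yields the sharper rate $s\sim c\,e^{-\lambda}$, which the paper does not state.
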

\begin{proof}
    Since $s_0\in (0,r_3)$ and since $s'(\lambda)<0$ we have that $r_3-s(\lambda)>r_3-s_0$ and then
    \begin{equation*}
        -\frac{s'(\lambda)}{s(\lambda)}=\sqrt{\hat{q}(s(\lambda))}\geq \sqrt{r_3-s_0}\sqrt{C}>0,
    \end{equation*}
    so that
    \begin{equation*}
        s(\lambda)\leq s_0e^{-\sqrt{r_3-s_0}\sqrt{C}\lambda},
    \end{equation*}
    which proves that $s(\lambda)\to 0$ as $\lambda\to +\infty.$
    
We next prove that 
    \begin{equation*}
        \int_{s_0}^0\frac{ds}{-s\sqrt{\hat{q}(s)}}=+\infty.
    \end{equation*}
    In fact since $s$ is bounded by above, so is $\hat{q}(s)$ $$\forall s\in (0,s_0),\quad 0<\hat{q}(s)\leq K\ne0,$$ and then
    \begin{equation*}
        \int_{\epsilon}^{s_0}\frac{ds}{s\sqrt{\hat{q}(s(\lambda))}}\geq \frac{1}{\sqrt{K}}(\ln (s_0)-\ln (\epsilon))\to +\infty ,\quad \epsilon\to 0.
    \end{equation*}
\end{proof}
\begin{cor}\label{integralsfinite}
Let $s(\lambda)$ be the analytic continuation solution to the system $s'(\lambda)^2=s^2(\lambda)\hat{q}(s(\lambda))$ with initial condition $s(0)=s_0$ and a choice of sign of $s'(0)$. Then
    \begin{equation*}
        \int_{-\infty}^{\infty}s(\lambda)d\lambda<+\infty.
    \end{equation*}
\end{cor}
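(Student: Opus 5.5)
The plan is to show that the analytic continuation of $s(\lambda)$ has the following shape. It rises from $0$ at $\lambda=-\infty$, reaches $r_3$ at a finite time $\lambda_1$, turns back, and decays to $0$ as $\lambda\to+\infty$. With this picture, the integral splits into a compact piece where $s$ is bounded by $r_3$, plus two exponentially decaying tails.

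\emph{Step 1: the continuation and the turning point.} Suppose the sign of $s'(0)$ is positive, so that $s'=s\sqrt{\hat q(s)}$ near $0$. By the lemma preceding Lemma \ref{sdecaysexponentially}, $s$ reaches $r_3$ at some finite $\lambda_1$. Since $r_3$ is a simple root of $\hat q$ (because $Q(r_3)>0$ by \eqref{lowerboundQ}), the second-order equation obtained by differentiating, $s''=\tfrac12\big(s^2\hat q(s)\big)'$, is regular there. Its solution satisfies $s''(\lambda_1)=\tfrac12 r_3^2\hat q'(r_3)<0$, so $s$ continues analytically past $\lambda_1$ with the sign of $s'$ reversed. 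For $\lambda>\lambda_1$ the function then solves $s'=-s\sqrt{\hat q(s)}$ with some value $s(\lambda_1+1)\in(0,r_3)$.

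By Lemma \ref{sdecaysexponentially}, $s$ never reaches $0$ and satisfies $s(\lambda)\le s(\lambda_1+1)e^{-c(\lambda-\lambda_1-1)}$ for some $c>0$. The case where $s'(0)$ is negative is the same after the reflection $\lambda\mapsto-\lambda$.

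\emph{Step 2: the left tail.} For $\lambda<0$ we have $s'=s\sqrt{\hat q(s)}>0$, so going backwards $s$ decreases. The function $\tilde s(\lambda)=s(-\lambda)$ solves $\tilde s'=-\tilde s\sqrt{\hat q(\tilde s)}$, and Lemma \ref{sdecaysexponentially} gives $s(\lambda)\le s_0e^{-c'|\lambda|}$ as $\lambda\to-\infty$. It also gives that $s$ stays positive, so the solution exists for all negative $\lambda$. On the compact interval $[0,\lambda_1+1]$ we have $0<s\le r_3$.

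\emph{Step 3: summing up.} Combining the three pieces gives
\[
\int_{-\infty}^{\infty}s\,d\lambda\le \frac{s_0}{c'}+r_3(\lambda_1+1)+\frac{s(\lambda_1+1)}{c}<+\infty.
\]

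The main obstacle is Step 1, namely justifying that the continuation through $s=r_3$ really turns around rather than stalling at the equilibrium. The first-order equation $s'=s\sqrt{\hat q(s)}$ loses uniqueness at $s=r_3$, and the constant $s\equiv r_3$ is also a solution of it. I would resolve this as above: work with the analytic second-order equation, for which $s\equiv r_3$ is not a solution because $s''(\lambda_1)\neq0$. Alternatively, one can note that near $r_3$ the substitution $\xi=r_3-s$ turns the equation into $\xi'^2\approx a\,\xi$ with $a>0$, whose analytic solution is a parabola $\xi\approx \tfrac a4(\lambda-\lambda_1)^2$.
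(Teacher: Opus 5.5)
Your proof is correct and is essentially the paper's argument. The paper states the corollary as an immediate consequence of two facts, the finite-time arrival at $r_3$ and the exponential decay of Lemma \ref{sdecaysexponentially}, which is exactly your splitting into two exponential tails plus a bounded middle piece. Your justification of the turn at $r_3$ via the second-order equation, with $s''(\lambda_1)=\tfrac12 r_3^2\hat q'(r_3)=-\tfrac12 r_3^2Q(r_3)<0$, makes explicit a step the paper takes for granted; the paper later also uses the substitution $\xi=s(\lambda)$, which gives directly $\int_{-\infty}^{\infty}s\,d\lambda=2\int_0^{r_3}ds/\sqrt{\hat q(s)}<\infty$.
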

\subsection{The $t$ orbits}
We again fix initial conditions $t(0)=t_0\in (-\infty,0)$. We begin by proving that the system $t'(\lambda)=t(\lambda)\sqrt{\hat{q}(t(\lambda))}$ develops a singularity in finite $\lambda$-time.
\begin{lema}\label{tdevelopssingularity}
    \begin{equation}
        \int_{t_0}^{-\infty}\frac{dt}{t\sqrt{\hat{q}(t)}}<+\infty.
    \end{equation}
\end{lema}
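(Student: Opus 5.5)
We need to show $\int_{t_0}^{-\infty}\frac{dt}{t\sqrt{\hat q(t)}}$ converges, with $t_0<0$. On $(-\infty,t_0]$ we have $t<0$. By \eqref{rewriteqpolynomial}, $\hat q(t)=(r_3-t)Q(t)$ with $r_3>0$. Hence $r_3-t>r_3-t_0>0$, and by \eqref{lowerboundQ} $Q(t)>C>0$. So $\hat q(t)>0$ on this half-line and the integrand is a continuous, nonvanishing real function there. Thus the only question is integrability at $-\infty$.

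Write $\xi=-t\in[-t_0,\infty)$. The integral becomes, up to sign,
\[
\int_{-t_0}^{\infty}\frac{d\xi}{\xi\sqrt{(r_3+\xi)\,Q(-\xi)}} .
\]
I would bound the two factors under the root separately:
\begin{itemize}
\item $r_3+\xi\geq \xi$.
\item $Q(-\xi)=\xi^2+2\,Re(r_1)\,\xi+|r_1|^2$ is asymptotic to $\xi^2$. Concretely, for $\xi\geq \xi_1:=\max\{-t_0,\,4|Re(r_1)|\}$ we get $Q(-\xi)\geq \xi^2/2$. On the compact piece $[-t_0,\xi_1]$ the integrand is continuous, so that piece is finite.
\end{itemize}
On $[\xi_1,\infty)$ the integrand is then bounded by $\sqrt{2}\,\xi^{-5/2}$, which is integrable at infinity. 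Hence the whole integral is finite. Equivalently, the integrand behaves like $|t|^{-5/2}$ at $-\infty$ because $\hat q$ is a cubic with leading coefficient $-1$, i.e.\ $\hat q(t)\sim |t|^3$ as $t\to-\infty$.

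Finally, I would interpret this as in the $s$-orbit lemmas. Separating variables in $t'=t\sqrt{\hat q(t)}$ gives $\lambda=\int_{t_0}^{t(\lambda)}\frac{dt}{t\sqrt{\hat q(t)}}$. Since $t'<0$ for $t<0$, $t$ decreases, and it reaches $-\infty$ at the finite time $\lambda_\infty=\int_{t_0}^{-\infty}\frac{dt}{t\sqrt{\hat q(t)}}$. The integrand is positive there, because $dt<0$ and $t<0$. The only mild care is fixing the sign and orientation conventions; there is no real obstacle beyond the elementary comparison $\hat q(t)\gtrsim |t|^3$.
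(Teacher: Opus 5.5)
Your proof is correct and matches the paper's: the substitution $\xi=-t$, the lower bound $\xi\sqrt{\hat q(-\xi)}\geq c\,\xi^{5/2}$ for large $\xi$, and comparison with $\int_1^\infty \xi^{-5/2}\,d\xi$. You additionally make the constant explicit ($c=1/\sqrt{2}$ for $\xi\geq\max\{-t_0,4|\mathrm{Re}(r_1)|\}$) and verify that $\hat q>0$ on the half-line, both of which the paper leaves implicit.
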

\begin{proof}
    In fact, make the change of variable $\xi=-t$ and notice that there exists a constant $c>0$ such that for $\xi$ large enough we have $x\sqrt{\hat{q}(-x)}\geq cx^{\frac{5}{2}}$. The result follows by comparison with the convergent integral
    \begin{equation*}
        \int_{1}^{\infty}\frac{d\xi}{\xi^{\frac{5}{2}}}<+\infty.
    \end{equation*}
\end{proof}
The proof of the following Lemma is analogous to that of Lemma \ref{sdecaysexponentially}.
\begin{lema}
    The solution to $t'(\lambda)=-t(\lambda)\sqrt{\hat{q}(t(\lambda))}$ converges exponentially to $0$ and takes infinite $\lambda$-time to reach $0$.
\end{lema}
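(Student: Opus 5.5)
The plan is to mirror the proof of Lemma \ref{sdecaysexponentially}, with the roles of the bounds adjusted to the half-line $t<0$. Along a solution of $t'(\lambda)=-t(\lambda)\sqrt{\hat{q}(t(\lambda))}$ with $t(0)=t_0<0$, we have $t'(\lambda)=-t\sqrt{\hat q(t)}>0$, since $-t>0$ and $\hat q(t)>0$. Indeed, for $t<0$ equation \eqref{rewriteqpolynomial} gives $\hat q(t)=(r_3-t)Q(t)$ with $r_3-t>r_3>0$ and $Q(t)>C$ by \eqref{lowerboundQ}. Hence $t$ is strictly increasing. Moreover $t$ can never cross $0$, because $t\equiv 0$ is an equilibrium and uniqueness holds where the right-hand side is smooth, which is the case near $t=0$ because $\hat q(0)=r_1r_2r_3=1>0$. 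So $t(\lambda)\in[t_0,0)$ for all $\lambda\geq 0$, and the solution exists for all forward time.

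\textbf{Exponential decay.} For $t\in[t_0,0)$ the key estimate is
\begin{equation*}
\frac{t'(\lambda)}{-t(\lambda)}=\sqrt{\hat q(t(\lambda))}\geq \sqrt{r_3}\sqrt{C}>0,
\end{equation*}
using $r_3-t\geq r_3$. Writing $y=-t>0$, this reads $y'/y\leq -\sqrt{r_3C}$. Integrating gives $|t(\lambda)|\leq |t_0|e^{-\sqrt{r_3C}\,\lambda}$, which is the exponential convergence to $0$. Note this bound is even cleaner than in the $s$ case, since no dependence on the initial datum is needed in the lower bound for $r_3-t$.

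\textbf{Infinite time.} The time needed to travel from $t_0$ to $\epsilon\in(t_0,0)$ is
\begin{equation*}
\lambda(\epsilon)=\int_{t_0}^{\epsilon}\frac{dt}{-t\sqrt{\hat q(t)}}.
\end{equation*}
On the compact interval $[t_0,0]$ the polynomial $\hat q$ is bounded above by some $K>0$. This gives $\lambda(\epsilon)\geq K^{-1/2}\left(\ln|t_0|-\ln|\epsilon|\right)\to+\infty$ as $\epsilon\to 0^-$. The time to reach $0$ is therefore infinite.

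The only point requiring care is the sign bookkeeping, so that $t'>0$ and the ratio $t'/(-t)$ is taken with the correct sign. There is no real obstacle beyond this, and the rest is routine.
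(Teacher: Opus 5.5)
Your proof is correct and takes the same route as the paper, which only says the lemma is proved analogously to Lemma \ref{sdecaysexponentially}: a lower bound $\sqrt{\hat q(t)}\geq\sqrt{r_3C}$ gives the exponential decay, and an upper bound $\hat q\leq K$ on $[t_0,0]$ makes the time integral diverge logarithmically. Your extra points are valid refinements of that same argument: that $t\equiv 0$ is an equilibrium, with uniqueness because $\hat q(0)=1$; that the solution exists for all forward time; and that the bound $r_3-t>r_3$ does not depend on the initial datum.
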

\begin{lema}\label{integraltfinite}
Let $t(\lambda)$ be the analytic continuation solution to the system $t'(\lambda)^2=t^2(\lambda)\hat{q}(t(\lambda))$ with initial condition $t(0)=t_0$ and a choice of sign of $t'(0)$. Then
    \begin{equation*}
        \int_{-\infty}^{\infty}t(\lambda)d\lambda<+\infty.
    \end{equation*}
\end{lema}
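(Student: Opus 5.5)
The idea is to trade the $\lambda$-integral for a $t$-integral on each monotone branch of the orbit. On such a branch we have $d\lambda = dt/(t'(\lambda)/1)=\pm\, dt/(t\sqrt{\hat{q}(t)})$, and therefore
\begin{equation*}
\int t(\lambda)\,d\lambda=\pm\int \frac{dt}{\sqrt{\hat{q}(t)}}.
\end{equation*}
This removes the singular factor $t$ entirely. The claim then reduces to showing that $\int_{-\infty}^{0} dt/\sqrt{\hat{q}(t)}<+\infty$, together with a description of the continued orbit as a union of at most two such monotone branches.

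\emph{Step 1 (structure of the orbit).} For $t<0$, equation \eqref{rewriteqpolynomial} together with \eqref{lowerboundQ} gives $\hat{q}(t)=(r_3-t)Q(t)>0$. Hence $t'$ never vanishes for $t\in(-\infty,0)$, and the solution is strictly monotone on each interval of definition.

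Along the branch $t'=t\sqrt{\hat{q}(t)}<0$, Lemma \ref{tdevelopssingularity} shows that $t\to-\infty$ at a finite time $\lambda_*$. I then check the local behaviour there. Since $\hat{q}(t)\sim |t|^3$, we get $t'\sim -|t|^{5/2}$, which gives $|t(\lambda)|\sim c(\lambda_*-\lambda)^{-2/3}$. This singularity is integrable.

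The analytic continuation past $\lambda_*$ is the reflected branch $t'=-t\sqrt{\hat{q}(t)}$, which comes back from $-\infty$. Because the system is autonomous, this branch is just the time reversal $\lambda\mapsto 2\lambda_*-\lambda$ of the first one. By the lemma preceding the statement, it then increases to $0$ exponentially as $\lambda\to+\infty$. Symmetrically, the first branch tends to $0$ as $\lambda\to-\infty$. So $\bR\setminus\{\lambda_*\}$ splits into two intervals, on each of which $t$ is a monotone bijection onto $(-\infty,0)$.

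\emph{Step 2 (change of variables).} On each branch the substitution gives
\begin{equation*}
\int |t(\lambda)|\,d\lambda=\int_{-\infty}^{0}\frac{dt}{\sqrt{\hat{q}(t)}}.
\end{equation*}
This yields a total of $2\int_{-\infty}^0 dt/\sqrt{\hat{q}(t)}$.

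\emph{Step 3 (convergence).} Near $t=0$ we have $\hat{q}(0)=r_1r_2r_3=1$, so the integrand is bounded there. As $t\to-\infty$ we have $\hat{q}(t)\ge c|t|^3$, as in the proof of Lemma \ref{tdevelopssingularity}, so the integrand is $O(|t|^{-3/2})$, which is integrable. Hence $\int_{\bR}|t|\,d\lambda<+\infty$.

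The exponential tails are automatically covered by this computation. Alternatively, they can be bounded directly by the exponential decay estimate, exactly as in Lemma \ref{sdecaysexponentially}.

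The main obstacle is Step 1, namely making precise what "analytic continuation" through the blow-up at $\lambda_*$ means. I would handle it by passing to the variable $w=1/t$ (or an appropriate power of it), in which the equation becomes regular at $w=0$. This justifies gluing the two branches and shows there is no further singular time. The integrability near $\lambda_*$ is then exactly the convergence of the tail $\int^{-\infty}dt/\sqrt{\hat{q}(t)}$ in Step 3.
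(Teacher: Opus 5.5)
Your argument is correct and follows the paper's route: the substitution $\xi=t(\lambda)$ turns $\int t\,d\lambda$ into $\pm\int d\xi/\sqrt{\hat q(\xi)}$, and $\hat q(-\xi)\ge c\,\xi^3$ gives convergence at $-\infty$ by comparison with $\xi^{-3/2}$. You are in fact more complete than the paper, which only treats $\int_0^L t\,d\lambda$ and leaves the tails and the reflected branch implicit; your observation $\hat q(0)=r_1r_2r_3=1$ handles both full branches at once.

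One correction to your closing remark. No power of $t$ alone makes the equation regular in the time $\lambda$; indeed $t\sim -c(\lambda_*-\lambda)^{-2/3}$ is not meromorphic in $\lambda$. You must also reparametrize time. For example, $t=-w^{-2}$ with $d\sigma=-t\,d\lambda$ (comparable to the paper's variable $u$ near the blow-up) gives $dw/d\sigma=\pm\tfrac12\sqrt{(1+r_1w^2)(1+r_2w^2)(1+r_3w^2)}$. Its solution through $w=0$ is odd in $\sigma$, and this yields exactly the time-reversed branch you describe.
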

\begin{proof}
    It is enough to prove that the solution to the system $t'(\lambda)=t(\lambda)\sqrt{\hat{q}(t(\lambda))}$ with initial condition $t(0)=t_0$ satisfies that
    \begin{equation*}
        \int_0^Lt(\lambda)d\lambda<+\infty,
    \end{equation*}
    where $L$ is the $\lambda$- time of Lemma \ref{tdevelopssingularity} where the system $t(\lambda)$ develops a singularity \textit{i.e.} $\lim_{\lambda\to L}t(\lambda)=-\infty.$ Make the change of variable $\xi=t(\lambda)$. Then
    \begin{equation*}
        d\xi=t'(\lambda)d\lambda=\xi\sqrt{\hat{q}(\xi)}d\lambda,
        \end{equation*}
        and then
        \begin{equation*}
            \int_0^Lt(\lambda)d\lambda=\int_{t_0}^{-\infty}\frac{d\xi}{\sqrt{\hat{q}(\xi)}}=-\int_{-t_0}^{+\infty}\frac{d\xi}{\sqrt{\hat{q}(-\xi)}}.
        \end{equation*}
        Notice that there exists a constant $c>0$ such that for $\xi$ large enough it holds that
        \begin{equation*}
            \hat{q}(-\xi)\geq c\xi^3.
        \end{equation*}
        The result follows by a comparison with the convergent integral
        \begin{equation*}
            \int_1^{+\infty}\frac{d\xi}{\xi^{\frac{3}{2}}}<+\infty.
        \end{equation*}
\end{proof}
Using  equation \eqref{udefinition}, Corollary \ref{integralsfinite} and Lemma \ref{integraltfinite} we see that the function $u(\lambda)$ sends the real line $(-\infty,+\infty)$ to a bounded interval $(a,b)$. Let us set $u(-\infty)=0$ so that
\begin{equation*}
    u(\lambda)=\frac{1}{2}\int_{-\infty}^{\lambda}(s(\nu)-t(\nu))d\nu.
\end{equation*}
Notice we have started from a minimal immersion $\Psi_{L(\tau,s),\hat{g}_0}$ with well-defined conformal factor in the strip $(0,\omega_3)\times \bR$, which has a well-defined solution to the system $(\alpha(u),\beta(u))$ in the $u$ interval $(0,\omega_3)$. On the other hand we have just described the behavior of the system $(s(\lambda),t(\lambda))$ from which we can recover the $(\alpha,\beta)$ solutions, so it should be that $u(+\infty)=\omega_3$ for the choice $u(-\infty)=0$. We now prove this fact 

\begin{prop}
     \begin{equation*}
        u(+\infty)=\omega_3.
    \end{equation*}
\end{prop}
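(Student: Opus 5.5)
The plan is to show that the total $u$-length of a full $(s,t)$ orbit equals the distance between two consecutive zeros of $\alpha$. By Proposition \ref{initialconditionsparameterspace} these zeros are $u=0$ and $u=\omega_3$, and $\alpha\neq 0$ in between.

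\textbf{Step 1: zeros of $\alpha$ correspond to $\lambda\to\pm\infty$.} From \eqref{diffeostalphabeta} we have $\alpha^2=-st$ with $s>0>t$. Hence $\alpha(u(\lambda))\neq 0$ for every finite $\lambda$, and $\alpha\to 0$ only at the ends of the orbit. At $\lambda\to-\infty$ and $\lambda\to+\infty$ one factor tends to $0$ exponentially while the other stays bounded. Along the orbit, $s$ goes from $0$ up to $r_3$ and back to $0$ (Lemma \ref{sdecaysexponentially} together with the finite-time lemma). The coordinate $t$ escapes to $-\infty$ in finite time (Lemma \ref{tdevelopssingularity}), but this is only a singularity of the $(s,t)$ coordinates. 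Since $t\to-\infty$ with $s>0$, we get $\alpha^2=-st\to\infty$, so it is not a zero of $\alpha$. One then checks that $\alpha\beta$ and $\alpha^2$ stay consistent with a smooth Hamiltonian solution after continuing $t$ through $\infty$ in projective fashion, or, more simply, one restricts to the branch of the orbit that the diffeomorphism \eqref{diffeostalphabeta} actually covers. In any case, the image $u(\mathbb R)=(0,u(+\infty))$ is a maximal open interval on which $\alpha$ has constant sign, with $\alpha\to0$ at both endpoints.

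\textbf{Step 2: identify that interval with $(0,\omega_3)$.} Take the actual solution $(\alpha,\beta)$ coming from $\Psi_{L(\tau,s),\hat g_0}$. It has $\alpha(0)=0$ and $\alpha'(0)=-1/C\neq0$, so $\alpha<0$ for small $u>0$. By the uniqueness statement after \eqref{diffeostalphabeta}, this solution on $(0,\varepsilon)$ is the image of some $(s,t)$ orbit, normalized by $u(-\infty)=0$. By Step 1 and the uniqueness of ODE solutions, the reconstructed $(\alpha,\beta)$ coincides with the immersion's on $(0,u(+\infty))$, and $\alpha$ first vanishes again at $u(+\infty)$.

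It therefore suffices to show that $\alpha$ has no zero in $(0,\omega_3)$. Proposition \ref{initialconditionsparameterspace} then gives the zero at $\omega_3$, and hence $u(+\infty)=\omega_3$.

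\textbf{Step 3: no zero of $\alpha$ in $(0,\omega_3)$ (the main obstacle).} The approach is to redo the computation of Proposition \ref{initialconditionsparameterspace} along a vertical line $u=u_0$ with $0<u_0<\omega_3$. By \eqref{polynomialpequation}, the coefficient of $X^4$ in $p(u_0,\cdot)$ is $-\alpha(u_0)^2$. If $\alpha(u_0)=0$, then $p$ would be a cubic in $e^{\omega}$, which forces $e^{\omega(u_0,v)}$ to be an elliptic-type function of $v$ with a pole. But on $u_0+i\mathbb R$ with $0<u_0<\omega_3$ there are no lattice points, and $e^{\omega}$ is bounded there by periodicity in $v$; that periodicity follows from Lemma \ref{gtranslated} and the reality of $\kappa_2$ along $C_s$.

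I would make this rigorous as follows. If $\alpha(u_0)=0$, then $4((e^\omega)_v)^2$ is a polynomial of degree at most $3$ in $e^\omega$. A bounded, nonconstant, real-analytic periodic solution must then oscillate between two simple roots. I would compare this with the behaviour at $u=0$, where $e^{\omega}=C\phi(iv)$ blows up at the lattice point. The leading coefficient, which equals $-4\alpha'(u_0)$, would then have to have a sign compatible with boundedness. Arguing this way is delicate, so as a fallback I would use instead the Step 1 characterization of zeros of $\alpha$ as the places where the flat ends occur. Indeed, at $u=0$ and $u=\omega_3$ the lines $\Psi(u,\cdot)$ pass through the ends or the points $\pm\mu$ where $g$ is real. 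I would then show, via Lemmas \ref{signphi} and \ref{signg} and the Enneper equation \eqref{Enneperequation}, that $\alpha(u)=0$ forces $\Psi(u,\cdot)$ to be planar-circular with infinite radius, by \eqref{radiusangle}. That happens only on the vertical lines through points of $L$ or $\omega_3+L$.
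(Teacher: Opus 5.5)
There is a genuine gap. The decisive step, Step 3 (that $\alpha$ has no zero in $(0,\omega_3)$), is not proved, and neither of your suggested routes works.

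\textbf{The cubic argument.} If $\alpha(u_0)=0$, then \eqref{polynomialpequation} says $4((e^\omega)_v)^2$ is a cubic in $e^\omega$. Nothing prevents such an equation from having a bounded, periodic, real-analytic solution that oscillates between two simple roots of the cubic. The corresponding elliptic-type function simply never meets a pole on the real $v$-line. So you obtain neither a contradiction nor a sign condition on $-4\alpha'(u_0)$.

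\textbf{The fallback.} It only restates the desired conclusion, namely that planar curvature lines ($R=\infty$) occur only on the lines $u\in\omega_3\bZ$. You give no argument for this.

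\textbf{Steps 1--2 contradict Proposition \ref{initialconditionsparameterspace}.} That proposition gives $\alpha(0)=\alpha(\omega_3)=0$ and $\alpha'(0)=\alpha'(\omega_3)=-1/C<0$. Hence $\alpha$ is negative near $0^+$ and positive near $\omega_3^-$, so it cannot have constant sign on $(0,\omega_3)$. If $u(+\infty)=\omega_3$, then $u(\bR)$ is not a maximal interval of constant sign of $\alpha$, contrary to Step 1. What actually happens is that $\alpha=\epsilon\sqrt{-st}$ passes through $\infty$ and changes sign at $u(\lambda_t)$, where $t\to-\infty$. This is exactly the point you wave away (``continuing $t$ through $\infty$ in projective fashion''). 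It is also where Step 2 fails: ODE uniqueness identifies the immersion's $(\alpha,\beta)$ with the orbit only up to the blow-up time, not past it.

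\textbf{The missing idea.} You do not need any information about the immersion or the zeros of $\alpha$, because the $u$-length of an orbit is an explicit integral. Split $\tfrac12\int_{\bR}(s-t)\,d\lambda$ at the times where $s$ reaches $r_3$ and $t$ diverges. On each monotone branch of \eqref{stsystem} one has
\[
s\,d\lambda=\pm\frac{ds}{\sqrt{\hat q(s)}},\qquad t\,d\lambda=\pm\frac{dt}{\sqrt{\hat q(t)}}.
\]
Along the orbit, $s$ sweeps $(0,r_3)$ once in each direction and $t$ sweeps $(-\infty,0)$ once in each direction. Therefore, for every orbit,
\[
u(+\infty)=\int_{-\infty}^{r_3}\frac{dx}{\sqrt{\hat q(x)}}.
\]
Now $\phi=b-4\wp$ maps $(0,\omega_3)$ diffeomorphically onto $(-\infty,r_3)$, since $r_3=b-4\wp(\omega_3)$. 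Moreover $\phi'=-4\wp'>0$ there and $\phi'^2=\hat q(\phi)$ by \eqref{phiequation}. The substitution $x=\phi(\xi)$ turns the integral into $\int_0^{\omega_3}d\xi=\omega_3$. This is the paper's proof. It is orbit-independent and needs none of the geometric input you were trying to supply.
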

\begin{proof}
    \begin{gather*}
        u(+\infty)=\frac{1}{2}\left(\int_{-\infty}^{\lambda_s}s(\lambda)d\lambda+\int_{\lambda_s}^{+\infty}s(\lambda)d\lambda\right)-\frac{1}{2}\left(\int_{-\infty}^{\lambda_s}t(\lambda)d\lambda+\int_{\lambda_s}^{+\infty}t(\lambda)d\lambda\right).
    \end{gather*}
    Making the change of variables $\xi=s(\lambda)$ and $\xi=t(\lambda)$ we have
    \begin{gather*}
        u(+\infty)=\frac{1}{2}\left(\int_0^{r_3}\frac{ds}{\sqrt{\hat{q}(s)}}+\int_{r_3}^0\frac{-ds}{\sqrt{\hat{q}(s)}}\right)-\frac{1}{2}\left(\int_0^{-\infty}\frac{dt}{\sqrt{\hat{q}(t)}}+\int_{-\infty}^0\frac{-dt}{\sqrt{\hat{q}(t)}}\right),
    \end{gather*}
    \begin{equation*}
    u(+\infty)=\int_{-\infty}^{r_3}\frac{dx}{\sqrt{\hat{q}(x)}}.
    \end{equation*}
    Since the restriction of the function 
    \begin{equation*}
        \phi(\cdot)=b-4\wp(\cdot):(0,\omega_3)\to (-\infty,b-4\wp(\omega_3))=(-\infty,r_3),
    \end{equation*}
    defines a diffeomorphism, we can make the change of variables $x=\phi(\xi)$
    \begin{equation*}
        u(+\infty)=\int_{0}^{\omega_3}\frac{\phi'(\xi)d\xi}{\sqrt{\hat{q}(\phi(\xi))}},
    \end{equation*}
    using equation \eqref{phiequation} and the fact that $\phi'(\cdot)=-4\wp(\cdot)>0$ on $ (0,\omega_3)$ then
    \begin{equation*}
        u(+\infty)=\int_{0}^{\omega_3}\frac{\phi'(\xi)d\xi}{\phi'(\xi)}=\int_0^{\omega_3}d\xi = \omega_3,
    \end{equation*}
   which concludes the proof.
\end{proof}
\begin{figure}
    \centering
    \includegraphics[width=0.5\linewidth]{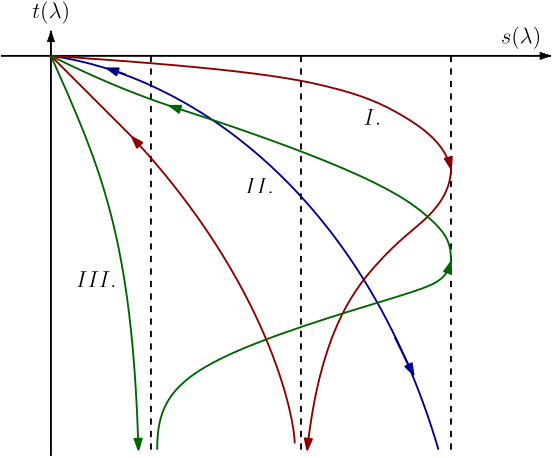}
    \caption{The orbits $(s(\lambda),t(\lambda))$.}
    \label{fig:st phase space}
\end{figure}
\begin{thm}\label{qualitativest}
    In phase space $(s,t)$ we have the following qualitative behavior. All the orbits start at $\lambda=-\infty$ and end at $\lambda=+\infty$ at the origin, converging exponentially to zero as $|\lambda|\to +\infty$. Along an orbit $(s(\lambda),t(\lambda))$ let us denote by $\lambda_s$ and $\lambda_t$ the finite $\lambda$-times where the $s$ solution reaches $s=r_3$ and where the $t$ orbit diverges $t\to -\infty$ respectively. We have three kinds of orbits:
\begin{itemize}
    \item[I.] $\lambda_s<\lambda_t$. In this case the orbit starts at the origin in $\lambda=-\infty$ and reaches the wall $s=r_3$ at $\lambda_s$-time and then diverges in $\lambda_t$ time. Finally the system converges to the origin in infinite $\lambda$-time.
\item[II.] $\lambda_s=\lambda_t$. The system starts at the origin, and then diverges to $(r_3,-\infty)$ in finite $\lambda$-time. Subsequently the system returns to the origin along the same orbit finishing at the origin.
\item[III.] $\lambda_s>\lambda_t$. In this case the system starts at the origin and diverges at $\lambda_t$-time, subsequently it hits the wall $s=r_3$ at $\lambda_s$ and then converges to the origin.
\end{itemize}
\end{thm}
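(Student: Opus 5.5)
The plan is to treat the two equations of \eqref{stsystem} as decoupled one-dimensional autonomous flows and glue them, using only the lemmas of this section. Each component will be described as a single orbit that crosses the phase line once in each direction, with a time shift fixing when it turns around.

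First I describe a full $s$-orbit. Start from $s(0)=s_0\in(0,r_3)$ and take the branch $s'=s\sqrt{\hat q(s)}$. By the lemma on finite time to the wall, $s$ reaches $r_3$ at a finite time $\lambda_s$. Going backward in time, the same computation as in Lemma \ref{sdecaysexponentially} shows that $s\to 0$ exponentially as $\lambda\to-\infty$, with infinite $\lambda$-time needed. Since $r_3$ is a simple root of $\hat q$, the point $s=r_3$ is a turning point of $s'^2=s^2\hat q(s)$. The analytic continuation therefore switches to the branch $s'=-s\sqrt{\hat q}$, and one checks $s''(\lambda_s)=\tfrac12\big(s^2\hat q\big)'(r_3)<0$. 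On this branch Lemma \ref{sdecaysexponentially} gives $s\to0$ exponentially as $\lambda\to+\infty$. The continued orbit is symmetric about $\lambda_s$, that is $s(\lambda_s+\lambda)=s(\lambda_s-\lambda)$, because $\tilde s(\lambda)=s(2\lambda_s-\lambda)$ solves the same equation with the same data at $\lambda_s$.

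The $t$-orbit is handled the same way. Starting at $t_0<0$, the branch that moves away from $0$ blows up, $t\to-\infty$, at a finite time $\lambda_t$ by Lemma \ref{tdevelopssingularity}. Backward in time, $t\to0^-$ exponentially, by the lemma analogous to Lemma \ref{sdecaysexponentially}. To continue through the blow-up I would substitute $w=1/t$. Then $w'^2=w^2\,t^2\hat q(t)/t^4$, and since $\hat q(t)\sim -t^3$ this becomes $w'^2 = -w\,(1+O(w))$ near $w=0$. This is a regular turning point: $w$ reaches $0$ and returns with a simple reflection, so the analytic continuation sends $t$ back from $-\infty$ along the branch $t'=-t\sqrt{\hat q(t)}$. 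That branch converges exponentially to $0$ as $\lambda\to+\infty$, and the orbit is again symmetric about $\lambda_t$. I expect this continuation through $t=-\infty$ to be the main obstacle. It needs the Puiseux-type local analysis in $w$ to justify that the solution is meromorphic, with $t$ having a pole of order $2$ at $\lambda_t$, and that the sign of $t'$ changes there. The precise way to justify it is to identify $t$ with an elliptic-type solution. Explicitly, $\lambda\mapsto 1/t$ satisfies an equation whose solution is expressible through $\wp$ with a double pole, and this makes the reflection automatic.

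Finally I combine the two components. The pair $(s(\lambda),t(\lambda))$ has independent turning times $\lambda_s$ and $\lambda_t$, determined by the initial data $(s_0,t_0)$ and the chosen signs of $s'(0)$ and $t'(0)$. Both components tend to $0$ exponentially as $|\lambda|\to\infty$, so every orbit starts and ends at the origin. The three cases I--III follow from the trichotomy $\lambda_s<\lambda_t$, $\lambda_s=\lambda_t$, $\lambda_s>\lambda_t$, ordering when the orbit hits the wall $s=r_3$ and when it escapes to $t=-\infty$. In case II both reflections occur at the same time. The symmetry of each component about that common time then gives $(s,t)(\lambda_s+\lambda)=(s,t)(\lambda_s-\lambda)$, so the orbit retraces itself back to the origin.
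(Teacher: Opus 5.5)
Your outline follows the same route as the paper. The paper states this theorem without a separate proof: it reads it off from the lemmas on the decoupled $s$- and $t$-flows and takes the reflections at $\lambda_s$ and $\lambda_t$ for granted, so I--III is just the ordering of $\lambda_s,\lambda_t$. Your $s$-part is correct: $r_3$ is a simple root, $s''(\lambda_s)=\tfrac12(s^2\hat q)'(r_3)<0$, and the orbit is symmetric about $\lambda_s$.

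\textbf{The gap.} The step you single out as the main obstacle is argued incorrectly. For $w=1/t$ one has $w'^2=t'^2/t^4=\hat q(t)/t^2=w^2\hat q(1/w)=-w^{-1}(1-r_1w)(1-r_2w)(1-r_3w)$. It is not $-w(1+O(w))$; your formula carries an extra factor $w^2$. Thus $|w'|\approx|w|^{-1/2}$, $w$ reaches $0$ with infinite speed, and
\[ t(\lambda)\approx-\bigl(\tfrac32\lvert\lambda-\lambda_t\rvert\bigr)^{-2/3}, \]
which is an algebraic branch point, not a double pole. This is forced by the degree. Since $t^2\hat q(t)$ is quintic, $\lambda=\int dt/(t\sqrt{\hat q(t)})$ is an elliptic integral of the third kind, and $t(\lambda)$ is neither meromorphic nor expressible through $\wp$. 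A double pole would in fact contradict Lemma \ref{integraltfinite}: $\int t\,d\lambda$ would diverge at $\lambda_t$, and $u$ would not stay bounded. So there is no ``regular turning point'', and no elliptic-function argument in $\lambda$ produces the reflection.

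\textbf{A fix.} The reflection can be justified globally on the curve $y^2=\hat q(t)$. Put $t=-\xi^{-2}$ and $y=\xi^{-3}S(\xi)$, where
\[ S(\xi)=\sqrt{(1+r_1\xi^2)(1+r_2\xi^2)(1+r_3\xi^2)}. \]
$S$ is positive and real-analytic on all of $\bR$, because $(1+r_1\xi^2)(1+r_2\xi^2)=\lvert 1+r_1\xi^2\rvert^2$ and $r_3>0$. With these choices:
\begin{itemize}
\item $y^2=\hat q(t)$ holds, $\xi\in\bR\setminus\{0\}$ covers the whole part $t<0$ of the real locus, and $\xi=0$ is its point at infinity.
\item The equation $t'=ty$ becomes $d\lambda=dt/(ty)=-2\xi^2S(\xi)^{-1}\,d\xi$.
\item Hence $\lambda(\xi)=\lambda_t-2\int_0^\xi\eta^2S(\eta)^{-1}\,d\eta$ is a strictly decreasing homeomorphism of $\bR$ with $\lambda(-\xi)-\lambda_t=-(\lambda(\xi)-\lambda_t)$. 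It behaves like $\mp2\log\lvert\xi\rvert$ as $\xi\to\pm\infty$, which also gives the exponential decay of $t$.
\end{itemize}
The orbit you want is $t(\lambda)=-\xi(\lambda)^{-2}$. It satisfies $t'^2=t^2\hat q(t)$ and $t\to-\infty$ as $\lambda\to\lambda_t$. Since $t'=ty$ changes sign with $y$, the branch switches from $t'=t\sqrt{\hat q}$ to $t'=-t\sqrt{\hat q}$ at $\lambda_t$. Finally, $t(\lambda_t+\mu)=t(\lambda_t-\mu)$ follows from $\xi\mapsto-\xi$. With this in place of the $w=1/t$ analysis, the rest of your argument, including case II, goes through.
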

We can now understand the qualitative behavior of the $\alpha(u),\beta(u))$ system for $u\in (0,\omega_3)$ associated to the minimal immersion $\Psi_{L(\tau,s),\hat{g}_0}$.
\begin{thm}\label{alphabetaqualitative}
    The functions $\alpha(u)$ and $\beta(u)$ have a unique singularity at $u(\lambda_t)\in (0,\omega_3)$ and their asymptotic behavior is given by
    \begin{gather*}
    \begin{cases}
        \lim_{u\to u^-(\lambda_t)}\alpha(u)=-\infty\\ \lim_{u\to u^-(\lambda_t)}\beta(u)=+\infty 
    \end{cases},\quad 
    \begin{cases}
        \lim_{u\to u^+(\lambda_t)}\alpha(u)=+\infty\\ \lim_{u\to u^+(\lambda_t)}\beta(u)=-\infty 
    \end{cases},\quad  \begin{cases}
        \alpha(u)\in \bR^-,\quad u\in (0,u(\lambda_t))\\ \alpha(u)\in \bR^+,\quad u\in (u(\lambda_t),\omega_3)
    \end{cases}.
\end{gather*}
Moreover the radii $R(u)$ of the spherical curvature lines are finite in the interval $(0,\omega_3)$ and the angle of intersection $\theta(u)\to0\ \text{or}\ \pi$ as $u\to u(\lambda_t)$.
\end{thm}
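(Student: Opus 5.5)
We will obtain Theorem \ref{alphabetaqualitative} by transporting the qualitative picture of Theorem \ref{qualitativest} through the change of variables \eqref{diffeostalphabeta} and the reparametrization $u(\lambda)$. Since $u'(\lambda)=\frac{s-t}{2}>0$, the map $\lambda\mapsto u(\lambda)$ is an increasing homeomorphism from $\bR$ onto $(0,\omega_3)$, using the normalization $u(-\infty)=0$ and the proposition $u(+\infty)=\omega_3$. This holds except at $\lambda_t$, where $t\to-\infty$. The integrability of $t$ near $\lambda_t$ from Lemma \ref{integraltfinite} shows that $u$ still extends continuously across $\lambda_t$. Hence the only point of $(0,\omega_3)$ where $(\alpha,\beta)$ can fail to be finite is $u(\lambda_t)$. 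Away from it $s\in(0,r_3]$ and $t\in(-\infty,0)$ are finite, so $\alpha^2=-st$ and $\alpha\beta=s+t$ are finite. Moreover $\alpha\neq0$ on $(0,\omega_3)$, since $s,t\neq 0$ there, and $\alpha$ vanishes only at the endpoints, consistently with Proposition \ref{initialconditionsparameterspace}.

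For the sign of $\alpha$, note that $\alpha$ is continuous and nonvanishing on each of $(0,u(\lambda_t))$ and $(u(\lambda_t),\omega_3)$, so it has constant sign on each. By Proposition \ref{initialconditionsparameterspace}, $\alpha(0)=0$ and $\alpha'(0)=-1/C<0$, so $\alpha<0$ just to the right of $0$. Likewise $\alpha(\omega_3)=0$ and $\alpha'(\omega_3)=-1/C<0$, so $\alpha>0$ just to the left of $\omega_3$. In particular $\alpha$ must change sign somewhere in $(0,\omega_3)$. This forces the singularity to actually occur inside the interval, and gives the stated sign pattern.

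For the limits, $s(\lambda_t)\in(0,r_3]$ stays bounded away from $0$ while $t\to-\infty$. Then $|\alpha|=\sqrt{-st}\to\infty$, with sign $-$ from the left and $+$ from the right by the previous step. Also
\begin{equation*}
\beta=\frac{s+t}{\alpha}\sim\frac{t}{\alpha}=\mp\sqrt{-t/s}\to\pm\infty,
\end{equation*}
and the opposite sign to $\alpha$ gives the stated limits of $\beta$. For the geometric consequences we use \eqref{radiusangle}:
\begin{equation*}
R^2=\frac{4+\beta^2}{\alpha^2}=\frac{4+\beta^2}{-st}.
\end{equation*}
Here $\beta^2\sim t/s$, so $R^2\to 1/s(\lambda_t)^2$, which is finite. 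Elsewhere $\alpha\neq0$, so $R$ is finite on all of $(0,\omega_3)$. Finally $\tan\theta=2/\beta\to0$, so $\theta\to0$ or $\pi$ depending on the branch convention.

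The main obstacle is justifying that the $(s,t)$ orbit attached to the given immersion passes through exactly one singular time $\lambda_t$, and that $(\alpha,\beta)$ is continued across $u(\lambda_t)$ as the \emph{same} Hamiltonian solution with $\alpha$ switching half-plane. The transformation \eqref{diffeostalphabeta} is a diffeomorphism onto one half-plane only. We would therefore argue via the Hamiltonian ODE \eqref{hamiltonianequations} in the variable $1/\alpha$, or via the conformal factor: Enneper's relation \eqref{Enneperequation}, $2\omega_u=\alpha e^{\omega}+\beta e^{-\omega}$, has smooth left-hand side, which forces the blow-ups of $\alpha$ and $\beta$ to cancel in a controlled way. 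Uniqueness of the singular time follows because $t$ diverges only once along an orbit (Lemma \ref{tdevelopssingularity}), while $s$ stays bounded.
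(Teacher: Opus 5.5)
Your proposal is correct and follows essentially the paper's own argument. You pass to the $(s,t)$ system via \eqref{diffeostalphabeta}, take the unique singular time $\lambda_t$ from Theorem \ref{qualitativest}, and fix the sign of $\alpha$ on each side of $u(\lambda_t)$ using $\alpha(0)=\alpha(\omega_3)=0$ and $\alpha'(0)=\alpha'(\omega_3)=-1/C<0$ from Proposition \ref{initialconditionsparameterspace}. You then read off the limits and the finiteness of $R$; the paper writes the latter as $R^2=(1/t-1/s)^2$.

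There is one slip: the asymptotic $\beta^2\sim t/s$ should be $\beta^2\sim -t/s$, though your limit $R^2\to 1/s(\lambda_t)^2$ is still right. The continuation issue you flag at the end is also left implicit in the paper.
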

\begin{proof}
    Consider the associated $(s,t)$ system. We have the following two possible solutions according to equation \eqref{diffeostalphabeta}
\begin{gather}\label{alphabetast}
     \begin{cases}
        \alpha(u(\lambda))=\epsilon \sqrt{-s(\lambda)t(\lambda)}\\ \beta(u(\lambda))=\epsilon\left(\frac{\sqrt{s(\lambda)}}{\sqrt{-t(\lambda)}}-\frac{\sqrt{-t(\lambda)}}{\sqrt{s(\lambda)}}\right)
    \end{cases},\quad \epsilon\in \{-1,+1\}.
\end{gather}
Observe that by the previous equation and Theorem \ref{qualitativest}, $\alpha(u)$ and $\beta(u)$ have a unique singularity at $u(\lambda_t)\in (0,\omega_3)$. Now by Proposition \ref{initialconditionsparameterspace}, the equations $\alpha(0)=0$ and $\alpha'(0)<0$ imply that $\alpha(u)<0$ for $u$ sufficiently close to $0^{+}$, whereas the equations $\alpha(\omega_3)=0$ and $\alpha'(\omega_3)<0$ imply that $\alpha(u)>0$ for $u$ sufficiently near to $\omega_3^{-}$. Therefore we must have that
\begin{gather*}
    \forall u\in (0,u(\lambda_t)),\quad \begin{cases}
         \alpha(u(\lambda))=-\sqrt{-s(\lambda)t(\lambda)}\\ \beta(u(\lambda))=-\left(\frac{\sqrt{s(\lambda)}}{\sqrt{-t(\lambda)}}-\frac{\sqrt{-t(\lambda)}}{\sqrt{s(\lambda)}}\right)
    \end{cases},
\end{gather*}
and
\begin{gather*}
    \forall u\in (u(\lambda_t),\omega_3),\quad \begin{cases}
         \alpha(u(\lambda))=\sqrt{-s(\lambda)t(\lambda)}\\ \beta(u(\lambda))=\left(\frac{\sqrt{s(\lambda)}}{\sqrt{-t(\lambda)}}-\frac{\sqrt{-t(\lambda)}}{\sqrt{s(\lambda)}}\right).
    \end{cases}
\end{gather*}
From which the asympotic behavior follows. For the last claim, even though $(\alpha,\beta)$ have a singularity, notice that in terms of the associated $(s,t)$ system we have by equation \eqref{radiusangle} that
\begin{equation*}
R^2(u(\lambda))=\left(\frac{1}{t(\lambda)}-\frac{1}{s(\lambda)}\right)^2,
\end{equation*}
so the radius is finite for every $u\in (0,\omega_3)$.
\end{proof}

  \begin{figure}
     \centering
     \begin{subfigure}[b]{0.48\textwidth}
         \centering
         \includegraphics[width=\textwidth]{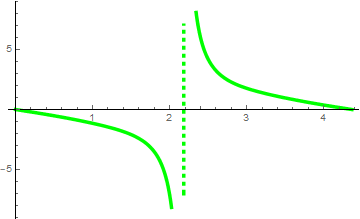}
         \caption{Numerical simulation of the function $\alpha(u)$ in the interval $(0,\omega_3)$.}
         \label{fig:alpha}
     \end{subfigure}
     \hfill
     \begin{subfigure}[b]{0.48\textwidth}
         \centering
         \includegraphics[width=\textwidth]{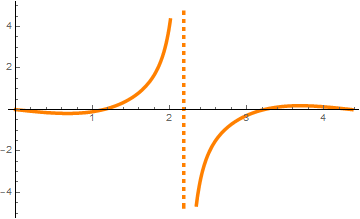}
         \caption{Numerical simulation of the function $\beta(u)$ in the interval $(0,\omega_3)$.}
         \label{fig:beta}
     \end{subfigure}
      \caption{Numerical simulation of the pair $(\alpha,\beta)$ for the choice of parameters $\tau\approx 0.503$ and $\hat{g}_0=1$.}
     \end{figure}
\section{Center Analysis}\label{Section 5.7}
We now proceed to make an analysis of the behavior of the center of the spheres containing the spherical lines of curvature of $\Psi_{L(\tau,s),\hat{g}_0}$. The next theorem shows that it is enough to understand how the center behaves in the special interval $(0,\omega_3).$
 
\begin{thm}\label{periodicitycenter}
    For all $\tau\in (0,\tau_0)$ there exists a non-zero real constant
    \begin{equation*}
        c_{\tau}\coloneqq 4\left(\omega_3\wp(\mu)+\zeta(\omega_3)\right),
    \end{equation*} 
    such that for all $\hat{g}_0$ and all $u\in \bR$
    \begin{equation*}
        c(u+\omega_3)=c(u)+(0,0,c_{\tau}).
    \end{equation*}
\end{thm}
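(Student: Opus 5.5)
The plan is to compare the Weierstrass data at $z$ and at $z+\omega_3$ along a horizontal line, and then feed the result into the center formula \eqref{centerequationgeneral}. The conceptual inputs are the $\omega_3$-periodicity of $(\alpha,\beta)$ from Corollary \ref{alphabetaomega3periodic} and a symmetry of the Weierstrass data under $z\mapsto z+\omega_3$.

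\textbf{Step 1: behaviour of $\Phi$ under $z\mapsto z+\omega_3$.} Note that $\omega_3$ is not a lattice vector, since $2\omega_3=2\omega_1+2\omega_2$ is. However, $\omega_3-\omega'_3=2\omega_1\in L$, so $\phi(z+\omega_3)=\phi(z+\omega_3')$. Moreover $\phi$ is even and real on both diagonals of the rhombus. For $g$, I will use \eqref{gaussmapsigmafunctions} together with \eqref{sigmaidentity} and the relation $\zeta(\mu)\omega_3=\zeta(\omega_3)\mu$ valid on $C_s$ (this is how \eqref{g(omega3)} was obtained). The aim is a relation of the form $g(z+\omega_3)=-\overline{g(\overline{z})}$ or $-1/\overline{\cdots}$, possibly twisted by the rotation factor $e^{-2\pi i\kappa_2}$, exactly as in Lemma \ref{conformalfactoratiomega3'level}.

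The cleaner route is to work only on the real axis. By Lemma \ref{signg} and \eqref{g(omega3)}, $g(u+\omega_3)$ and $g(u)$ are real with opposite signs. Combining this with \eqref{integral1sobrephi}-type integrals, I expect $\Phi(z+\omega_3)=A\,\overline{\Phi(\overline{z})}$ or $\Phi(z+\omega_3)=A\,\Phi(z)$, where $A$ is a rotation about the $x_3$-axis (of angle $0$ or $\pi$). The third component $\phi$ is unchanged in either case.

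\textbf{Step 2: transfer to the center.} Given Step 1, $\Psi(z+\omega_3)=A\Psi(z)+\vec w$ for a constant vector $\vec w$, and the Gauss map and the unit vector $\Psi_u/|\Psi_u|$ transform by the same $A$. Since $\alpha,\beta$ are $\omega_3$-periodic, \eqref{centerequationgeneral} gives $c(u+\omega_3)=Ac(u)+\vec w$.

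Because $c(u)\in l$ and $l$ is parallel to the $x_3$-axis, $A$ fixes the direction of $l$, so $c(u+\omega_3)-c(u)$ is vertical. Its value equals the $x_3$-component of $\Psi(z+\omega_3)-\Psi(z)$, which can be read off along the real axis:
\[
\mathrm{Re}\int_u^{u+\omega_3}\bigl(b-4\wp(\xi)\bigr)\,d\xi=b\,\omega_3+4\bigl(\zeta(u+\omega_3)-\zeta(u)\bigr).
\]
Taking $u=-\omega_3/2$ and using that $\zeta$ is odd gives $b\omega_3+8\zeta(\omega_3/2)$. Rewriting this in terms of $4(\omega_3\wp(\mu)+\zeta(\omega_3))$ will need the duplication-type identity \eqref{identityzeta}. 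Alternatively, I would rather compute the shift of the $x_3$-coordinate of the center directly: subtract the vertical components of the normal and $\Psi_u$ terms, which are periodic, and use the vertical translation along a full path from $u$ to $u+\omega_3$ passing through a pole. In either case the target is $2\omega_3 b/2\cdot$ appropriately, i.e.\ $c_\tau$, consistent with Theorem \ref{symmetriesg0=1}, whose $2\omega_3$ translation is $2b\omega_3+8\zeta(\omega_3)=2c_\tau$.

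\textbf{Step 3: $c_\tau\neq 0$.} By homogeneity, $c_\tau=\frac{4}{\lambda}\bigl(2\wp(s|\tilde L)+\zeta(2|\tilde L)\bigr)=-\frac{4}{\lambda}f_\tau'(s(\tau))$. This is nonzero by Lemma \ref{criterio}, as in the computation of $\vec a_3$ in Theorem \ref{singlyperiodic}.

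The main obstacle is Step 1, specifically pinning down the precise symmetry of $g$ under the half-period shift $\omega_3$. A related difficulty is that the path from $u$ to $u+\omega_3$ crosses $\mu$ and the lattice poles, so the vertical displacement must be computed with the correct branch or residue bookkeeping. To avoid this, I would evaluate the displacement along a shifted horizontal line $\mathrm{Im}\,z=\varepsilon$ that avoids the poles.
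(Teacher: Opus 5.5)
Your Step 1 fails: the symmetry you are looking for does not exist. Since $\omega_3$ is only a half-period, $\phi(z+\omega_3)$ has its double poles on $\omega_3+L$, whereas $\phi(z)=\overline{\phi(\overline{z})}$ has them on $L$. For instance, $\phi(\omega_3)=r_3$ is finite. So neither $\Phi(z+\omega_3)=A\Phi(z)$ nor $\Phi(z+\omega_3)=A\overline{\Phi(\overline{z})}$ can hold; already the third component fails.

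Your own Step 2 computation shows the same thing. The quantity $\mathrm{Re}\int_u^{u+\omega_3}\phi\,d\xi=b\omega_3+4(\zeta(u+\omega_3)-\zeta(u))$ has $u$-derivative $4(\wp(u)-\wp(u+\omega_3))\not\equiv 0$, so there is no constant translation $\vec w$. Moreover, the value you pick, $b\omega_3+8\zeta(\omega_3/2)$, differs from $c_\tau=b\omega_3+4\zeta(\omega_3)$ by $-2\wp''(\omega_3/2)/\wp'(\omega_3/2)$ (duplication formula for $\zeta$). This does not vanish, so no identity will turn one into the other.

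The fallbacks do not rescue the argument. On the real axis, the vertical components of $N_\Sigma$ and $\Psi_u/|\Psi_u|$ are built from $g(u)$, which is not $\omega_3$-periodic: $g(0)=\hat g_0$ while $g(\omega_3)=-\hat g_0$ by \eqref{g(omega3)}. So these terms are not "periodic" as you claim. Moving to the line $\mathrm{Im}\,z=\varepsilon$ is still a shift by a non-lattice vector, so it has the same defect.

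The missing idea is that the center depends only on $u$, so you may first move along the curvature line and then shift. Pair the horizontal shift $\omega_3$ with the vertical shift $\omega_3'$: $\omega_3+\omega_3'=2\omega_2\in L$ and $\mathrm{Re}(2\omega_2)=\omega_3$. Your observation $\omega_3-\omega_3'=2\omega_1$ works equally well.

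Under the lattice translation $z\mapsto z+2\omega_2$:
\begin{itemize}
\item $\phi$ is invariant;
\item by Lemma \ref{gtranslated}, $g$ is multiplied by $e^{-2\pi i\kappa_2}$, which is unimodular precisely because $\kappa_2\in\bR$ on $C_s$;
\item hence $\Phi$ changes only by a rotation $R$ about the $x_3$-axis, and $\Psi(z+2\omega_2)=R\Psi(z)+\vec w$;
\item this rigid motion carries the curvature line at $u$ onto the one at $u+\omega_3$;
\item its vertical part is $w_3=\mathrm{Re}\bigl(2b\omega_2+8\zeta(\omega_2)\bigr)=b\omega_3+4\zeta(\omega_3)=c_\tau$, independent of the base point.
\end{itemize}

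The paper implements exactly this. It evaluates \eqref{centerequationgeneral} at $(u,|\omega_3'|)$ and uses $u+\omega_3'\equiv u-\omega_3 \pmod{L}$, Lemma \ref{conformalfactoratiomega3'level}, \eqref{modulusglevelomega3'omega3}, the identity $\Psi_3(u+\omega_3')=\Psi_3(u-\omega_3,0)+c_\tau$, and the $\omega_3$-periodicity of $(\alpha,\beta)$ from Corollary \ref{alphabetaomega3periodic}.

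Your Step 3, $c_\tau=-\frac{4}{\lambda}f_\tau'(s(\tau))\neq 0$, is correct and matches the paper.
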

\begin{proof}
Since the center lies on an axis parallel to the $x_3$-axis, the coordinates $c_1(u),c_2(u)$ are constant, so we restrict to the third coordinate of equation \eqref{centerequationgeneral}. By construction
\begin{equation}\label{derivativeuimmersion}
    \abs{\Psi_u(u,v)}=e^{\omega(u,v)},\quad  \Psi_3(u,v)=Re\int_{\frac{\omega_3}{2}}^{u+iv}\phi(\nu)d\nu.
\end{equation}
By the Cauchy-Riemann equations
\begin{equation*}
(\Psi_3(u,v))_u=Re\ \phi(u+iv).
\end{equation*}
Furthermore since $\wp(z)=-\zeta'(z)$
\begin{gather}\label{thirdcoordinatePsi}
    \Psi_3(u,v)=Re\int_{\frac{\omega_3}{2}}^{u+iv}(b+4\zeta'(\nu))d\nu=b(u-\frac{\omega_3}{2})+4\ (\frac{\zeta(u+iv)+\zeta(u-iv)}{2}-\zeta(\frac{\omega_3}{2})).
\end{gather}
We know that
\begin{equation}\label{normalthird}
    (N_{\Sigma}(\Psi(u,v))_3=\frac{|g(u,v)|^2-1}{|g(u,v)|^2+1}.
\end{equation}
First we analyze at $v=0$ and we obtain
\begin{equation}\label{centerthirdu}
    c_3(u)=\Psi_3(u,0)-\frac{2}{\alpha(u)}(e^{-\omega(u,0)}\phi(u))-\frac{\beta(u)}{\alpha(u)}(N_{\Sigma}(\Psi(u,0)))_3,
\end{equation}
\begin{equation}\label{alternc3u}
    c_3(u)=b(u-\frac{\omega_3}{2})+4(\zeta(u)-\zeta(\frac{\omega_3}{2}))+\frac{4}{\alpha(u)}\frac{g(u,0)}{g^2(u,0)+1}-\frac{\beta(u)}{\alpha(u)}\frac{g^2(u,0)-1}{g^2(u,0)+1}.
\end{equation}
\begin{rem}
\normalfont{
    This equation was already obtained in \cite[equation 5.26]{Isabel} for rectangular lattices.}
\end{rem}
Now we investigate what happens at $v=|\omega_3'|$ in equation \eqref{centerequationgeneral}. We study the terms separately. Notice that 
\begin{equation}\label{psiomega3'derivativeu}
    (\Psi_3(u+\omega_3'))_u=\phi(u+\omega_3')=\phi(u-\omega_3).
\end{equation}
We also have by equation \eqref{derivativeuimmersion} and Lemma \ref{conformalfactoratiomega3'level}
\begin{equation}\label{abspsiuomega3'}
    |\Psi_u(u+\omega_3')|=e^{\omega(u+\omega_3')}=e^{\omega(u-\omega_3)}.
\end{equation}
Also by equations \eqref{modulusglevelomega3'omega3} and \eqref{normalthird} we have
\begin{equation}\label{Normalthirdomega3'}
    (N_{\Sigma}(\Psi(u+\omega_3'))_3= (N_{\Sigma}(\Psi(u-\omega_3))_3.
\end{equation}
By equations \eqref{identityzeta} and \eqref{thirdcoordinatePsi} evaluated at $v=|\omega_3'|$
\begin{gather}\label{psiumaisomega3'}
    \Psi_3(u+\omega_3')=b(u-\frac{\omega_3}{2})+4(\zeta(u)-\zeta(\frac{\omega_3}{2}))+\frac{2\wp'(u)}{\wp(u)-\wp(\omega_3')}.
\end{gather}

On the other hand using equation \eqref{thirdcoordinatePsi} at $(u-\omega_3,0)$ we have
\begin{gather}\label{psiuminusomega3}
    \Psi_3(u-\omega_3,0)=b(u-\frac{\omega_3}{2})-b\omega_3+4(\zeta(u-\omega_3)-\zeta(\frac{\omega_3}{2})).
\end{gather}
By the identity
\begin{equation*}
    \zeta(z+y)-\zeta(z)-\zeta(y)=\frac{1}{2}\frac{\wp'(z)-\wp'(y)}{\wp(z)-\wp(y)},
\end{equation*}
we have
\begin{equation}\label{zetauminusomega3}
    \zeta(u-\omega_3)=\zeta(u)-\zeta(\omega_3)+\frac{1}{2}\frac{\wp'(u)}{\wp(u)-\wp(\omega_3)},
\end{equation}
and after algebraic manipulations with equations \eqref{psiumaisomega3'}, \eqref{psiuminusomega3} and \eqref{zetauminusomega3} we obtain
\begin{equation}\label{psiu+omega3'}
    \Psi_3(u+\omega_3')=\Psi_3(u-\omega_3,0)+4\left(\omega_3\wp(\mu)+\zeta(\omega_3)\right).
\end{equation}
Collecting the previous calculations \eqref{psiomega3'derivativeu}, \eqref{abspsiuomega3'}, \eqref{Normalthirdomega3'} and \eqref{psiu+omega3'}, the third coordinate of equation \eqref{centerequationgeneral} at $(u,i|\omega_3'|)$ can be written as
\begin{gather}\label{cthirdualternative}
    c_3(u)=\Psi_3(u-\omega_3,0)-\frac{2}{\alpha(u)}\left(e^{-\omega(u-\omega_3)}\phi(u-\omega_3)\right)-\frac{\beta(u)}{\alpha(u)}(N_{\Sigma}(\Psi(u-\omega_3)))_3+4\left(\omega_3\wp(\mu)+\zeta(\omega_3)\right).
\end{gather}
Using Corollary \ref{alphabetaomega3periodic} we have $\alpha(u)=\alpha(u-\omega_3)$ and $\beta(u)=\beta(u-\omega_3)$ so using equations \eqref{centerthirdu} evaluated at $u-\omega_3$ and \eqref{cthirdualternative}
\begin{equation*}
    c_3(u)=c_3(u-\omega_3)+4\left(\omega_3\wp(\mu)+\zeta(\omega_3)\right),
\end{equation*}
and the proof is completed after an application of Item I) of Lemma \ref{criterio}, together with the homogeneity property to conclude that $c_\tau\neq 0$.
\end{proof}
 \begin{figure}
    \centering
    \begin{subfigure}[b]{0.47\textwidth}
    \includegraphics[width=\linewidth]{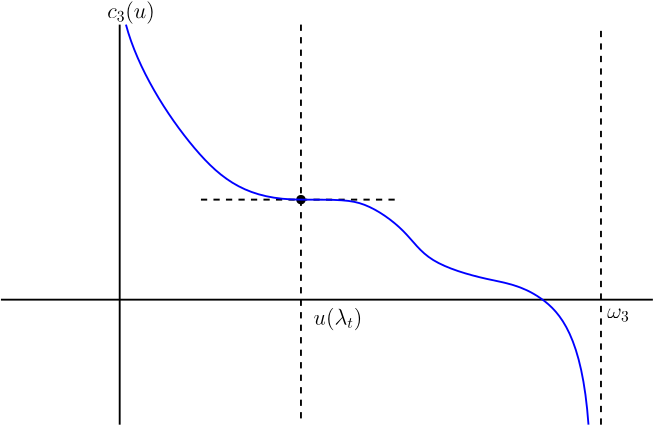}
    \caption{Qualitative behavior of the center of the spherical curvature lines for any $\hat{g}_0\in \bR^{+}.$}
    \label{fig:center graph}
    \end{subfigure}
     \hfill
     \begin{subfigure}[b]{0.47\textwidth}
         \centering
         \includegraphics[width=\textwidth]{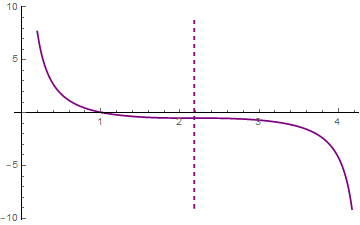}
         \caption{Numerical simulation of the function $c_3(u)$ in the interval $(0,\omega_3)$ for $\tau\approx 0.503$ and $\hat{g}_0=1$.}
         \label{fig:center}
     \end{subfigure}
     \caption{The height center function $c_3(u).$}
\end{figure}
\begin{thm}\label{centeranalysis}
The components of the center function $c(u)$ are finite for all $u\in (0,\omega_3)$. Moreover the height of the center $c_3(u)$ is strictly monotone decreasing in the whole interval, and has a unique critical point $u(\lambda_t)\in (0,\omega_3)$ where $c_3'(u(\lambda_t))=0.$
\end{thm}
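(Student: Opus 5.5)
We will work with the formula \eqref{alternc3u} for $c_3(u)$, or more conveniently with the general formula \eqref{centerequationgeneral}, and express everything through the $(s,t)$ system of Section \ref{Section 5.6}.

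\emph{Finiteness.} Since $c_1,c_2$ are constant, only $c_3$ needs attention. In \eqref{alternc3u} the terms $b(u-\frac{\omega_3}{2})+4(\zeta(u)-\zeta(\frac{\omega_3}{2}))$ and the Gauss map terms are finite on $(0,\omega_3)$: $\zeta$ has poles only at lattice points, and $g$ is real and nonzero there by Lemma \ref{signg}. We still have to check the point $u=\mu$, where $g\in\{0,\infty\}$. There $\frac{g}{g^2+1}$ and $\frac{g^2-1}{g^2+1}$ stay bounded, so this point causes no trouble. The only possible trouble comes from $1/\alpha$ and $\beta/\alpha$. By Theorem \ref{alphabetaqualitative}, $\alpha$ does not vanish on $(0,\omega_3)$, because by \eqref{alphabetast} $\alpha^2=-st>0$ for finite $\lambda$. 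At the singular point $u(\lambda_t)$ we have $1/\alpha\to0$. Also $\beta/\alpha=\frac{s+t}{-st}=\frac1t-\frac1s$ up to sign, which tends to $-1/s(\lambda_t)$, and this is finite. So $c_3$ extends finitely across $u(\lambda_t)$ and is finite on the whole interval. This is the same mechanism that makes $R(u)$ finite.

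\emph{Derivative of $c_3$.} Next I would compute $c_3'(u)$ directly. Differentiate \eqref{centerequationgeneral} in $u$ along a curvature line, using the Frenet-type equations of the Enneper surface from \eqref{fundamentalforms1}, \eqref{fundamentalforms2}: $N_u=e^{-2\omega}\Psi_u$ from $k_1=-e^{-2\omega}$, together with $\Psi_{uu}$ expressed through $\omega_u,\omega_v,N$. Combine this with \eqref{Enneperequation}, $2\omega_u=\alpha e^\omega+\beta e^{-\omega}$. The $e^{\pm\omega}$ terms should cancel, as they do in Wente's proof that the center is independent of $v$, leaving
\[
c'(u)=\Bigl(\frac{2\alpha'}{\alpha^2}\Bigr)\frac{\Psi_u}{|\Psi_u|}+\Bigl(\frac{\beta}{\alpha}\Bigr)'(-N)+(\text{terms that cancel}).
\]
Projecting onto $e_3$ and using that $c'$ is parallel to $l$, I expect a clean identity of the form $c_3'(u)=\kappa\,(\beta/\alpha)'$ or $c_3'=-\frac{2}{\alpha^2}\cdot(\ldots)$. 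Concretely I would aim for
\[
c_3'(u)=\frac{d}{du}\Bigl(\frac1{s}-\frac1{t}\Bigr)\cdot(\pm1).
\]
Equivalently, since $c$ moves along $l$ while the sphere radius is $R=|\frac1t-\frac1s|$, the height should satisfy a relation with $dR/du$ and the angle $\theta$: $c_3'=\pm R'\cos\theta$-like. The precise form is what has to be checked.

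\emph{Monotonicity via $(s,t)$.} Pass to $\lambda$ using $du/d\lambda=\frac{s-t}{2}>0$. This gives $\frac{dc_3}{d\lambda}$ in terms of $s',t'$, that is, of $\pm s\sqrt{\hat q(s)}$ and $\pm t\sqrt{\hat q(t)}$, with signs fixed by the orbit type of Theorem \ref{qualitativest} and by the sign choices for $\epsilon$ in the proof of Theorem \ref{alphabetaqualitative}. In $\frac{d}{d\lambda}(\frac1t)=-\frac{t'}{t^2}$ the sign flips exactly at $\lambda_t$. The sign flip in $\epsilon$ at $u(\lambda_t)$ then makes the total sign constant, giving $c_3'<0$ except where a factor vanishes. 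For instance, $c_3'\propto -t'/t^2$ type terms vanish as $t\to-\infty$, since $t'/t^2\sim|t|^{1/2}\cdot$ const$/|t|$. This decay should be what forces $c_3'(u(\lambda_t))=0$ as the unique critical point, while the $s$-contribution vanishes only at $\lambda_s$ and must be checked not to produce another zero.

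The main obstacle is the second step: obtaining the exact closed form of $c_3'$ with correct signs. If the geometric derivation is messy, I would instead differentiate \eqref{alternc3u} directly, using $g'/g=1/\phi$, $\phi'^2=\hat q(\phi)$ and the Hamiltonian system \eqref{hamiltonianequations}, and simplify with $h=-\sum r_ir_j$, $k=1$ from Proposition \ref{initialconditionsparameterspace}.
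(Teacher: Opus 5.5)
Your finiteness step matches the paper's argument. $\alpha^2=-st$ does not vanish for finite $\lambda$, $1/\alpha\to 0$ at $u(\lambda_t)$, and $\beta/\alpha=(s+t)/(-st)=-\frac{1}{t}-\frac{1}{s}\to -1/s(\lambda_t)$. The correct expression is $-(\frac1t+\frac1s)$, not $\pm(\frac1t-\frac1s)$, but the limit you state is right.

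\textbf{The gap is in the monotonicity part: the identity that does all the work is missing.} Your sketched $u$-derivative of \eqref{centerequationgeneral} is heading toward Wente's formula \eqref{derivativecenter}, $c'(u)=\alpha^{-2}\vec F(u)$ with $\vec F=(0,0,F_3)$. The decisive point is that $|\vec F|^2$ equals the conserved quantity $4k$, and $k=1$ by Proposition \ref{initialconditionsparameterspace}. Hence $F_3=\pm 2$, with constant sign on each of $(0,u(\lambda_t))$ and $(u(\lambda_t),\omega_3)$. It follows that $|c_3'|=2/\alpha^2=2/(-st)$; the paper's displayed $c_3'^2=4/\alpha^2$ should read $4/\alpha^4$, but only the vanishing pattern matters. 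So $c_3'$ cannot vanish where $\alpha$ is finite, and it tends to $0$ exactly at the unique pole $u(\lambda_t)$ of $\alpha$.

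Your guessed identity $c_3'=\pm\frac{d}{du}(\frac1s-\frac1t)$ is not the right one. In the $\lambda$ variable the true statement is
\[
\frac{dc_3}{d\lambda}=\frac{F_3}{-st}\cdot\frac{s-t}{2}=\frac{F_3}{2}\Bigl(\frac1s-\frac1t\Bigr)=\pm R .
\]
This contains no $s'$ or $t'$ and never vanishes. Your guess would additionally require $dR/d\lambda=\pm R$, and there is no reason for that, since $\frac{d}{d\lambda}\frac1s$ and $\frac{d}{d\lambda}\frac1t$ involve $\sqrt{\hat q(s)}$ and $\sqrt{\hat q(t)}$. The zero of $c_3'(u)$ at $u(\lambda_t)$ comes only from $du/d\lambda=(s-t)/2\to\infty$, not from a sign interplay between $s'/s^2$ and $t'/t^2$. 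So the sign-counting you propose would be analysing the wrong expression.

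Second, the sign of $c_3'$ cannot be read off from the $\epsilon$ choices or the orbit types of Theorem \ref{qualitativest}. $F_3$ involves $(\Psi_u)_3$, $(\Psi_v)_3$ and $N_3$, i.e.\ the immersion itself, not only $(\alpha,\beta)$. The paper fixes the sign by evaluating $F_3$ at the two ends.

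As $u\to 0^+$ it works on the line $v=|\omega_3'|$, because $e^{\omega}$ blows up at $(0,0)$. It uses $\alpha(0)=0$, $\alpha'(0)=-1/C$, $\beta(0)=\hat g_0-\hat g_0^{-1}$ (Proposition \ref{betafunctionat0omega3}), $\phi(\omega_3')=\phi(\omega_3)>0$ and $N_3=(\hat g_0^2-1)/(\hat g_0^2+1)$. This gives $F_3=-\frac{1}{C}\bigl(2e^{-\omega(0,|\omega_3'|)}\phi(\omega_3)+\beta(0)N_3\bigr)<0$, since $\beta(0)N_3=(\hat g_0^2-1)^2/(\hat g_0(\hat g_0^2+1))\ge 0$. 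The analogous computation along $v=0$ gives $F_3<0$ as $u\to\omega_3^-$. Together with the continuity of $c_3$ across $u(\lambda_t)$ from your first step, this yields strict decrease on all of $(0,\omega_3)$.

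Your fallback of differentiating \eqref{alternc3u} directly, using the Hamiltonian system and $h,k$, amounts to re-deriving $c_3'=F_3/\alpha^2$. As written, however, the proposal supplies neither this identity nor the endpoint sign computation. The monotonicity and the uniqueness of the critical point therefore remain unproved.
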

\begin{proof}
We start by observing that from Theorem \ref{qualitativest} the functions $s(\lambda)$ and $t(\lambda)$ only vanish at $\lambda\to \pm \infty$. From equation \eqref{alphabetast} this implies that the $\alpha$ function is only zero at $u=0,\omega_3$ in the interval $[0,\omega_3]$. On the other hand by Theorem \ref{alphabetaqualitative} the only singularity of the $\alpha$, $\beta$ functions is at $u(\lambda_t)\in (0,\omega_3)$ in that interval. From equation \eqref{alphabetast}
\begin{equation*}
    \frac{\beta(u(\lambda))}{\alpha(u(\lambda))}=-\frac{1}{t(\lambda)}-\frac{1}{s(\lambda)}\to -\frac{1}{s(\lambda_t)}<+\infty,\quad \text{as}\quad  \lambda\to \lambda_t.
\end{equation*}
Therefore by equation \eqref{alternc3u} and since the coordinates $c_1(u),c_2(u)$ are independent of $u$, we check that the components of $c(u)$ are finite in the interval $(0,\omega_3).$

Following \cite[equation 4.14]{Wentespherical}, \cite[equation 5.29]{Isabel} we have that
\begin{equation}\label{derivativecenter}
    c'(u)=\frac{1}{\alpha(u)^2}\Vec{F}(u),
\end{equation}
where 
\begin{gather}\label{Fvector}
    \Vec{F}\coloneqq \left(2\alpha'+\alpha^2e^{\omega}-\alpha\beta e^{-\omega}\right)e^{-\omega}\Psi_u+\left(2\alpha\omega_v\right)e^{-\omega}\Psi_v+\left(2\alpha e^{-\omega}+\alpha'\beta-\beta'\alpha\right)N_{\Sigma},
\end{gather}
is a vector only dependent on $u$ and parallel to the $z$ axis, \textit{i.e.} $\vec{F}(u)=(0,0,F_3(u))$ and satisfies
\begin{equation*}
    |F|^2=4k.
\end{equation*}
Since we have shown in Proposition \ref{initialconditionsparameterspace} that $k=1$ also holds for rhombic lattices, we have generalized the following formula for such rhombic lattices
\begin{equation*}
    c_3'(u)^2=\frac{4}{\alpha(u)^2}.
\end{equation*}
We see that according with Theorem \ref{alphabetaqualitative} there exists a unique $u=u(\lambda_t)\in (0,\omega_3)$ such that $c_3'(u)=0$. This implies that $c_3(u)$ is monotone in both intervals $(0,u(\lambda_t))$ and $(u(\lambda_t),\omega_3)$. In order to determine the sign of the derivative we study the sign of $c_3'(u)$ as $u$ approaches $0$ and $\omega_3$. Notice that from equation \eqref{derivativecenter} the sign of $c_3'(u)$ is determined by the sign of $F_3(u)$.

Since the conformal factor explodes at $u=0$ along $v=0$ we have to analyze the limit of $F_3(u)$ along $v=|\omega_3'|$. Using the third component of equation \eqref{Fvector} evaluated at $(0,|\omega_3'|)$, Propositions \ref{initialconditionsparameterspace} \ref{betafunctionat0omega3}, equation \eqref{psiomega3'derivativeu}
\begin{gather*}
    \lim_{u\to 0^+}F_3(u)=2\left(-\frac{1}{C}\right)e^{-\omega(0,|\omega_3'|)}\phi(\omega_3')+\left(-\frac{1}{C}\right)\left(\hat{g}_0-\frac{1}{\hat{g}_0}\right)\left(N_{\Sigma}\right)_3(\omega_3').
\end{gather*}
Using equations \eqref{Normalthirdomega3'} and \eqref{normalthird}, the $2\omega_2$ periodicity of $\phi$, the  $2\omega_3$ periodicity of the Gauss map along the real axis \eqref{gaussmap2omega3periodicrealline} and equation \eqref{g(omega3)} we have
\begin{equation*}
     \lim_{u\to 0^+}F_3(u)=2\left(-\frac{1}{C}\right)e^{-\omega(0,|\omega_3'|)}\left(4\wp(\mu)-4\wp(\omega_3)\right)+\left(-\frac{1}{C}\right)\left(\hat{g}_0-\frac{1}{\hat{g}_0}\right)\left(\frac{\hat{g}_0^2-1}{\hat{g}_0^2+1}\right),
\end{equation*}
and therefore
\begin{equation*}
      \lim_{u\to 0^+}F_3(u)\in \bR^-,
\end{equation*}
so that
\begin{equation*}
    \forall u\in (0,u(\lambda_t)),\quad c_3'(u)<0.
\end{equation*}
For the analysis of the sign of $c_3'(u)$ at the limit $u=\omega_3$ we focus at points $v=0$ since the conformal factor is finite at $\omega_3$. Performing a completely analogous calculation we can check that
\begin{equation*}
    \lim_{u\to \omega_3^-}F_3(u)\in \bR^-
\end{equation*}
so that 
\begin{equation*}
    \forall u\in (u(\lambda_t),\omega_3),\quad c_3'(u)<0,
\end{equation*}
We conclude that $c_3(u)$ is strictly decreasing in $(0,\omega_3)$ except at the unique critical point $u(\lambda_t)$. 
\end{proof}
\begin{figure}
     \centering
     \begin{subfigure}[b]{0.3\textwidth}
         \centering
         \includegraphics[width=\textwidth]{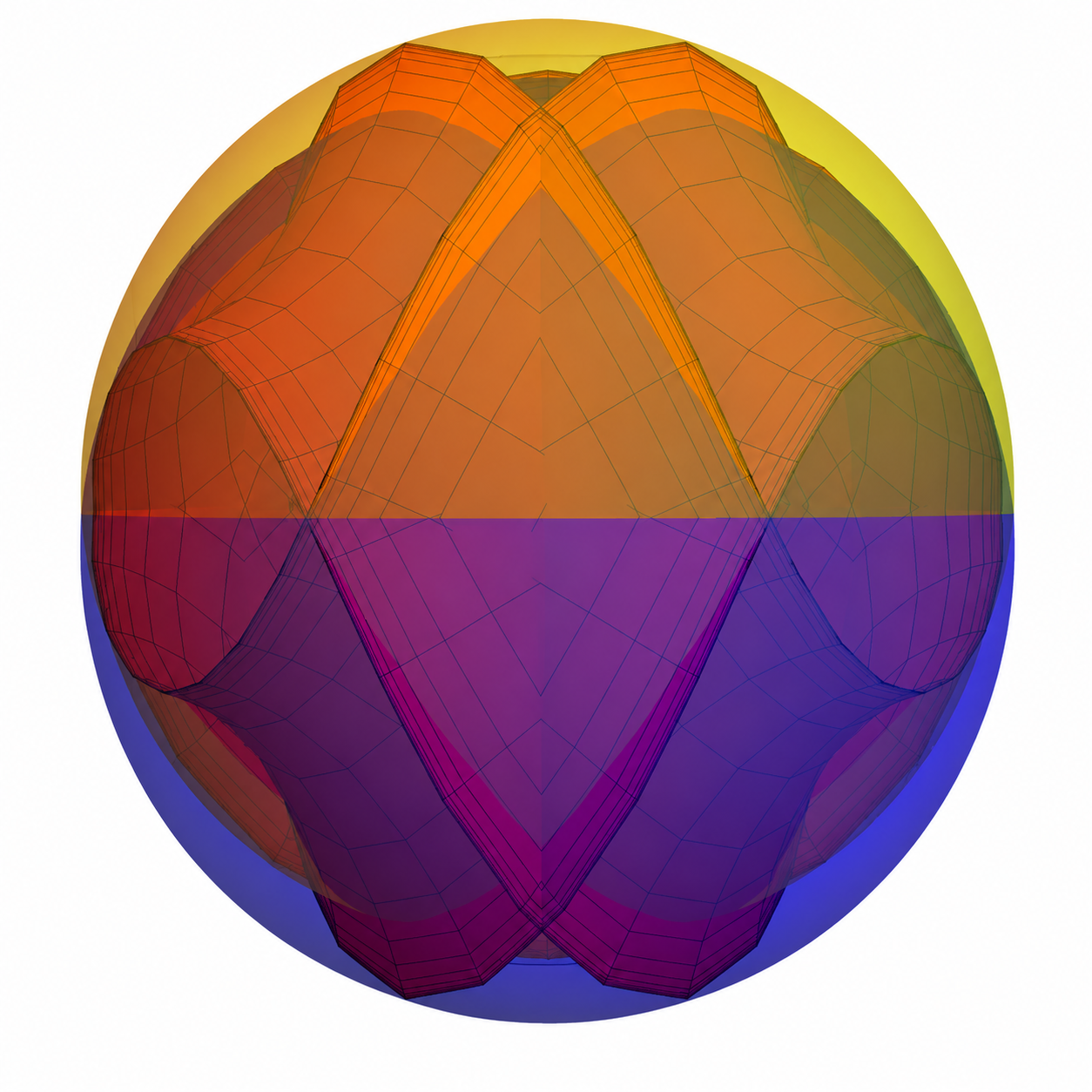}
    \label{fig:capillary n=3}
     \end{subfigure}
      \hfill
      \begin{subfigure}[b]{0.3\textwidth}
         \centering
         \includegraphics[width=\textwidth]{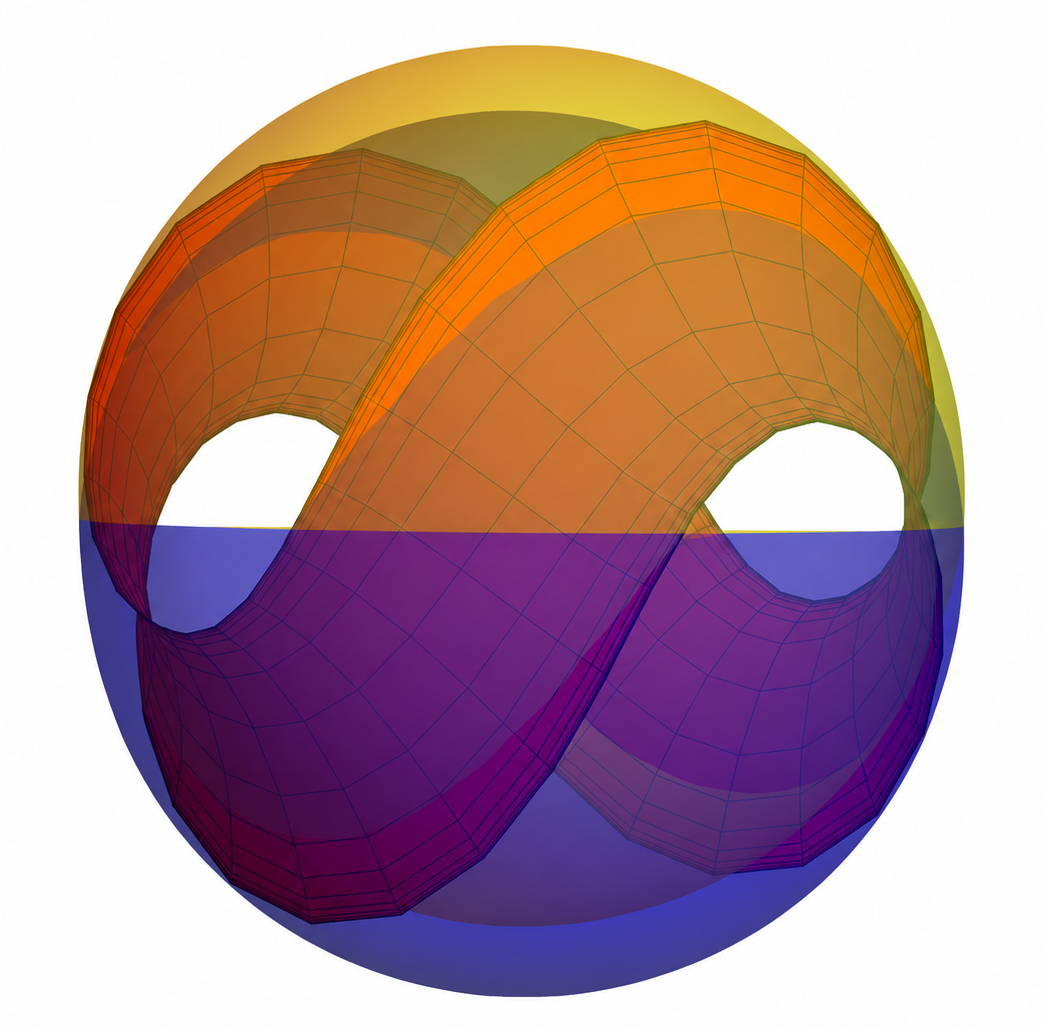}
         \label{fig:capillary n=4}
     \end{subfigure}
     \hfill
     \begin{subfigure}[b]{0.32\textwidth}
         \centering
         \includegraphics[width=\textwidth]{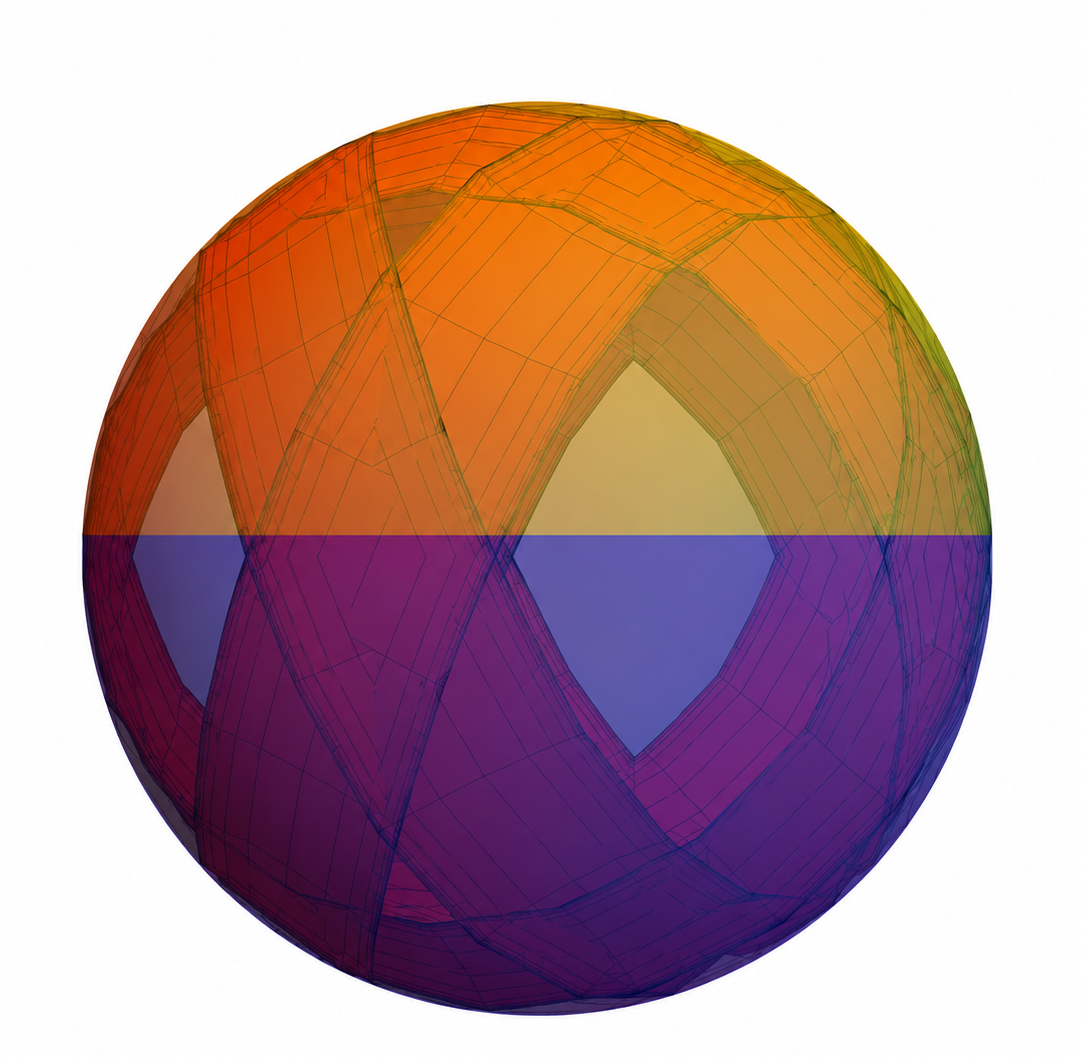}
         \label{fig:capillary n=5}
     \end{subfigure}
     
     \caption{Immersed and embedded capillary minimal annuli in two spheres of same radius and supplementary intersection angles.}
     \end{figure}
\begin{rem}
\normalfont
    It is interesting to remark that from Theorem \ref{canonicalconstruction} and from Theorem \ref{centeranalysis}, it is impossible to produce a free boundary minimal annuli in one single sphere which is a piece of an extended singly periodic minimal Wente torus with ends.
\end{rem}
\section{Application}\label{Section 5.8}
In this section we study the special case $\hat{g}_0=1$ in order to obtain a better control of the Hamiltonian system. As a consequence we obtain an interesting geometric application. Consider
\begin{equation}\label{u0}
    u_0\coloneqq \min \{u(\lambda_t),\omega_3-u(\lambda_t)\}
\end{equation}
\begin{thm}\label{hamiltoniansystemg0=1}
    Let $(\alpha(u),\beta(u))$ be the Hamiltonian system associated to the minimal immersion $\Psi_{L(\tau,s),\hat{g}_0}$ with $(\tau,s)\in C_s$ and $\hat{g}_0=1$. Then
    \begin{gather*}
    \forall u\in (0,u_0)\quad \begin{cases}
        \alpha(-u)=\alpha(\omega_3-u)=-\alpha(u),\\ \beta(-u)=\beta(\omega_3-u)=-\beta(u).
    \end{cases}
\end{gather*}
\end{thm}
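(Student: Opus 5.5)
The plan is to exploit a discrete symmetry of the Hamiltonian system \eqref{hamiltonianequations} together with the initial conditions of Proposition \ref{initialconditionsparameterspace}, which for $\hat{g}_0=1$ become symmetric. Given the solution $(\alpha,\beta)$, I define
\begin{equation*}
A(u)\coloneqq -\alpha(-u),\qquad B(u)\coloneqq -\beta(-u).
\end{equation*}
I then check directly that $(A,B)$ again solves \eqref{hamiltonianequations} with the same constant $\delta=3b$. Indeed $A''(u)=-\alpha''(-u)=-\delta\alpha(-u)+2\alpha^2\beta(-u)$. Since $\alpha^2\beta(-u)=-A^2B(u)$, this gives $A''=\delta A-2A^2B$. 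For the second equation, $B''(u)=-\beta''(-u)=-\delta\beta+2\alpha\beta^2+2\alpha$ evaluated at $-u$. Using $\alpha\beta^2(-u)=-AB^2(u)$ and $\alpha(-u)=-A(u)$, this gives $B''=\delta B-2AB^2-2A$. The system is therefore invariant under the involution $(u,\alpha,\beta)\mapsto(-u,-\alpha,-\beta)$.

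Next I compare initial data at $u=0$. By Proposition \ref{initialconditionsparameterspace} we have $\alpha(0)=0$, hence $A(0)=0=\alpha(0)$ and $A'(0)=\alpha'(0)=-1/C$. Likewise $B'(0)=\beta'(0)=C\sum_{i<j}r_ir_j$. The only datum that is not automatically preserved is $\beta(0)$, and this is exactly where $\hat{g}_0=1$ enters. Proposition \ref{betafunctionat0omega3} gives $\beta(0)=\hat{g}_0-\hat{g}_0^{-1}=0$, so $B(0)=-\beta(0)=0=\beta(0)$. By uniqueness of solutions of the analytic ODE system \eqref{hamiltonianequations} with given $(\alpha,\alpha',\beta,\beta')$ at $u=0$, we get $(A,B)=(\alpha,\beta)$ on the maximal interval of regularity around $0$. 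This yields $\alpha(-u)=-\alpha(u)$ and $\beta(-u)=-\beta(u)$.

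To control that interval I use Theorem \ref{alphabetaqualitative} together with Corollary \ref{alphabetaomega3periodic}. In $[0,\omega_3]$ the only singularity of $(\alpha,\beta)$ is at $u(\lambda_t)$. By $\omega_3$-periodicity, the nearest singularity to the left of $0$ is $u(\lambda_t)-\omega_3$. Hence $(\alpha,\beta)$ is regular on $(-(\omega_3-u(\lambda_t)),u(\lambda_t))$, which contains $(-u_0,u_0)$ by \eqref{u0}, so the identity holds for all $u\in(0,u_0)$. The remaining identities follow from periodicity: $\alpha(\omega_3-u)=\alpha(-u)=-\alpha(u)$ and $\beta(\omega_3-u)=\beta(-u)=-\beta(u)$. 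As a consistency check, one can redo the argument at $u=\omega_3$, where Proposition \ref{beta(omega3)} also gives $\beta(\omega_3)=0$.

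The main obstacle I expect is justifying that $(\alpha,\beta)$ is a genuine smooth solution of \eqref{hamiltonianequations} through $u=0$. The values $\alpha(0)$, $\beta(0)$, $\alpha'(0)$, $\beta'(0)$ were read off along the line $v=|\omega_3'|$, because the conformal factor blows up at $(0,0)$. I would handle this by noting that \eqref{Enneperequation} defines $\alpha,\beta$ as functions of $u$ alone. It can therefore be evaluated on the horizontal line $v=|\omega_3'|$, which is free of lattice points near $u=0$. There $e^{\omega}$ is smooth by Lemma \ref{conformalfactoratiomega3'level}, so $\alpha,\beta$ are real-analytic near $u=0$ and the uniqueness theorem applies.
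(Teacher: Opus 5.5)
Your proposal is correct and follows essentially the paper's proof: the paper compares $(\alpha(-u),\beta(-u))$ with $(-\alpha(u),-\beta(u))$ using the same ingredients, namely the time-reversal and sign symmetries of the system, the data $\alpha(0)=\beta(0)=0$ forced by $\hat{g}_0=1$, uniqueness, and $\omega_3$-periodicity, so your involution argument is the same argument rearranged. Your explicit check that both solutions are defined on $(-u_0,u_0)$, using the singularities $u(\lambda_t)+\omega_3\bZ$, is somewhat more careful than the paper, which only asserts this.
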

\begin{proof}
Recall that by Theorem \ref{alphabetaqualitative} the system $(\alpha(u),\beta(u))$ is well-defined and smooth in the interval $u\in (0,u(\lambda_t))\cup (u(\lambda_t),\omega_3)$. Using Corollary \ref{alphabetaomega3periodic}, we can introduce the well-defined and smooth system $(\tilde{\alpha}(u),\tilde{\beta}(u))$ in $(0,u_0)$ by
\begin{gather*}
    \begin{cases}
        \Tilde{\alpha}(u)\coloneq\alpha(-u)=\alpha(\omega_3-u),\\ \Tilde{\beta}(u)\coloneq\beta(-u)=\beta(\omega_3-u).
    \end{cases}
\end{gather*}
Notice that the pair $(\Tilde{\alpha},\Tilde{\beta})$ is a solution to the Hamiltonian system \eqref{hamiltonianequations} with initial conditions
\begin{gather*}
    \begin{cases}
        \Tilde{\alpha}(0)=\alpha(0)=0=-\alpha(0)\\\Tilde{\beta}(0)=\beta(0)=0=-\beta(0)
    \end{cases},\quad \begin{cases}
        \Tilde{\alpha}'(0)=-\alpha'(0)\\ \Tilde{\beta}'(0)=-\beta'(0).
    \end{cases}
\end{gather*}
Since the system $(-\alpha(u),-\beta(u))$ is also a solution to the Hamiltonian system \eqref{hamiltonianequations} with the same initial conditions as the system $(\Tilde{\alpha},\Tilde{\beta})$, and since they are both well-defined in the interval $(0,u_0)$ we must have, by uniqueness, that
\begin{equation*}
   \forall u\in (0,u_0),\quad \Tilde{\alpha}(u)=-\alpha(u),\quad \Tilde{\beta}(u)=-\beta(u),
\end{equation*}
as we wanted to prove.
\end{proof}

\begin{cor}\label{u(lambdat)=omega32}
Consider the minimal immersion  $\Psi_{L(\tau,s),\hat{g}_0}$ with $(\tau,s)\in C_s$ and $\hat{g}_0=1$. Then 
    \begin{gather*}
     \forall u\in (0,u_0)\quad  \begin{cases}
        R(\omega_3-u)=R(u)\\ \tan\theta(\omega_3-u)=-\tan\theta(u)
    \end{cases},\quad  c_3(\omega_3-u)=-c_3(u)+\frac{2\wp''(\frac{\omega_3}{2})}{\wp'(\frac{\omega_3}{2})}.
    \end{gather*}
\end{cor}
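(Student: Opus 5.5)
The plan is to read all three identities off Theorem \ref{hamiltoniansystemg0=1}, using the explicit formulas \eqref{radiusangle} for $R$ and $\theta$ and the formula \eqref{alternc3u} for $c_3$. With $\hat{g}_0=1$ we have $C=1$ and $\beta(0)=\beta(\omega_3)=0$ by Proposition \ref{betafunctionat0omega3}.

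\textbf{Radius and angle.} By Theorem \ref{hamiltoniansystemg0=1}, for $u\in(0,u_0)$ we have $\alpha(\omega_3-u)=-\alpha(u)$ and $\beta(\omega_3-u)=-\beta(u)$. Substituting into \eqref{radiusangle} gives
\[
R^2(\omega_3-u)=\frac{4+\beta(u)^2}{\alpha(u)^2}=R^2(u),
\qquad
\tan\theta(\omega_3-u)=\frac{2}{-\beta(u)}=-\tan\theta(u).
\]
Since $R>0$, the first identity gives $R(\omega_3-u)=R(u)$. This part is immediate.

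\textbf{Center height.} I would evaluate \eqref{alternc3u} at $\omega_3-u$ and compare it term by term with its value at $u$.

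\emph{Step 1: the $\phi$ and $g$ data.} Since $\wp$ is even and $2\omega_3$-periodic on the real axis, $\wp(\omega_3-u)=\wp(u-\omega_3)=\wp(u+\omega_3)$. For the Gauss map I would show $g(\omega_3-u)=1/g(u)$ on the real axis when $\hat{g}_0=1$. From \eqref{gaussmapsigmafunctions}, oddness of $\sigma$ gives $g(-z)=\hat{g}_0^2/g(z)$. Combining the $2\omega_3$-periodicity \eqref{gaussmap2omega3periodicrealline} with $g(\omega_3)=-\hat{g}_0$ then shows that $g(\omega_3-u)g(u)$ is a constant, which equals $1$ when $\hat{g}_0=1$. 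I will need to check the sign bookkeeping here.

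\emph{Step 2: the boundary terms.} With $g\mapsto 1/g$, the quantity $\frac{g}{g^2+1}$ is invariant and $\frac{g^2-1}{g^2+1}$ changes sign. Together with $\alpha\mapsto-\alpha$ and $\beta\mapsto-\beta$, the term $\frac{4}{\alpha}\frac{g}{g^2+1}$ changes sign. The term $\frac{\beta}{\alpha}\frac{g^2-1}{g^2+1}$ also changes sign, since $\beta/\alpha$ is invariant. So the whole boundary part is odd under $u\mapsto\omega_3-u$.

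\emph{Step 3: the $\Psi_3$ part.} It remains to show that
\[
\Psi_3(u,0)+\Psi_3(\omega_3-u,0)=\frac{2\wp''(\frac{\omega_3}{2})}{\wp'(\frac{\omega_3}{2})}.
\]
The terms $b(u-\tfrac{\omega_3}{2})$ cancel in the sum. What is left is
\[
4\Big(\zeta(u)+\zeta(\omega_3-u)-2\zeta\big(\tfrac{\omega_3}{2}\big)\Big).
\]
I would apply \eqref{identityzeta} with $z_1=\frac{\omega_3}{2}$ and $z_2=u-\frac{\omega_3}{2}$. This gives
\[
\zeta(u)+\zeta(\omega_3-u)-2\zeta\big(\tfrac{\omega_3}{2}\big)=\frac{\wp'(\frac{\omega_3}{2})}{\wp(\frac{\omega_3}{2})-\wp(u-\frac{\omega_3}{2})}.
\]
This expression depends on $u$, whereas the claimed right-hand side is constant. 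So the constant must come from combining this term with the boundary terms, perhaps using the relation between $\alpha$ and $\phi$, $g$ along the curvature line.

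\textbf{Alternative route and main obstacle.} A cleaner approach may be to use $c_3'(u)^2=4/\alpha(u)^2$ from Theorem \ref{centeranalysis}, together with the fact that $c_3'<0$ on the whole interval. The symmetry $\alpha(\omega_3-u)=-\alpha(u)$ then gives
\[
\frac{d}{du}\big[c_3(\omega_3-u)\big]=-c_3'(\omega_3-u)=-c_3'(u),
\]
so $c_3(\omega_3-u)+c_3(u)$ is constant on $(0,u_0)$. The constant can then be computed by taking the limit at a convenient point, for instance $u\to u_0$ when $u_0=\omega_3/2$, or via the $\zeta$-identity evaluated at $u=\frac{\omega_3}{2}$. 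Pinning down this constant, $\frac{2\wp''(\omega_3/2)}{\wp'(\omega_3/2)}$, is the step I expect to be the main obstacle. I would try to obtain it by a Taylor expansion of the $\zeta$-identity near $u=\frac{\omega_3}{2}$, where the quotient $\frac{\wp'}{\wp(\frac{\omega_3}{2})-\wp(\cdot)}$ produces exactly a $\wp''/\wp'$ term.
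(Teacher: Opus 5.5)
Your radius and angle identities are correct and match the paper. The center identity, however, is not established, and your main route breaks at Step~1. The product $g(\omega_3-u)g(u)$ is not constant. From $g(-z)=\hat{g}_0^2/g(z)$ and $2\omega_3$-periodicity you only get $g(\omega_3-u)=1/g(u+\omega_3)$. Since $\omega_3$ is merely a half-period, $g(u+\omega_3)/g(u)$ is not constant: as a meromorphic function, $g(\omega_3-z)g(z)$ has zeros at $\mu+L$ and $\omega_3-\mu+L$ and poles at $-\mu+L$ and $\omega_3+\mu+L$, and these do not cancel. Already at $u=0$ it equals $g(\omega_3)g(0)=-1$, not $1$. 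Hence the boundary terms are not odd under $u\mapsto\omega_3-u$, which is exactly why Step~3 leaves a $u$-dependent remainder.

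The missing idea is to reflect about $0$ rather than about $\omega_3/2$, and then shift using the center periodicity of Theorem~\ref{periodicitycenter}: $c_3(\omega_3-u)=c_3(-u)+4(\omega_3\wp(\mu)+\zeta(\omega_3))$. At $-u$ your Steps~1--2 hold exactly: $g(-u)=1/g(u)$, and $\alpha(-u)=-\alpha(u)$, $\beta(-u)=-\beta(u)$ by Theorem~\ref{hamiltoniansystemg0=1}. So \eqref{alternc3u} gives $c_3(-u)=-c_3(u)-b\omega_3-8\zeta(\omega_3/2)$. Since $b=4\wp(\mu)$, the $b\omega_3$ terms cancel, leaving $4(\zeta(\omega_3)-2\zeta(\omega_3/2))$. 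This equals $2\wp''(\omega_3/2)/\wp'(\omega_3/2)$ by the duplication formula $\zeta(2z)=2\zeta(z)+\wp''(z)/(2\wp'(z))$. That is the paper's proof.

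Your alternative route correctly shows that $c_3(\omega_3-u)+c_3(u)$ is constant on $(0,u_0)$, using $c_3'=-2/|\alpha|$ from Theorem~\ref{centeranalysis} and $|\alpha(\omega_3-u)|=|\alpha(u)|$. But the value of that constant is the entire content of the claim, and your proposed evaluations do not produce it. Even granting $u_0=\omega_3/2$ (which your derivative identity does give, as in Corollary~\ref{calculationu0}), the limit $u\to u_0$ only yields $2c_3(\omega_3/2)$. That quantity involves $g(\omega_3/2)$ and the limit of $\beta/\alpha$ at the singularity, neither of which is known. Near $u=\omega_3/2$ your quotient $\wp'(\omega_3/2)/(\wp(\omega_3/2)-\wp(u-\omega_3/2))$ tends to $0$, because $\wp$ has a pole at $0$; no $\wp''/\wp'$ term appears there.

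The expansion you want occurs as $u\to 0^+$. There $4\wp'(\omega_3/2)/(\wp(\omega_3/2)-\wp(u-\omega_3/2))=4/u+2\wp''(\omega_3/2)/\wp'(\omega_3/2)+O(u)$. Meanwhile, the boundary terms equal $-4/u+O(u)$, using $g(0)=1$, $g(\omega_3)=-1$, $\alpha(u)=-u+O(u^3)$ (because $\alpha''(0)=0$), $\beta(0)=0$, and the fact that $x\mapsto x/(x^2+1)$ is critical at $x=\pm1$. Carrying this out would complete your alternative route, avoiding Theorem~\ref{periodicitycenter} at the cost of relying on Theorem~\ref{centeranalysis}. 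As written, though, the constant is left undetermined.
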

\begin{proof}
    The first claim comes from Theorem \ref{hamiltoniansystemg0=1} and equation \eqref{radiusangle}. For the second claim, notice that from Theorem \ref{periodicitycenter}
    \begin{equation}\label{equationcomega3-u}
        c_3(\omega_3-u)-4\left(\omega_3\wp(\mu)+\zeta(\omega_3)\right)=c_3(-u).
    \end{equation}
    By equation \eqref{alternc3u} we compute
    \begin{equation}\label{c3develpoment}
        c_3(-u)=b\left(-u-\frac{\omega_3}{2}\right)+4\left(\zeta(-u)-\zeta\left(\frac{\omega_3}{2}\right)\right)+\frac{4}{\alpha(-u)}\frac{g(-u)}{g(-u)^2+1}-\frac{\beta(-u)}{\alpha(-u)}\frac{g(-u)^2-1}{g(-u)^2+1}.
    \end{equation}
    From equation \eqref{gaussmapsigmafunctions} and the hypothesis $\hat{g}_0=1$ we see that $g(-u)=\frac{1}{g(u)}$. On the other hand, by Theorem \ref{hamiltoniansystemg0=1} we have $\alpha(-u)=-\alpha(u)$, $\beta(-u)=-\beta(u)$ for all $u\in (0,u_0)$ so we can write equation \eqref{c3develpoment}
    \begin{gather*}
        c_3(-u)=-b\left(u-\frac{\omega_3}{2}\right)-4\left(\zeta(u)-\zeta\left(\frac{\omega_3}{2}\right)\right)-\frac{4}{\alpha(u)}\frac{g(u)}{g(u)^2+1}+\frac{\beta(u)}{\alpha(u)}\frac{g(u)^2-1}{g(u)^2+1}-b\omega_3-8\zeta\left(\frac{\omega_3}{2}\right),
    \end{gather*}
    \begin{equation}\label{equationc3-u}
        c_3(-u)=-c_3(u)-b\omega_3-8\zeta\left(\frac{\omega_3}{2}\right).
    \end{equation}
    From equations \eqref{equationcomega3-u} and \eqref{equationc3-u} we obtain
    \begin{equation*}
        c_3(\omega_3-u)=-c_3(u)+4\left(\zeta(\omega_3)-2\zeta\left(\frac{\omega_3}{2}\right)\right).
    \end{equation*}
    By identity \cite[Example 20.23]{Whittaker} we conclude that
    \begin{equation*}
         c_3(\omega_3-u)=-c_3(u)+2\frac{\wp''\left(\frac{\omega_3}{2}\right)}{\wp'\left(\frac{\omega_3}{2}\right)},
    \end{equation*}
    and the proof is complete.
\end{proof}
\begin{cor}\label{calculationu0}
    Consider the minimal immersion  $\Psi_{L(\tau,s),\hat{g}_0}$ with $(\tau,s)\in C_s$ and $\hat{g}_0=1$. Then
    \begin{equation}
       u_0= u(\lambda_t)=\frac{\omega_3}{2}.
    \end{equation}
\end{cor}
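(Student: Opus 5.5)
The plan is to turn the symmetry of Theorem \ref{hamiltoniansystemg0=1} into a statement about where the singularity of $(\alpha,\beta)$ can sit. Write $u_*\coloneq u(\lambda_t)$. By Theorem \ref{alphabetaqualitative}, $u_*$ is the unique point of $(0,\omega_3)$ where $\alpha,\beta$ blow up: $\alpha\to-\infty$ as $u\to u_*^-$ and $\alpha\to+\infty$ as $u\to u_*^+$. By the definition \eqref{u0}, it suffices to prove $u_*=\omega_3-u_*$. I will argue by contradiction and assume, without loss of generality, that $u_*<\omega_3/2$, so that $u_0=u_*$. The opposite case $u_*>\omega_3/2$, where $u_0=\omega_3-u_*$, is handled by the same argument with the roles of the two endpoints exchanged.

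First step: apply Theorem \ref{hamiltoniansystemg0=1} on $(0,u_0)=(0,u_*)$. It gives $\alpha(\omega_3-u)=-\alpha(u)$ for all $u\in(0,u_*)$. Let $u\to u_*^-$. The left-hand side is then evaluated at $\omega_3-u\to(\omega_3-u_*)^+$, and this point lies in $(u_*,\omega_3)$ because $\omega_3-u_*>\omega_3/2>u_*$. On $(u_*,\omega_3)$ the function $\alpha$ is smooth and finite, so the limit $\alpha(\omega_3-u_*)$ is a finite real number. The right-hand side, however, is $-\alpha(u)\to+\infty$. This is a contradiction, so $u_*\ge\omega_3/2$.

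Second step: the symmetric case $u_*>\omega_3/2$. Here $u_0=\omega_3-u_*$, and I apply the identity to $u\in(0,\omega_3-u_*)$ and let $u\to(\omega_3-u_*)^-$. Then $\omega_3-u\to u_*^+$, so $\alpha(\omega_3-u)\to+\infty$. On the other side, $u$ stays in $(0,u_*)$, where $\alpha$ is finite, and the limit point $\omega_3-u_*$ is strictly less than $u_*$. So $-\alpha(u)$ tends to the finite value $-\alpha(\omega_3-u_*)$, again a contradiction. Hence $u_*=\omega_3/2$, and therefore $u_0=\min\{u_*,\omega_3-u_*\}=\omega_3/2$.

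The main subtlety is the extension of the identity up to the endpoint $u_0$. The limits are one-sided limits taken inside the open interval, so I only need continuity of $\alpha$ on the side that stays away from the singularity, together with the blow-up statement of Theorem \ref{alphabetaqualitative} on the other side. One could instead compare $\beta$, since $\beta\to\pm\infty$ as well, but using $\alpha$ suffices. As a consistency check, when $u_*=\omega_3/2$ the behaviour $\alpha\to\mp\infty$ on the two sides of $\omega_3/2$ is exactly compatible with the relation $\alpha(\omega_3-u)=-\alpha(u)$, so no contradiction arises in that case.
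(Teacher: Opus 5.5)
Your argument is correct, but it reaches the conclusion by a more direct route than the paper does.

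\textbf{The paper's route.} It differentiates the center identity $c_3(\omega_3-u)=-c_3(u)+\text{const}$ of Corollary \ref{u(lambdat)=omega32} to get $c_3'(\omega_3-u)=c_3'(u)$ on $(0,u_0)$. It then passes to $u=u_0$ by continuity. Finally it uses the uniqueness statement of Theorem \ref{centeranalysis} ($u(\lambda_t)$ is the only zero of $c_3'$ in $(0,\omega_3)$) to force $\omega_3-u(\lambda_t)=u(\lambda_t)$.

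\textbf{Your route.} You skip the center function entirely. You apply the symmetry $\alpha(\omega_3-u)=-\alpha(u)$ of Theorem \ref{hamiltoniansystemg0=1} and compare one-sided limits at $u_0$. By Theorem \ref{alphabetaqualitative}, one side blows up while the other tends to a finite value, unless the singularity sits at $\omega_3/2$. Put differently, the pairs $\bigl(\alpha(\omega_3-u),\beta(\omega_3-u)\bigr)$ and $\bigl(-\alpha(u),-\beta(u)\bigr)$ solve the same polynomial ODE and agree on $(0,u_0)$. Their blow-up times $\omega_3-u(\lambda_t)$ and $u(\lambda_t)$ must therefore coincide.

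\textbf{Comparison.} Both proofs rest on the same underlying fact: $c_3'=F_3/\alpha^2$ with $|F_3|=2$ vanishes exactly where $\alpha$ blows up. Yours uses strictly fewer ingredients. It needs no $\zeta$-function formula for $c_3$, no Corollary \ref{u(lambdat)=omega32}, and neither the formula $c'=\vec F/\alpha^2$ nor the monotonicity analysis. It also needs continuity of $\alpha$ only on the regular side. The paper's passage to $u=u_0$, by contrast, relies on $c_3'$ being continuous through the singular point, with value $0$ there. What the paper's version buys is that it is phrased through the height of the centers, which is the geometric quantity that matters for the free boundary application that follows.
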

\begin{proof}
    Indeed, by Corollary \ref{u(lambdat)=omega32} we have that 
    \begin{equation*}
        \forall u\in (0,u_0)\quad -c_3'(\omega_3-u)=-c_3'(u)
    \end{equation*}
    In particular, using the continuity of the function $c_3'$ we have in particular that
    \begin{equation}\label{step1}
        c_3'(\omega_3-u_0)=c_3'(u_0).
    \end{equation}
We know by equation \ref{u0} that either $u_0=u(\lambda_t)$ or $u_0=\omega_3-u(\lambda_t)$. In any case we obtain by equation \ref{step1} and Theorem \ref{centeranalysis} that
\begin{equation*}
    c_3'(\omega_3-u(\lambda_t))=c_3'(u(\lambda_t))=0.
\end{equation*}
Applying the uniqueness part of Theorem \ref{centeranalysis} we obtain that
\begin{equation*}
    \omega_3-u(\lambda_t)=u(\lambda_t),
\end{equation*}
concluding the proof.
\end{proof}
We can also prove the following theorem
\begin{thm}\label{freeboundaryrhombictwospheres}
    Consider the minimal immersion  $\Psi_{L(\tau,s),\hat{g}_0}$ with $(\tau,s)\in C_s$ and $\hat{g}_0=1$. If $\sum_{i<j} r_ir_j<0$ then there exist $u^*\in (0,\frac{\omega_3}{2})$ such that 
    \begin{equation*}
        \beta(u^*)=0,\quad  \beta(\omega_3-u^*)=0.
    \end{equation*}
\end{thm}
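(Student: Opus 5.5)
Since $\hat g_0=1$, we have $C=1$. Proposition \ref{initialconditionsparameterspace} then gives $\beta(0)^2=4(C^2-1)=0$, which agrees with Proposition \ref{betafunctionat0omega3}: $\beta(0)=\hat g_0-\hat g_0^{-1}=0$. The same proposition gives $\beta'(0)=\sum_{i<j}r_ir_j<0$ by hypothesis. Hence $\beta(u)<0$ for $u\in(0,\varepsilon)$ with $\varepsilon$ small.

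Next, use the singularity at the midpoint. By Corollary \ref{calculationu0}, $u(\lambda_t)=\frac{\omega_3}{2}$. By Theorem \ref{alphabetaqualitative}, $\lim_{u\to (\omega_3/2)^-}\beta(u)=+\infty$. Since $\beta$ is smooth on $(0,\frac{\omega_3}{2})$ (Theorem \ref{alphabetaqualitative}), it is negative near $0^+$ and positive near $(\omega_3/2)^-$. The intermediate value theorem therefore gives $u^*\in(0,\frac{\omega_3}{2})$ with $\beta(u^*)=0$.

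To obtain the symmetric zero, apply Theorem \ref{hamiltoniansystemg0=1}. By Corollary \ref{calculationu0}, $u_0=\frac{\omega_3}{2}$, so the theorem holds on the whole interval $(0,\frac{\omega_3}{2})$. It gives $\beta(\omega_3-u^*)=-\beta(u^*)=0$, and $\omega_3-u^*\in(\frac{\omega_3}{2},\omega_3)$.

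The step I expect to need the most care is the sign near $0^+$. There the conformal factor blows up at $(0,0)$, but $\beta$ depends only on $u$, and the initial data $\beta(0)=0$, $\beta'(0)=C\sum_{i<j}r_ir_j$ are fixed in Proposition \ref{initialconditionsparameterspace} via the line $u=0$. So $\beta$ extends smoothly through $0$, and the first-order expansion $\beta(u)=\beta'(0)u+O(u^2)$ is legitimate.

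If one prefers to avoid this, there is a cross-check using the $(s,t)$ representation \eqref{alphabetast}. On $(0,\frac{\omega_3}{2})$ we have $\beta=-\left(\sqrt{s/(-t)}-\sqrt{(-t)/s}\right)$, and a zero means $s=-t$ at some intermediate $\lambda$. However, this only confirms the limit at $\frac{\omega_3}{2}$: the sign near $u=0$ really comes from the hypothesis $\sum_{i<j}r_ir_j<0$, through $\beta'(0)$.
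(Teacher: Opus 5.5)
Your proposal is correct and follows the paper's proof step for step. From Proposition \ref{initialconditionsparameterspace} with $C=1$ you get $\beta(0)=0$ and $\beta'(0)=\sum_{i<j}r_ir_j<0$; Theorem \ref{alphabetaqualitative} and Corollary \ref{calculationu0} give the blow-up $\beta\to+\infty$ as $u\to u(\lambda_t)=\frac{\omega_3}{2}$; the intermediate value theorem then produces $u^*$, and the odd symmetry $\beta(\omega_3-u)=-\beta(u)$ of Theorem \ref{hamiltoniansystemg0=1} on $(0,u_0)=(0,\frac{\omega_3}{2})$ gives the second zero. Your extra remarks on the smoothness of $\beta$ at $u=0$ and the $(s,t)$ cross-check are harmless but not needed.
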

\begin{proof}
    In fact, since $\hat{g}_0=1$ we have by Proposition \ref{initialconditionsparameterspace} and the hypothesis $\sum_{i<j} r_ir_j<0$, that $\beta(0)=0$ and $\beta'(0)\in \bR^-$. Therefore there exists $\epsilon>0$ such that \begin{equation*}
        \beta(u)\in \bR^-,\quad u\in (0,\epsilon),
    \end{equation*}
By the asymptotic behavior of $\beta(u)$ of Theorem \ref{alphabetaqualitative} and from Corollary \ref{calculationu0} we obtain that
\begin{equation*}
    \lim_{u\to \frac{\omega_3^{-}}{2}}\beta(u)=+\infty.
\end{equation*}
By the intermediate value theorem, there exists $u^*\in (0,\frac{\omega_3}{2})$ such that $\beta(u^*)=0$. By Theorem \ref{hamiltoniansystemg0=1} and Corollary \ref{calculationu0} we have $\beta(\omega_3-u^*)=-\beta(u^*)=0$. Therefore, by Corollary \ref{u(lambdat)=omega32} we construct a free boundary minimal annuli in two spheres of different centers and same radius.
\end{proof}

\section{Appendix}
Numerically we know that for $\tau=\tan \theta_\tau$, $\theta_\tau\in \left(0,\frac{\pi}{6}\right)$, the condition $\sum_{i<j}r_ir_j<0$ is satisfied. We next provide our partial explanation for this numerical evidence.
\begin{figure}[h]
    \centering
    \includegraphics[width=0.5\linewidth]{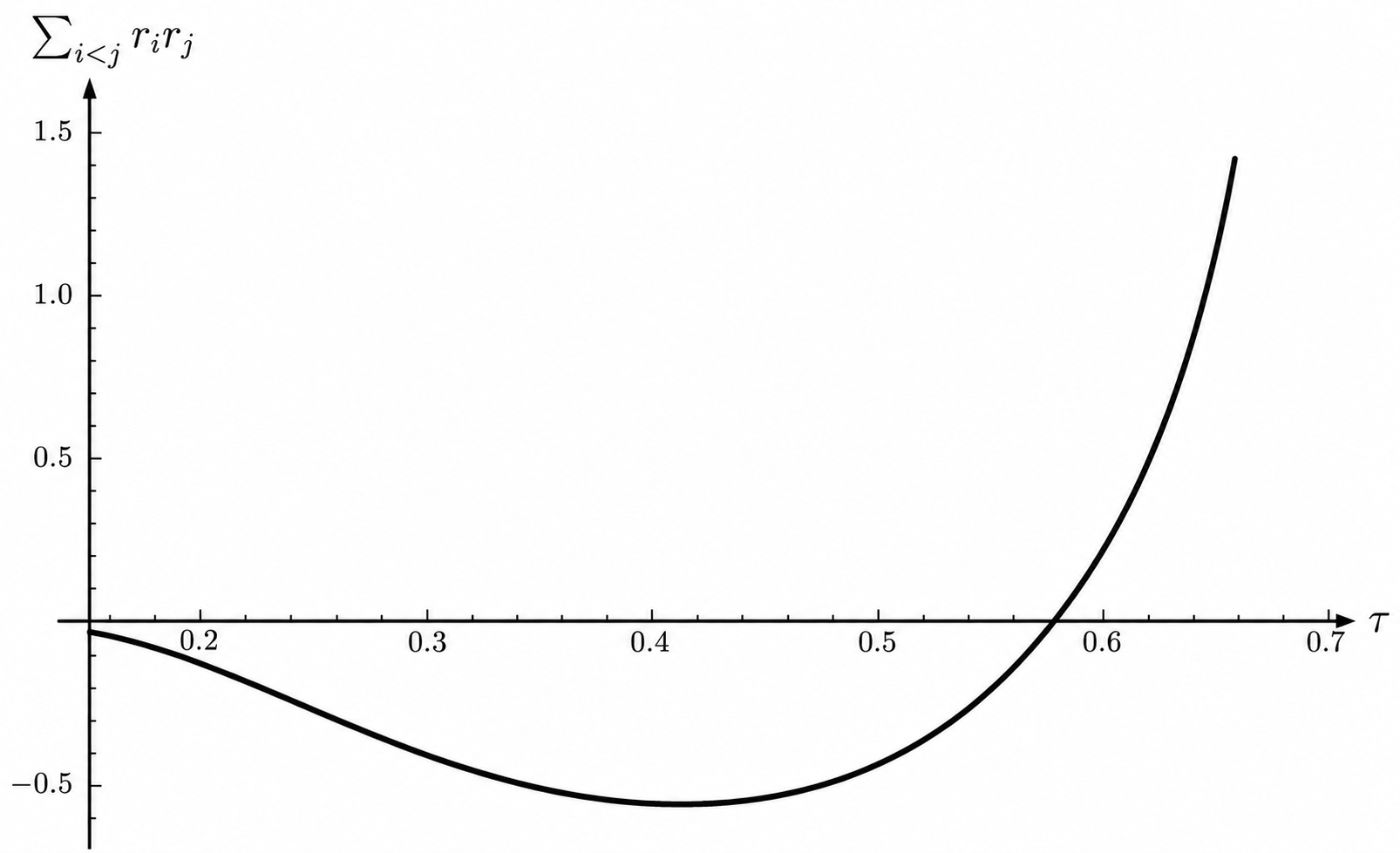}
    \caption{Numerical simulation of the condition $\sum_{i<j} r_ir_j$ as a function of the $\tau$ parameter.}
    \label{fig:Sumrirj}
\end{figure}
\begin{lema}
    Along the curve $C_s$, we have
     \begin{equation}
         \sum_{i<j}r_ir_j=\frac{8}{\lambda^4}\wp''(s(\tau)|\tilde{L}(\tau)).
    \end{equation}
\end{lema}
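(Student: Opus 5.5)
The plan is to expand the elementary symmetric function $\sum_{i<j}r_ir_j$ directly in terms of $b$ and the invariants, identify the result with $\wp''$ evaluated at $\mu$, and then rescale from $L$ to $\tilde L$. The curve $C_s$ should play no essential role beyond fixing the parameter point $(\tau,s(\tau))$.

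\textbf{Step 1: expand in $b$ and $g_2$.} From \eqref{rootproperties}, $r_j=b-4e_j$. For each pair,
\begin{equation*}
r_ir_j=b^2-4b(e_i+e_j)+16e_ie_j .
\end{equation*}
Summing over the three pairs gives
\begin{equation*}
\sum_{i<j}r_ir_j=3b^2-8b\,(e_1+e_2+e_3)+16\,(e_1e_2+e_1e_3+e_2e_3).
\end{equation*}
Using the identities $e_1+e_2+e_3=0$ and $e_1e_2+e_1e_3+e_2e_3=-\frac{g_2}{4}$ recalled in Section \ref{Section 5.5}, this becomes
\begin{equation*}
\sum_{i<j}r_ir_j=3b^2-4g_2 .
\end{equation*}

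\textbf{Step 2: recognise $\wp''(\mu|L)$.} By \eqref{defb} and \eqref{equivalentdefinitions}, $b=4\wp(\mu|L)$ with $\mu=M(\tau,s)$. Hence
\begin{equation*}
\sum_{i<j}r_ir_j=48\wp(\mu|L)^2-4g_2 .
\end{equation*}
Differentiating \eqref{differentialequationp} and dividing by $2\wp'$ gives the standard identity $\wp''=6\wp^2-\frac{g_2}{2}$. Multiplying by $8$ shows
\begin{equation*}
8\wp''(\mu|L)=48\wp(\mu|L)^2-4g_2=\sum_{i<j}r_ir_j .
\end{equation*}

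\textbf{Step 3: rescale to $\tilde L$.} The lattice is $L=\lambda\tilde L$ and, by construction, $\mu=M(\tau,s)=\lambda s$. The homogeneity relation $\wp(\lambda z|\lambda\tilde L)=\lambda^{-2}\wp(z|\tilde L)$ (\cite[Section 18.2]{Handbook}), differentiated twice in $z$, gives $\wp''(\lambda z|\lambda \tilde L)=\lambda^{-4}\wp''(z|\tilde L)$. Evaluating at $s=s(\tau)$ yields
\begin{equation*}
\sum_{i<j}r_ir_j=\frac{8}{\lambda^4}\wp''(s(\tau)|\tilde L(\tau)).
\end{equation*}

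The only real obstacle is bookkeeping: tracking the factor $\lambda^{-4}$ in the homogeneity relation correctly, and using the relation $e_1e_2+e_1e_3+e_2e_3=-g_2/4$ with the correct sign.
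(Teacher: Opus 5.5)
Your proposal is correct and follows essentially the same route as the paper: expand $\sum_{i<j}r_ir_j=3b^2-4g_2=48\bigl(\wp^2(\mu)-\tfrac{g_2}{12}\bigr)$ and use $\wp''=6\wp^2-\tfrac{g_2}{2}$ together with the homogeneity relations. The only difference is the order: the paper rescales $\wp^2$ and $g_2$ to $\tilde{L}$ before recognising $\wp''$, whereas you recognise $\wp''(\mu|L)$ first and then rescale it via $\wp''(\lambda z|\lambda\tilde{L})=\lambda^{-4}\wp''(z|\tilde{L})$, which is equivalent.
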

\begin{proof}
     Indeed,
    \begin{align*}
    \begin{split}
        \sum_{i<j}r_ir_j=&(b-4e_1)(b-4e_2)+(b-4e_1)(b-4e_3)+(b-4e_2)(b-4e_3)\\=&3b^2-8b(e_1+e_2+e_3)+16(e_1e_2+e_1e_3+e_2e_3)\\=&3b^2-4g_2\\=&48\left(\wp^2(\mu)-\frac{g_2}{12}\right)\\=&\frac{48}{\lambda^4}\left(\wp^2(s(\tau)|\tilde{L}(\tau))-\frac{\tilde{g}_2(\tau)}{12}\right)\\=&\frac{8}{\lambda^4}\wp''(s(\tau)|\tilde{L}(\tau)).
         \end{split}
    \end{align*}
\end{proof}

\begin{lema}
    For all $\theta_\tau\in \left(0,\frac{\pi}{6}\right)$ there exists a non-empty interval $(a_1(\tau),a_2(\tau))\subset (0,2)$ such that the Weierstrass function $\wp(v|\Tilde{L}(\tau))$ is concave down.
\end{lema}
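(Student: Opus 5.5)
The plan is to reduce concavity to an inequality on the values of $\wp$ along the diagonal $(0,2)$, and then use the shape of the cubic $4x^3-\tilde g_2x-\tilde g_3$.

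\textbf{Step 1: rewrite concavity as a condition on $\wp$.} Differentiating \eqref{differentialequationp} gives
\begin{equation*}
\wp''(v|\tilde L)=6\wp(v|\tilde L)^2-\frac{\tilde g_2}{2}.
\end{equation*}
So, at a point $v\in(0,2)$, the function $\wp$ is concave down exactly when
\begin{equation*}
\wp(v|\tilde L)^2<\frac{\tilde g_2}{12}.
\end{equation*}
For $\theta_\tau\in(0,\frac{\pi}{6})$, Theorem \ref{signinvariants} gives $\tilde g_2(\tau)>0$. Put $\rho\coloneqq\sqrt{\tilde g_2/12}>0$. It then suffices to find a nonempty open interval of $v\in(0,2)$ on which $\wp(v|\tilde L)\in(-\rho,\rho)$.

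\textbf{Step 2: the range of $\wp$ on $(0,2)$.} By \eqref{derivativeP}, $\wp(\cdot|\tilde L)$ is real and strictly decreasing on $(0,2)$. It tends to $+\infty$ as $v\to0^+$. Its value at $\tilde\omega_3=2$, call it $e$, is a half-period value, so $\wp'(2)=0$. Therefore $e$ is a real root of $f(x)=4x^3-\tilde g_2x-\tilde g_3$. Since $\tilde\Delta_{mod}<0$, $f$ has exactly one real root, so $e$ is that root. Consequently $\wp$ maps $(0,2)$ diffeomorphically onto $(e,+\infty)$.

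\textbf{Step 3: locate $e$ relative to $\pm\rho$.} This is the only step requiring care. The critical points of $f$ are exactly $x=\pm\rho$, and
\begin{equation*}
f(\pm\rho)=\mp\tfrac{2}{3}\tilde g_2\rho-\tilde g_3.
\end{equation*}
Because there is only one real root, the local maximum $f(-\rho)$ and the local minimum $f(\rho)$ have the same sign. For $\theta_\tau<\frac{\pi}{6}<\frac{\pi}{4}$, Theorem \ref{signinvariants} gives $\tilde g_3<0$, hence $f(-\rho)=\tfrac{2}{3}\tilde g_2\rho-\tilde g_3>0$. Thus both critical values are positive. Since $f\to-\infty$ as $x\to-\infty$, the unique real root lies to the left of the local maximum, so $e<-\rho$.

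\textbf{Step 4: conclude.} Since $e<-\rho$, the whole interval $(-\rho,\rho)$ lies in the range $(e,+\infty)$ of $\wp$ on $(0,2)$. Define
\begin{equation*}
(a_1(\tau),a_2(\tau))\coloneqq\wp(\cdot|\tilde L)^{-1}\big((-\rho,\rho)\big)\subset(0,2),
\end{equation*}
which is a nonempty open interval because $\wp$ is a decreasing diffeomorphism there. Explicitly, $a_1=\wp^{-1}(\rho)$ and $a_2=\wp^{-1}(-\rho)$. On this interval $\wp''<0$ by Step 1, which proves the lemma.
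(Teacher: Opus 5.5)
Your proof is correct and has the same skeleton as the paper's. Both reduce concavity, via $\wp''=6\wp^2-\tilde g_2/2$, to the condition $\wp^2<\tilde g_2/12$. Both use that $\wp(\cdot|\tilde L)$ decreases on $(0,2)$ down to $\tilde e_3=\wp(2|\tilde L)$, and both hinge on the inequality $\tilde e_3<-\sqrt{\tilde g_2/12}$.

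The difference is in how that inequality is proved and how the interval is extracted. The paper works with the roots directly. From $\tilde g_3=4|\tilde e_1|^2\tilde e_3$ it reads off $\tilde e_3<0$, and from the identity $|\tilde e_1-\tilde e_3|^2=3\bigl(\tilde e_3^2-\tilde g_2/12\bigr)$ it gets $\tilde e_3^2>\tilde g_2/12$. It then places the solutions of $\wp=\pm\sqrt{\tilde g_2/12}$ on either diagonal, rules out the vertical one $[\tilde\omega_3,2\tilde\omega_2]$ by contradiction, and finishes with a sign table on four subintervals.

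You instead use only the graph of the real cubic $4x^3-\tilde g_2x-\tilde g_3$. Negative discriminant forces the critical values at $\pm\rho$ to share a sign, $\tilde g_3<0$ makes that sign positive, so the unique real root lies to the left of $-\rho$. You restrict to $(0,2)$ from the start and take the preimage of $(-\rho,\rho)$ under the decreasing diffeomorphism onto $(\tilde e_3,+\infty)$. This removes the need to treat the other diagonal and to do the case analysis.

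What each route buys: yours needs only the qualitative fact that the cubic has a single real root, not the Vieta relations among $\tilde e_1=\overline{\tilde e_2}$ and $\tilde e_3$. Because your Step 1 is an equivalence, it also shows directly that on $(0,2)$ one has $\wp''<0$ exactly on $(a_1,a_2)$, which is what the subsequent corollary uses. The paper's route gives, as a by-product, an explicit identity relating $|\tilde e_1-\tilde e_3|$ to $\tilde e_3^2-\tilde g_2/12$.
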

\begin{proof}
    Suppose that $\wp''(x|\tilde{L})=0$. This implies that
\begin{equation*}
    \wp^2(x|\tilde{L})=\frac{\tilde{g}_2}{12}.
\end{equation*}
By Theorem \ref{signinvariants} we know that for $\theta_\tau\in \left(0,\frac{\pi}{6}\right)$, $\tilde{g}_2>0$, $\tilde{g}_3<0$. This implies 
\begin{equation*}
    \wp(x|\tilde{L})=\pm \sqrt{\frac{\tilde{g}_2}{12}}\in \bR.
\end{equation*}
Therefore $x$ must belong to the diagonals of the rhombus. Let $a_1,\ a_2\in [0,\tilde{\omega}_3]\cup [\tilde{\omega}_3,2\tilde{\omega}_2]$ such that
\begin{equation*}
    \wp(a_1|\tilde{L})=\sqrt{\frac{\tilde{g}_2}{12}},\quad  \wp(a_2|\tilde{L})=-\sqrt{\frac{\tilde{g}_2}{12}}.
\end{equation*}
Notice that in the rhombic case,
\begin{equation*}
    \tilde{g}_3=4\tilde{e}_1\tilde{e}_2\tilde{e}_3=4|\tilde{e}_1|^2\tilde{e}_3
\end{equation*}
which implies that 
\begin{equation*}
    \forall \theta_\tau\in \left(0,\frac{\pi}{6}\right),\quad \wp(\tilde{\omega}_3|\tilde{L})=\tilde{e}_3<0.
\end{equation*}
Also notice that in the rhombic case,
\begin{align*}
    \begin{split}
        |\tilde{e}_1-\tilde{e}_3|^2=&(\tilde{e}_1-\tilde{e}_3)(\tilde{e}_1-\tilde{e}_2)=\tilde{e}_1\tilde{e}_2-(\tilde{e}_1+\tilde{e}_2)\tilde{e}_3+\tilde{e}_3^2\\=&2\tilde{e}_3^2+\tilde{e}_1\tilde{e}_2=3\tilde{e}_3^2+(\tilde{e}_1\tilde{e}_2-\tilde{e}_3^2).
    \end{split}
\end{align*}
However, we can write
\begin{equation*}
    \tilde{g}_2=-4(\tilde{e}_1\tilde{e}_2+\tilde{e}_1\tilde{e}_3+\tilde{e}_2\tilde{e}_3)=-4(\tilde{e}_1\tilde{e}_2-\tilde{e}_3^2).
\end{equation*}
Therefore
\begin{equation*}
    \tilde{e}_3^2-\frac{\tilde{g}_2}{12}>0
\end{equation*}
We next prove that $a_1,\ a_2\in (0,\tilde{\omega}_3)=(0,2)$. Indeed, suppose by contradiction that $a_j\in [\tilde{\omega}_3,2\tilde{\omega}_2]$ for some $j=1,2$. Then by the monotonicity property of the Weierstrass function, we have
\begin{equation*}
    \wp(a_j|\tilde{L})<\wp(\tilde{\omega}_3|\tilde{L})=\tilde{e}_3<0.
\end{equation*}
This implies that
\begin{equation*}
    0=\wp^2(a_j|\tilde{L})-\frac{\tilde{g}_2}{12}>\tilde{e}_3^2-\frac{\tilde{g}_2}{12}>0,
\end{equation*}
which is a contradiction. Therefore we can assume that $0<a_1<a<a_2<\omega_3$, where $\wp(a|\tilde{L})=0$. We can analyze the concavity of the Weierstrass $\wp(v|\tilde{L})$ function along the interval $(0,\tilde{\omega}_3)$. 
\begin{itemize}
    \item $v\in (0,a_1)$. $\wp(v|\tilde{L})>\wp(a_1|\tilde{L})>0$. $\frac{1}{6}\left(\wp^2(v|\tilde{L})-\frac{\tilde{g}_2}{12}\right)>\frac{1}{6}\left(\wp^2(a_1|\tilde{L})-\frac{\tilde{g}_2}{12}\right)$. $\wp''(v|\tilde{L})>0$.
     \item $v\in (a_1,a)$. $\wp(a_1|\tilde{L})>\wp(v|\tilde{L})>\wp(a|\tilde{L})=0$. $\wp''(v|\tilde{L})<0$.
     \item $v\in (a,a_2)$. $0=\wp(a|\tilde{L})>\wp(v|\tilde{L})>\wp(a_2|\tilde{L})$. $\wp''(v|\tilde{L})<0$.
     \item $v\in (a_2,\tilde{\omega}_3)$. $0>\wp(a_2|\tilde{L})>\wp(v|\tilde{L})$. $\wp''(v|\tilde{L})>0$.
\end{itemize}
\end{proof}

\begin{cor}
    We have the equivalence
    \begin{equation}
    \forall \theta_\tau\in \left(0,\frac{\pi}{6}\right),\ \sum_{i<j}r_ir_j<0 \iff s(\tau)\in (a_1,a_2)
\end{equation}
\end{cor}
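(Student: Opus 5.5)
Combining the two preceding lemmas, the condition $\sum_{i<j}r_ir_j<0$ reduces to a statement about the sign of $\wp''$ on the horizontal diagonal, and that sign is already tabulated. By the first Lemma of this Appendix, along $C_s$ we have $\sum_{i<j}r_ir_j=\frac{8}{\lambda^4}\wp''(s(\tau)|\tilde L(\tau))$. Since $\lambda(\tau,s)>0$ by construction, the sign of $\sum_{i<j}r_ir_j$ coincides with the sign of $\wp''(s(\tau)|\tilde L(\tau))$. Hence
\begin{equation*}
\sum_{i<j}r_ir_j<0 \iff \wp''(s(\tau)|\tilde L(\tau))<0 .
\end{equation*}
The whole statement then comes down to identifying exactly where $\wp''(\cdot|\tilde L)$ is negative on $(0,2)$.

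For that I would use the concavity lemma just proved, valid for $\theta_\tau\in(0,\frac{\pi}{6})$. It gives points $0<a_1<a<a_2<2$ with $\wp(a_1|\tilde L)=\sqrt{\tilde g_2/12}$ and $\wp(a_2|\tilde L)=-\sqrt{\tilde g_2/12}$. On $(0,a_1)\cup(a_2,2)$ we have $\wp''>0$, and on $(a_1,a)\cup(a,a_2)$ we have $\wp''<0$.

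Two points are left over: $a$ itself and the endpoints $a_1,a_2$. At $a_1,a_2$ we have $\wp''=0$ by definition. At $a$ the formula $\wp''=6\wp^2-\tilde g_2/2$ gives $\wp''(a)=-\tilde g_2/2<0$, because $\tilde g_2>0$ in this range by Theorem \ref{signinvariants}. Therefore on $(0,2)$,
\begin{equation*}
\wp''(x|\tilde L)<0 \iff x\in(a_1,a_2).
\end{equation*}
This uses that $\wp$ is strictly decreasing on $(0,2)$, so $\wp^2-\tilde g_2/12$ changes sign only at $a_1$ and $a_2$.

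Finally, Corollary \ref{curveCs} gives $s(\tau)\in(0,2)$ for $\tau\in(0,\tau_0)$, and substituting $x=s(\tau)$ yields the stated equivalence. The one point needing care is that $\tau$ must lie in the domain of $s$, i.e. $(0,\tau_0)$. One should therefore check, or simply restrict the statement to, those $\theta_\tau\in(0,\frac{\pi}{6})$ with $\tau<\tau_0$; this domain compatibility is the main obstacle. Otherwise the argument is a direct combination of the two preceding lemmas.
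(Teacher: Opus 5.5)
Your argument is correct and follows the paper's intended route: the paper states the corollary without a separate proof, and it follows exactly as you say from $\sum_{i<j}r_ir_j=\frac{8}{\lambda^4}\wp''(s(\tau)|\tilde L(\tau))$ together with the sign table of the concavity lemma. Your check that $\wp''(a)=-\tilde g_2/2<0$ fills a small gap in that table, since the paper's table leaves out the point $a$.

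The domain issue you flag is real but easy to close. Every term of the series for $F$ in Lemma \ref{functionF} is positive, and the $n=1$ term $1/(e^{\pi/\sqrt{3}}+1)$ already exceeds $1/8$ because $e^{\pi/\sqrt{3}}<7$. Hence $F(\tan\frac{\pi}{6})<0$, and since $F$ is increasing, $\tan\frac{\pi}{6}<\tau_0$. So $s(\tau)$ is defined for every $\theta_\tau\in(0,\frac{\pi}{6})$, and no restriction of the statement is needed.
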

Recall that from Lemma \ref{criterio}, we know that $s(\tau)$ is the unique solution in the interval $(0,2)$ to the transcendental equation
\begin{equation}
     2\zeta(s(\tau)|\Tilde{L}(\tau))-\zeta(2|\Tilde{L}(\tau))s(\tau)=0
\end{equation}
 Alternatively, using Corollary \ref{curveCs} and equation \ref{eq4}, we have that $s(\tau)$ is a critical point of the classical theta function $\vartheta(z|q)$, i.e.
\begin{equation}
    \eval{\frac{\p}{\p z}\vartheta_1(\nu|q)}_{z=s(\tau)}=0,\quad \nu=\frac{\pi z}{4},\quad q=ie^{-\frac{\pi \tau}{2}}
\end{equation}
We hope these observations help to find the definitive explanation of the numerical evidence.

\printbibliography

\end{document}